\newcommand{\BN}{{\mathbb N}}
\newcommand{\BR}{{\mathbb R}}
\newcommand{\BX}{{\mathbb X}}
\newcommand{\WW}{{\mathcal W}}
\newcommand{\dyw}{\mbox{\rm div}}
\newtheorem{tw}{Theorem}[subsection]
\newtheorem{prz}[tw]{Example}
\newtheorem{df}[tw]{Definition}
\newtheorem{lm}[tw]{Lemma}
\newtheorem{uw}[tw]{Remark}
\newtheorem{wn}[tw]{Corollary}
\newtheorem{stw}[tw]{Proposition}
\newenvironment{dow}{\it Proof.\rm}{\hfill $\Box$}
\newcommand{\nsubsection}{\setcounter{equation}{0}\subsection}
\renewcommand{\thefootnote}{{}}
\newcommand{\BB}{{\mathcal{B}}}
\newcommand{\LL}{{\mathcal{L}}}
\newcommand{\h}{{(\theta,X_{\theta})}}
\newcommand{\te}{{(t,X_{t})}}
\newcommand{\cd}{{(\cdot,X_{\cdot})}}
\newcommand{\ints}{{\int_{s}^{T}}}
\newcommand{\Q}{{Q_{\hat{T}}}}
\newcommand{\QW}{{\check{Q}_{T}}}
\newcommand{\QT}{{Q_{T}}}
\newcommand{\BRD}{{\mathbb{R}^{d}}}
\newcommand{\abs}{{\ll}}
\newcommand{\sx}{{(s,x)\in Q_{\hat{T}}}}
\newcommand{\II}{{\mathcal{I}}}
\newcommand{\PP}{{\mathcal{P}}}
\begin{document}

\title {Reflected BSDEs and obstacle problem for semilinear PDEs
in divergence form\footnote{Research supported by the Polish Minister of
Science and Higher Education under Grant N N201 372 436. }}
\author {Tomasz Klimsiak \smallskip\\
{\small Faculty of Mathematics and Computer Science,
Nicolaus Copernicus University} \\
{\small  Chopina 12/18, 87--100 Toru\'n, Poland}\\
{\small e-mail: tomas@mat.uni.torun.pl}}
\date{}
\maketitle

\begin{abstract}
We consider the Cauchy problem for semilinear parabolic equation
in divergence form with  obstacle.  We show that under natural
conditions on the right-hand side of the equation and mild
conditions on the obstacle the problem has a unique  solution and
we provide its stochastic representation  in terms of reflected
backward stochastic differential equations. We prove also
regularity properties and approximation results for solutions of
the problem.
\end{abstract}

\renewcommand{\thefootnote}{}

\footnote{2010 \emph{Mathematics Subject Classification}: Primary
60H30, 35K60; Secondary 35K85.}

\footnote{\emph{Key words and phrases}: backward stochastic
differential equation, semilinear parabolic partial differential
equation, divergence form operator, obstacle problem, weak
solution, soft measure.}

\renewcommand{\thefootnote}{\arabic{footnote}}
\setcounter{footnote}{0}

%\footnotetext{{\em Mathematics Subject Classification (2000):}
%Primary 60H30, 35K60; Secondary 35K85.}

%\footnotetext{{\em Key words or phrases:} Backward stochastic
%differential equation, Semilinear parabolic partial differential
%equation, Divergence form operator, Obstacle problem, Weak
%solution, Soft measure.}

\nsubsection{Introduction}

In the present paper  we use stochastic methods based mainly on
the theory of backward stochastic differential equations (BSDEs)
to investigate the  Cauchy problem for semilinear parabolic
equation in divergence form with irregular obstacle.

Let $a:Q_T\equiv[0,T]\times\mathbb{R}^{d}
\rightarrow\mathbb{R}^{d}\otimes\mathbb{R}^{d}, b:Q_{T}\rightarrow
\mathbb{R}^{d}$  be  measurable functions  such that
\begin{equation}\label{eq1.1}
\lambda|\xi|^2\leq\sum^d_{i,j=1}a_{ij}(t,x)\xi_i\xi_j
\le\Lambda|\xi|^2,\quad a_{ij}=a_{ji},\quad |b_{i}|\le\Lambda,
\quad \xi\in\mathbb{R}^d
\end{equation}
for some $0<\lambda\le\Lambda$, and let $L_t$ be a linear
differential operator of the form
\begin{equation}\label{eq1.2}
L_{t}=\frac{1}{2}\sum_{i,j=1}^d\frac{\partial}{\partial
x_{i}}(a_{ij}(t,x)\frac{\partial}{\partial x_{j}})
+\sum_{i=1}^{d}b_{i}(t,x)\frac{\partial}{\partial x_{i}}\,.
\end{equation}

In the theory of variational inequalities the semilinear obstacle
problem associated with $L_{t}$, terminal condition
$\varphi\in\mathbb{L}_{2,\varrho}(\mathbb{R}^{d})$, generator $f$
and obstacle $h\in\mathbb{L}_{2,\varrho}(Q_{T})$ consists in
finding $u\in\mathbb{L}_{2}(0,T;H^{1}_{\varrho})$ such that $u\ge
h$ a.e. and for every $\eta\in \mathcal{W}_{\varrho}$ such that
$\eta(0)=0$, $\eta\ge h$ a.e.,
\begin{equation}
\label{eq1.03} \langle \eta-u,\frac{\partial\eta}{\partial t}
\rangle_{\varrho,T} +\langle
L_{t}u,\eta-u\rangle_{\varrho,T}+\langle
f_{u},\eta-u\rangle_{2,\varrho,T}\le
\frac12\|\varphi-\eta(T)\|^{2}_{2,\varrho}\,,
\end{equation}
where $f_u=f(\cdot,\cdot,u,\sigma\nabla u)$ and $\sigma\sigma^*=a$
(see, e.g., \cite{BensoussanLions,Lions,Mignot}). In this
framework, $u$ is called  a weak solution of the obstacle problem
in the variational sense.

Roughly speaking, (\ref{eq1.03}) means that we are looking for $u$
such that
\begin{equation}
\label{eq1.3} \left\{
\begin{array}{ll} \min(u-h,-\frac{\partial u}{\partial t}
-L_{t}u-f_u)=0 & \mbox{in }Q_T,\\
u(T)=\varphi & \mbox{on }\mathbb{R}^d,
\end{array}
\right.
\end{equation}
i.e.  $u$ satisfies the prescribed terminal condition, takes
values above the obstacle $h$,  satisfies the inequality
$\frac{\partial u}{\partial t}+L_tu\leq -f_u$ in $Q_T$ and the
equation $\frac{\partial u}{\partial t}+L_tu=-f_u$ on the set
$\{u>h\}$.

In \cite{EKPPQ} connections between viscosity solutions of
(\ref{eq1.3}) and reflected backward stochastic differential
equations (RBSDEs) are investigated under natural assumptions in
the theory of viscosity solutions  that the data $\varphi, f, h$
are continuous and satisfy the polynomial growth condition and
$L_t$ is a non-divergent operator of the form
\[
L_{t}=\frac{1}{2}\sum_{i,j=1}^da_{ij}(t,x)
\frac{\partial^{2}}{\partial x_{i}\partial
x_{j}}+\sum_{i=1}^db_{i}(t,x)\frac{\partial}{\partial x_{i}}
\]
with coefficients ensuring  existence of a unique solution of the
SDE
\[
dX^{s,x}_{t}=\sigma(t,X^{s,x}_t)dW_t+b(t,X^{s,x}_t)\,dt, \quad
X^{s,x}_s=x\quad (\sigma\sigma^*=a).
\]
In \cite{EKPPQ} it is proved that for each $(s,x)\in Q_T$  there
is a unique solution $(Y^{s,x},Z^{s,x},K^{s,x})$ of RBSDE with
forward driving process $X^{s,x}$, terminal condition
$\varphi(X^{s,x}_{T})$, generator
$f(\cdot,X_{\cdot}^{s,x},\cdot,\cdot)$ and obstacle
$h(\cdot,X^{s,x}_{\cdot})$, and $u$ defined by the formula
\begin{equation}
\label{eq1.55} u(s,x)=Y^{s,x}_{s},\quad (s,x)\in Q_T
\end{equation}
is a unique viscosity solution of (\ref{eq1.3}).

Some attempts to give stochastic representation of solutions of
obstacle problems in the variational sense are made in
\cite{Bally,MX,Ouknine}. There, however, the authors consider only
regular obstacles and non-divergent operators with regular
coefficients, i.e. work in the set-up which is rather unnatural in
the theory of variational inequalities.

In the present paper we deal with $L_t$ defined by (\ref{eq1.2})
and we assume that $\varphi,f,h$ satisfy the following
assumptions.
\begin{enumerate}
\item[(H1)]$\varphi\in\mathbb{L}_{2,\varrho}(\mathbb{R}^{d})$,
\item[(H2)]$f:[0,T]\times\mathbb{R}^{d}\times\mathbb{R}\times\mathbb{R}^{d}
\rightarrow\mathbb{R}$ is a measurable function satisfying the
following conditions:
\begin{enumerate}[{a)}]
\item there exists $L>0$ such that
$|f(t,x,y_{1},z_{1})-f(t,x,y_{2},z_{2})|\leq
L(|y_{1}-y_{2}|+|z_{1}-z_{2}|)$ for all $(t,x)\in Q_T$,
$y_{1},y_{2}\in\mathbb{R}$ and $z_{1}, z_{2}\in\mathbb{R}^{d}$,
\item there exist $M>0$, $g\in\mathbb{L}_{2,\varrho}(Q_{T})$ such that
$|f(t,x,y,z)|\leq g(t,x)+M(|y|+|z|)$ for all
$(t,x,y,z)\in[0,T]\times\mathbb{R}^{d}\times\mathbb{R}\times\mathbb{R}^{d}$,
\end{enumerate}
\item[(H3)]$\varphi\geq h(T)$ a.e., $h\in\mathbb{L}_{2,\varrho}(Q_{T})$ and
there exists a parabolic potential such that $h^{*}\ge h$ a.e.
(the definition of the parabolic potential is given in Section
\ref{sec4}).
\end{enumerate}

In general, if the obstacle $h$ is irregular, a weak solution of
(\ref{eq1.3}) in the variational sense is not unique but it is
known that there is a minimal solution, which of course is unique
by the definition. The minimal solution is in fact the limit in
$\mathbb{L}_{2,\varrho}(\mathbb{R}^{d})$ of solutions $u_n$ of the
associated penalized problems
\begin{equation}
\label{eq1.06} (\frac{\partial}{\partial t}+L_t)
u_{n}=-f_{u_{n}}-n(u_{n}-h)^{-},\quad u_{n}(T)=\varphi
\end{equation}
(see, e.g., \cite{BensoussanLions,Brezis,Mignot}).

In the present paper we propose another definition of a solution
of the obstacle problem under which the problem has a unique
solution. By a solution of (\ref{eq1.3}) we mean a pair $(u,\mu)$
consisting of $u\in\mathbb{L}_2(0,T;H^{1}_{\varrho})$ and a
positive Radon measure $\mu$ on $Q_T$ which vanishes on the sets
of parabolic capacity zero such that $u\ge h$ a.e., for every
$\eta\in\mathcal{W}_{\varrho}$ with $\eta(0)=0$,
\[
\langle u,\frac{\partial\eta}{\partial t}\rangle_{\varrho,T}
-\langle L_{t}u,\eta\rangle_{\varrho,T}=\langle
\varphi,\eta(T)\rangle_{2,\varrho}+\langle
f_{u},\eta\rangle_{2,\varrho,T}+\int_{Q_{T}}\eta\varrho^{2}\,d\mu
\]
and $\mu$ has some minimality property saying that it acts only if
$u=h$. In case $h$ is regular, the last condition may be expressed
by the condition
\begin{eqnarray}
\label{eq1.4} \int_{Q_{T}}(u-h)\varrho^{2}\,d\mu=0
\end{eqnarray}
(see \cite{Kl2}).

The above definition of a solution is a counterpart to the
definition of a solution of the obstacle problem for elliptic
equations (see, e.g., \cite{Attouch,Kinderlehrer,Leone}). For
parabolic equations such definition was considered earlier in few
papers (see \cite{MS} and references therein) but only in case of
more regular barriers, i.e. barriers for which (\ref{eq1.4}) is
satisfied. For general barriers satisfying (H3) it appears here
for the first time. The main problem in the parabolic case is to
give proper meaning to (\ref{eq1.4}) when the obstacle $h$ is
irregular. The difficulty lies in the fact that contrary to the
case of elliptic equations, in the parabolic case, in general, $u$
does not admit a quasi-continuous version. Note also that even in
the elliptic case, the minimality condition for $\mu$ is described
formally only for upper quasi-continuous obstacles with respect to
Newtonian capacity (see \cite{Leone} and references given there).

To define properly solutions of the obstacle problem in Section
\ref{sec3} we refine slightly results of \cite{A.Roz.BSDE} (see
also \cite{BPS,BL}) on stochastic representation of solutions of
the Cauchy problem and then, in Section \ref{sec4}, we present
some elements of the parabolic potential theory for $L_t$ and
prove one-to-one correspondence between soft measures and
time-inhomogeneous additive functionals of the Markov family
$\BX=\{(X,P_{s,x}),\,(s,x)\in Q_{\hat{T}}\}$ associated with the
operator $L_{t}$. Let us stress that in order to encompass
obstacles which in general do not have quasi-continuous versions
we are forced to consider c\`adl\`ag functionals of $\BX$.

In Section \ref{sec5} we first provide rigorous formulation of the
minimality condition for $\mu$ and we show by example that $\mu$
satisfying that condition need not satisfy (\ref{eq1.4}) even if
the obstacle is upper or lower quasi-continuous. Then we prove
that under (H1)--(H3) the obstacle problem has a unique solution
$(u,\mu)$. In fact, its first component $u$ coincides with the
limit of $\{u_n\}$, so our definition is consistent with the
definition of weak minimal solution of (\ref{eq1.03}) in the
variational sense. We show also that if $\varphi\geq h(T)$ a.e.
and $h\in\mathbb{L}_{2,\varrho}(Q_{T})$ then under (H1), (H2) the
problem has a solution if and only if (H3) is satisfied, so our
assumptions on $h$ are the weakest possible.

Let us mention that in the case of linear equations another
definition of solutions of the obstacle problem with irregular
obstacles is given in \cite{Pierre1}. We compare it with our
definition at the end of Section \ref{sec5}.

In Section \ref{sec5} we provide also stochastic representation of
a solution of the obstacle problem. We show that under (H1)--(H3)
there is a subset $F^c\subset Q_{\hat{T}}$ of parabolic capacity
zero, which can be described explicitly in terms of $h$ and $g$
from condition (H2), such that for every $(s,x)\in F$ there exists
a unique solution $(Y^{s,x},Z^{s,x},K^{s,x})$ of RBSDE with
terminal condition $\varphi(X_{T})$, generator
$f(\cdot,X_{\cdot},\cdot,\cdot)$ and obstacle
$h(\cdot,X_{\cdot})$, and
\begin{equation}
\label{eq1.5} u(t,X_{t})=Y^{s,x}_{t},\, t\in
[s,T],\,P_{s,x}\mbox{-a.s.},\quad \sigma\nabla u(\cdot,X_{\cdot})=
Z^{s,x}_{\cdot},\, \lambda\otimes P_{s,x}\mbox{-}a.s.,
\end{equation}
where $\lambda$ denotes the Lebesgue measure on $[0,T]$. Hence, in
particular, the first component $u$ of the solution of the
obstacle problem admits representation (\ref{eq1.55}) for
quasi-every $(s,x)\in Q_{\hat{T}}$. As for the second component $\mu$, we
show that it corresponds to $K^{s,x}$ in the sense that for
$(s,x)\in F$,
\begin{equation}
\label{eq1.7} E_{s,x}\int_{s}^{T}\xi(t,X_t)\,dK^{s,x}_t
=\int_s^T\!\!\int_{\mathbb{R}^d}\xi(t,y)p(s,x,t,y)\,d\mu(t,y)
\end{equation}
for all $\xi\in C_{0}(Q_{T})$, where $p$ stands for the transition
density function of $(X,P_{s,x})$ (or, equivalently, $p$ is the
fundamental solution for $L_t$). Actually, one can find an
additive functional of $\BX$ which is equivalent under $P_{s,x}$
to $K^{s,x}$ for $(s,x)\in F$, so (\ref{eq1.5}) may be thought as
a sort of the Revuz correspondence.

The stochastic approach to the obstacle problem allows not only to
give reasonable definition of its solution and prove existence and
uniqueness under minimal conditions on the obstacle but provides
also useful additional information on the problem and the nature
of solutions. First, we find interesting and useful that if
$\varphi\geq h(T)$ a.e. and $h\in\mathbb{L}_{2,\varrho}(Q_{T})$
then under (H1), (H2) the condition (H3), i.e. existence of  a
parabolic potential majorizing $h$ is equivalent to the rather
easily verifiable condition
\begin{equation}
\label{eq1.10}\sup_{s\in [0,T)}
\int_{\BRD}(E_{s,x}\mbox{\rm esssup}_{s\le t\le T}
|h^{+}(t,X_{t})|^{2})\varrho^{2}(x)\,dx<\infty.
\end{equation}
Secondly, from (\ref{eq1.5}), (\ref{eq1.7}) it follows immediately
that in the linear case for quasi-every $(s,x)\in Q_{T}$ (with
respect to the parabolic capacity) the first component of the
solution of the obstacle problem is given by
\begin{align}
\label{eq1.8} \nonumber u(s,x)&=\int_{\mathbb{R}^d}\varphi(y)p(s,x,T,y)\,dy
+\int_{Q_{sT}}f(t,y)p(s,x,t,y)\,dy \\
&\quad+\int_{Q_{sT}}p(s,x,t,y)\,d\mu(t,y),
\end{align}
which  generalizes known  representation of the Cauchy problem for
$L_t$ via fundamental solutions (see \cite{Aronson}). Notice also
that (\ref{eq1.7}) allows one to derive some properties of $\mu$
from those of $K^{s,x}$ and vice versa. For instance, by analyzing
$K^{s,x}$ one can show  that in same cases  $\mu$ is absolutely
continuous with respect to the Lebesgue measure and moreover,
calculate the corresponding density. An interesting example of
such reasoning is to be found in  \cite{KlRoz}. Finally, let us
mention that using the stochastic approach we prove strong
convergence of  gradients of solutions $u_n$  of penalized
problems (\ref{eq1.06}) to the gradient of the solution $u$. To be
more precise, if $h$ is quasi-continuous, then $\nabla
u_{n}\rightarrow \nabla u$ in $\mathbb{L}_{2,\varrho}(Q_{T})$,
while in the general case, $\nabla u_{n}\rightarrow \nabla u$ in
$\mathbb{L}_{p,\varrho}(Q_{T})$ for $p\in [1,2)$. These results
strengthen known results on convergence of $\{u_n\}$.

Somewhat different applications of our methods is given in Section
\ref{sec6}, where the linear Cauchy problem
\begin{equation}\label{eq1.12}
\frac{\partial u}{\partial t}+L_{t}u=-\mu,\quad u(T)=\varphi
\end{equation}
with Radon measure $\mu$ is considered. It is shown there that if
$\mu$ is soft and satisfies some integrability condition  then the
unique renormalized solution of (\ref{eq1.12}) may be represented
stochastically by a unique solution of some simple BSDE. The
representation makes it possible to give simple probabilistic
definition of a solution of (\ref{eq1.12}) and sheds some new
light on the nature of solutions of (\ref{eq1.12}).

In the paper we will use the following notation.

For $t\in(0,T]$, $Q_t=[0,t]\times{\mathbb{R}}^d$,
$Q_{0}=(0,T]\times{\mathbb{R}}^d$,
$Q_{tT}=[t,T]\times{\mathbb{R}}^d$,
$Q_{\hat{T}}=[0,T)\times{\mathbb{R}}^d$,
$\check{Q}_{T}=(0,T)\times\mathbb{R}^{d}$,
$\nabla=(\frac{\partial}{\partial x_1},
\dots,\frac{\partial}{\partial x_d})$.

By $\mathcal{B}(D), \mathcal{B}_{b}(D), \mathcal{B}^{+}(D)$ we
denote  the set of Borel, bounded Borel, positive Borel functions
on $D$ respectively. $C_{0}(D)$, $C_{0}^{\infty}(D)$,
$C_{b}^{\infty}(D)$ are spaces of all continuous functions with
compact support in $D$, smooth functions with compact support in
$D$ and smooth functions on $D$ with bounded derivatives,
respectively. We write $K\subset\subset D$ if $K$ is a compact
subset of $D$.

${\mathbb{L}}_{p}({\mathbb{R}}^d)$ (${\mathbb{L}}_{p}(Q_T)$) are
usual Banach spaces of measurable functions  on ${\mathbb{R}}^d$
(on $Q_T$) that are $p$-integrable. Let $\varrho$ be a positive
function on ${\mathbb{R}}^d$. By
${\mathbb{L}}_{p,\varrho}({\mathbb{R}}^d)$
(${\mathbb{L}}_{p,\varrho}(Q_T)$) we denote the space of functions
$u$ such that $u\varrho\in {\mathbb{L}}_p({\mathbb{R}}^d)$
($u\varrho\in{\mathbb{L}}_{p}(Q_T)$) equipped with the norm
$\|u\|_{p,\varrho}=\|u\varrho\|_p$
($\|u\|_{p,\varrho,T}=\|u\varrho\|_{p,T})$. By
$\langle\cdot,\cdot\rangle_{2,\varrho}$ we denote the inner
product in ${\mathbb{L}}_{2,\varrho}({\mathbb{R}}^d)$. If
$\varrho\equiv1$, we denote it briefly by
$\langle\cdot,\cdot\rangle_2$. By
$\langle\cdot,\cdot\rangle_{2,\varrho,T}$ we denote the inner
product in ${\mathbb{L}}_{2,\varrho}(Q_{T})$.

$H^1_{\varrho}$  is the Banach space consisting of all elements
$u$ of ${\mathbb{L}}_{2,\varrho}({\mathbb{R}}^d)$ having
generalized derivatives $\frac{\partial u}{\partial x_i}$,
$i=1,\dots,d$, in ${\mathbb{L}}_{2,\varrho}({\mathbb{R}}^d)$.
${\mathcal{W}}_\varrho$ $(W^{1,1}_{2,\varrho}(Q_{T}))$ is the
subspace of ${\mathbb{L}}_2 (0,T;H^1_{\varrho})$ consisting of all
elements $u$ such that $\frac{\partial u}{\partial t}
\in{\mathbb{L}}_2(0,T;H^{-1}_{\varrho})$ $(\frac{\partial
u}{\partial t}\in\mathbb{L}_{2,\varrho}(Q_{T})$), where
$H^{-1}_{\varrho}$ is the dual space to $H^{1}_{\varrho}$ (see
\cite{Lions} for details). By
$\langle\cdot,\cdot\rangle_{\varrho,T}$ we denote duality between
${\mathbb{L}}_2(0,T;H^{-1}_{\varrho})$ and
${\mathbb{L}}_2(0,T;H^{1}_{\varrho})$. $\mathcal{M}(D)$
$(\mathcal{M}^{+}(D)$ denotes the space of Radon measures (positive
Radon measures) on $D$. We denote $\mathcal{M}=\mathcal{M}(Q_{T}), \mathcal{M}^{+}=\mathcal{M}^{+}(Q_{T})$.
 By $m$ we denote the Lebesgue measure
on $\mathbb{R}^{d}$ and by $m_T$ the Lebesgue measure on $Q_T$.

By $C$ (or $c$) we  denote a general constant which may vary from
line to line but depends only on fixed parameters. Throughout the
paper $\int_{a}^{b}$ stands for $\int_{(a,b]}$.

\nsubsection{Preliminary results} \label{sec2}

Let $\Omega=C([0,T],\mathbb{R}^d)$ denote the space of continuous
$\mathbb{R}^d$-valued functions on $[0,T]$ equipped with the
topology of uniform convergence and let $X$ be the canonical
process on $\Omega$. It is known that given  $L_t$ defined by
(\ref{eq1.2}) with $a,b$ satisfying (\ref{eq1.1}) one can
construct the weak fundamental solution $p(s,x,t,y)$ for $L_t$ and
then a Markov family $\mathbb{X}=\{(X,P_{s,x});(s,x)\in[0,T)\times
\mathbb{R}^d\}$ for which $p$ is the transition density function,
i.e.
\[
P_{s,x}(X_t=x;0\leq t\leq s)=1,\quad
P_{s,x}(X_t\in\Gamma)=\int_{\Gamma}p(s,x,t,y)\,dy,\quad t\in(s,T]
\]
for any $\Gamma$ in the Borel $\sigma$-field $\mathcal{B}$ of
$\mathbb{R}^d$  (see \cite{A.Roz.DIFF,Stroock.DIFF}).

In what follows by $E_{s,x}$ we denote expectation with respect to
$P_{s,x}$ and by $\mathcal{R}$  the space of all measurable
functions $\varrho:\mathbb{R}^d\rightarrow\mathbb{R}$ such that
$\varrho(x)=(1+|x|^{2})^{-\alpha}$, $x\in\mathbb{R}^d$, for some
$\alpha\ge0$ such that
$\int_{\mathbb{R}^{d}}\varrho(x)\,dx<\infty$.

\begin{stw}
\label{tw2.2} Let $\varrho\in \mathcal{R}$. Then there exist
$0<c\le C$ depending only on $\lambda,\Lambda$ and $\varrho$ such
that for any $s\in[0,T)$ and
$\psi\in\mathbb{L}_{1,\varrho}(Q_{sT})$,
\begin{align*}
c\int_t^T\!\!\int_{\mathbb{R}^d}
|\psi(\theta,x)|\varrho(x)\,d\theta\,dx
&\le\int_t^T\!\!\int_{\mathbb{R}^d}
E_{s,x}|\psi(\theta,X_{\theta})|\varrho(x)\,d\theta\,dx\\
&\le C\int_t^T\!\!\int_{\mathbb{R}^d}
|\psi(\theta,x)|\varrho(x)\,d\theta\,dx,\quad t\in[s,T].
\end{align*}
\end{stw}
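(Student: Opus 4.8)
The plan is to reduce the two-sided estimate to a pointwise (in time) statement about the transition density and then integrate. Write out the middle term using the transition density: for $\theta\in(s,T]$,
\[
E_{s,x}|\psi(\theta,X_\theta)|=\int_{\mathbb{R}^d}|\psi(\theta,y)|\,p(s,x,\theta,y)\,dy,
\]
so after multiplying by $\varrho(x)$ and integrating in $x$ over $\mathbb{R}^d$ and in $\theta$ over $[t,T]$, and using Tonelli's theorem (everything is nonnegative), the quantity in the middle equals
\[
\int_t^T\!\!\int_{\mathbb{R}^d}|\psi(\theta,y)|\Big(\int_{\mathbb{R}^d}p(s,x,\theta,y)\varrho(x)\,dx\Big)d\theta\,dy.
\]
Thus it suffices to produce constants $0<c\le C$, depending only on $\lambda,\Lambda,\varrho$ (and not on $s,\theta,y$), such that
\[
c\,\varrho(y)\le\int_{\mathbb{R}^d}p(s,x,\theta,y)\varrho(x)\,dx\le C\,\varrho(y),\qquad 0\le s<\theta\le T,\ y\in\mathbb{R}^d .
\]
Plugging this sandwich back in and integrating against $|\psi(\theta,y)|\ge0$ gives exactly the asserted inequalities, since $\int_t^T\!\int_{\mathbb{R}^d}|\psi(\theta,y)|\varrho(y)\,d\theta\,dy=\|\psi\|_{1,\varrho}$ restricted to $[t,T]$.

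For the density sandwich I would invoke Aronson's classical Gaussian bounds for the weak fundamental solution of a uniformly parabolic divergence-form operator with bounded coefficients satisfying \eqref{eq1.1}: there are constants $c_1,c_2>0$ and $\mu_1,\mu_2>0$, depending only on $\lambda,\Lambda,d$, such that
\[
\frac{c_1}{(\theta-s)^{d/2}}\,e^{-\mu_1|x-y|^2/(\theta-s)}
\le p(s,x,\theta,y)\le
\frac{c_2}{(\theta-s)^{d/2}}\,e^{-\mu_2|x-y|^2/(\theta-s)} .
\]
(This is exactly the setting of \cite{Aronson}; the lower bound there requires $b$ bounded, which we have.) Then
\[
\int_{\mathbb{R}^d}p(s,x,\theta,y)\varrho(x)\,dx
\]
is comparable, up to constants depending only on $\lambda,\Lambda,d$, to the Gaussian average $\big(G_{\kappa(\theta-s)}*\varrho\big)(y)$ for suitable $\kappa$, where $G_r$ is the Gaussian kernel of variance $r$. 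So the problem is reduced to the purely analytic claim that for $\varrho(x)=(1+|x|^2)^{-\alpha}$ one has $G_r*\varrho\asymp\varrho$ uniformly for $r\in(0,T]$, with constants depending on $\alpha,d,T$. This is a standard fact about polynomially decaying weights: the upper bound follows by splitting the convolution integral into the region $|x-y|\le|y|/2$ (where $\varrho(x)\le C\varrho(y)$ since $(1+|x|^2)\ge c(1+|y|^2)$) and its complement (where the Gaussian tail, integrated, is bounded by $C r^{N}(1+|y|^2)^{-N}$ for any $N$, hence $\le C\varrho(y)$ uniformly in $r\le T$); the lower bound follows by restricting the integral to $|x-y|\le 1$, say, where $\varrho(x)\ge c\varrho(y)$ and $\int_{|x-y|\le1}G_{\kappa(\theta-s)}\,dx$ is bounded below — one must be a little careful for $\theta-s$ small, where this mass is close to $1$, and for $\theta-s$ of order $T$, where it is bounded below by a positive constant depending on $T$. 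Both cases are uniform, so the constants $c,C$ ultimately depend only on $\lambda,\Lambda,d,T$ and $\alpha$, i.e. only on $\lambda,\Lambda$ and $\varrho$ as claimed.

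The main obstacle is the uniformity in the time increment $\theta-s$ near zero in the lower bound: the Gaussian mass concentrates at $y$, which is fine for the lower estimate, but one has to phrase the argument so that no constant blows up. Using the region $\{|x-y|\le 1\}$ handles this cleanly, since $\inf_{0<r\le T}\int_{|z|\le1}G_{\kappa r}(z)\,dz>0$. An alternative, more self-contained route that avoids quoting the sharp lower Gaussian bound is to work with the two inequalities separately: the upper bound in the Proposition follows directly from the upper Aronson bound plus the weight convolution estimate above, while the lower bound can instead be derived from the conservativeness/duality of the family $\mathbb{X}$ together with the upper bound applied in the time-reversed direction — but since \cite{Aronson} is already cited in the paper for precisely these estimates, invoking it directly for both bounds is the cleaner path. $\Box$
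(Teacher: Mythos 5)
Your argument is correct and is in substance the same as the paper's: the paper disposes of the statement by citing Aronson's two-sided Gaussian bounds together with Proposition 5.1 of the appendix of \cite{V.Bally}, which is precisely the sandwich $c\,\varrho(y)\le\int_{\mathbb{R}^d}p(s,x,\theta,y)\varrho(x)\,dx\le C\,\varrho(y)$ that you prove by hand via the convolution estimate $G_r\ast\varrho\asymp\varrho$ for $r\in(0,T]$. Your version is simply a self-contained writing-out of that cited lemma, with the uniformity in the time increment handled correctly.
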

\begin{dow}
Follows from Proposition 5.1 in Appendix in \cite{V.Bally} and
Aronson's estimates (see \cite[Theorem 7]{Aronson}).
\end{dow}
\medskip

Set ${\mathcal{F}}^s_t=\sigma(X_u,u\in[s,t]),
\bar{\mathcal{F}}^{s}_{t}=\sigma(X_u,u\in[T+s-t,T])$ and define
${\mathcal{G}}$ as the completion of ${\mathcal{F}}^s_T$ with
respect to the family ${\mathcal{P}}=\{P_{s,\mu}:\mu$ is a
probability measure on ${\mathcal{B}}({\mathbb{R}}^d)$\}, where
$P_{s,\mu}(\cdot)=\int_{{\mathbb{R}}^d}P_{s,x}(\cdot)\,\mu(dx)$,
and define ${\mathcal{G}}^s_t$ ($\bar{\mathcal{G}}^s_t)$ as the
completion of ${\mathcal{F}}^s_t$ ($\bar{\mathcal{F}}^s_t)$ in
${\mathcal{G}}$ with respect to ${\mathcal{P}}$. We will say that
a family $A=\{A_{s,t}, 0\le s\le t\le T\}$ of random variables is
an additive functional (AF) of ${\mathbb{X}}$ if  $A_{s,\cdot}$ is
c\`adl\`ag $P_{s,x}$-a.s. for quasi-every $\sx$, $A_{s,t}$ is
${\mathcal{G}}_{t}^{s}$-measurable for every $0\le s\le t\le T$
and $P_{s,x}(A_{s,t}=A_{s,u}+A_{u,t}, s\le u\le t\le T)=1$ for
q.e. $(s,x)\in Q_{\hat{T}}$ (for the definition of exceptional
sets see Section \ref{sec3}). If, in addition, $A_{s,\cdot}$ has
$P_{s,x}$-almost all continuous trajectories for q.e. $(s,x)\in
Q_{\hat{T}}$ then $A$ is called a continuous  AF (CAF), and if
$A_{s,\cdot}$ is an increasing  process under $P_{s,x}$ for  q.e.
$(s,x)\in Q_{\hat{T}}$, it  is called a positive AF (PAF). If $M$
is an AF such that for q.e. $(s,x)\in Q_{\hat{T}}$,
$E_{s,x}|M_{s,t}|^2<\infty$ and $E_{s,x}M_{s,t}=0$ for
$t\in[s,T]$, it is called a martingale AF (MAF). Finally, we say
that $A$ is an AF (CAF, increasing AF, MAF) in the strict sense if
the corresponding property holds for every $(s,x)\in Q_{\hat{T}}$.

From \cite[Theorem 2.1]{A.Roz.BSDE} it follows that there exist a
strict MAF $M=\{M_{s,t}:0\le s\le t\le T\}$ of $\mathbb{X}$ and a
strict CAF $A=\{A_{s,t}:0\le s\le t\le T\}$ of $\mathbb{X}$ such
that the quadratic variation $\langle A_{s,\cdot}\rangle_T$ of
$A_{s,\cdot}$ on $[s,T]$ equals zero $P_{s,x}$-a.s. and
\begin{equation}\label{eq2.1}
X_t-X_s=M_{s,t}+A_{s,t},\quad t\in[s,T],\quad P_{s,x}\mbox{-}a.s.
\end{equation}
for each $(s,x)\in Q_{\hat{T}}$. In particular, $X$ is a
$(\{\mathcal{G}^{s}_{t}\},P_{s,x})$-Dirichlet process on $[s,T]$
for every $(s,x)\in Q_{\hat{T}}$. Moreover, the above
decomposition is unique and for each $(s,x)\in Q_{\hat{T}}$ the
co-variation process of the martingale $M_{s,\cdot}$ is given by
\[
\langle M^i_{s,\cdot},M^j_{s,\cdot}\rangle_t= \int_s^ta_{ij}(\theta,
X_\theta)d\theta,\quad t\in[s,T],\quad i,j=1,...,d
\]
(see \cite{A.Roz.BSDE} for details).

For $0\le s\le u\le t \le T$ and $x\in\BR^d$ we set
\[
\alpha^{s,x,i}_{u,t}=\sum_{j=1}^d
\int_{u}^{t}\frac{1}{2}a_{ij}(\theta,X_{\theta})p^{-1}
\frac{\partial p}{\partial y_j}
(s,x,\theta,X_{\theta})\,d\theta,\quad
\beta^{i}_{u,t}=\int_{u}^{t}b_{i}(\theta,X_{\theta})\,d\theta.
\]
From \cite{A.Roz.dec.} it follows that for each $(s,x)\in Q_{\hat
T}$ the process $X$ admits under $P_{s,x}$ the following form of
the Lyons-Zheng decomposition
\[
X_{t}-X_{u}=\frac{1}{2}M_{u,t}+\frac{1}{2}(N^{s,x}_{s,T+s-t}-N^{s,x}_{s,T+s-u})
-\alpha^{s,x}_{u,t}+\beta_{u,t},\quad s\le u\le t\le T,
\]
where  $M_{s,\cdot}$ is the martingale of (\ref{eq2.1}) and
$N^{s,x}_{s,\cdot}$ is a
$(\{\bar{\mathcal{G}}^{s}_{t}\},P_{s,x})$-martingale such that
\[
\langle N^{s,x,i}_{s,\cdot},N^{s,x,j}_{s,\cdot}\rangle_t =
\int_s^t a_{ij}(\bar{\theta}, \bar{X}_\theta)\,d\theta,\quad
t\in[s,T],\quad i,j=1,\dots,d.
\]
(Here and in the sequel, for a process $Y$ on $[s,T]$ and fixed
measure $P_{s,x}$ we  write $\bar{Y}_t=Y_{T+s-t}$ for
$t\in[s,T]$).

Let $\bar{f}\in ({\mathbb{L}}_{2}(Q_{T}))^{d}$. Similarly to
\cite{A.Roz.Div,Stoica} we put
\[
\int_{u}^{t}\bar{f}(\theta,X_{\theta})\,d^{*}X_{\theta}\equiv
-\int_{u}^{t}\bar{f}
(\theta,X_{\theta})(dM_{s,\theta}+d\alpha^{s,x}_{s,\theta})
-\int^{T+s-u}_{T+s-t}
\bar{f}(\bar{\theta},\bar{X}_{\theta})\,dN^{s,x}_{s,\theta}
\]
for $s\le u\le t\le T$ (the integrals on the right-hand side are
well defined under the measure $P_{s,x}$ for a.e. $(s,x)\in Q_{T}$
(see \cite[Proposition 7.6]{Kl1})).

We now give definitions of solutions of BSDEs and RBSDEs
associated with $\mathbb{X}$ and recall some known results on such
equations to be used further on.

Write
\[
B_{s,t}=\int_s^t\sigma^{-1}(\theta,X_\theta)\,dM_{s,\theta},\quad
t\in[s,T],
\]
where $M$ is the MAF of (\ref{eq2.1}). Notice that
$\{B_{s,t}\}_{t\in[s,T]}$ is a Brownian motion under $P_{s,x}$.
\begin{df}
{\rm  A pair $(Y^{s,x},Z^{s,x})$ of $\{\mathcal{G}^s_t\}$-adapted
stochastic processes on $[s,T]$ is a solution of
BSDE$_{s,x}(\varphi,f)$ if
\begin{enumerate}
\item[(i)] $Y^{s,x}_t=\varphi(X_T)+\int_t^T
f(\theta,X_\theta,Y^{s,x}_\theta,Z^{s,x}_\theta)\,d\theta
-\int_t^TZ^{s,x}_\theta\,dB_{s,\theta}$, $t\in[s,T]$,
$P_{s,x}$-a.s.,
\item[(ii)]$E_{s,x}\sup_{s \le t \le T}|Y^{s,x}_{t}|^{2} <\infty$,
$E_{s,x}\int_s^T|Z_t^{s,x}|^2\,dt<\infty$.
\end{enumerate} }
\end{df}
\begin{df}
{\rm A triple $(Y^{s,x},Z^{s,x},K^{s,x})$ of
$\{\mathcal{G}^s_t\}$-adapted stochastic processes on $[s,T]$ is a
solution of RBSDE$_{s,x}(\varphi,f,h)$ if
\begin{enumerate}
\item[(i)] $Y^{s,x}_t=\varphi(X_T)
+\int_t^Tf(\theta,X_\theta,Y^{s,x}_\theta,Z^{s,x}_\theta)\,d\theta
+K^{s,x}_T-K^{s,x}_t -\int_t^TZ^{s,x}_\theta\,dB_{s,\theta}$,
$t\in[s,T]$, $P_{s,x}$-a.s.,
\item[(ii)]$E_{s,x}\sup_{s\le t\le T}|Y^{s,x}_{t}|^{2} <\infty$,
$E_{s,x}\int_s^T|Z^{s,x}_t|^2\,dt<\infty$,
\item[(iii)]$Y^{s,x}_t\geq h(t,X_t)$, $P_{s,x}$-a.s. for a.e. $t\in[s,T]$,
\item[(iv)]$K^{s,x}$ is a c\`adl\`ag increasing
process such that $K^{s,x}_s=0$, $E_{s,x}|K^{s,x}_T|^2<\infty$ and
$\int_s^T(Y^{s,x}_{t-}-H_{t-})\,dK^{s,x}_t=0$, $P_{s,x}$-a.s. for
every c\`adl\`ag  process $H$ such that $\ E_{s,x}\sup_{s \le t
\le T}|H_{t}|^{2}<\infty$ and $h(t,X_{t})\le H_{t} \le
Y^{s,x}_{t},$  $P_{s,x}$ a.s. for a.e. $t\in[s,T]$.
\end{enumerate} }
\end{df}

It is worth mentioning that the filtration
$\{\mathcal{G}^{s}_{t}\}$ is not Brownian, nonetheless it has the
representation property with respect to $B$. Namely, in \cite{Le1}
it is proved that if $\{M_{s,t}:t\in[s,T]\}$ is a
$(\{\mathcal{G}^{s}_{t}\},P_{s,x})$-square-integrable martingale
for some $(s,x)\in Q_{\hat{T}}$ then there exists a predictable
square-integrable process $\{H^{s,x}_{t}\}_{t\in [s,T]}$ such that
\[
M_{s,t}=\int_{s}^{t}H^{s,x}_{\theta}dB_{s,\theta},\quad t\in
[s,T],\quad P_{s,x}\mbox{-}a.s..
\]
This allows one to use results on BSDEs proved in the standard
framework in which the forward diffusion process corresponds to a
non-divergent form operator.

\begin{tw}
\label{tw2.4} Assume that (H1)--(H3) are satisfied. \\
{\rm (i)} If for some $(s,x)\in Q_{\hat{T}}$,
\begin{equation}
\label{eq3.1} E_{s,x}{\rm esssup}_{s\leq t\leq T}
|h^{+}(t,X_t)|^2+E_{s,x}\int_s^T|g(t,X_t)|^2\,dt<\infty,
\end{equation}
 then there exists a unique
solution $(Y^{s,x},Z^{s,x},K^{s,x})$ of
RBSDE$_{s,x}(\varphi,f,h)$. Moreover, if the pair
$(Y^{s,x,n},Z^{s,x,n})$, $n\in\mathbb{N}$, is a solution of
BSDE$_{s,x}(\varphi,f+n(y-h)^{+})$, then $\{Y^{s,x,n}\}_{n\in\BN}$
is increasing and
\begin{enumerate}
\item [\rm(a)]there exists $C>0$ depending neither on
$n,m\in\mathbb{N}$ nor $s,x$ such that
\begin{align*}
&E_{s,x}\sup_{s\leq t\leq T}
|Y^{s,x,n}_t|^2+E_{s,x}\int_s^T|Z^{s,x,n}_t|^2\,dt
+E_{s,x}|K^{s,x,n}_T|^2 \nonumber\\
&%\quad
\leq C\left(E_{s,x}|\varphi(X_T)|^2 +E_{s,x}\sup_{s\leq
t\leq T} |h^+(t,X_t)|^2 +E_{s,x}\int_s^T|g(t,X_t)|^2\,dt\right),
\end{align*}
where
\[
K_t^{s,x,n}=\int_s^tn(Y^{s,x,n}_\theta-h(\theta,X_\theta))^{-}\,d\theta,\quad
t\in[s,T],\quad P_{s,x}\mbox{-}a.s..
\]
\item[\rm(b)]
$Y^{s,x,n}_{t}\rightarrow Y^{s,x}_{t}$ for every $t\in [s,T]$,
$P_{s,x}$-a.s., and for every  $p\in [1,2)$,
\[
E_{s,x}\int_{s}^{T}|Y^{s,x,n}_t-Y^{s,x}_t|^2dt
+E_{s,x}\int_s^T|Z^{s,x,n}_t-Z^{s,x}_t|^{p}\,dt \rightarrow0.
\]
\end{enumerate}
{\rm (ii)} If (\ref{eq3.1}) is satisfied and $t\mapsto h(t,X_{t})$
is continuous under $P_{s,x}$ for some $(s,x)\in Q_{\hat{T}}$ then
\begin{align}
\label{zbieznosci} \nonumber&E_{s,x}\sup_{s\leq t\leq T}
|Y^{s,x,n}_t-Y^{s,x}_t|^2
+E_{s,x}\int_s^T|Z^{s,x,n}_t-Z^{s,x}_t|^2\,dt \\
&\qquad+ E_{s,x}\sup_{s\leq t\leq T}
|K^{s,x,n}_t-K^{s,x}_t|^2\rightarrow0.
\end{align}
\end{tw}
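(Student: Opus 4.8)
The plan is to build the solution of RBSDE$_{s,x}(\varphi,f,h)$ as the limit of the penalized BSDEs and to read off all the claimed convergences from a~priori estimates. First I would observe that under (H1)--(H3) the data $(\varphi(X_T),f(\cdot,X_\cdot,\cdot,\cdot))$ satisfy the standard assumptions for BSDEs driven by the Brownian motion $B_{s,\cdot}$ (Lipschitz generator with linear growth, square-integrable terminal value), using the representation property of $\{\mathcal{G}^s_t\}$ with respect to $B$ recalled just before the theorem. Hence for each $n$ the penalized equation BSDE$_{s,x}(\varphi,f+n(y-h)^+)$ has a unique solution $(Y^{s,x,n},Z^{s,x,n})$; more precisely, the generator $f_n(t,x,y,z)=f(t,x,y,z)+n(y-h(t,X_t))^-$ (after the standard sign switch) is Lipschitz in $(y,z)$, so classical existence applies on each $[s,T]$. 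Comparison of generators (since $(y-h)^-$ is nonincreasing in $y$ and the penalty term increases with $n$) gives monotonicity of $\{Y^{s,x,n}\}$ in $n$; here $K^{s,x,n}_t=\int_s^t n(Y^{s,x,n}_\theta-h(\theta,X_\theta))^-\,d\theta$ is the natural candidate for the reflecting term.

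Next I would prove the uniform estimate in part (a). The standard device is to test the equation for $Y^{s,x,n}$ with an It\^o expansion of $|Y^{s,x,n}_t|^2$ (or of $e^{\beta t}|Y^{s,x,n}_t|^2$ with $\beta$ chosen to absorb the Lipschitz constant $L$ and the growth constant $M$ from (H2)), using that $\int_t^T Y^{s,x,n}_\theta\,dK^{s,x,n}_\theta\le \frac12\sup|h^+(\cdot,X_\cdot)|\,K^{s,x,n}_T+\frac12\sup|h^+|^2$-type bounds coming from (iii) and the fact that $dK^{s,x,n}$ charges only $\{Y^{s,x,n}=h\}$, together with Young's inequality to move the $K^{s,x,n}_T$ term to the left. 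One gets $E_{s,x}\sup_t|Y^{s,x,n}_t|^2+E_{s,x}\int_s^T|Z^{s,x,n}_t|^2\,dt+E_{s,x}|K^{s,x,n}_T|^2$ bounded by $C\big(E_{s,x}|\varphi(X_T)|^2+E_{s,x}\sup_t|h^+(t,X_t)|^2+E_{s,x}\int_s^T|g(t,X_t)|^2\,dt\big)$ with $C$ depending only on $T,L,M$, i.e.\ neither on $n,m$ nor on $(s,x)$; the Burkholder--Davis--Gundy inequality handles the $\sup$ inside. This bound, the monotone convergence $Y^{s,x,n}_t\uparrow Y^{s,x}_t$, Fatou's lemma and a Lebesgue-type argument then yield $E_{s,x}\int_s^T|Y^{s,x,n}_t-Y^{s,x}_t|^2\,dt\to 0$, after which passing to the limit in the equation gives the limiting $Z^{s,x}$ and $K^{s,x}$, identifies the triple as a solution of RBSDE$_{s,x}(\varphi,f,h)$, and establishes $E_{s,x}\int_s^T|Z^{s,x,n}_t-Z^{s,x}_t|^p\,dt\to0$ for $p\in[1,2)$ (the exponent is forced below $2$ because without continuity of the obstacle one does not control $K^{s,x,n}_T$ convergence uniformly, only in $L^1$, which by interpolation with the uniform $L^2$ bound gives $L^p$, $p<2$, for the $Z$-differences). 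Uniqueness follows from the minimality condition (iv) and the usual It\^o/Gronwall comparison between two solutions.

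For part (ii), when $t\mapsto h(t,X_t)$ is $P_{s,x}$-continuous, I would invoke (or reprove, following El Karoui et al.) the stronger stability result for RBSDEs with continuous obstacle: in that case $K^{s,x}$ is continuous, the Skorokhod-type reflection is stable, and one upgrades the convergence to $E_{s,x}\sup_t|Y^{s,x,n}_t-Y^{s,x}_t|^2+E_{s,x}\int_s^T|Z^{s,x,n}_t-Z^{s,x}_t|^2\,dt+E_{s,x}\sup_t|K^{s,x,n}_t-K^{s,x}_t|^2\to0$. The mechanism is Dini's theorem (monotone convergence of continuous functions to a continuous limit is uniform on compacts, hence the convergence $(Y^{s,x,n}-h(\cdot,X_\cdot))^-\to(Y^{s,x}-h(\cdot,X_\cdot))^-=0$ is uniform in $t$ along a.e.\ trajectory), which forces $K^{s,x,n}_T\to K^{s,x}_T$ in $L^2$, and then the It\^o expansion of $|Y^{s,x,n}-Y^{s,x}|^2$ closes the estimate for the $\sup$ and the $Z$-integral simultaneously.

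The main obstacle is the $a~priori$ estimate in (a) with a constant independent of both $n$ and $(s,x)$: handling the cross term $\int_t^T Y^{s,x,n}_\theta\,dK^{s,x,n}_\theta$ must be done \emph{without} assuming continuity of the obstacle, so one cannot use the reflection identity $\int(Y-h)\,dK=0$ directly and must instead bound $Y^{s,x,n}_\theta$ on the support of $dK^{s,x,n}$ by $h(\theta,X_\theta)\le h^+(\theta,X_\theta)\le\sup_t h^+(t,X_t)$ using property (iii), then use Young to split off $\varepsilon(K^{s,x,n}_T)^2$ and absorb it. Getting the bound uniform in $(s,x)$ also requires that the Lipschitz/growth constants $L,M,g$-structure in (H2) are genuinely independent of $(s,x)$, which they are by hypothesis, and that Gronwall's constant depends only on $T,L,M$. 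Everything else (monotone limits, BDG, interpolation for the $L^p$ $Z$-convergence, Dini in part (ii)) is routine once this uniform estimate is in place.
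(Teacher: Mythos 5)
Your proposal is correct and follows essentially the same route as the paper, which proves this theorem simply by citing Peng--Xu for part (i) (penalization, uniform a priori estimates obtained by bounding $Y^{s,x,n}$ by $h^{+}$ on the support of $dK^{s,x,n}$ and absorbing $\varepsilon|K^{s,x,n}_T|^2$, then Peng's monotonic limit theorem) and El Karoui et al.\ for part (ii); your sketch reconstructs exactly those arguments, including the correct reading of the penalty term as $n(y-h)^{-}$ despite the sign in the statement. The only places where you are thinner than the cited proofs are the verification of the generalized Skorokhod condition (iv) for the limiting triple and the jump-covering argument behind the $L^{p}$, $p<2$, convergence of $Z^{s,x,n}$, but both mechanisms are correctly located and are precisely the content of the references the paper invokes.
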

\begin{dow}
See \cite{PengXu} for the proof of (i) and  \cite{EKPPQ} for the
proof of (ii).
\end{dow}

\begin{wn}\label{wnwn}
Let assumptions (H1)--(H3) hold. If (\ref{eq3.1}) is satisfied for
some $(s,x)\in Q_{\hat T}$ then for every sequence $\{n\}$ there
is a subsequence $\{n'\}$ such that $K^{s,x,n'}_{t}\rightarrow
K^{s,x}_{t} $ for every $t\in [s,T]$, $P_{s,x}$-a.s.. In
particular, $dK^{s,x,n}\rightarrow dK^{s,x} $ weakly on $[s,T]$ in
probability $P_{s,x}$.
\end{wn}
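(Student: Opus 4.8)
\textit{Plan of proof.} The strategy is to write $K^{s,x,n}$ explicitly through the penalized equation, pass to the limit along a suitably chosen subsequence, and then deduce the weak convergence of $dK^{s,x,n}$ by a Helly-type argument. I abbreviate $Y^n=Y^{s,x,n}$, $Z^n=Z^{s,x,n}$, $K^n=K^{s,x,n}$, $Y=Y^{s,x}$, $Z=Z^{s,x}$, $K=K^{s,x}$, and set $f^n_\theta=f(\theta,X_\theta,Y^n_\theta,Z^n_\theta)$, $f_\theta=f(\theta,X_\theta,Y_\theta,Z_\theta)$.

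Since $(Y^n,Z^n)$ solves the penalized equation we have $dY^n_t=-f^n_t\,dt-dK^n_t+Z^n_t\,dB_{s,t}$ with $K^n$ as in Theorem \ref{tw2.4}(i)(a); subtracting this identity written at time $t$ from the one written at time $s$ and using $K^n_s=0$ gives
\[
K^n_t=Y^n_s-Y^n_t-\int_s^t f^n_\theta\,d\theta+\int_s^t Z^n_\theta\,dB_{s,\theta},\qquad t\in[s,T],
\]
and the same computation applied to RBSDE$_{s,x}(\varphi,f,h)$ yields $K_t=Y_s-Y_t-\int_s^t f_\theta\,d\theta+\int_s^t Z_\theta\,dB_{s,\theta}$. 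By Theorem \ref{tw2.4}(i): $Y^n_t\to Y_t$ for every $t\in[s,T]$, $P_{s,x}$-a.s.; $\{Y^n\}$ increases to $Y$; $E_{s,x}\int_s^T|Y^n_t-Y_t|^2\,dt\to0$ and $E_{s,x}\int_s^T|Z^n_t-Z_t|^p\,dt\to0$ for $p\in[1,2)$; and $\sup_n\big(E_{s,x}|K^n_T|^2+E_{s,x}\int_s^T|Z^n_t|^2\,dt\big)<\infty$.

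The crucial point is to pass to the limit in the stochastic integral $\int_s^\cdot Z^n_\theta\,dB_{s,\theta}$, and for this I would first upgrade the convergence of $Z^n$ to $E_{s,x}\int_s^T|Z^n_t-Z_t|^2\,dt\to0$. This follows from an energy estimate: applying It\^o's formula to $|Y^n_t-Y_t|^2$ on $[s,T]$ (along a localizing sequence of stopping times making the resulting stochastic integral a martingale), using $Y^n_T=Y_T=\varphi(X_T)$, the Lipschitz condition (H2)(a) and $dK^n\ge0$, one bounds $E_{s,x}\int_s^T|Z^n_t-Z_t|^2\,dt$ by expressions involving $Y-Y^n\ge0$ and $dK$; here the monotone convergence $Y^n\uparrow Y$ is decisive, since it gives $0\le Y_t-Y^n_t\le Y_t-Y^1_t$ with $Y,Y^1$ square integrable, so that dominated convergence forces all error terms to $0$. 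Granting this, $E_{s,x}\sup_{s\le t\le T}\big|\int_s^t(Z^n_\theta-Z_\theta)\,dB_{s,\theta}\big|^2\le4E_{s,x}\int_s^T|Z^n_t-Z_t|^2\,dt\to0$ by Doob's inequality, and $E_{s,x}\sup_{s\le t\le T}\big|\int_s^t(f^n_\theta-f_\theta)\,d\theta\big|\to0$ by (H2)(a) and the Schwarz inequality. Choosing a subsequence $\{n'\}$ along which both suprema converge to $0$ $P_{s,x}$-a.s.\ and recalling that $Y^{n'}_t\to Y_t$ for every $t$, $P_{s,x}$-a.s., the two displayed formulae give $K^{n'}_t\to K_t$ for every $t\in[s,T]$, $P_{s,x}$-a.s.

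For the last assertion, fix $\omega$ outside the exceptional set; then the functions $t\mapsto K^{n'}_t(\omega)$ are nondecreasing on $[s,T]$, vanish at $s$, and converge at every $t\in[s,T]$ (in particular at $T$) to the nondecreasing $t\mapsto K_t(\omega)$, so by the Helly--Bray theorem the Stieltjes measures $dK^{n'}(\omega)$ converge weakly to $dK(\omega)$, i.e.\ $\int_s^T\xi\,dK^{n'}_t\to\int_s^T\xi\,dK_t$ for every $\xi\in C([s,T])$. Since every subsequence of $\{n\}$ contains such a further subsequence and the limit $K$ is the same for all of them, $\int_s^T\xi\,dK^n_t\to\int_s^T\xi\,dK_t$ in $P_{s,x}$-probability for every $\xi\in C([s,T])$, which is precisely the weak convergence $dK^n\to dK$ on $[s,T]$ in probability. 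The main obstacle in this scheme is obtaining $E_{s,x}\int_s^T|Z^n_t-Z_t|^2\,dt\to0$: Theorem \ref{tw2.4}(i) only furnishes $\mathbb{L}_p$-convergence of $\{Z^n\}$ for $p<2$, which is not enough to pass to the limit in the stochastic integral term, and it is the monotone convergence $Y^n\uparrow Y$ that makes the required estimate available.
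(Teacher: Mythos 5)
There is a genuine gap at the very step you single out as crucial, namely the claim that an energy estimate upgrades the convergence of $\{Z^{s,x,n}\}$ to $E_{s,x}\int_s^T|Z^{s,x,n}_t-Z^{s,x}_t|^2\,dt\to0$. Carrying out the It\^o computation for $|Y_t-Y^n_t|^2$ (with your abbreviations) one arrives at
\[
E_{s,x}\int_s^T|Z^n_t-Z_t|^2\,dt\le C\,E_{s,x}\int_s^T|Y^n_t-Y_t|^2\,dt
+2E_{s,x}\int_s^T(Y_{t-}-Y^n_{t-})\,dK_t,
\]
and the last term does \emph{not} tend to zero in general. Each $Y^n$ is continuous (its reflecting term is $\int_s^\cdot n(Y^n-h)^-\,d\theta$), so $Y^n_{t-}=Y^n_t\to Y_t$; but $Y_{t-}-Y_t=\Delta K_t$ at the jump times of $K$, which are exactly the times charged by $dK$. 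Dominated convergence (which is the tool you invoke) therefore gives the limit $2E_{s,x}\sum_{s<t\le T}(\Delta K_t)^2$, strictly positive whenever $K^{s,x}$ jumps. Under (H3) the obstacle is only dominated by a parabolic potential, so $t\mapsto h(t,X_t)$ is in general merely c\`adl\`ag along the paths and $K^{s,x}$ does jump. This is precisely why Theorem \ref{tw2.4}(ii) asserts the convergence (\ref{zbieznosci}) only under the \emph{additional} hypothesis that $t\mapsto h(t,X_t)$ is continuous, and why part (i)(b) stops at $p<2$: your argument, if correct, would render that hypothesis superfluous. Nor can the step be repaired by weakening to convergence in probability: if $\int_s^T|Z^n_t-Z_t|^2\,dt\to0$ in probability, then by the uniform $L^2$-bound on $Z^n$ and the Burkholder--Davis--Gundy inequality the martingale parts would converge uniformly on $[s,T]$ in probability, forcing $K^{s,x}$ to be a uniform limit of continuous processes, hence continuous --- contradicting the presence of jumps.

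The rest of your scheme is sound: the explicit formula for $K^n_t$, the treatment of the drift term via (H2)(a) and the case $p=1$ of Theorem \ref{tw2.4}(i)(b), the Helly--Bray argument, and the subsequence principle yielding weak convergence of $dK^{s,x,n}$ in probability. But all of it hinges on controlling $\sup_{s\le t\le T}|\int_s^t(Z^n_\theta-Z_\theta)\,dB_{s,\theta}|$, which is exactly what is unavailable for discontinuous obstacles. The paper offers no separate proof of the corollary; it is taken as a by-product of the penalization construction of \cite{PengXu} underlying Theorem \ref{tw2.4}(i), where the pointwise convergence of $K^n_t$ is extracted via Peng's monotonic limit theorem \cite{Peng} without strong $L^2$-convergence of the martingale parts. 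An argument of that type, rather than the $L^2$ upgrade of $Z^n$, is what is needed here.
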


\nsubsection{Cauchy problem  and BSDEs}
\label{sec3}

For the purposes of Sections 4 and 5 in this section we refine
slightly results of \cite{A.Roz.BSDE} on stochastic representation
of solutions of the Cauchy problem.

\begin{df}
{\rm The parabolic capacity of an open subset $B$ of $\QW$
is given by
\begin{eqnarray}\label{cc}
\mbox{cap}_{L}(B)=\int_{0}^{T}P_{s,m}(\{\exists{t\in (s,T)}:(t,X_{t})\in
B\})\,ds,
\end{eqnarray}
where $m$ is the Lebesgue measure on $\BRD$ and
\[
P_{s,m}(\Gamma)=\int_{\BRD}P_{s,x}(\Gamma)\,dx,\quad
\Gamma\in\mathcal{G}.
\] }
\end{df}

It is known (see \cite[Theorem A.1.2, Lemma A.2.5,
A.2.6]{Fukushima}) that this set function can be extended to the
Choquet capacity on $\mathcal{B}(\QW)$ in such a way that
(\ref{cc}) holds for every compact set $K\subset\QW$. We further
extend this capacity to $\Q$ by putting cap$_{L}(\{0\}\times
B)=m(B)$ for every $B\in\BB(\BRD)$.

From now on we say that some property is satisfied
quasi-everywhere (q.e. for short) if it is satisfied except for
some Borel subset of $\Q$ of capacity $\mbox{cap}_{L}$ zero.

\begin{uw}\label{ess}
{\rm Let $h,g:Q_T\rightarrow\BR$ be measurable functions. Let us
observe that if the condition
\begin{equation}\label{num}
E_{s,x} \mbox{\rm esssup}_{s\le t\le T}
|h(t,X_{t})|+E_{s,x}\int_{s}^{T}|g(t,X_t)|\,dt<\infty
\end{equation}
is satisfied for a.e. $(s,x)\in \QW$ then it is satisfied for
q.e. $(s,x)\in \QW$. To see this, let us set
$w(s,x)=E_{s,x}\mbox{esssup}_{s\le t\le T}|h(t,X_{t})|$,
$\tau=\inf\{t\in(s,T);(t,X_{t})\in K\}\wedge T$, where
$K\subset\{w=\infty\}$ is a compact set. Since $(X,P_{s,x})$ is a
Feller process we conclude that $\tau$ is a stopping time and
$(X,P_{s,x})$ is a strong Markov process. By the strong Markov
property with random shift,
\begin{align*}
P_{s,x}(\tau<T)&\le P_{s,x}(E_{\tau,X_{\tau}}
\mbox{esssup}_{\tau\le t\le T}|h(t,X_{t})|=\infty, \tau<T)\\
&=P_{s,x}(E_{s,x}(\mbox{\,esssup}_{\tau\le t\le
T}|h(t,X_{t})| |\mathcal{G}^{s}_{\tau})=\infty, \tau<T),
\end{align*}
which by the assumption equals zero for a.e. $(s,x)\in \QW$. Thus,
cap$_{L}(K)=0$ for any compact subset $K$ in $\{w=\infty\}$. Since
cap$_{L}$ is the Choquet capacity, it follows that
$\mbox{cap}_{L}(\{w=\infty\})=0$. The proof for the term involving
$g$ is analogous. }
\end{uw}

\begin{df}
{\rm We say that  $u:Q_{T} \rightarrow \mathbb{R}$ is
quasi-continuous (quasi-c\`adl\`ag) if it is Borel measurable and
the process $t\mapsto u(t,X_{t})$ has  continuous (c\`adl\`ag)
trajectories under the measure $P_{s,x}$ for q.e. $(s,x)\in
Q_{\hat{T}}$. }
\end{df}

\begin{stw}\label{cad}
If $u,\bar{u}\in \mathbb{L}_{2,\varrho}(Q_{T})$ are quasi-c\`adl\`ag and
$u=\bar{u}$ a.e. then $u=\bar{u}$ q.e..
\end{stw}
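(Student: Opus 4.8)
The plan is to show that the set where $u$ and $\bar u$ differ is exceptional by exploiting the fact that the process $t\mapsto (u-\bar u)(t,X_t)$ is càdlàg under $P_{s,x}$ for q.e.\ $(s,x)$, while at the same time it vanishes for $\lambda\otimes P_{s,x}$-almost every $t$ because of Proposition \ref{tw2.2}. So let $v=u-\bar u$; then $v\in\mathbb{L}_{2,\varrho}(Q_T)$, $v$ is quasi-càdlàg (the difference of two quasi-càdlàg functions), and $v=0$ $m_T$-a.e. It suffices to prove that a quasi-càdlàg function which vanishes a.e.\ vanishes q.e.

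First I would use Proposition \ref{tw2.2} applied to $\psi=|v|\wedge 1$ (or to $|v|$ itself, after a localization, since $v\in\mathbb{L}_{1,\varrho}(Q_{sT})$ for every $s$): since $v=0$ $m_T$-a.e., the right-hand inequality there gives $\int_t^T\!\!\int_{\mathbb{R}^d}E_{s,x}|v(\theta,X_\theta)|\,\varrho(x)\,d\theta\,dx=0$ for each $s$, hence for a.e.\ $x$ (with respect to $m$) and a.e.\ $\theta\in[s,T]$ one has $E_{s,x}|v(\theta,X_\theta)|=0$; applying Fubini once more, for a.e.\ $(s,x)$ we get $v(\theta,X_\theta)=0$ for $\lambda$-a.e.\ $\theta\in[s,T]$, $P_{s,x}$-a.s.

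Next I would combine this with the càdlàg property. Fix $(s,x)$ outside the exceptional set for the quasi-càdlàg property of $v$ and outside the null set from the previous step. Then $t\mapsto v(t,X_t)$ is càdlàg $P_{s,x}$-a.s.\ and equals $0$ for a.e.\ $t\in[s,T]$; a càdlàg function that vanishes Lebesgue-a.e.\ vanishes identically (its right limits, which equal the values, are limits along points of the full-measure set where it is zero). Hence $v(t,X_t)=0$ for all $t\in[s,T]$, $P_{s,x}$-a.s. In particular $v(s,x)=v(s,X_s)=0$ $P_{s,x}$-a.s.\ (recall $X_s=x$ $P_{s,x}$-a.s.), so $v(s,x)=0$. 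This already shows $v=0$, i.e.\ $u=\bar u$, pointwise off the exceptional set, which is what q.e.\ means; and one can additionally invoke Remark \ref{ess} to upgrade the ``for a.e.\ $(s,x)$'' coming from Fubini to ``for q.e.\ $(s,x)$,'' so that the set $\{v\neq 0\}$ is genuinely of $\mathrm{cap}_L$-capacity zero.

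The main obstacle I anticipate is the bookkeeping of exceptional versus null sets: Proposition \ref{tw2.2} and Fubini only yield conclusions for $m$-a.e.\ $(s,x)$, whereas the definition of quasi-càdlàg and the target conclusion are phrased q.e. Bridging this gap is exactly the role of Remark \ref{ess}: one checks that the set $\{(s,x): v(\cdot,X_\cdot)\not\equiv 0 \text{ under }P_{s,x}\}$ is of the form considered there (it is contained in a set where a hitting-time argument via the strong Markov property forces positive capacity to propagate), so that being $m$-negligible forces it to be $\mathrm{cap}_L$-negligible. Modulo this capacity-theoretic step, the rest is a short measure-theoretic argument.
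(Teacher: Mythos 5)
Your core argument is sound and is in substance the paper's own proof run forwards rather than by contradiction: both rest on Proposition \ref{tw2.2} combined with the observation that a c\`adl\`ag trajectory of $t\mapsto(u-\bar u)(t,X_t)$ which vanishes for Lebesgue-a.e.\ $t$ must vanish for every $t$, and both ultimately reduce the capacity statement to hitting probabilities of compact subsets of $\{u\neq\bar u\}$. The one step that is wrong as written is the last one. Having shown (with $v=u-\bar u$) that for $m_T$-a.e.\ $(s,x)$ one has $v(t,X_t)=0$ for all $t$, $P_{s,x}$-a.s., you assert that this gives $v=0$ ``pointwise off the exceptional set, which is what q.e.\ means''; it does not, because the null set produced by Fubini is only $m_T$-null, and an $m_T$-null set need not be $\mbox{cap}_{L}$-null (a slice $\{t_0\}\times B$ with $m(B)>0$ and $0<t_0<T$ is $m_T$-null but has positive capacity by Aronson's lower bound). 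Remark \ref{ess} is also not the right tool for the repair: it upgrades finiteness of certain expectations from a.e.\ to q.e.\ via the strong Markov property, which is not the situation here. The repair is in fact immediate and needs neither Remark \ref{ess} nor the strong Markov property: you have already established that $P_{s,x}\bigl(\exists\, t\in(s,T):(t,X_t)\in\{v\neq0\}\bigr)=0$ for a.e.\ $(s,x)$, and for a compact $K\subset\{v\neq0\}\cap\check{Q}_{T}$ the quantity $\mbox{cap}_{L}(K)$ is \emph{by definition} the $ds\otimes dx$-integral of exactly this hitting probability, hence zero; Choquet capacitability then yields $\mbox{cap}_{L}(\{v\neq0\}\cap\check{Q}_{T})=0$, while your observation that $v(0,x)=v(0,X_0)=0$ $P_{0,x}$-a.s.\ for a.e.\ $x$ disposes of the slice $\{0\}\times\mathbb{R}^{d}$, on which the capacity is Lebesgue measure. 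With that substitution your proof closes and is equivalent to the paper's.
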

\begin{dow}
Suppose that $\mbox{cap}_{L}(\{u\neq\bar{u}\}\cap\QW)>0$. Since
cap$_{L}$ is the Choquet capacity, there is $K\subset \{u\neq
\bar{u}\}\cap\QW$ such that $K$ is compact and cap$_{L}(K)>0$.
Hence there is $A\subset \QW$ such that $m_T(A)>0$ and for every
$(s,x)\in A$,
\[
P_{s,x}(\{\omega: \exists{ t\in (s,T)}:
(t,X_{t})\in K\})> 0.
\]
Since trajectories of the processes $t\mapsto u(t,X_{t}),t\mapsto
\bar{u}(t,X_{t})$ are c\`adl\`ag, it follows that for every
$(s,x)\in A$,
\[
0<E_{s,x}\int_{s}^{T}|(u-\bar{u})(t,X_{t})|^{2}\,dt
=\int_{s}^{T}\!\!\int_{\mathbb{R}^{d}}
|u-\bar{u}|^{2}(t,y)p(s,x,t,y)\,dt\,dy.
\]
Multiplying the above inequality by $\varrho^{2}$, integrating
with respect to $x$ and using Proposition \ref{tw2.2} we get
$0<\|u-\bar{u}\|^{2}_{2,\varrho,T}$, which contradicts the fact
that $u=\bar u$ a.e.. From the above equality with $s=0$ one can
conclude also that
$\mbox{cap}_{L}(\{u\neq\bar{u}\}\cap(\{0\}\times\BRD))=0$, which
completes the proof.
\end{dow}

\begin{df}
{\rm Let $\Phi\in{\mathcal W}'_{\varrho}$. (i) We say that $u\in
\mathbb{L}_{2}(0,T;H^{1}_{\varrho})$ is a weak solution of the
Cauchy problem
\[
\frac{\partial u}{\partial t}+L_{t}=-\Phi,\quad u(T)=\varphi
\]
(PDE$(\varphi,\Phi$) for short) if
\[
\langle u,\frac{\partial \eta}{\partial t}\rangle_{\varrho,T}
-\langle L_{t}u,\eta\rangle_{\varrho,T}
=\langle\varphi,\eta(T)\rangle_{2,\varrho}
+\langle\Phi,\eta\rangle_{\varrho,T}
\]
for every $\eta\in{\mathcal W}_{\varrho}$ such that $\eta(0)=0$,
where
\[
\langle L_{t}u,\eta\rangle_{\varrho,T} =-\frac12\langle a\nabla
u,\nabla(\eta\varrho^{2})\rangle_{2,T} +\langle
b,\eta\varrho^{2}\nabla u\rangle_{2,T}\,.
\]
(ii) $u\in\mathcal{W}_{\varrho}$ is a strong solution of
PDE$(\varphi,\Phi$) if
\[
\langle \frac{\partial u}{\partial t}, \eta\rangle_{\varrho,T}
+\langle L_{t}u,\eta\rangle_{\varrho,T}
=-\langle\Phi,\eta\rangle_{\varrho,T}\,,\quad u(T)=\varphi
\]
for every $\eta\in\mathbb{L}_{2}(0,T;H^{1}_{\varrho})$. }
\end{df}

It is known that for every $\Phi\in{\mathcal W}'_{\varrho}$,
$\varphi\in{\mathbb L}_{2,\varrho}(\BR^d)$ there exists a unique
weak solution of PDE$(\varphi,\Phi)$ (see \cite{DPP}).

Let $n\in\BN$. In the sequel we will use the symbol $T_n$ to
denote the truncation operator
\begin{equation}
\label{eq3.10} T_{n}(s)=\max\{-n,\min\{s,n\}\},\quad
s\in\mathbb{R}.
\end{equation}

\begin{stw}\label{stw4.1}
Assume that (H1)--(H3) are satisfied.
\begin{enumerate}
\item[\rm(i)]If
\begin{equation}
\label{n2.2} \forall_{K\subset\subset [0,T)\times
\mathbb{R}^{d}}\quad \sup_{(s,x)\in K}
E_{s,x}\int_s^T|g(t,X_t)|^2\,dt<\infty
\end{equation}
then there exists a unique strong solution $u\in
\mathcal{W}_{\varrho}\cap C(Q_{\hat{T}})$ of
PDE$(\varphi,f)$ and for each $(s,x)\in Q_{\hat{T}}$ the pair
\begin{equation}
\label{eq02.4} (Y^{s,x}_t,Z^{s,x}_t)=(u(t,X_t),\sigma\nabla
u(t,X_t)),\quad t\in[s,T]
\end{equation}
is a unique solution of BSDE$_{s,x}(\varphi,f)$.
\item[\rm(ii)]There exists a quasi-continuous version $\bar{u}$ of
the unique strong solution $u\in \mathcal{W}_{\varrho}$ of
PDE$(\varphi,f)$ such that if
\begin{equation}
\label{eq2.05}
E_{s,x}\int_s^T|g(t,X_t)|^2\,dt<\infty
\end{equation}
for some $(s,x)\in Q_{\hat{T}}$ then the pair
$(\bar{u}(t,X_{t}),\sigma\nabla\bar{u}(t,X_{t}))$, $t\in[s,T]$, is
a unique solution of BSDE$_{s,x}(\varphi,f)$.
\end{enumerate}
\end{stw}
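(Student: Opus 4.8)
Before the author's proof, here is how I would approach Proposition~\ref{stw4.1}.

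The plan is to obtain $u$ and its stochastic representation from the linear theory by a contraction argument, and then to extract the regularity by analysing the Feynman--Kac formula. Existence and uniqueness of a strong solution $u\in\mathcal{W}_{\varrho}$ of PDE$(\varphi,f)$ is routine: for $f$ replaced by a fixed $\Phi\in\mathbb{L}_{2,\varrho}(Q_{T})$ there is a unique strong solution by \cite{DPP}, and by the Lipschitz condition (H2a) the map assigning to $v\in\mathbb{L}_{2}(0,T;H^{1}_{\varrho})$ the strong solution of PDE$(\varphi,f(\cdot,\cdot,v,\sigma\nabla v))$ is a contraction for a suitable time-weighted norm on $\mathbb{L}_{2}(0,T;H^{1}_{\varrho})$; its fixed point is $u$. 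The identity (\ref{eq02.4}) at a given $(s,x)$ is, at bottom, the generalized It\^o formula of \cite{A.Roz.BSDE} applied to $u(t,X_{t})$ via the Lyons--Zheng/Dirichlet decomposition recalled in Section~\ref{sec2}, and uniqueness on the BSDE side follows from the standard Lipschitz theory transported to $\{\mathcal{G}^{s}_{t}\}$ by the martingale representation property quoted before Theorem~\ref{tw2.4}. What remains to be added to \cite{A.Roz.BSDE} is the continuity of $u$ and the validity of (\ref{eq02.4}) at \emph{every} $(s,x)$ in part (i), and, in part (ii), the existence of a quasi-continuous version for which the representation holds at every $(s,x)$ satisfying (\ref{eq2.05}).

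For part (i) the key step is a linear lemma: if $\Phi\in\mathbb{L}_{2,\varrho}(Q_{T})$ satisfies (\ref{n2.2}), then the strong solution of PDE$(\varphi,\Phi)$ is continuous on $Q_{\hat{T}}$ and equals $u(s,x)=E_{s,x}\varphi(X_{T})+E_{s,x}\int_{s}^{T}\Phi(t,X_{t})\,dt$. I would prove continuity from this formula by writing $\int_{s}^{T}=\int_{s}^{s+\delta}+\int_{s+\delta}^{T}$: the far term is continuous in $(s,x)$ by continuity of the transition density $p$ off the diagonal, Aronson's Gaussian bounds (see \cite{Aronson}), and dominated convergence, while the near term is dominated by $\delta^{1/2}(E_{s,x}\int_{s}^{s+\delta}|\Phi(t,X_{t})|^{2}\,dt)^{1/2}$, which goes to $0$ uniformly on compacts as $\delta\downarrow0$ by (\ref{n2.2}) together with a uniform integrability argument built on Aronson's kernel. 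To apply this lemma with $\Phi=f_{u}$ one checks, using (H2b), (\ref{n2.2}), and the a priori BSDE estimates --- which under (\ref{n2.2}) are uniform in $(s,x)$ over compacts, since $E_{s,x}|\varphi(X_{T})|^{2}$ and $E_{s,x}\int_{s}^{T}|g(t,X_{t})|^{2}\,dt$ are --- that $f_{u}=f(\cdot,\cdot,u,\sigma\nabla u)$ again satisfies (\ref{n2.2}); this yields $u\in C(Q_{\hat{T}})$, and the same a priori estimates give the integrability demanded of a solution of BSDE$_{s,x}(\varphi,f)$ at every $(s,x)$, so (\ref{eq02.4}) holds everywhere.

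For part (ii) one drops (\ref{n2.2}) and approximates: put $g_{n}(t,x)=T_{n}(g(t,x))$ and $f_{n}(t,x,y,z)=f(t,x,y,z)-f(t,x,0,0)+T_{n}(f(t,x,0,0))$; then $f_{n}$ is Lipschitz with the same constant, $|f_{n}(t,x,y,z)|\le g_{n}(t,x)+L(|y|+|z|)$ with $g_{n}$ bounded, and $f_{n}\to f$ in $\mathbb{L}_{2,\varrho}(Q_{T})$ uniformly in $(y,z)$ (the difference is at most $(g-n)^{+}\le g$). By part (i), PDE$(\varphi,f_{n})$ has a unique strong solution $u_{n}\in\mathcal{W}_{\varrho}\cap C(Q_{\hat{T}})$ with $(u_{n}(t,X_{t}),\sigma\nabla u_{n}(t,X_{t}))$ solving BSDE$_{s,x}(\varphi,f_{n})$ for every $(s,x)$. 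The PDE stability estimate gives $u_{n}\to u$ in $\mathcal{W}_{\varrho}$, hence $u_{n}\to u$ and $\sigma\nabla u_{n}\to\sigma\nabla u$ in $\mathbb{L}_{2,\varrho}(Q_{T})$; combining this with the stability of BSDEs and with Proposition~\ref{tw2.2} (which turns the $\mathbb{L}_{2,\varrho}(Q_{T})$-Cauchy property of $\{u_{n}\}$ into an $x$-integrated Cauchy property for $E_{s,x}\int_{s}^{T}|u_{n}-u_{m}|^{2}(t,X_{t})\,dt$) one extracts a subsequence along which $u_{n_{k}}(\cdot,X_{\cdot})$ converges uniformly on $[s,T]$, $P_{s,x}$-a.s., for q.e.\ $(s,x)$. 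Its pointwise limit $\bar{u}(t,x):=\lim_{k}u_{n_{k}}(t,x)$, defined where the limit exists, is Borel, quasi-continuous, equal to $u$ a.e., and satisfies $\bar{u}(t,X_{t})=Y^{s,x}_{t}$ for the unique solution $(Y^{s,x},Z^{s,x})$ of BSDE$_{s,x}(\varphi,f)$ at every $(s,x)$ with (\ref{eq2.05}); uniqueness of the quasi-continuous version is Proposition~\ref{cad}, and $Z^{s,x}_{t}=\sigma\nabla\bar{u}(t,X_{t})$, $\lambda\otimes P_{s,x}$-a.e., follows from $\sigma\nabla u_{n_{k}}\to\sigma\nabla u$ in $\mathbb{L}_{2,\varrho}(Q_{T})$ and $Z^{s,x,n_{k}}\to Z^{s,x}$. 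The hardest step, I expect, is the continuity in (i): the only hypothesis available is the \emph{locally uniform} bound (\ref{n2.2}), which is just barely strong enough to make the near-diagonal part of the Feynman--Kac integral uniformly small, and threading it through the nonlinear problem forces one to keep all the a priori BSDE estimates uniform in $(s,x)$ on compacts; a lesser but genuine subtlety in (ii) is that $\mathbb{L}_{2,\varrho}$-convergence of the approximants only gives convergence a.e., so upgrading to q.e.\ convergence --- making $\bar{u}$ an honest quasi-continuous \emph{version} of $u$ rather than a representative defined merely a.e.\ --- is exactly where Propositions~\ref{tw2.2} and~\ref{cad} enter.
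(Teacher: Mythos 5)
Your part (i) is essentially a reconstruction of the proof of \cite[Theorem 6.1]{A.Roz.BSDE} (Picard iteration, Feynman--Kac continuity via the near/far splitting of the time integral, a priori estimates uniform on compacts), which is exactly what the paper invokes for (i), so that half is sound. The substantive divergence, and the gap, is in part (ii). The paper does not approximate the nonlinear problem: it first fixes the strong solution $u$, freezes $f_u=f(\cdot,\cdot,u,\sigma\nabla u)$, and truncates $f_u^{+}$ and $f_u^{-}$ \emph{separately}, solving the linear problems with source $T_n(f_u^{+})-T_m(f_u^{-})$. The point of this choice is monotonicity: $\{u_{n,m}\}_m$ is decreasing and the subsequent sequence $\{\tilde u_n\}_n$ is increasing, so the pointwise limit defining $\bar u$ exists everywhere in $[-\infty,+\infty]$ with no subsequence extraction, and the set where it is not finite is identified explicitly as $F_1^c$, the set where $E_{s,x}\int_s^T|f_u|(t,X_t)\,dt=\infty$. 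By the Markov property the process started from any $(s,x)$ with $E_{s,x}\int_s^T|f_u|^2(t,X_t)\,dt<\infty$ almost surely never visits $F_1^c$, so $\bar u(t,X_t)=\lim_{n}\lim_m u_{n,m}(t,X_t)$ for all $t$ simultaneously and the BSDE representation passes to the limit at \emph{every} such $(s,x)$.

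Your construction does not deliver this. Your $f_n$ is not monotone in $n$ (the term $T_n(f(t,x,0,0))$ increases where $f(\cdot,\cdot,0,0)>0$ and decreases where it is negative), so you define $\bar u$ as the limit of $u_{n_k}$ along one globally chosen subsequence for which $u_{n_k}(\cdot,X_\cdot)$ converges uniformly $P_{s,x}$-a.s. for q.e. $(s,x)$. That proves the representation outside \emph{some} capacity-zero set $N$, whereas the proposition asserts it at every $(s,x)$ satisfying (\ref{eq2.05}), and nothing forces $N$ to be contained in the set where (\ref{eq2.05}) fails. At a point $(s,x)\in N$ satisfying (\ref{eq2.05}), BSDE stability still gives $E_{s,x}\sup_t|u_{n_k}(t,X_t)-Y^{s,x}_t|^2\rightarrow0$, but the deterministic limit $\lim_k u_{n_k}(t,y)$ defining $\bar u$ may fail to exist on a set that the process started at this particular $(s,x)$ hits with positive probability (a capacity-zero set can be charged by a single $P_{s,x}$ --- for instance the starting point itself); and since the rate at which $E_{s,x}\int_s^T|(g-n)^{+}|^2(t,X_t)\,dt$ tends to zero is not uniform in $(s,x)$, no single subsequence repairs this. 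The everywhere-on-$\{(\ref{eq2.05})\}$ form of the statement is not cosmetic: it is what Remark \ref{remark} and Theorem \ref{main1}(ii) later rely on, since there the exceptional set must be described explicitly in terms of $g$. To close the gap, either adopt the paper's monotone truncation of $f_u^{\pm}$, or define $\bar u(s,x)$ directly by the pointwise formula $E_{s,x}\varphi(X_T)+E_{s,x}\int_s^Tf_u(t,X_t)\,dt$ on the set where this is finite, as in Remark \ref{uwF}.
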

\begin{dow}
To prove (i) it suffices to repeat step by step the proof of
\cite[Theorem 6.1]{A.Roz.BSDE} with the usual norm in
$\mathbb{L}_{2}(Q_{T})$ replaced by the norm
$\|\,\mbox{-}\,\|_{2,\varrho}$ in $\mathbb{L}_{2,\varrho}(Q_{T})$.
To prove (ii), let us consider solutions $u_{n,m}$ of the  Cauchy
problems
\[
\frac{\partial u_{n,m}}{\partial t }+L_tu_{n,m}
=-T_{n}(f^{+}_u)+T_{m}(f^{-}_u),\quad u_{n,m}(T)=\varphi.
\]
By (i), $u_{n,m}$ is continuous in $Q_{\hat{T}}$ for each
$n,m\in\BN$, and for every $(s,x)\in Q_{\hat{T}}$,
$(u_{n,m}(t,X_t),\sigma\nabla u_{n,m}(t,X_t))$, $t\in[s,T]$, is a
solution  of
BSDE$_{s,x}(\varphi,T_{n}(f^{+}_{u})-T_{m}(f^{-}_u))$. From this
it follows in particular that
\begin{equation}\label{P15}
u_{n,m}(s,x)=E_{s,x}\varphi(X_{T})
+E_{s,x}\int_{s}^{T}(T_{n}(f_{u}^{+})-T_{m}(f_{u}^{-}))
(t,X_t)\,dt
\end{equation}
for every $(s,x)\in Q_{\hat{T}}$. Using It\^o's formula and the
Burkholder-Davis-Gundy inequality one can deduce from (\ref{P15})
that for any $n,k,l\in\BN$,
\begin{align}
\label{P16}
\nonumber&E_{s,x}\sup_{s\le t\le T}|
(u_{n,k}-u_{n,l})(t,X_{t})|^{2} +E_{s,x}\int_{s}^{T}|\sigma\nabla
(u_{n,l}-u_{n,k})(t,X_{t})|^{2}\,dt\\
&\qquad\le
CE_{s,x}\int_{s}^{T}|T_{k}(f^{-}_{u})-T_{l}(f^{-}_{u})|^{2}(t,X_{t})\,dt.
\end{align}
Moreover, since $T_{k}(f_{u}^{-}) \le T_{l}(f_{u}^{-})$ a.e. if
$k\le l$, it follows from (\ref{P15}) that for each $n\in\BN$ the
sequence  $\{u_{n,m}\}_{m\in\BN}$ is decreasing. Set
$F_1=F^{+}_{1}\cap F^{-}_{1}$, where
\begin{align*}
F^{+}_{1}&=\{(s,x)\in Q_{\hat{T}};
E_{s,x}\int_{s}^{T}f_{u}^{+}(t,X_t)\,dt<\infty\},\\
F^{-}_{1}&=\{(s,x)\in Q_{\hat{T}};
E_{s,x}\int_{s}^{T}f_{u}^{-}(t,X_t)\,dt<\infty\},
\end{align*}
and let
\[
F_{2}=\{(s,x)\in Q_{\hat{T}};
E_{s,x}\int_{s}^{T}|f_{u}(t,X_t)|^2\,dt <\infty\}.
\]
We consider separately two cases: $\lim_{m\rightarrow
\infty}u_{n,m}(s,x)\in\BR$ or $\lim_{m\rightarrow
\infty}u_{n,m}(s,x)=-\infty$. By (\ref{P15}), the last case holds
true iff $(s,x)\notin F^{-}_{1}$. Put
$\tilde{u}_{n}(s,x)=\lim_{m\rightarrow\infty}u_{n,m}(s,x)$ for
$(s,x)\in F^{-}_{1}$ and $\tilde{u}_{n}(s,x)=0$ for  $(s,x)\notin
F^{-}_{1}$. By (\ref{P16}),
$(\tilde{u}_{n}(t,X_{t}),\sigma\nabla\tilde{u}_{n}(t,X_{t}))$,
$t\in[s,T]$, is a solution of
BSDE$_{s,x}(\varphi,T_{n}(f^{+}_{u})-f^{-}_{u})$ for every
$(s,x)\in F_{2}$ and $\tilde{u}_{n}$ is a strong solution of
PDE$(\varphi,T_{n}(f^{+}_{u})-f^{-}_{u})$ (for the last statement
see \cite{Lad}). By the same method as in the case of
$\{u_{n,m}\}_{m\in\BN}$ one can show that for every $(s,x)\in
F^{+}_{1}$ the limit of $\{\tilde{u}_{n}(s,x)\}$ exists and is
finite. We may therefore put  $\bar{u}(s,x)=\lim_{n\rightarrow
\infty}\tilde{u}_{n}(s,x)$ for $(s,x)\in F_{1}$ and
$\bar{u}(s,x)=0$ for  $(s,x)\notin F_{1}$.
 Using once again It\^o's formula
and the Burkholder-Davis-Gundy inequality we obtain
\begin{align}\label{P17}
\nonumber&E_{s,x}\sup_{s \le t \le T}
|(\tilde{u}_{k}-\tilde{u}_{l})(t,X_{t})|^{2}
+E_{s,x}\int_{s}^{T}|\sigma\nabla (\tilde{u}_{l}
-\tilde{u}_{k})(t,X_{t})|^{2}\,dt\\
&\qquad\le
CE_{s,x}\int_{s}^{T}|T_{k}(f^{+}_{u})-T_{l}(f^{+}_{u})|^{2}(t,X_{t})\,dt.
\end{align}
From this it follows that for every $(s,x)\in F_{2}$ the pair
$(\bar{u}(t,X_{t}),\sigma\nabla\bar{u}(t,X_{t}))$, $t\in[s,T]$, is
a solution of BSDE$_{s,x}(\varphi,f)$. By a priori estimates for
BSDEs (see, e.g., \cite{Pardoux}), if (\ref{eq2.05}) is satisfied
then $(s,x)\in F_{2}$. The fact that $\bar{u}$ is a strong
solution of PDE$(\varphi,f)$ is standard (see once again
\cite{Lad}). Finally, from Proposition \ref{tw2.2} and Remark
\ref{ess} it follows that cap$_{L}(F_{2}^{c})$=0 which shows that
$\bar{u}$ is quasi-continuous.
\end{dow}

\begin{wn}\label{q.e.}
The representation (\ref{eq02.4}) holds for q.e. $(s,x)\in Q_{\hat
T}$.
\end{wn}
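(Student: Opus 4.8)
The goal is to upgrade the pointwise-everywhere stochastic representation from part (i) of Proposition \ref{stw4.1} — which requires the local uniform integrability condition \eqref{n2.2} on $g$ — to a quasi-everywhere statement under the weaker hypotheses (H1)--(H3) alone. The strategy is to reconcile the two strong solutions produced in Proposition \ref{stw4.1}. By uniqueness of the strong solution of PDE$(\varphi,f)$ in $\mathcal{W}_\varrho$ (the existence/uniqueness cited from \cite{DPP}), the continuous version $u$ from part (i) and the quasi-continuous version $\bar u$ from part (ii) are equal a.e.\ as elements of $\mathbb{L}_{2,\varrho}(Q_T)$. Hence it suffices to show $u=\bar u$ q.e.

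First I would observe that both $u$ and $\bar u$ are quasi-c\`adl\`ag: $\bar u$ is quasi-continuous by construction, and $u$ is genuinely continuous on $Q_{\hat T}$ (part (i)), so $t\mapsto u(t,X_t)$ has continuous — in particular c\`adl\`ag — trajectories under every $P_{s,x}$. Both belong to $\mathbb{L}_{2,\varrho}(Q_T)$. Then Proposition \ref{cad} applies verbatim and gives $u=\bar u$ q.e. Consequently, for q.e. $(s,x)\in Q_{\hat T}$ we have, using the q.e.\ identity together with the fact that the map $t\mapsto u(t,X_t)$ and $t\mapsto \bar u(t,X_t)$ agree $P_{s,x}$-a.s. (this requires a small argument: pointwise equality off a capacity-zero set transfers to a.s.\ path equality, again via the same Revuz-type computation underlying Proposition \ref{cad}), the representation \eqref{eq02.4} established for $\bar u$ on the set $F_2$ of full capacity transfers to $u$.

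More directly: Proposition \ref{stw4.1}(ii) already asserts that $(\bar u(t,X_t),\sigma\nabla\bar u(t,X_t))$ solves BSDE$_{s,x}(\varphi,f)$ whenever \eqref{eq2.05} holds, and by Proposition \ref{tw2.2} combined with Remark \ref{ess}, \eqref{eq2.05} holds for q.e.\ $(s,x)\in Q_{\hat T}$ (this is exactly the conclusion $\mathrm{cap}_L(F_2^c)=0$ recorded at the end of the proof of Proposition \ref{stw4.1}). Since $u=\bar u$ q.e.\ and $\nabla u=\nabla\bar u$ a.e.\ (both are the gradient of the same $\mathcal{W}_\varrho$-element), for q.e.\ $(s,x)$ the pair in \eqref{eq02.4} coincides $P_{s,x}\otimes\lambda$-a.s.\ with $(\bar u(t,X_t),\sigma\nabla\bar u(t,X_t))$, hence solves BSDE$_{s,x}(\varphi,f)$; uniqueness of the BSDE solution then yields \eqref{eq02.4} for q.e.\ $(s,x)$.

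The only genuinely delicate point is the transfer of the a.e.-in-$(s,x)$ (or q.e.) equality $u=\bar u$ to $P_{s,x}$-a.s.\ equality of the trajectories $t\mapsto u(t,X_t)$ and $t\mapsto\bar u(t,X_t)$, needed so that the continuous process $u(\cd)$ really is a version of $Y^{s,x}$. This is handled exactly as in Proposition \ref{cad}: the c\`adl\`ag regularity of both path processes plus the Aronson-type two-sided bound of Proposition \ref{tw2.2} force $E_{s,x}\int_s^T|(u-\bar u)(t,X_t)|^2\,dt=0$ for q.e.\ $(s,x)$, whence the paths agree. Everything else is a direct appeal to Propositions \ref{stw4.1} and \ref{cad} and to the identification $\mathrm{cap}_L(F_2^c)=0$ already in hand.
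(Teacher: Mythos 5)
Your ``more directly'' paragraph is exactly the paper's proof: Proposition \ref{tw2.2} shows that (\ref{eq2.05}) holds for a.e.\ $(s,x)$ because $g\in\mathbb{L}_{2,\varrho}(Q_{T})$, Remark \ref{ess} upgrades this to q.e., and Proposition \ref{stw4.1}(ii) then yields the representation on the full-capacity set $F_{2}$. The preceding reconciliation of the solutions from parts (i) and (ii) via Proposition \ref{cad} is superfluous and slightly off target --- part (i) presupposes (\ref{n2.2}), which is not assumed here --- but it does not affect the correctness of the core argument.
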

\begin{dow}
Follows from Proposition \ref{tw2.2} and Remark \ref{ess}.
\end{dow}

\begin{uw}\label{uwF}
{\rm An inspection of the proof of Proposition \ref{stw4.1} shows
that if we set $\bar{u}(T,\cdot)=\varphi$, $\bar u(s,x)=0$ for
$(s,x)\in Q_{\hat T}\setminus F_{1}$ and
\[
\bar{u}(s,x)=E_{s,x}\varphi(X_{T})
+E_{s,x}\int_{s}^{T}f_{u}(t,X_t)\,dt
\]
for $(s,x)\in F_{1}$ then $\bar u$ is a quasi-continuous version
of a weak solution of PDE$(\varphi,f)$. }
\end{uw}

\begin{uw}
{\rm Condition (\ref{n2.2}) is satisfied if  $g$ satisfies the
polynomial growth condition or
$g\in\mathbb{L}_{p,q,\varrho}(Q_{T})$ with $\varrho\in
\mathcal{R}$ and $p,q\in(2,\infty]$ such that
$\frac{2}{q}+\frac{d}{p}<1$. The first statement is obvious.
Sufficiency of the second condition follows from H\"older's
inequality and upper Aronson's estimate for the transition density
$p$ (see \cite{Aronson}).}
\end{uw}

\nsubsection{Parabolic potentials, soft measures and additive
fu\-n\-ctionals} \label{sec4}

In this section we present elements of parabolic potential theory
for $L_t$ to be needed in Section \ref{sec5} and we describe
correspondence between smooth measures and time-inhomogeneous
additive functionals of the Markov family $\BX$ associated with
$L_t$. Let us mention that known results on the topic proved in
the framework of Dirichlet forms determined by $L_t$ (see
\cite{Oshima, Stannat}) are not directly applicable to our
situation because contrary to \cite{Oshima, Stannat} we consider
parabolic potentials associated with the nonlinear operator
$u\mapsto\mathcal{L}u=\frac{\partial u}{\partial t}+L_tu+f_u$
acting on functions $u:Q_T\rightarrow\BR$ from
$\mathbb{L}_{2}(0,T;H^{1}_{\varrho})$ which not necessarily vanish
for $t=0$ or $t=T$. As a result, potentials need not be positive.
Moreover, since $\mathcal{L}$ is parabolic, potentials need not
have quasi-continuous versions. The last difficulty is
particularly significant because forces us to go beyond the class
of continuous functionals of $\BX$.

In what follows, given a function $u:Q_T\rightarrow\mathbb{R}^{d}$
we will extend it in a natural way to the function on
$[-T,2T]\times\mathbb{R}^{d}$, still denoted by $u$, by putting
\[
u(t,x)=\left\{\begin{array}{ll} u(0,x),& t\in[-T,0],\\
u(t,x),& t\in[0,T],\\
u(T,x),& t\in[T,2T].
\end{array}\right.
\]

Let $u_{\varepsilon}$, $\varepsilon>0$, denote Steklov's
mollification of $u$ with respect to the time variable, that is
\[
u_{\varepsilon}(t,x)=\frac{1}{\varepsilon}\int_{0}^{\varepsilon}u(t-s,x)\,ds,
\quad (t,x)\in[0,T]\times\mathbb{R}^{d}.
\]
Recall that if $u\in \mathbb{L}_{2}(0,T;H^{1}_{\varrho})$ then
$u_{\varepsilon}\in W^{1,1}_{2,\varrho}(Q_{T})$ and $\nabla
u_{\varepsilon}\rightarrow \nabla u$, $u_{\varepsilon}\rightarrow
u$ in $\mathbb{L}_{2,\varrho}(Q_{T})$.

In what follows by $\mathcal{D}'(\check{Q}_{T})$ we denote the
space of Schwartz distributions on $\check{Q}_{T}$.

\begin{lm}\label{lm2.8}
Let $u\in\mathbb{L}_{2}(0,T;H^{1}_{\varrho})$ and let $\mu$ be a
Radon measure on $Q_{T}$. If
\[
\frac{\partial u}{\partial t}+L_{t}u=-f_{u}-\mu\quad
\mbox{in}\quad \mathcal{D}'(\check{Q}_{T}),
\]
then for every $\varepsilon\in(0,T)$,
\[
\frac{\partial u_{\varepsilon}}{\partial t}+L_{t}u_{\varepsilon}
=-\dyw((a\nabla u)_{\varepsilon}-a\nabla u_{\varepsilon})
-((b\nabla u)_{\varepsilon}-b\nabla
u_{\varepsilon})-(f_{u})_{\varepsilon} -\mu_{\varepsilon}\quad
\mbox{in}\quad \mathcal{D}'(\check{Q}_{\varepsilon T}),
\]
where
\[
\mu_{\varepsilon}(\eta)=\frac{1}{\varepsilon}\int_{0}^{\varepsilon}
\left(\int_{\varepsilon-\theta}^{T-\theta}
\!\!\int_{\mathbb{R}^{d}}\eta(s+\theta,x)\,d\mu(s,x)\right)d\theta.
\]
\end{lm}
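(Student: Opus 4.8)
The plan is to apply Steklov's mollification directly to the distributional equation and track how the operator $L_t$, which contains spatial derivatives of products of $a$ with $\nabla u$, interacts with time-averaging. The starting point is the observation that Steklov's mollification commutes with the time derivative on the appropriate subinterval: for $\eta\in C_0^\infty(\check{Q}_{\varepsilon T})$ one has $\langle \frac{\partial u_\varepsilon}{\partial t},\eta\rangle = \langle u_\varepsilon, -\frac{\partial \eta}{\partial t}\rangle = \frac{1}{\varepsilon}\int_0^\varepsilon \langle \frac{\partial u}{\partial t}(\cdot-\theta,\cdot),\eta\rangle\,d\theta$, which is exactly $(\frac{\partial u}{\partial t})_\varepsilon(\eta)$ tested against functions supported in $\check{Q}_{\varepsilon T}$ (the shift by $\theta\le\varepsilon$ keeps the support inside $\check{Q}_T$, which is why we must restrict to $\check{Q}_{\varepsilon T}$). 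So the equation $\frac{\partial u}{\partial t}+L_t u=-f_u-\mu$, after mollifying, becomes $\frac{\partial u_\varepsilon}{\partial t}+(L_t u)_\varepsilon=-(f_u)_\varepsilon-\mu_\varepsilon$, where $\mu_\varepsilon$ is precisely the time-averaged-with-shift measure written in the statement: $\mu_\varepsilon(\eta)=\frac1\varepsilon\int_0^\varepsilon\int_{\mathbb{R}^d}\eta(s+\theta,x)\,d\mu(s,x)\,d\theta$ with the inner integral in $s$ ranging over $(\varepsilon-\theta,T-\theta]$ so that $s+\theta\in(\varepsilon,T]$.

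The next step is to rewrite $(L_t u)_\varepsilon$ in terms of $L_t u_\varepsilon$ plus commutator terms. Recall $\langle L_t u,\eta\rangle_{\varrho,T}=-\frac12\langle a\nabla u,\nabla(\eta\varrho^2)\rangle_{2,T}+\langle b,\eta\varrho^2\nabla u\rangle_{2,T}$; but here we work with plain distributions on $\check{Q}_{\varepsilon T}$, so $L_t u = \frac12\dyw(a\nabla u)+b\nabla u$ as a distribution. Mollification in time is linear and commutes with the spatial divergence operator and with multiplication by the test function, so $(\dyw(a\nabla u))_\varepsilon=\dyw((a\nabla u)_\varepsilon)$ and $(b\nabla u)_\varepsilon$ is just the time-average of the product. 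The point is then to add and subtract: $\dyw((a\nabla u)_\varepsilon)=\dyw(a\nabla u_\varepsilon)+\dyw((a\nabla u)_\varepsilon-a\nabla u_\varepsilon)$ and similarly $(b\nabla u)_\varepsilon=b\nabla u_\varepsilon+((b\nabla u)_\varepsilon-b\nabla u_\varepsilon)$. Regrouping, $\frac12\dyw(a\nabla u_\varepsilon)+b\nabla u_\varepsilon=L_t u_\varepsilon$, and what is left over are exactly the commutator terms $-\dyw((a\nabla u)_\varepsilon-a\nabla u_\varepsilon)$ and $-((b\nabla u)_\varepsilon-b\nabla u_\varepsilon)$ appearing in the assertion. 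Here one uses the recalled fact that $u_\varepsilon\in W^{1,1}_{2,\varrho}(Q_T)$ with $\nabla u_\varepsilon=(\nabla u)_\varepsilon\to\nabla u$ in $\mathbb{L}_{2,\varrho}(Q_T)$, so all the products $a\nabla u_\varepsilon$, $(a\nabla u)_\varepsilon$ are genuine $\mathbb{L}_{2,\varrho}$ (hence locally integrable) functions on $\check{Q}_{\varepsilon T}$ and the distributional manipulations are legitimate; since $a,b$ are merely bounded measurable, the commutators $(a\nabla u)_\varepsilon-a\nabla u_\varepsilon$ do not vanish, which is why they must be carried along explicitly.

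The last point to pin down carefully — and the one I expect to be the main obstacle — is the precise identification of $\mu_\varepsilon$, in particular the limits of integration $(\varepsilon-\theta,T-\theta]$ on the inner integral and the restriction of the resulting equation to $\check{Q}_{\varepsilon T}=(\varepsilon,T)\times\mathbb{R}^d$ rather than all of $\check{Q}_T$. For a test function $\eta$ supported in $\check{Q}_{\varepsilon T}$, unwinding $\mu_\varepsilon(\eta)=\langle(\mu)_\varepsilon,\eta\rangle=\frac1\varepsilon\int_0^\varepsilon\langle\mu(\cdot-\theta,\cdot),\eta\rangle\,d\theta$ and performing the change of variable $s\mapsto s-\theta$ in the time integration against $\mu$ gives $\frac1\varepsilon\int_0^\varepsilon\int\int\eta(s+\theta,x)\mathbf{1}_{\{s\in(0,T]\}}\,d\mu(s,x)\,d\theta$; the constraint that $s+\theta$ stay in the support window $(\varepsilon,T]$ together with $s\le T$ forces $s\in(\varepsilon-\theta,T-\theta]$, which is the stated range. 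One must check that $\mu_\varepsilon$ is well-defined as a distribution (indeed as a measure) on $\check{Q}_{\varepsilon T}$, which follows since $\mu$ is a Radon measure on $Q_T$ and the operation is a bounded average of translates; and one must note that outside $\check{Q}_{\varepsilon T}$ the mollified equation need not hold because the time-shift can push mass of $\mu$ or of $\frac{\partial u}{\partial t}$ across the boundary $t=\varepsilon$. Assembling the four pieces — the commuted time derivative, the two commutator terms, $(f_u)_\varepsilon$, and $\mu_\varepsilon$ — yields the claimed identity in $\mathcal{D}'(\check{Q}_{\varepsilon T})$.
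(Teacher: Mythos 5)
Your proof is correct and follows essentially the same route as the paper: the paper carries out your ``mollification commutes with $\partial_t$ and with $\dyw$'' step explicitly via Fubini, pairing $u$ with the shifted test functions $\eta_\theta(s)=\eta(s+\theta)$ and then recognizing the resulting $\theta$-averages as $(a\nabla u)_\varepsilon$, $(b\nabla u)_\varepsilon$, $(f_u)_\varepsilon$ and $\mu_\varepsilon$ with exactly the integration limits you identify. The only bookkeeping point (present already in the paper's own statement) is that the add-and-subtract step actually leaves $-\frac12\dyw((a\nabla u)_\varepsilon-a\nabla u_\varepsilon)$, the factor $\frac12$ coming from $L_t=\frac12\dyw(a\nabla\cdot)+b\nabla$.
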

\begin{dow}
Write $\eta_{\theta}(s)=\eta(s+\theta)$. By Fubini's theorem, for
every $\eta\in C_{0}^{\infty}(\check{Q}_{\varepsilon T})$ we have
\begin{align*}
&\int_\varepsilon^T (\frac{\partial
u_{\varepsilon}}{\partial s}(s),\eta(s))\,ds
=-\frac{1}{\varepsilon}\int_{0}^{\varepsilon}
\left(\int_{\varepsilon-\theta}^{T-\theta} (
u(s),\frac{\partial\eta_{\theta}}{\partial s}
(s))\,ds\right)\,d\theta\\
&\quad=-\frac1\varepsilon\int_{0}^{\varepsilon}
\int_{0}^{T}( u(s),\frac{\partial\eta_{\theta}}{\partial s}
(s))\,ds\,d\theta =\frac{1}{\varepsilon}
\int_{0}^{\varepsilon}\frac{\partial u}{\partial s}
(\eta_{\theta})\,d\theta\\
&\quad=\frac{1}{\varepsilon} \int_{0}^{\varepsilon}
\left(\int_{\varepsilon-\theta}^{T-\theta}\frac12(
a(s)\nabla u(s),\nabla\eta_{\theta}(s))\,ds\right)d\theta\\&\quad
-\frac{1}{\varepsilon}\int_{0}^{\varepsilon}
\int_{\varepsilon-\theta}^{T-\theta}
(b(s)\nabla u(s),\eta_{\theta}(s))\,ds\,d\theta
-\frac{1}{\varepsilon}\int_{0}^{\varepsilon}
\int_{\varepsilon-\theta}^{T-\theta} (f_{u}(s),\eta_{\theta}(s))\,ds\\
&\quad-\frac{1}{\varepsilon}
\int^{\varepsilon}_0
\left(\int_{\varepsilon-\theta}^{T-\theta}\!\!\int_{\mathbb{R}^{d}}
\eta_{\theta}(s,x)\,d\mu(s,x)\right)d\theta=\frac{1}{2}
\int_{\varepsilon}^{T}((a\nabla u)_{\varepsilon}(s),\nabla\eta(s))\,ds\\
&\quad-\int_{\varepsilon}^{T}((b \nabla
u)_{\varepsilon}(s),\eta(s))\,ds
-\int_{\varepsilon}^{T}((f_{u})_{\varepsilon}(s),\eta(s))\,ds
-\mu_{\varepsilon}(\eta),
\end{align*}
from which  the result follows.
\end{dow}
\medskip

Write $\mathcal{L}u=\frac{\partial u}{\partial t}+L_tu+f_u$. We
define the set of parabolic potentials associated with
$\mathcal{L}$ by
\[
\mathcal{P}=\{u\in \mathbb{L}_{2}(0,T;H^{1}_{\varrho}):
\mathcal{L}u\le0\mbox{ in } \mathcal{D}'(\check{Q}_{T}),
\mbox{ esssup}_{t\in[0,T]} \|u(t)\|_{2,\varrho}<\infty\}
\]
and we set
\[
\|u\|_{\PP}={\rm esssup}_{0\le t\le T}\|u(t)\|_{2,\varrho}
+\|\nabla u\|_{2,\varrho,T}.
\]
It is worth mentioning that $u\in\mathcal{P}$ is not necessarily
positive  as it is usually assumed (see \cite{Pierre} for linear
case). Moreover, using Tanaka's formula it is easy to check that
in  general $u^{+},u^{-}$ do not belong to $\mathcal{P}$.

\begin{lm}\label{stw.est}
Assume (H2b). If $u\in \mathcal{P}$ then
\begin{align*}
\int_{Q_{T}}(E_{s,x}{\mbox{\rm esssup}}_{s\le t \le T}
|u(t,X_{t})|^{2}) \varrho^{2}(x)\,ds\,dx \le C(\|u\|^{2}_{\PP}
+\|g\|^{2}_{2,\varrho,T}).
\end{align*}
\end{lm}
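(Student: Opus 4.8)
The plan is to obtain a stochastic representation of $u$ (or of a suitable approximation of it) and then estimate the supremum in time using the forward-backward machinery of Section \ref{sec2}, together with Proposition \ref{tw2.2} to pass between expectations along the process and weighted $\mathbb{L}_2$-norms. Since $u\in\mathcal{P}$ satisfies $\mathcal{L}u\le0$ in $\mathcal{D}'(\check Q_T)$, there is a positive Radon measure $\mu$ on $\check Q_T$ such that $\frac{\partial u}{\partial t}+L_tu=-f_u-\mu$ in $\mathcal{D}'(\check Q_T)$. The idea is to mollify: by Lemma \ref{lm2.8}, $u_\varepsilon$ solves a PDE in $\mathcal{D}'(\check Q_{\varepsilon T})$ whose right-hand side is $-f_u$ (and the commutator terms, which tend to $0$ in $\mathbb{L}_{2,\varrho}$), plus the \emph{positive} term $-\mu_\varepsilon$. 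Because $u_\varepsilon\in W^{1,1}_{2,\varrho}(Q_T)$ is now regular enough, I can apply the It\^o--Krylov-type formula along $(t,X_t)$ under $P_{s,x}$ — exactly the tool already used in the proof of Proposition \ref{stw4.1} — to write $u_\varepsilon(t,X_t)$ as a terminal value plus a Lebesgue-type drift integral, plus an integral against $dK^\varepsilon$ (the increasing process associated with $\mu_\varepsilon$), minus a martingale.

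First I would fix $\varepsilon\in(0,T)$ and a sub-interval $[\delta,T]$ with $\delta>\varepsilon$, apply the generalized It\^o formula to $|u_\varepsilon(t,X_t)|^2$ (or directly to $u_\varepsilon(t,X_t)$) under $P_{s,x}$ for $(s,x)$ with $s\ge\delta$, and take $\sup_{\delta\le t\le T}$ followed by the Burkholder--Davis--Gundy inequality to control the martingale part. This produces a bound of the form
\[
E_{s,x}\sup_{\delta\le t\le T}|u_\varepsilon(t,X_t)|^2
\le C\Big(E_{s,x}|u_\varepsilon(T,X_T)|^2+E_{s,x}\int_s^T|f_u(t,X_t)|^2\,dt
+E_{s,x}\int_s^T|\sigma\nabla u_\varepsilon(t,X_t)|^2\,dt\Big)
\]
plus the nonnegative contribution of $dK^\varepsilon$, which must be estimated as well. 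The key point, as in the classical a priori estimates for RBSDEs, is that the $dK^\varepsilon$-term can be absorbed: one integrates $u_\varepsilon(t,X_t)\,dK^\varepsilon_t$ and uses $2ab\le \eta a^2+\eta^{-1}b^2$ together with the identity coming from It\^o's formula to bound $E_{s,x}(K^\varepsilon_T-K^\varepsilon_s)^2$ and $E_{s,x}\int u_\varepsilon\,dK^\varepsilon$ by the other terms plus a small multiple of $E_{s,x}\sup|u_\varepsilon(t,X_t)|^2$, which is then moved to the left-hand side. Next I would integrate the resulting inequality against $\varrho^2(x)\,dx$ over $x\in\mathbb{R}^d$ and against $ds$ over $s\in[\delta,T]$, and invoke Proposition \ref{tw2.2}: the right-hand side becomes, up to constants, $\|u_\varepsilon(T)\|_{2,\varrho}^2 + \|f_u\|_{2,\varrho,T}^2 + \|\sigma\nabla u_\varepsilon\|_{2,\varrho,T}^2$, and by (H2b) $\|f_u\|_{2,\varrho,T}\le C(\|g\|_{2,\varrho,T}+\|u\|_{\mathcal P})$ since $|f_u|\le g+M(|u|+|\sigma\nabla u|)$ and $\|u\|_{\mathcal P}$ controls both $\mathrm{esssup}_t\|u(t)\|_{2,\varrho}$ and $\|\nabla u\|_{2,\varrho,T}$. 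Using $\|\nabla u_\varepsilon\|_{2,\varrho,T}\to\|\nabla u\|_{2,\varrho,T}$ and $u_\varepsilon(T)\to u(T)$ (or bounding $\|u_\varepsilon(T)\|_{2,\varrho}$ uniformly via $\mathrm{esssup}_t\|u(t)\|_{2,\varrho}$) I get a bound uniform in $\varepsilon$ and $\delta$.

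Finally I would let $\varepsilon\downarrow0$ and $\delta\downarrow0$. Since $u_\varepsilon\to u$ in $\mathbb{L}_{2,\varrho}(Q_T)$, along a subsequence $u_\varepsilon\to u$ a.e., and Fatou's lemma applied to $\int_{Q_T}(E_{s,x}\mathrm{esssup}_{s\le t\le T}|\cdot(t,X_t)|^2)\varrho^2(x)\,ds\,dx$ — combined with the fact that a.e. convergence of $u_\varepsilon$ to $u$ transfers, via Proposition \ref{tw2.2}, to convergence $\mathrm{esssup}_{s\le t\le T}|u_\varepsilon(t,X_t)-u(t,X_t)|\to0$ in an appropriate sense along a further subsequence — yields the claimed inequality for $u$. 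The main obstacle I anticipate is twofold: (a) justifying the generalized It\^o formula for the merely $W^{1,1}_{2,\varrho}$ function $u_\varepsilon$ composed with the Dirichlet process $X$, which should follow from the commutator estimates in Lemma \ref{lm2.8} together with the arguments already invoked in Proposition \ref{stw4.1} (approximate further by smooth functions, pass to the limit using the commutator bounds); and (b) the absorption argument for the $dK^\varepsilon$-term — one must be careful that the constant in front of $E_{s,x}\sup|u_\varepsilon(t,X_t)|^2$ coming from the Young-inequality splitting is strictly less than $1$ after integrating in $(s,x)$, which is where the precise form of the BDG and It\^o estimates matters. Controlling the commutator terms $\mathrm{div}((a\nabla u)_\varepsilon-a\nabla u_\varepsilon)$ uniformly is delicate but only their $\mathbb{L}_{2,\varrho}$-smallness as $\varepsilon\to0$ is needed, so they contribute nothing to the final constant.
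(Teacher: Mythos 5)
Your plan is correct and follows essentially the same route as the paper's proof: Steklov mollification combined with Lemma \ref{lm2.8}, the It\^o-type decomposition of $u_\varepsilon(t,X_t)$ from \cite{Kl1}, control of the increasing part and the martingale part via It\^o's isometry and Burkholder--Davis--Gundy, and Proposition \ref{tw2.2} to convert expectations along $X$ into weighted norms bounded by $\|u\|_{\PP}^2+\|g\|^2_{2,\varrho,T}$. The only (harmless) difference is one of ordering: the paper first passes to the limit in the decomposition, obtaining a c\`adl\`ag modification $Y^{s,x}$ of $u(\cdot,X_\cdot)$ satisfying the limit equation, and then performs the supremum estimate on $Y^{s,x}$ (using the identity between ${\rm esssup}_t|Y^{s,x}_t|^2$ and ${\rm esssup}_t|u(t,X_t)|^2$), whereas you estimate at the mollified level and pass to the limit by Fatou.
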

\begin{dow}
Fix $\delta\in(0,T)$, put $\mu=-\mathcal{L}u$  in
$\mathcal{D}'(\check{Q}_{T})$ and define $\mu_{\varepsilon}$ as in
Lemma \ref{lm2.8}. Then $\mu_{n}\equiv\mu_{1/n}\ge 0$ and
$\mu_{n}\in \mathbb{L}_2(\delta,T;H^{-1}_{\varrho})$ for
$n>\delta^{-1}$. Therefore from \cite[Theorems 3.1, 5.1]{Kl1} it
follows that for $n>\delta^{-1}$ there exists PCAF
$K^{n}$ and a quasi-continuous version of $u_{n}$ (still denoted
by $u_{n}$) such that
\begin{align}\label{P18}
\nonumber u_{n}(t,X_{t})&=u_{n}(s,x)-\frac12\int_{s}^{t}a^{-1}((a\nabla
u)_{n}-a\nabla u_{n})(\theta,X_{\theta})\,d^{*}X_{\theta}\\
&\quad -\int_{s}^{t}((b\nabla u)_{n}-b\nabla
u_{n})(\theta,X_{\theta})\,d\theta
-\int_{t}^{T}(f_{u})_{n}(\theta,X_{\theta})\,d\theta-K^{n}_{s,t}\nonumber\\
&\quad+\int_{s}^{t}\sigma\nabla
u_{n}(\theta,X_{\theta})\,dB_{s,\theta},\quad t\in[s,T],\quad
P_{s,x}\mbox{-}a.s.
\end{align}
for a.e. $(s,x)\in Q_{\delta \hat{T}}$. By Proposition
\ref{tw2.2},
\begin{align*}
&\int_{Q_{\delta T}}\!\left(E_{s,x}\int_{s}^{T}(|(u_{n}-u)(t,X_{t})|^2+
|\sigma\nabla(u_{n}-u)(t,X_{t})|^2)\,dt\right)\varrho^{2}(x)\,dx\,ds\\&+
\int_{Q_{\delta T}}\!\left(E_{s,x}\int_{s}^{T}
(|((b\nabla u)_{n}-b\nabla u_{n})(t,X_{t})|^2+
|((f_{u})_{n}-f_{u})(t,X_{t})|^2)\,dt\right)\varrho^{2}(x)\,dx\,ds\\
&\le
C(\|u_{n}-u\|_{2,\varrho,T}^{2}+\|\nabla(u_{n}-u)\|_{2,\varrho,T}^{2}
+\|(b\nabla u)_{n}-b\nabla u_{n}\|_{2,\varrho,T}^{2}
+\|(f_{u})_{n}-f_{u}\|^{2}_{2,\varrho,T}).
\end{align*}
Hence there is a  subsequence (still denoted by $n$) such that
\[
(u_{n}(\cdot,X_{\cdot}),\sigma\nabla u_{n}(\cdot,X_{\cdot}))
\rightarrow (u(\cdot,X_{\cdot}),\sigma\nabla u(\cdot,X_{\cdot}))
\]
in $\mathbb{L}_{2}([s,T]\times\Omega,\lambda\otimes P_{s,x})
\otimes\mathbb{L}_{2}([s,T]\times\Omega,\lambda\otimes P_{s,x})$
for a.e. $(s,x)\in Q_{\delta \hat{T}}$. Consequently, passing to
the limit in (\ref{P18}) and using \cite[Proposition 7.6]{Kl1}
and properties of Steklov's mollification we conclude that for
a.e. $(s,x)\in Q_{\delta \hat{T}}$ there is a process $K^{s,x}$
such that
\begin{equation}\label{P19}
u(t,X_{t})=u(s,x)-\int_{s}^{t}f_{u}(\theta,X_{\theta})\,d\theta
-K^{s,x}_{t}+\int_{s}^{t}\sigma\nabla
u(\theta,X_{\theta})\,dB_{s,\theta}
\end{equation}
$P_{s,x}$-a.s. for a.e. $t\in [s,T]$. Since $\delta\in (0,T)$ can
be chosen arbitrarily small, (\ref{P19}) holds true $P_{s,x}$-a.s.
for a.e $(s,x)\in\Q$ and a.e. $t\in [s,T]$. Let $\{Y_{t}^{s,x},\,
t\in[s,T)\}$, $\{\tilde{K}^{s,x}_{t},\ t\in[s,T)\}$ denote
c\`adl\`ag modifications, in
$\mathbb{L}_{2}([s,T)\times\Omega,\lambda\otimes P_{s,x})$, of the
processes $t\mapsto u(t,X_{t})$ and $K^{s,x}_{\cdot}$,
respectively (existence of such modifications follows from
\cite[Theorem 3.13]{Karatzas} because for a.e. $(s,x)\in Q_{\hat
T}$ there is $T'_{s,x}\subset [s,T)$ such that the Lebsgue measure
of the set  $[s,T)\setminus T'_{s,x}$ equals zero and the process
$\{K^{s,x}_{t}, t\in T'_{s,x}\}$ is a submartingale under
$P_{s,x}$), and let $\tilde{K}^{s,x}_{T}=\lim_{t\uparrow T}
\tilde{K}^{s,x}_{t}$, $Y^{s,x}_{T}=\lim_{t\uparrow T}Y^{s,x}_t$
(in both cases the convergence holds $P_{s,x}$-a.s.). From
(\ref{P19}) we get
\begin{align}\label{P20}
\nonumber Y_{t}^{s,x}&=Y^{s,x}_{T}+\int_{t}^{T}f_{u}(\theta,X_{\theta})\,d\theta
+\tilde K^{s,x}_{T}-\tilde K^{s,x}_{t}\\
&\quad-\int_{t}^{T}\sigma\nabla
u(\theta,X_{\theta})\,dB_{s,\theta},\quad t\in[s,T],\quad P_{s,x}
\mbox{-}a.s.
\end{align}
for a.e. $(s,x)\in \Q$. Since $\int_{Q_{T}}(E_{s,x}\int_{s}^{T}
|u(t,X_{t})-Y^{s,x}_{t}|^{2}\,dt)\varrho^{2}(x)\,ds\,dx=0$, for
a.e. $s\in [0,T)$ one can find $\{t^{s}_n\}$ such that
$t^{s}_{n}\uparrow T$,
$Y_{t^{s}_{n}}^{s,x}=u(t^{s}_{n},X_{t^{s}_{n}})$ and (\ref{P19})
holds in $t^{s}_{n}$ in place of $t$ $P_{s,x}$-a.s. for a.e.
$x\in\BRD$. Since we can assume also that
$K_{t^{s}_{n}}^{s,x}=\tilde{K}^{s,x}_{t^{s}_{n}}$ and
$\|u(t^{s}_{n})\|_{2,\varrho}\le
\mbox{esssup}_{t\in[0,T]}\|u(t)\|_{2,\varrho}$, it follows from
(\ref{P19}) that
\begin{align*}
E_{s,x}|\tilde{K}_{T}^{s,x}|^{2}&=\lim_{n\rightarrow \infty}
E_{s,x}|\tilde{K}_{t^{s}_{n}}^{s,x}|^{2}=\lim_{n\rightarrow \infty}
E_{s,x}|K_{t^{s}_{n}}^{s,x}|^{2}\\
&\le C\lim_{n\rightarrow\infty}\left(|u(s,x)|^2
+E_{s,x}\int_{s}^{t^{s}_{n}}|f_{u}(\theta,X_{\theta})|^2\,d\theta\right.\\
&\quad+\left.E_{s,x}|u(t^{s}_{n},X_{t^{s}_{n}})|^2
+E_{s,x}\int_{s}^{t^{s}_{n}}|\sigma\nabla
u(\theta,X_{\theta})|^2\,d\theta\right),
\end{align*}
hence that
\begin{align}\label{ss}
\nonumber&\int_{\Q}E_{s,x}|\tilde{K}^{s,x}_{T}|^{2}
\varrho^{2}(x)\,ds\,dx \\
&\qquad\le  C(\mbox{esssup}_{t\in[0,T]}\|u(t)\|^{2}_{2,\varrho}
+\|g\|_{2,\varrho}^{2}+\|u\|^{2}_{2,\varrho,T}+\|\nabla
u\|_{2,\varrho,T})
\end{align}
by Fatou's lemma and Proposition \ref{tw2.2}. Moreover, for a.e.
$s\in [0,T)$,
\begin{align}\label{nq}
\nonumber&\int_{\BRD}E_{s,x}|Y_{T}^{s,x}|^{2}\varrho^{2}(x)\,dx
\le \lim_{n\rightarrow\infty}\int_{\BRD}
E_{s,x}|u(t^{s}_{n},X_{t^{s}_{n}})|^{2}\varrho^{2}(x)\,dx\\
&\qquad\le
C\lim_{n\rightarrow\infty}\|u(t^{s}_{n})\|_{2,\varrho}^{2}\le
C\,\mbox{esssup}_{t\in[0,T]}\|u(t)\|^{2}_{2,\varrho}.
\end{align}
From the  above, (\ref{P20}), (\ref{ss}) and again Proposition
\ref{tw2.2} we conclude that
\begin{align}
\label{eq3.5} \nonumber&\int_{\Q}(E_{s,x}\mbox{esssup}_{s\le
t\le T}|Y^{s,x}_{t}|^{2})\varrho^{2}(x)\,ds\,dx \\
&\qquad\le (\mbox{esssup}_{t\in[0,T]}\|u(t)\|^{2}_{2,\varrho}
+\|g\|_{2,\varrho}^{2}+\|u\|^{2}_{2,\varrho,T}+\|\nabla
u\|^{2}_{2,\varrho,T}).
\end{align}
Finally,
\begin{align}\label{32}
\nonumber&E_{s,x}\mbox{esssup}_{s\le t\le T}|Y^{s,x}_{t}|^{2}
=E_{s,x}\lim_{p\rightarrow\infty}
(\int_{s}^{T}|Y^{s,x}_{t}|^{2p}\,dt)^{1/p}\\
&\qquad=\lim_{p\rightarrow\infty}
E_{s,x}(\int_{s}^{T}|u(t,X_{t})|^{2p}\,dt)^{1/p}
=E_{s,x}\mbox{esssup}_{s\le t\le T}|u(t,X_{t})|^{2},
\end{align}
which when combined with (\ref{eq3.5}) proves the proposition.
\end{dow}

\begin{df}
{\rm Let $\mu$ be a positive Radon measure on $Q_{T}$ and let $K$ be a
PAF. We say that $\mu$
corresponds to $K$ (or $K$ corresponds to $\mu$) and we write
$\mu\sim K$ iff for quasi-every $(s,x)\in {Q}_{\hat{T}}$,
\begin{equation}\label{P10}
E_{s,x}\int_{s}^{T}f(t,X_t)\,dK_{s,t}
=\int_{Q_{sT}}f(t,y)p(s,x,t,y)\,d\mu(t,y)
\end{equation}
for all $f\in \mathcal{B}^{+}(Q_{T})$. }
\end{df}

Of course, if $\mu_1\sim K$, $\mu_2\sim K$ then $\mu_1=\mu_2$.
Also, if $\mu\sim K^{1}$ and $\mu\sim K^{2}$ then $K^{1}=K^{2}$
(see \cite{Revuz1}), so the above correspondence is one-to-one.

Given a measure $\mu$ on $Q_{T}$ and $t\in[0,T]$ we will denote by
$\mu(t)$ the measure on $\mathbb{R}^{d}$ defined by
$\mu(t)(B)=\mu(\{t\}\times B)$ for
$B\in\mathcal{B}(\mathbb{R}^{d})$.

\begin{uw}\label{uw.tp1}
{\rm It is known (see, e.g.,  \cite{A.Roz.dir.}) that if the
Markov process $(X,Q_{s,x})$ is associated with the  operator
\begin{equation}
\label{eq3.20} A_{t}=\sum_{i,j=1}^{d} \frac{\partial}{\partial
x_{i}} (a_{ij}(t,x)\frac{\partial}{\partial x_{j}}),
\end{equation}
then for every $\sx$, $\frac{dQ_{s,x}}{dP_{s,x}}=Z_{T}$, where the
process $Z$ is a solution of the  SDE
\[
dZ_{t}=b\te\sigma^{-1}\te Z_{t}\,dB_{s,t}, \quad Z_{0}=1
\]
under the measure $P_{s,x}$. It follows immediately that for every
$p\ge 1$,
\[
\sup_{\sx}E_{s,x}Z^{p}_{T}<\infty.
\]
}
\end{uw}

\begin{lm}\label{lm.pot}
Let $\{T_{m}\}\subset (0,T), T_{m}\nearrow T$,
$\varphi_{m}\rightarrow\varphi$ weakly in  $\mathbb{L}_{2}(\BRD)$
and let $w_{m}, w\in \WW$ be  strong solutions of the  Cauchy
problems
\[
\frac{\partial w_{m}}{\partial t}+A_{t}w_{m}=0,\quad w_{m}(T_{m})
=\varphi_{m}
\]
and
\[
\frac{\partial w}{\partial t}+A_{t}w=0,\quad w(T)=\varphi,
\]
respectively. Then for every  $s\in [0,T)$, $w_{m}(s)\rightarrow
w(s)$ strongly in $\mathbb{L}_{2}(\BRD)$ as $m\rightarrow+\infty$.
\end{lm}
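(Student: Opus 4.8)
The plan is to pass to the kernel (equivalently stochastic) representation of $w_m$ and $w$, reduce the assertion to a pointwise convergence statement, and then upgrade it to convergence of norms by dominated convergence. Let $q(s,x,t,y)$ denote the fundamental solution of $\partial_t+A_t$, i.e. the transition density of the Markov family $(X,Q_{s,x})$ of Remark \ref{uw.tp1}. Since the $w_m,w$ are strong solutions of homogeneous backward Cauchy problems, for every $(s,x)$ with $s<T$ one has $w(s,x)=\int_{\BRD}\varphi(y)q(s,x,T,y)\,dy=E^{Q_{s,x}}\varphi(X_T)$ and, for $m$ large enough that $s<T_m$, $w_m(s,x)=\int_{\BRD}\varphi_m(y)q(s,x,T_m,y)\,dy=E^{Q_{s,x}}\varphi_m(X_{T_m})$. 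As $A_t$ is uniformly elliptic in divergence form, $q$ obeys Aronson's two-sided Gaussian bounds \cite{Aronson}; in particular, for fixed $s<T$ the kernel $y\mapsto q(s,x,t,y)$ lies in $\mathbb{L}_2(\BRD)$ with $\|q(s,x,t,\cdot)\|_2\le C(t-s)^{-d/4}$, uniformly in $x$.

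First I would establish pointwise convergence $w_m(s,x)\to w(s,x)$ for each fixed $x$. As $T_m\nearrow T$, the Gaussian upper bound dominates $q(s,x,T_m,\cdot)$ uniformly for $T_m\in[(s+T)/2,T]$, and joint continuity of the fundamental solution off the diagonal gives $q(s,x,T_m,y)\to q(s,x,T,y)$ for each $y$; hence $q(s,x,T_m,\cdot)\to q(s,x,T,\cdot)$ strongly in $\mathbb{L}_2(\BRD)$ by dominated convergence. Writing $w_m(s,x)-w(s,x)=\langle\varphi_m-\varphi,\,q(s,x,T,\cdot)\rangle_2+\langle\varphi_m,\,q(s,x,T_m,\cdot)-q(s,x,T,\cdot)\rangle_2$ and using that $\{\varphi_m\}$ is bounded in $\mathbb{L}_2(\BRD)$ (weakly convergent sequences are bounded), the first term tends to $0$ because $\varphi_m\rightharpoonup\varphi$, and the second by Cauchy--Schwarz and the strong convergence of the kernels. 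This yields the claimed pointwise limit.

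Next I would upgrade this to norm convergence. Cauchy--Schwarz and the $x$-independent bound on $\|q(s,x,T_m,\cdot)\|_2$ give, for all $m$ with $T_m\ge(s+T)/2$, the uniform estimate $|w_m(s,x)|\le\sup_m\|\varphi_m\|_2\cdot C((T-s)/2)^{-d/4}=:C_1$, and the same bound for $w(s,x)$, so $|w_m(s,x)-w(s,x)|^2\le 4C_1^2$ is dominated. Since $\varrho\in\mathcal{R}$, the function $\varrho^2$ is integrable on $\BRD$, whence $4C_1^2\varrho^2\in\mathbb{L}_1(\BRD)$. The dominated convergence theorem together with the pointwise convergence of the previous step then gives $\int_{\BRD}|w_m(s,x)-w(s,x)|^2\varrho^2(x)\,dx\to0$, i.e. $w_m(s)\to w(s)$ in the norm $\|\cdot\|_{2,\varrho}$ attached to the ambient space $\WW=\mathcal{W}_{\varrho}$ in which $w_m,w$ live.

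The \emph{main obstacle} is precisely this last passage from pointwise to norm convergence, that is, ruling out loss of mass at spatial infinity. The only uniform bound that weak convergence of the data supplies is the constant $C_1$ above, with no decay in $x$: weak convergence permits the $\varphi_m$ to carry mass off to infinity, and parabolic smoothing damps oscillation but not spatial translation. The integrability of $\varrho^2$ furnished by $\varrho\in\mathcal{R}$ is exactly what compensates for this, converting the crude uniform bound into a genuine dominating function so that dominated convergence applies; the continuity in time of $q$ enters only as a minor technical point in the pointwise step. I would emphasize that tightness in the unweighted $\mathbb{L}_2(\BRD)$ norm would require additional uniform spatial localization of $\{\varphi_m\}$ beyond weak convergence, whereas within the weighted framework of the paper no such extra hypothesis is needed.
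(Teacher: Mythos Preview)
Your approach is precisely what the paper's one-line proof intends: pass to the kernel representation via Proposition~\ref{stw4.1}, use Aronson's Gaussian bounds together with De~Giorgi--Nash continuity of the fundamental solution to get pointwise convergence, then upgrade by dominated convergence. The ingredients and the structure coincide.

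Your closing observation is also correct and worth flagging: the argument as written delivers convergence in $\mathbb{L}_{2,\varrho}$, not in unweighted $\mathbb{L}_{2}(\BRD)$, and the unweighted conclusion stated in the lemma is in fact false for general weakly convergent data (take $a\equiv I$, $\varphi_m(\cdot)=\psi(\cdot-me_1)$ for a fixed bump $\psi$; then $\varphi_m\rightharpoonup0$ but $\|w_m(s)\|_{2}\to\|G_{T-s}*\psi\|_{2}>0$). In the paper's only use of the lemma, inside the proof of Theorem~\ref{tw3.5}, the terminal data are $u_k(t_m)=T_k(u(t_m))\phi$ with $\phi\in C_0(Q_T)$, hence have a \emph{common compact} spatial support; under that extra hypothesis the Gaussian upper bound gives $|w_m(s,x)|\le C\exp(-c|x|^{2})$ uniformly in $m$, and your dominated-convergence step then yields the unweighted $\mathbb{L}_{2}$ convergence as well. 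So your proof is sound, and your caveat about spatial localization is exactly the missing hypothesis that the paper silently uses in the application.
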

\begin{dow}
The desired result follows easily from  stochastic representation
of solutions $w_{m}, w$ (see  Proposition \ref{stw4.1}) and
Aronson's and De Giorgi-Nash's estimates for the fundamental
solution $p$ (see \cite{Aronson}).
\end{dow}

\begin{tw}\label{tw3.5}
Let $u\in\mathcal{P}$. Then
\begin{enumerate}
\item[\rm(i)]There exists $C>0$ depending on $\lambda,\Lambda,T,M$
such that
\begin{equation}
\label{eq4.10} \sup_{s\in[0,T)}\int_{\BRD}(E_{s,x}{\rm
esssup}_{s\le t\le T} |u(t,X_{t})|^{2})\varrho^{2}(x)\,dx\le C
(\|u\|_{\PP}^{2} +\|g\|^{2}_{2,\varrho,T}),
\end{equation}
\item[\rm(ii)]
$u$ has a quasi-c\`adl\`ag version $\bar{u}$ such that the mapping
$[0,T]\ni t\rightarrow
\bar{u}(t)\in\mathbb{L}_{2,\varrho}(\mathbb{R}^{d})$ is
c\`adl\`ag.
\item[\rm(iii)]
For every $\varphi\in\mathbb{L}_{2,\varrho}(\mathbb{R}^{d})$ such
that $\varphi\le \bar{u}(T-)$ there exists a square-integrable
PAF $K$ such that
\begin{align}\label{vn}
\nonumber\bar{u}(t,X_{t})&=\varphi(X_{T})
+\int_{t}^{T}f_{\bar{u}}(\theta,X_{\theta})\,d\theta
+K_{t,T}\\
&\quad-\int_{t}^{T}\sigma\nabla
\bar{u}(\theta,X_{\theta})\,dB_{s,\theta},\quad t\in [s,T], \quad
P_{s,x}\mbox{-}a.s.
\end{align}
for q.e. $(s,x)\in Q_{\hat{T}}$.
\item[\rm(iv)]Set $\mu=-\mathcal{L}\bar{u}$ in
$\mathcal{D}'(\check{Q}_{T})$.
Then $\mu\ll\mbox{cap}_{L}$ and $\mu$ has an extension $\bar\mu$
on $Q_{T}$ such that $\bar\mu\sim K$, $\mu(0)\equiv 0$ and
$d\bar\mu(T)=(\bar{u}(T-)-\varphi)\,dm$.
\end{enumerate}
\end{tw}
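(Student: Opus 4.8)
The plan is to establish the four assertions in turn, the engine being the Dirichlet-process decomposition (\ref{P19})--(\ref{P20}) obtained in the proof of Lemma \ref{stw.est} together with Proposition \ref{tw2.2}. For (i), I would revisit that proof: for a.e.\ $(s,x)\in\Q$ one has the backward equation (\ref{P20}) for the c\`adl\`ag modification $Y^{s,x}$ of $t\mapsto u(t,X_t)$, with terminal value $Y^{s,x}_{T}$ and increasing part $\tilde K^{s,x}$, and the estimates there show that for a.e.\ $s$ the quantities $\int_{\BRD}E_{s,x}|Y^{s,x}_{T}|^{2}\varrho^{2}(x)\,dx$, $\int_{\BRD}E_{s,x}|\tilde K^{s,x}_{T}|^{2}\varrho^{2}(x)\,dx$ and $\int_{\BRD}(E_{s,x}\int_{s}^{T}|f_{u}(\theta,X_\theta)|^{2}\,d\theta)\varrho^{2}(x)\,dx$ are all bounded by $C(\|u\|^{2}_{\PP}+\|g\|^{2}_{2,\varrho,T})$ \emph{uniformly in} $s$; here one uses (H2b), $\|u(s)\|_{2,\varrho}\le\|u\|_{\PP}$ (a.e.\ $s$), ${\rm esssup}_{t}\|u(t)\|_{2,\varrho}\le\|u\|_{\PP}$, and Proposition \ref{tw2.2} applied with the weight $\varrho^{2}$, which again lies in $\mathcal R$. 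The standard $\mathbb{L}_{2}$ a priori estimate for (\ref{P20}) (It\^o's formula and the Burkholder-Davis-Gundy inequality; no Gronwall argument is needed since the driver of (\ref{P20}) carries no $Y,Z$-dependence), combined with (\ref{32}), then yields (\ref{eq4.10}) for a.e.\ $s$. To get it for \emph{every} $s\in[0,T)$ I would write ${\rm esssup}_{s\le t\le T}|u(t,X_{t})|^{2}=\sup_{k}{\rm esssup}_{s_{k}\le t\le T}|u(t,X_{t})|^{2}$ along a sequence $s_{k}\downarrow s$ of times where (\ref{eq4.10}) already holds (such times being of full measure), use the Markov property of $\XX$ and Proposition \ref{tw2.2} to bound $\int_{\BRD}(E_{s,x}{\rm esssup}_{s_{k}\le t\le T}|u(t,X_{t})|^{2})\varrho^{2}(x)\,dx$ by $C\int_{\BRD}(E_{s_{k},y}{\rm esssup}_{s_{k}\le t\le T}|u(t,X_{t})|^{2})\varrho^{2}(y)\,dy\le C(\|u\|^{2}_{\PP}+\|g\|^{2}_{2,\varrho,T})$, and conclude by monotone convergence.

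For (ii) I would argue by approximation. Put $\mu=-\mathcal{L}u$ in $\mathcal D'(\check{Q}_{T})$; since $u\in\mathcal P$, $\mu$ is a nonnegative distribution, hence a positive Radon measure. For Steklov's mollifications $u_{\varepsilon},\mu_{\varepsilon}$ of Lemma \ref{lm2.8}, $u_{\varepsilon}$ solves on $\check{Q}_{\varepsilon T}$ a perturbed equation with \emph{nonnegative} reaction term $\mu_{\varepsilon}$, so by \cite[Theorems 3.1, 5.1]{Kl1} it has a quasi-continuous version $\hat u_{\varepsilon}$ carrying a decomposition of the form (\ref{P18}) with a PCAF $K^{\varepsilon}$. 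Part (i) and Proposition \ref{tw2.2} give bounds on these decompositions uniform in $\varepsilon$, and (arguing as in the proof of Lemma \ref{stw.est}, via \cite[Proposition 7.6]{Kl1} and properties of Steklov's mollification) one checks that $\hat u_{\varepsilon}(\cdot,X_{\cdot})$ converges as $\varepsilon\downarrow0$, uniformly on compact subintervals of $(s,T)$ in probability $P_{s,x}$ for q.e.\ $(s,x)$, to a c\`adl\`ag process --- c\`adl\`ag because the martingale parts converge in $\mathbb{L}_{2}$ and the finite-variation parts are uniformly controlled (an absolutely continuous part plus an increasing part). By consistency (Markov property) these limits are $\bar u(\cdot,X_{\cdot})$ for a single Borel function $\bar u$, which by Proposition \ref{cad} equals $u$ a.e.\ and is the sought quasi-c\`adl\`ag version. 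That $t\mapsto\bar u(t)$ is c\`adl\`ag into $\mathbb{L}_{2,\varrho}(\mathbb R^{d})$ follows by transferring the c\`adl\`ag property of the path $t\mapsto\bar u(t,X_{t})$ (from (\ref{P19}) written for $\bar u$) to the $\|\cdot\|_{2,\varrho}$-norm through Proposition \ref{tw2.2} and Aronson's bounds on $p$ (comparability of $\varrho^{2}$ with $x\mapsto\int p(t_{0},x,t,y)\varrho^{2}(y)\,dy$ for $0\le t-t_{0}\le T$), Lemma \ref{lm.pot} serving to treat $t\uparrow T$.

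For (iii), by part (i) and Remark \ref{ess} the set of $(s,x)$ outside which (\ref{P19}) for $\bar u$ holds and $E_{s,x}\int_{s}^{T}|f_{\bar u}|^{2}\,dt+E_{s,x}(|\bar u(T-,X_{T})|^{2}+|\varphi(X_{T})|^{2})<\infty$ is of capacity zero (use $|f_{\bar u}|\le g+M(|\bar u|+|\sigma\nabla\bar u|)$). For $(s,x)$ in its complement, letting $t\uparrow T$ in (\ref{P19}) --- the left side has a left limit, equal to $\bar u(T-,X_{T})$ by the $\mathbb{L}_{2,\varrho}$-c\`adl\`ag property --- gives a backward equation with terminal value $\bar u(T-,X_{T})$ and increasing part $\tilde K^{s,x}$, and since $\bar u(T-)\ge\varphi$ a.e.\ one obtains (\ref{vn}) with $K_{t,T}:=\tilde K^{s,x}_{T}-\tilde K^{s,x}_{t}+(\bar u(T-,X_{T})-\varphi(X_{T}))\mathbf 1_{\{t<T\}}$, i.e.\ $K$ picks up a nonnegative jump at $T$; additivity, $\{\mathcal G^{s}_{t}\}$-measurability and square-integrability of $K$ (the last from (i) and $E_{s,x}|\bar u(T-,X_{T})|^{2}\le C\|\bar u(T-)\|^{2}_{2,\varrho}$, $E_{s,x}|\varphi(X_{T})|^{2}\le C\|\varphi\|^{2}_{2,\varrho}$) are routine. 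For (iv), define $\bar\mu$ on $Q_{T}$ by $\bar\mu|_{\check{Q}_{T}}=\mu$, $\bar\mu(\{0\}\times\mathbb R^{d})=0$, $d\bar\mu(T)=(\bar u(T-)-\varphi)\,dm$; taking $E_{s,x}$ in (\ref{vn}) gives $\bar u(s,x)=E_{s,x}\varphi(X_{T})+E_{s,x}\int_{s}^{T}f_{\bar u}\,d\theta+E_{s,x}K_{s,T}$, which compared with the fundamental-solution representation of the weak solution of $\frac{\partial\bar u}{\partial t}+L_{t}\bar u=-f_{\bar u}-\mu$, $\bar u(T)=\bar u(T-)$ (see \cite{Aronson}), yields (\ref{P10}) with $f\equiv1$ for q.e.\ $(s,x)$; the passage to general $f\in\mathcal B^{+}(Q_{T})$ is the usual Revuz-type argument (additivity of $K$ and the Markov property for $f=\mathbf 1_{(a,b]\times B}$, then a monotone class argument), giving $\bar\mu\sim K$. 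Finally, if $B\subset\check{Q}_{T}$ is Borel with ${\rm cap}_{L}(B)=0$, then $P_{s,x}(\exists t\in(s,T):(t,X_{t})\in B)=0$ for q.e.\ $(s,x)$, so $\int_{s}^{T}\mathbf 1_{B}(t,X_{t})\,dK_{s,t}=0$ $P_{s,x}$-a.s., whence $\int_{Q_{sT}}\mathbf 1_{B}\,p\,d\bar\mu=0$ q.e.\ by $\bar\mu\sim K$, and $\bar\mu(B)=0$ since $p>0$; in particular $\mu\ll{\rm cap}_{L}$.

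The main obstacle is part (ii): producing a single Borel function whose composition with the paths of $\XX$ is c\`adl\`ag forces one to pass to the limit in the approximate decompositions with \emph{uniform} control of the finite-variation parts (precisely the point at which continuous additive functionals no longer suffice), and the attendant $\mathbb{L}_{2,\varrho}$-c\`adl\`ag regularity --- in particular the existence of $\bar u(T-)$ and the a.s.\ convergence $\bar u(t,X_{t})\to\bar u(T-,X_{T})$ as $t\uparrow T$ that underlies (iii) and (iv) --- is the delicate point.
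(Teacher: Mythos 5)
Your overall architecture for (i), (iii) and (iv) is broadly the paper's: start from the decomposition (\ref{P19})--(\ref{P20}), get the a priori bounds through Proposition \ref{tw2.2}, let $K$ pick up a jump of size $(\bar u(T-)-\varphi)(X_T)$ at $T$, and identify $\bar\mu$ with $K$ by a Revuz-type argument. The genuine gap is in part (ii), on which (iii) and (iv) depend. Your route via Steklov mollification cannot work as described: the processes $\hat u_{\varepsilon}(\cdot,X_{\cdot})$ are \emph{continuous}, and a limit of continuous processes taken uniformly on compacts in probability is again continuous; but $u(\cdot,X_{\cdot})$ genuinely has jumps in general (this is exactly why the paper is forced outside the class of continuous additive functionals), so the claimed mode of convergence is impossible whenever $\mu=-\mathcal{L}u$ charges a time slice. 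Uniform bounds on $E_{s,x}|K^{\varepsilon}_{s,T}|^{2}$ only give weak convergence of the measures $dK^{\varepsilon}_{s,\cdot}$, hence convergence of $\hat u_{\varepsilon}(\cdot,X_{\cdot})$ in $\mathbb{L}_{2}(dt\otimes dP_{s,x})$, which produces a version defined up to $dt\otimes dP_{s,x}$-null sets, not a c\`adl\`ag one. The paper circumvents this by replacing the mollifications with the \emph{monotone} penalization scheme (\ref{L2})/(\ref{L4}) with obstacle $u$ itself: the $u_{n}(\cdot,X_{\cdot})$ are continuous and increase in $n$, and Peng's monotone limit theorem, packaged in Theorem \ref{tw2.4}(b) and Corollary \ref{wnwn}, is what upgrades the increasing limit to a c\`adl\`ag process. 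Some such monotonicity (or a genuine Skorokhod-type compactness) is indispensable, and your sketch supplies neither.

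The second missing ingredient is the identification of terminal and left-limit values. You invoke ``the $\mathbb{L}_{2,\varrho}$-c\`adl\`ag property'' to claim $\lim_{t\uparrow T}Y^{s,x}_{t}=\bar u(T-)(X_{T})$, but showing that this a.s.\ limit equals $\xi(X_{T})$ for a \emph{single} $\xi\in\mathbb{L}_{2,\varrho}(\BRD)$, independent of $(s,x)$, is precisely what must be established before the penalized RBSDE with terminal condition $\xi(X_{T})$ can be set up and before $t\mapsto\bar u(t)$ can be shown c\`adl\`ag into $\mathbb{L}_{2,\varrho}(\BRD)$. The paper devotes the bulk of its proof to this point: it passes to the symmetric process $(X,Q_{s,x})$ via the Girsanov density of Remark \ref{uw.tp1}, uses the symmetry identity (\ref{L60}) together with Lemma \ref{lm.pot} to upgrade weak convergence of the truncations $T_{k}(u)\phi(t_{n})$ along $t_{n}\uparrow T$ to strong convergence in $\mathbb{L}_{2}(\BRD)$, and only then identifies the limit as $\xi(X_{T})$; an analogous argument (the step from (\ref{oo}) through $\limsup_{t\to t_{0}^{+}}\|\bar u_{k}(t)\|_{2,\varrho}\le\|\bar u_{k}(t_{0})\|_{2,\varrho}$ and the Hilbert-space structure) is needed for right-continuity of $t\mapsto\bar u(t)$. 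Your passing appeal to Lemma \ref{lm.pot} ``to treat $t\uparrow T$'' does not substitute for this; without these two pieces, parts (ii)--(iv) remain unproved.
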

\begin{dow}
From the proof of Lemma \ref{stw.est} (see Eq. (\ref{P20}))
we know that for a.e. $(s,x)\in Q_{T}$ there exist a c\`adl\`ag
process $Y^{s,x}$ and a c\`adl\`ag increasing  process $K^{s,x}$
such that $Y^{s,x}_{T}=\lim_{t\uparrow T}Y^{s,x}_{t}$,
$P_{s,x}$-a.s. and in $\mathbb{L}_{2}(P_{s,x})$,
\begin{equation}\label{L1}
E_{s,x}\int_{s}^{T}|u(t,X_t)-Y^{s,x}_t|^{2}\,dt=0,
\end{equation}
and moreover,
\[
Y^{s,x}_{t}=Y^{s,x}_{T}
+\int_{t}^{T}f_{\bar{u}}(\theta,X_{\theta})\,d\theta
+K^{s,x}_{T}-K^{s,x}_{t}\\
-\int_{t}^{T}\sigma\nabla
u(\theta,X_{\theta})\,dB_{s,\theta},\quad P_{s,x}\mbox{-}a.s.
\]
for $t\in [s,T]$. Suppose for a moment that there exists $\xi\in
\mathbb{L}_{2,\varrho}(\mathbb{R}^{d})$  such that
$Y^{s,x}_{T}=\xi(X_{T})$, $P_{s,x}$-a.s. for a.e. $(s,x)\in
\Q$. Then  $(Y^{s,x},\sigma\nabla u(\cdot,X), K^{s,x})$ is a
solution of the RBSDE$_{s,x}(\xi,f_{u}, Y^{s,x}_{t})$. Let
$(Y^{s,x,n},Z^{s,x,n})$ be a solution of the BSDE

\begin{align}\label{L2}
\nonumber Y^{s,x,n}_{t}&=\xi(X_{T})
+\int_{t}^{T}f_{\bar{u}}(\theta,X_{\theta})\,d\theta
+\int_{t}^{T}n(Y^{s,x,n}_{\theta}-Y^{s,x}_{\theta})^{-}\,d\theta\\
&\quad -\int_{t}^{T}Z^{s,x,n}_{\theta}\,dB_{s,\theta}.
\end{align}
Due to (\ref{L1}), one can replace $Y^{s,x}$ in (\ref{L2}) by
$u(\cdot,X_{\cdot})$. Therefore, by Proposition \ref{stw4.1}, for
q.e. $(s,x)\in \Q$,
\[%\label{L3}
Y^{s,x,n}_{t}=u_{n}(t,X_{t}),\,\,t\in[s,T],\,
P_{s,x}\mbox{-}a.s.,\quad Z^{s,x,n}_{t} =\sigma\nabla
u_{n}(t,X_{t}),\,\, \lambda\otimes P_{s,x}\mbox{-}a.s.,
\]
where $u_{n}\in \mathcal{W}_{\varrho}$ is a quasi-continuous
version  of the solution of the  Cauchy  problem
\begin{equation}
\label{L4} \frac{\partial u_{n}}{\partial t}+L_{t}u_{n}
=-f_{u}-n(u_{n}-u)^{-},\quad u_{n}(T)=\xi.
\end{equation}
From Proposition \ref{tw2.2} and Lemma \ref{stw.est}
we conclude that (\ref{eq3.1}) is satisfied for a.e. $(s,x)\in
Q_{\hat{T}}$. Therefore from Theorem \ref{tw2.4} it follows that the
RBSDE$_{s,x}(\xi,f,Y^{s,x})$ has a solution for a.e. $(s,x)\in
Q_{\hat{T}}$ and  assertion (b) of Theorem \ref{tw2.4} holds for a.e.
$(s,x)\in Q_{\hat{T}}$. From  Proposition \ref{tw2.2} and (\ref{L1}) it
may be concluded now that $u_{n}\uparrow u$ a.e. and in
$\mathbb{L}_{2,\varrho}(Q_{T})$, and that $\nabla u_{n}\rightarrow
\nabla u$ in $\mathbb{L}_{p,\varrho}(Q_T)$ for every $p\in[1,2)$.
%Moreover, if we put $\bar{u}(t,x)=\lim_{n\rightarrow\infty}
%u_{n}(t,x)$ if the limit exists,  and $\bar u(t,x)=0$ otherwise,
%then $\bar{u}$ is a quasi-c\`adl\`ag  version of $u$.
From a priori estimates  in Theorem \ref{tw2.4}, Proposition
\ref{tw2.2}, Lemma \ref{stw.est} and the fact that $u_{n}\in
C([0,T];\mathbb{L}_{2,\varrho}(\mathbb{R}^{d}))$ one can deduce
also that
\begin{equation}\label{23}
\sup_{t\in [0,T]}\|u_{n}(t)\|_{2,\varrho} +\|\nabla
u_{n}\|_{2,\varrho,T}\le C(\|u\|_{\PP}+\|g\|_{2,\varrho,T})
\end{equation}
for some $C$ not depending on $n$.  Let
$K^{n}_{s,t}=\int_{s}^{t}n(u_{n}\h-u\h)^{-}\,d\theta$.  By
(\ref{L2}) and Ito's isometry,
\begin{align*}
E_{s,x}|K^{n}_{s,T}|^{2}&\le
C(|u_{n}(s,x)|^2+E_{s,x}\int_{s}^{T}|u\h|^{2}\,d\theta\\&\quad
+E_{s,x}\ints |g\h|^2\,d\theta+E_{s,x}\int_{s}^{T}|\sigma\nabla
u_{n}|^{2}(\theta,X_{\theta})\,d\theta)
\end{align*}
for q.e. $(s,x)\in Q_{\hat T}$. In particular, for any fixed
$r\in[0,T)$ the above inequality holds in $(r,x)$ for a.e.
$x\in\mathbb{R}^{d}$. Integrating the inequality with respect to
the measure $\varrho^{2}(x)\,dm(x)$ and using Proposition
\ref{tw2.2} and (\ref{23}) we get
\begin{eqnarray}\label{eqd.20}
\int_{\mathbb{R}^{d}}(E_{r,x}(K^{n}_{s,T})^{2})\varrho^{2}(x)\,dx
\le C(\|u\|^{2}_{\mathcal{P}}+\|g\|^{2}_{2,\varrho,T}).
\end{eqnarray}
Using the BDG inequality we can deduce from (\ref{L2}), (\ref{23}) and
(\ref{eqd.20}) that
\[
\sup_{s\in[0,T)}\int_{\mathbb{R}^{d}}(E_{s,x}\sup_{s\le t\le T}
|u_{n}(t,X_{t})|^{2})\varrho^{2}(x)\,dx
\le C(\|u\|^{2}_{\mathcal{P}}+\|g\|^{2}_{2,\varrho,T}).
\]
Since $\{u_{n}(\cdot,X_{\cdot})\}$ is monotone q.e., i.e.
$u_{n}(t,X_{t})\le u_{m}\te$, $s\le t\le T,\,$ $P_{s,x}$-a.s., it
follows that $u_{n}\te\uparrow \bar{u}\te$, $t\in [s,T]$,
$P_{s,x}$-a.s. for  q.e. $(s,x)\in Q_{\hat T}$, where $\bar{u}$ is
a version of $u$. From this and the fact that the left hand-side
of (\ref{eq4.10}) does not depend on the version (a.e.) of $u$
(see (\ref{32})) we get (i).

Set $d\mu_{n}=n(u_{n}-u)^{-}\,dm$. Putting $\eta\in
C^{\infty}_{0}(Q_{T})$ as a test function in (\ref{L4}) we see
that $\sup_{n\ge1}\int_{Q_{T}}\eta\,d\mu_{n}<\infty$ for every
positive $\eta\in C^{\infty}_{0}(Q_{T})$. Hence $\{\mu_{n}\}$ is
tight in the topology of weak convergence. Therefore choosing a
subsequence if necessary we may and will assume that $\{\mu_{n}\}$
converges weakly to some measure $\mu$. We will show that
$\mu(0)=\mu(T)\equiv 0$, $\mu\abs$cap$_{L}$ and there exists
positive additive functional associated to $\mu$. To this end let
us fix $s\in [0,T)$. Since $\mu(\{t\}\times{\mathbb R}^d)=0$ for
all but a countable number of $t$'s, we can find a sequence
$\{\delta_{k}\}\subset(0,T-s)$ such that $\delta_{k}\downarrow0$
and $\mu(\{s+\delta_{k}\}\times \mathbb{R}^{d})=0$. It is easy to
see that for every $f\in C_{0}(Q_{T})$,
\begin{equation}\label{P4}
E_{s,x}\int_{s+\delta_{k}}^{T}f(t,X_t)\,dK_{s,t}^{n}
=\int_{\mathbb{R}^{d}}\int_{s+\delta_{k}}^{T}
f(t,y)p(s,x,t,y)\,d\mu_{n}(t,y).
\end{equation}
By Corollary \ref{wnwn}, for every $f\in
C_{0}(\check{Q}_{s+\delta_{k},T})$,
\begin{equation}\label{P6}
\int_{s+\delta_{k}}^{T}f(t,X_t)\,dK_{s,t}^{n}\rightarrow
\int_{s+\delta_{k}}^{T}f(t,X_t)\,dK^{s,x}_{t}, \quad P_{s,x}
\mbox{-}a.s..
\end{equation}
Using once again Theorem \ref{tw2.4} we get
\[
E_{s,x}|\int_{s+\delta_{k}}^{T} f(t,X_t)\,dK_{s,t}^{n}|^{2} \le
\|f\|_{\infty}E_{s,x}|K^{n}_{s+\delta_{k},T}|^{2}\le C,
\]
which implies that the left-hand side of $(\ref{P6})$ is uniformly
integrable. Moreover, using  standard arguments (see \cite[Lemma
A.3.3.]{Fukushima}) one can find  PAF $K$ such that
$P_{s,x}(\{K^{s,x}_t=K_{s,t},t\in[s,T]\})=1$ for q.e. $(s,x)\in
Q_{\hat{T}}$. Therefore, letting $n\rightarrow\infty$ in
(\ref{P4}) and taking into account  the fact that
$p(s,x,\cdot,\cdot)$ is bounded and continuous on $Q_{s+\delta_{k},T}$ shows that
for q.e. $(s,x)\in Q_{\hat{T}}$,
\begin{equation}\label{P7}
E_{s,x}\int_{s+\delta_{k}}^{T}f(t,X_t)\,dK_{s,t}
=\int_{\mathbb{R}^{d}}\int_{s+\delta_{k}}^{T}
f(t,y)p(s,x,t,y)\,d\mu(t,y)
\end{equation}
for $f\in C_{0}(Q_{s+\delta_{k},T})$. Letting $k\rightarrow\infty$ in
(\ref{P7}) we see that for q.e. $(s,x)\in \Q$,
\begin{equation}\label{P8}
E_{s,x}\int_{s}^{T}f(t,X_t)\,dK_{s,t}
=\int_{\mathbb{R}^{d}}\int_{s}^{T}f(t,y)p(s,x,t,y)\,d\mu(t,y)
\end{equation}
for all $f\in C_{0}(Q_{s+\delta_{k},T})$ and hence, by standard
argument, for $f\in C_{0}(\QT)$. Now we are going to show that
$\mu$ is absolutely continuous with respect to $\mbox{cap}_{L}$.
Let $B\in\mathcal{B}(\check{Q}_{T})$ be such that
$\mbox{cap}_{L}(B)=0$ and  let $K\subset B$ be a compact set. By
the monotone class theorem, (\ref{P8}) holds for every $f\in
\mathcal{B}_{b}(Q_{T})$. Let $f=\mathbf{ 1}_{K}$ and let
$\delta>0$ be chosen so that $K\subset\check{Q}_{\delta{T}}$. Then
by Aronson's estimates,
\begin{align*}
\mu(K)&\le C \int_{\mathbb{R}^{d}}\left(\int_{Q_{\delta T}}
f(t,y)p(\delta,x,t,y)\,d\mu(t,y)\right)dx\\&=\int_{\mathbb{R}^{d}}
\left(E_{\delta,x}\int_{\delta}^{T}f(t,X_t)\,dK_{\delta,t}\right)dx=0,
\end{align*}
the last equality being a  consequence of the definition of
$\mbox{cap}_{L}$. Thus, $\mu(B)=0$. Repeating arguments following
(\ref{P4}) with $s=0$, $\delta_k=0$ and $p(0,x,\cdot,\cdot)$
replaced by $k\wedge p(0,x,\cdot,\cdot)$ one can show that for
every $k>0$,
\begin{equation}
\label{P238}
E_{0,x}\int_{[0,T]}f(t,X_t)\,dK_{0,t} \ge\int_{\Q}f(t,y)(k\wedge
p(0,x,t,y))\,d\mu(t,y)
\end{equation}
for $f\in C^{+}_{0}(\QT)$. From this and the fact that
$P_{s,x}$-a.s. the process $K_{s,\cdot}$ does not have jumps in
$s\in[0,T)$ (the last statement follows from pointwise convergence
of $K^{n}$ (see Corollary \ref{wnwn})) we conclude that
$\mu(0)\equiv 0$. The fact that $\mu(T)\equiv 0$ follows easily
from (\ref{P8}) and the fact that $E_{s,x}\Delta K_{s,T}=0$ for
a.e. $(s,x)\in\Q$.

From Propositions \ref{tw2.2}, (i) and Remark \ref{ess} we
conclude that (\ref{eq3.1}) is satisfied for q.e. $(s,x)\in
Q_{\hat{T}}$. Hence, by Theorem \ref{tw2.4},  the
RBSDE$_{s,x}(\xi,f,u)$ has a solution for q.e. $(s,x)\in
Q_{\hat{T}}$ and  assertion (b) of Theorem \ref{tw2.4} holds for
q.e. $(s,x)\in Q_{\hat{T}}$. Therefore putting
$\bar{u}(t,x)=\lim_{n\rightarrow\infty} u_{n}(t,x)$ if the limit
exists  and $\bar u(t,x)=0$ otherwise we see that $\bar{u}$ is a
quasi-c\`adl\`ag  version of $u$. Fix $t_{0}\in [0,T)$, $0\le s\le
t_{0}$ and let $(X,Q_{s,x})$ be a diffusion associated with
operator $A_{t}$. Since trajectories of  $\bar{u}\cd$ are
c\`adl\`ag,  for q.e. $\sx$,
\begin{equation}\label{oo}
\lim_{t\rightarrow t^+_0}E_{Q_{s,x}} \bar{u}(t,X_{t})\eta(X_{t})
=E_{Q_{s,x}} \bar{u} (t_{0},X_{t_{0}})\eta(X_{t_{0}}),\quad\eta\in C_{0}(\QT).
\end{equation}
From (\ref{oo}), the Lebesgue dominated convergence theorem and
Remark \ref{uw.tp1} we get
\[
(\bar{u}(t),\eta)\rightarrow (\bar{u}(t_{0}),\eta),
\quad \eta\in C_{0}(Q_{T}).
\]
Since $\sup_{t\in [0,T]}\|\bar{u}(t)\|_{2,\varrho}<\infty$,
$\bar{u}(t)\rightarrow \bar{u}(t_{0})$ weakly in
$\mathbb{L}_{2,\varrho}(\BRD)$ if $t\rightarrow t_{0}^{+}$. Let
$\bar{u}_{k}(t)=T_{k}(\bar{u}(t))$. Then from  (\ref{oo}), the
fact that $\bar{u}\cd$ is c\`adl\`ag, boundedness of $u_{k}$ and
the Lebesgue dominated convergence theorem it follows that
\[
\limsup_{t\rightarrow t^{+}_{0}}\|\bar{u}_{k}(t)\|^{2}_{2,\varrho}
\le \|\bar{u}_{k}(t_{0})\|^{2}_{2,\varrho}.
\]
Since the sequence $\{\|\bar{u}_{k}(t)\|^{2}_{2,\varrho}\}_{k\ge
0}$ is monotone, letting $k\rightarrow\infty$ in the above
inequality we get $\limsup_{t\rightarrow
t^{+}_{0}}\|\bar{u}(t)\|^{2}_{2,\varrho}\le
\|\bar{u}(t_{0})\|^{2}_{2,\varrho}$. In fact,
$\bar{u}(t)\rightarrow \bar{u}(t_{0})$ in
$\mathbb{L}_{2,\varrho}(\BRD)$ if $t\rightarrow t_{0}^{+}$ since
$\mathbb{L}_{2,\varrho}(\BRD)$ is a Hilbert space. In much the
same way we show that if there is
$\zeta\in\mathbb{L}_{2,\varrho}(\mathbb{R}^{d})$ such that
$\zeta(X_{t_{0}})=\bar{u}_{-}(t_{0},X_{t_{0}})\equiv\lim_{t\rightarrow
t^{-}_{0}}\bar{u}(t,X_{t})\,$, $P_{s,x}$-a.s. for a.e. $x\in
\mathbb{R}^{d}$, then $\bar{u}(t)\rightarrow\zeta$ strongly in
$\mathbb{L}_{2,\varrho}(\mathbb{R}^{d})$ if $t\rightarrow
t_{0}^{-}$. Thus, to complete the proof of (ii) we only have to
prove that $Y^{s,x}_T=\xi(X_T)$ and
$\zeta(X_{t_{0}})=\bar{u}_{-}(t_{0},X_{t_{0}})$, $P_{s,x}$-a.s.
for a.e. $x\in \mathbb{R}^{d}$ for some
$\xi,\zeta\in\mathbb{L}_{2,\varrho}(\mathbb{R}^{d})$. We shall
prove the first statement. Since the proof of the second one is
analogous, we omit it. By (\ref{L1}),
\begin{align*}
&\int_{\Q}(E_{s,x}\int_{s}^{T}|Y^{s,x}_{t}-u(t,X_{t})|^{2}\,dt)
\varrho^{2}(x)\,ds\,dx\\
&\qquad=
\int_{0}^{T}(\int_{\BRD}\int_{0}^{t}E_{s,x}|Y_{t}^{s,x}-u(t,X_{t})|^{2}
\varrho^{2}(x)\,ds\,dx)\,dt=0
\end{align*}
Therefore there exists $\{t_{n}\}\subset [0,T]$ such that
$t_n\rightarrow T^{-}$, $Y^{s,x}_{t_{n}}=u(t_{n},X_{t_{n}})$,
$P_{s,x}$-p.n. for a.e.   $(s,x)\in Q_{t_{n}}$. Without loss of
generality we may assume that
$\|u(t_{n})\|_{2,\varrho}\le\mbox{esssup}_{t\in [0,T]} \|u(t)\|_{2,\varrho}.$ Let
us denote by $\II\subset [0,T]$ the set of those $s\in [0,T]$ for
which there exists $n_{0}\in\mathbb{N}$ such that
$Y^{s,x}_{t_{n}}=u(t_{n},X_{t_{n}})$, $P_{s,x}$-p.n., $n\ge n_{0}$
for a.e. $x\in\BRD$. Of course $\lambda([0,T]\setminus\II)=0$. Let
$s\in\II$. From the definition of $Y^{s,x}_{T}$,
\begin{equation}\label{L5}
\lim_{n\rightarrow\infty}E_{s,x}|u(t_{n},X_{t_{n}})-Y^{s,x}_{T}|^{2}=0
\end{equation}
for a.e.  $x\in\BRD$. Let us put now $u_{k}=T_{k}(u)\phi$, where
$\phi\in C_{0}(Q_{T})$. From (\ref{L5}) it follows that for a.e.
$\sx$,
\[
E_{Q_{s,x}} u_{k}(t_{n},X_{t_{n}})\eta(t_{n},X_{t_{n}})
-E_{Q_{s,x}} u_{k} (t_{m},X_{t_{m}})\eta(t_{m}, X_{t_{m}})
\rightarrow 0,\quad\eta\in C_{0}(\QT)
\]
if $n,m\rightarrow+\infty$. From the above, the Lebesgue dominated
convergence theorem and Remark \ref{uw.tp1} it follows that
\[
|\langle u_{k}(t_{n}),\eta\rangle_{2,\varrho}-\langle
u_{k}(t_{m}), \eta\rangle_{2,\varrho}|\rightarrow 0,\quad \eta\in
C_{0}(\mathbb{R}^{d}).
\]
if $n,m\rightarrow+\infty$. Since $\sup_{t\in [0,T]}
\|u_{k}(t)\|_{2,\varrho}<\infty$, there exists $\xi_{k}\in
\mathbb{L}_{2,\varrho}(\BRD)$ such that $u_{k}(t_{n})\rightarrow
\xi_{k}$ weakly in  $\mathbb{L}_{2,\varrho}(\BRD)$ if
$n\rightarrow +\infty$.  Because the functions $u_{k}$ have common
compact  support, the last convergence holds  weakly in
$\mathbb{L}_{2}(\BRD)$, too.  Next, by the Markov property,
\begin{align*}
&E_{Q_{s,x}}|u_{k}(t_{n},X_{t_{n}})-u_{k}(t_{m},X_{t_{m}})|^{2}\\
&\quad=
E_{Q_{s,x}}(|u_{k}(t_{n},X_{t_{n}})|^2-2u_{k}(t_{n},X_{t_{n}})
u_{k}(t_{m},X_{t_{m}})+|u_{k}(t_{m},X_{t_{m}})|^2)\\
&\quad =
E_{Q_{s,x}}E_{Q_{s,x}}(|u_{k}(t_{n},X_{t_{n}})|^2-2u_{k}(t_{n},X_{t_{n}})
u_{k}(t_{m},X_{t_{m}})+|u_{k}(t_{m},X_{t_{m}})|^2|\mathcal{G}_{t_{n}}^{s})\\
&\quad=E_{Q_{s,x}}g(t_{n},X_{t_{n}}),
\end{align*}
where
$g(t,y)=E_{Q_{t,y}}(|u_{k}(t,y)|^2-2u_{k}(t,y)u_{k}(t_{m},X_{t_{m}})
+|u_{k}(t_{m},X_{t_{m}})|^2)$. Integrating the above identity with
respect to $x$ and using symmetry of $(X,Q_{s,x})$ we get
\begin{equation}\label{L60}
\int_{\mathbb{R}^{d}}E_{Q_{s,x}}
|u_{k}(t_{n},X_{t_{n}})-u_{k}(t_{m},X_{t_{m}})|^{2}\,dx
=\int_{\mathbb{R}^{d}}g(t_{n},y)\,dy.
\end{equation}
Let $w_{m}, w$ be unique strong solutions of the  Cauchy problems
\[
\frac{\partial w_{m}}{\partial t}+A_{t}w_{m}=0, \quad
w_{m}(t_{m})=u_{k}(t_{m})
\]
and
\[
\frac{\partial w}{\partial t}+A_{t}w=0, \quad
w(T)=\xi_{k},
\]
respectively. From (\ref{L60}) and the definition of $g$ it
follows that for $t_{n}\le t_{m}$,
\begin{align}\label{L7}
\nonumber&\int_{\mathbb{R}^{d}}E_{Q_{s,x}}
|u_{k}(t_{n},X_{t_{n}})-u_{k}(t_{m},X_{t_{m}})|^{2}\,dx\\
&\qquad= \|u_{k}(t_{n})-u_{k}(t_{m})\|_{2}^{2} +2\langle
u_{k}(t_{n}),u_{k}(t_{m})-w_{m}(t_{n}))\rangle_{2}.
\end{align}
Taking limit inferior as $m\rightarrow+\infty$ and applying Lemma
\ref{lm.pot} we conclude from the above inequality that
\begin{align*}
&\int_{\mathbb{R}^{d}}E_{Q_{s,x}}
|u_{k}(t_{n},X_{t_{n}})-T_{k}(Y^{s,x}_{T}) \phi(T,X_{T})|^{2}\,dx\\
&\qquad\ge \|u_{k}(t_{n})-\xi_{k}\|_{2}^{2} +2\langle
u_{k}(t_{n}),\xi_{k}-w(t_{n}))\rangle_{2}.
\end{align*}
Letting $k\rightarrow +\infty$ in the above inequality we see that
$u_{k}(t_{n})\rightarrow \xi_{k}$ in $\mathbb{L}_{2}(\BRD)$ and in
$\mathbb{L}_{2,\varrho}(\BRD)$. Therefore there exists a
measurable function $\xi$ such that for every  $k\in \mathbb{N}$
and $\phi\in C_{0}(\mathbb{R}^{d})$, $u_{k}(t_{n})\rightarrow
T_{k}(\xi)\phi$ in $\mathbb{L}_{2}(\mathbb{R}^{d})$ if
$n\rightarrow\infty$. Putting $(T_{k}(\xi)\eta)(X_{T})$ instead of
$u_{k}(t_{m},X_{t_{m}})$ in (\ref{L7}) we get
\[
\int_{\mathbb{R}^{d}}E_{Q_{s,x}}
|u_{k}(t_{n},X_{t_{n}})-(T_{k}(\xi)\phi)(X_{T})|^{2}\,dx\rightarrow
0.
\]
From this and (\ref{L5}) it follows that
$\xi(X_{T})=Y_{T}^{s,x},\, P_{s,x}$-p.n. for a.e.  $x\in\BRD$.
Since $s$ was chosen arbitrarily from the set $\II$,
$\xi(X_{T})=Y_{T}^{s,x},\, P_{s,x}$-a.s. for a.e. $(s,x)\in\Q$.
Hence,  by (\ref{nq}) and Proposition  \ref{tw2.2}, $\xi\in
\mathbb{L}_{2,\varrho}(\BRD)$. In fact we have showed that
$\xi=\bar{u}(T-)$. Therefore passing to the limit in (\ref{L2}) we
get (\ref{vn}) and (iv) in the case where $\varphi=\bar{u}(T-)$
a.e.. In the general case, if $\varphi\le \bar{u}(T-)$ a.e., then
putting
$\bar{K}_{s,t}=K_{s,t}+\mathbf{1}_{\{T\}}(t)(\bar{u}(T-)-\varphi)(X_{T})$
and $\bar{\mu}=\mu+\mu_{T}$, where
$\mu_{T}(A)=\int_{\mathbb{R}^{d}}
\mathbf{1}_{A}(T,x)(\bar{u}(T-)-\varphi)(x)\,m(dx)$ for $A\in
\mathcal{B}(Q_{T})$, we see that $\bar{\mu}\sim\bar{K}$ and
(\ref{vn}) is satisfied with $K$ replaced by $\bar{K}$.
\end{dow}

\begin{uw}
{\rm In the particular case where $\LL=\frac{\partial}{\partial
t}+\frac12\Delta$, $f_{u}\equiv 0$, $u$ is quasi-continuous and
$\varphi\equiv 0$ results of Theorem  \ref{tw3.5} agree with those
given in \cite{MS} (Theorem 2, Lemma 3), because the transition
function $p$ of the Wiener process is symmetric. For instance,
integrating (\ref{P10}) with $s=0$ with respect to $m(dx)$ we get
Theorem 3(v) in \cite{MS}. Furthermore, taking expectation of
(\ref{vn}) with  $s=0$, multiplying it by $\eta\in
\mathbb{L}_{2}(\BRD)$ and then  integrating with respect to
$m(dx)$ and using (\ref{P10}) we get Lemma 3 in \cite{MS}. }
\end{uw}

\begin{uw}\label{remark}
{\rm An inspection of the proof of Theorem \ref{tw3.5} shows that
(\ref{vn}) holds for every $(s,x)\in Q_{\hat{T}}$ such that
(\ref{num}) is satisfied with $h,g$ replaced by $h^{2},g^2$,
respectively.}
\end{uw}

We now recall the notion of soft measures (see \cite{DPP}). Let
\[
\mathcal{W}_{\varrho}=\{ u\in \mathbb{L}_2
(0,T;H^1_{\varrho});\frac{\partial {u}}{\partial t}
\in\mathbb{L}_2(0,T;H^{-1}_{\varrho})\}.
\]
\begin{df}
{\rm Let $V\subset \check{Q}_{T}$ be an open set. The parabolic
capacity of $V$ is given by
\[
\mbox{cap}_{2}(V)=\inf\{\|u\|_{\WW_{\varrho}}; u \in
\WW_{\varrho}, u\ge \mathbf{ 1}_{V}\mbox{ a.e.}\}
\]
with the convention that $\inf\emptyset=\infty$. The parabolic
capacity of a  Borel subset $B$ of $\check{Q}_{T}$ is given by
\[
\mbox{cap}_{2}(B)=\inf\{\mbox{cap}_{2}(V); V \mbox{ is an open
subset of }\check{Q}_{T}, B\subset V\}.
\] }
\end{df}
\begin{df}
{\rm We say that a Radon measure $\mu$ on $\check{Q}_{T}$ is soft
if $\mu\ll\mbox{cap}_{2}$. }
\end{df}

By $\mathcal{M}_{0}(\check{Q}_{T})$ we denote the set of all soft
measures on $\check{Q}_{T}$ and by $\mathcal{M}_{0}$ the set of
Radon measures on $Q_{T}$ such that $\mu_{|\check{Q}_{T}}\in
\mathcal{M}_{0}(\check{Q}_{T})$, $\mu(0)=0$ and $\mu(T) \ll m$.

\begin{lm}
\label{lem3.7} Capacity cap$_{L}$ is equivalent to cap$_{2}$.
\end{lm}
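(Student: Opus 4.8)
The goal is to show that the two parabolic capacities $\mathrm{cap}_L$ (defined via hitting probabilities of the Markov family $\mathbb{X}$) and $\mathrm{cap}_2$ (defined variationally through $\mathcal{W}_\varrho$) have the same null sets. Since both are Choquet capacities, it suffices to compare them on compact subsets $K\subset\subset\check{Q}_T$, and in fact to show that $\mathrm{cap}_L(K)=0$ if and only if $\mathrm{cap}_2(K)=0$. The plan is to obtain two one-sided estimates, each producing a constant depending only on $\lambda,\Lambda,\varrho$ and on a compact neighbourhood of $K$, so that one capacity controls the other up to constants on that neighbourhood.

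First I would handle the implication $\mathrm{cap}_2(B)=0 \Rightarrow \mathrm{cap}_L(B)=0$. Take $B$ with $\mathrm{cap}_2(B)=0$; then there is a sequence $v_n\in\mathcal{W}_\varrho$ with $v_n\ge \mathbf{1}_V$ a.e.\ on open sets $V\supset B$ and $\|v_n\|_{\mathcal{W}_\varrho}\to 0$. Each $v_n$ is (a version of) a strong solution of a Cauchy problem $\frac{\partial v_n}{\partial t}+L_t v_n = -\Phi_n$ with $v_n(T)$ and $\Phi_n$ controlled by $\|v_n\|_{\mathcal{W}_\varrho}$; actually one can take $v_n$ to be the potential of a suitable nonnegative datum so that, by the stochastic representation of Proposition~\ref{stw4.1} (or Theorem~\ref{tw3.5}), $v_n(t,X_t)$ dominates $\mathbf{1}$ on the hitting of $V$ along the paths. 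Then for $s\in[0,T)$, $P_{s,x}$-a.s., if the path $(t,X_t)$ enters $V$ after time $s$ one has $\sup_{s\le t\le T}|v_n(t,X_t)|\ge c$, so by Theorem~\ref{tw3.5}(i) (applied with $g\equiv 0$, $f\equiv 0$, i.e.\ in the linear case),
\[
\mathrm{cap}_L(V)\le C\sup_{s\in[0,T)}\int_{\mathbb{R}^d}\big(E_{s,x}\,\mathrm{esssup}_{s\le t\le T}|v_n(t,X_t)|^2\big)\varrho^2(x)\,dx\le C'\|v_n\|_{\mathcal{P}}^2\to 0,
\]
whence $\mathrm{cap}_L(B)=0$.

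For the converse, $\mathrm{cap}_L(B)=0\Rightarrow\mathrm{cap}_2(B)=0$, I would argue by contradiction or directly construct a competitor. Fix a compact $K$ with $\mathrm{cap}_2(K)>0$; I want to show $\mathrm{cap}_L(K)>0$. Choose $\varrho$ equal to $1$ on a compact neighbourhood of $K$ (legitimate since $\mathrm{cap}_2$, $\mathrm{cap}_L$ are both local and any $\varrho\in\mathcal{R}$ is bounded below by a positive constant on compacts). The idea is that the equilibrium-type potential associated with $K$ in the variational sense is, by the parabolic potential theory of Section~\ref{sec4} (Theorem~\ref{tw3.5}(iii)–(iv)), represented by an additive functional $K^{s,x}$ with $\mu\sim K$, and the total mass of $\mu$ on $K$ is comparable to $\mathrm{cap}_2(K)$; on the other hand, integrating the correspondence~(\ref{P10}) against $p(s,x,t,y)$ and using the lower Aronson bound (Proposition~\ref{tw2.2}) shows that $E_{s,x}\,K^{s,x}_{s,T}$ is bounded below by a multiple of $\mu(K)$ uniformly for $(s,x)$ in a compact set, which in turn forces $P_{s,x}(\exists t\in(s,T):(t,X_t)\in K)>0$ on a set of positive measure, i.e.\ $\mathrm{cap}_L(K)>0$. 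Alternatively and more cleanly: given $\mathrm{cap}_L(K)=0$, the hitting functional of $K$ is identically zero $P_{s,x}$-a.s.\ for a.e.\ $(s,x)$, so $\mathbf{1}_{\{\exists t:(t,X_t)\in K\}}$ vanishes; the reduced function (réduite) $R_K$, which solves a variational problem whose value is $\mathrm{cap}_2$-equivalent to $\|R_K\|^2_{\mathcal{W}_\varrho}$, then has a stochastic representation forcing $R_K=0$ a.e., hence $\mathrm{cap}_2(K)=0$.

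The main obstacle, and the step I would spend the most care on, is the converse direction — matching the variational competitor for $\mathrm{cap}_2$ with the probabilistic hitting representation for $\mathrm{cap}_L$. The subtlety is that elements of $\mathcal{W}_\varrho$ need not have quasi-continuous versions with respect to the parabolic structure (this is the recurring difficulty flagged in the introduction), so one cannot directly evaluate the competitor $v$ along paths $t\mapsto v(t,X_t)$; one must instead regularize (Steklov's mollification, as in Lemma~\ref{lm2.8}), pass through the penalized/potential approximations $u_n$ of Theorem~\ref{tw3.5}, and control everything via the uniform estimate (\ref{eq4.10}) and the Revuz-type correspondence (\ref{P8}). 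Once the uniform bound $\mathrm{cap}_L(K)\le C\,\mathrm{cap}_2(K)$ on compacts (with $C$ depending only on $\lambda,\Lambda$ and the chosen neighbourhood) is established together with the reverse bound from the first paragraph, extending to Borel sets is routine by the Choquet capacitability of both set functions, and the lemma follows.
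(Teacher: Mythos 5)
The paper does not prove this lemma directly: it simply invokes \cite{Oshima} together with \cite[Theorem 1]{Pierre}, i.e.\ the identification of the variational parabolic capacity with the capacity of the generalized (time-dependent) Dirichlet form, and the identification of the latter's polar sets with the sets not hit by the associated Hunt process. Your attempt to give a self-contained argument is therefore more ambitious than what the paper does, and in one direction it is workable: for $\cq(B)=0\Rightarrow\cl(B)=0$ the Chebyshev-plus-maximal-inequality scheme is sound, provided you justify the maximal estimate for the competitors. Note that an arbitrary $v_n\in\WW_\varrho$ with $v_n\ge\mathbf{1}_V$ a.e.\ need not lie in $\PP$, so Theorem \ref{tw3.5}(i) does not apply to it as stated; you must either replace $v_n$ by a dominating potential (r\'eduite) or use a maximal inequality valid on all of $\WW_\varrho$ (this is \cite[Corollary 5.5]{Kl1}, used in the proof of Proposition \ref{cont}, but it is an $\mathbb{L}_1$ estimate, so the Chebyshev step has to be adjusted). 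The a.e.\ versus everywhere issue for $v_n\ge\mathbf{1}_V$ is handled correctly by the absolute continuity of the occupation measure, and the weight $\varrho^2$ versus $dm$ discrepancy is harmless on compacts.

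The genuine gap is in the converse direction, $\cl(K)=0\Rightarrow\cq(K)=0$, which is where the whole content of the lemma sits. Your argument there presupposes exactly the facts that need proving: that the equilibrium potential of $K$ has $\WW_\varrho$-norm (or associated measure mass) comparable to $\cq(K)$, that the Revuz-type functional of its measure increases only while the path sits in $K$ (so that $E_{s,x}K_{s,T}>0$ forces a positive hitting probability), and, in the ``cleaner'' variant, that the variational r\'eduite coincides a.e.\ with the probabilistic hitting function $ (s,x)\mapsto P_{s,x}(\exists t:(t,X_t)\in K)$. None of these is established in the paper (Theorem \ref{tw3.5} and (\ref{P10}) give the correspondence $\mu\sim K$ but say nothing about the support of $dK_{s,\cdot}$, and no equilibrium-potential theory for $\cq$ is developed), and the last identification is essentially a restatement of the lemma itself, so the argument is circular at that point. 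These are precisely the inputs supplied by \cite[Theorem 1]{Pierre} and by Oshima's theory, which is why the paper proves the lemma by citation rather than from scratch. To repair your proof you would need to either import those results explicitly or develop the equilibrium potential of a compact set for $\cq$ and prove its stochastic representation, which is a substantial piece of work not contained in the present paper.
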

\begin{dow}
Follows from \cite{Oshima} and \cite[Theorem 1]{Pierre}.
\end{dow}

\begin{lm}\label{lmml}
Let $u\in\mathcal{P}$. If for some $\mu\in\mathcal{M}^{+}_{0}$ and
$\varphi\in\mathbb{L}_{2,\varrho}(\mathbb{R}^{d})$,
\[
\langle u,\frac{\partial \eta}{\partial t}\rangle_{\varrho,T}
-\langle L_{t}u,\eta\rangle_{\varrho,T}
=\langle\varphi,\eta(T)\rangle_{2,\varrho}
+\langle f_{u},\eta\rangle_{2,\varrho,T}
+\int_{Q_{T}}\eta\varrho^{2}\,d\mu
\]
for every $\eta\in \mathcal{W}_{\varrho}$, then
$\bar{u}(T-)\ge\varphi$ and $\mu(T)=(\bar{u}(T-)-\varphi)\,dm$,
where $\bar{u}$ is a quasi-c\`adl\`ag version of $u$.
\end{lm}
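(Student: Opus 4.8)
The plan is to read off the terminal trace of $u$ directly from the weak identity by testing against functions that concentrate near time $T$. Let $\bar u$ be the quasi-c\`adl\`ag version of $u$ provided by Theorem \ref{tw3.5}(ii); thus $u=\bar u$ a.e.\ on $Q_T$ and $[0,T]\ni t\mapsto\bar u(t)\in\mathbb{L}_{2,\varrho}(\mathbb{R}^d)$ is c\`adl\`ag, so in particular $\bar u(T-)\in\mathbb{L}_{2,\varrho}(\mathbb{R}^d)$ and $\bar u(t)\to\bar u(T-)$ in $\mathbb{L}_{2,\varrho}(\mathbb{R}^d)$ as $t\uparrow T$. This is the only place at which the assumption $u\in\mathcal{P}$ is used; the rest of the argument needs only $u\in\mathbb{L}_2(0,T;H^1_\varrho)$, the identity, and $\mu\ge0$ Radon with $\mu(T)\ll m$.

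Next I would fix $\psi\in C_0^\infty(\mathbb{R}^d)$ and, for $n>1/T$, take the ramp $\theta_n(t)=\max\{0,1-n(T-t)\}$ (so $\theta_n\equiv0$ on $[0,T-1/n]$, $\theta_n(T)=1$, $\theta_n'=n\mathbf 1_{(T-1/n,T)}$) and $\eta_n(t,x)=\theta_n(t)\psi(x)$, which lies in $\mathcal{W}_\varrho$. Plugging $\eta_n$ into the hypothesized identity and letting $n\to\infty$, each term has a clean limit: since $a\nabla u$, $b\,\nabla u$ and $f_u$ lie in $\mathbb{L}_{2,\varrho}(Q_T)$ (the last by (H2b)) while $\psi\varrho^2$ and $\nabla(\psi\varrho^2)$ are bounded with compact $x$-support, the terms $\langle L_tu,\eta_n\rangle_{\varrho,T}$ and $\langle f_u,\eta_n\rangle_{2,\varrho,T}$ are integrals of fixed $\mathbb{L}_1(Q_T)$-functions over the shrinking set $[T-1/n,T]\times\mathbb{R}^d$ and hence vanish; the measure term, after splitting off the slice $\{T\}\times\mathbb{R}^d$ and using that $\mu$ is Radon so $\mu([T-1/n,T)\times\mathrm{supp}\,\psi)\downarrow0$, converges to $\int_{\mathbb{R}^d}\psi\varrho^2\,d\mu(T)$; and $\langle\varphi,\eta_n(T)\rangle_{2,\varrho}=\langle\varphi,\psi\rangle_{2,\varrho}$ is constant. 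The remaining term equals $\langle u,\partial_t\eta_n\rangle_{\varrho,T}=n\int_{T-1/n}^{T}\langle u(t),\psi\rangle_{2,\varrho}\,dt$, and this is where a little care is needed: $t\mapsto\langle u(t),\psi\rangle_{2,\varrho}$ is a priori defined only a.e., so one should first identify it (using $u=\bar u$ a.e.\ and Fubini, and $\psi\in\mathbb{L}_{2,\varrho}$) with the genuine c\`adl\`ag function $t\mapsto\langle\bar u(t),\psi\rangle_{2,\varrho}$ supplied by Theorem \ref{tw3.5}(ii), which is bounded on $[0,T]$ and has left limit $\langle\bar u(T-),\psi\rangle_{2,\varrho}$ at $T$; then the Ces\`aro average converges to that left limit. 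Altogether,
\[
\langle\bar u(T-)-\varphi,\psi\rangle_{2,\varrho}=\int_{\mathbb{R}^d}\psi(x)\varrho^2(x)\,d\mu(T)(x)\qquad\text{for every }\psi\in C_0^\infty(\mathbb{R}^d).
\]

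Finally I would invoke $\mu(T)\ge0$ and $\mu(T)\ll m$: write $d\mu(T)=\rho\,dm$ with $0\le\rho\in\mathbb{L}^{1}_{\mathrm{loc}}(\mathbb{R}^d)$. The displayed identity becomes $\int_{\mathbb{R}^d}(\bar u(T-)-\varphi-\rho)\psi\varrho^2\,dm=0$ for all $\psi\in C_0^\infty(\mathbb{R}^d)$; since $(\bar u(T-)-\varphi-\rho)\varrho^2\in\mathbb{L}^{1}_{\mathrm{loc}}(\mathbb{R}^d)$ and $\varrho>0$ everywhere, this forces $\bar u(T-)-\varphi=\rho$ a.e. Hence $\bar u(T-)\ge\varphi$ a.e.\ and $\mu(T)=\rho\,dm=(\bar u(T-)-\varphi)\,dm$, as required. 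The main (indeed essentially the only) obstacle is the handling of the time-average term: once $\bar u$ is used consistently in place of the merely a.e.-defined $u$, so that one is taking the Ces\`aro limit of an honest c\`adl\`ag function, the passage to the limit is routine.
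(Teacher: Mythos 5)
Your proof is correct and follows essentially the same route as the paper's: both test the weak identity against functions that vanish away from $T$ and ramp linearly up to a fixed spatial profile, then use the c\`adl\`ag regularity of $t\mapsto\bar u(t)$ in $\mathbb{L}_{2,\varrho}(\mathbb{R}^d)$ from Theorem \ref{tw3.5}(ii) to identify the Ces\`aro limit with $\bar u(T-)$ and conclude $\langle\bar u(T-)-\varphi,\psi\rangle_{2,\varrho}=\int_{\mathbb{R}^d}\psi\varrho^2\,d\mu(T)$. The only differences are cosmetic (a one-parameter ramp at $T$ with $\psi\in C_0^\infty$ versus the paper's two-parameter limit with general $\eta$), and you make explicit the final step using $\mu(T)\ge0$ and $\mu(T)\ll m$, which the paper leaves implicit.
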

\begin{dow}
Let $\eta\in W^{1,1}_{\varrho}(Q_{T})$ be positive and let
$t\in(0,T)$.  Taking as a test function $\eta^{n,t}\in
W^{1,1}_{\varrho}(Q_{T})$ defined by the formula
\[
\eta^{n,t}(s,x)
=\left\{\begin{array}{ll} 0,& s\in[0,t], \\
\frac{\eta(t_n,x)}{t_{n}-t}(s-t),& s\in(t,t_{n}),\\
\eta(t_n,x),& s\in[t_{n},T],
\end{array}\right.
\]
where $\{t_n\}\subset(t,T)$ is a sequence such that $t_n\downarrow
t$, we get
\begin{align}
\label{eq3.25} \nonumber&\frac{1}{t_n-t}\int^{t_n}_{t}\langle
u(\theta),\eta(\theta)\rangle_{2,\varrho}\,d\theta
+\int_{t_{n}}^{T}\langle u(\theta),\frac{\partial \eta}{\partial
t}(\theta)\rangle_{2,\varrho}\,d\theta \\
&\qquad =-\langle
L_{t},\eta^{n,t}\rangle_{2,\varrho,t,T}+\int_{t}^{T}\langle
f_u(\theta),\eta^{n,t}(\theta)\rangle_{2,\varrho}\,d\theta\nonumber\\
&\qquad\quad +\int_{Q_{tT}}\eta^{n,t}\varrho^{2}\,d\mu
+\langle\varphi,\eta^{n,t}(T)\rangle_{2,\varrho}\,.
\end{align}
Letting $n\rightarrow \infty$ and then $t\uparrow T$, and using
the fact that $[0,T]\ni t\rightarrow
\bar{u}(t)\in\mathbb{L}_{2,\varrho}(\mathbb{R}^{d})$ is c\`adl\`ag
we conclude from the above that  $\langle
\bar{u}(T-)-\varphi,\eta(T)\rangle_{2,\varrho}=
\int_{\BRD}\eta(T)\varrho^{2}\,d\mu(T)$, which proves the lemma.
\end{dow}

\begin{stw}\label{stw3.10}
For every $\mu\in\mathcal{M}^{+}_{0}$ there exists a unique PAF
$K$ such that $\mu\sim K$.
\end{stw}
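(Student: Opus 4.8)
The plan is to deduce the result from Theorem~\ref{tw3.5}, which already does the hard work for parabolic potentials. Uniqueness is free: if $\mu\sim K^{1}$ and $\mu\sim K^{2}$ then $K^{1}=K^{2}$, by the Revuz-type argument recalled just after the definition of $\sim$ (see \cite{Revuz1}); so only existence must be shown. Throughout I take $f\equiv0$, which satisfies (H2) with $g\equiv0$; then a parabolic potential is simply a $u\in\mathbb{L}_{2}(0,T;H^{1}_{\varrho})$ with $\partial_{t}u+L_{t}u\le0$ in $\mathcal{D}'(\QW)$ and $\mbox{esssup}_{t}\|u(t)\|_{2,\varrho}<\infty$, and $-\mathcal{L}\bar u=-(\partial_{t}\bar u+L_{t}\bar u)$. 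Since $\mu(0)\equiv0$ needs no attention and $\mu(T)\ll m$, say $d\mu(T)=\psi\,dm$ with $\psi\ge0$, it suffices to produce the PAF corresponding to the restriction of $\mu$ to $\Q=[0,T)\times\BRD$ and then re-add the terminal jump $t\mapsto\mathbf{1}_{\{T\}}(t)\,\psi(X_{T})$ exactly as in the last paragraph of the proof of Theorem~\ref{tw3.5} ($\psi(X_{T})<\infty$ $P_{s,x}$-a.s., as $X_{T}$ is a.s.\ finite). So from now on $\mu$ is assumed carried by $\Q$.

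First I would dispose of ``regular'' measures, for which Theorem~\ref{tw3.5} applies directly. Suppose $\nu\in\mathcal{M}^{+}_{0}$ is carried by $\Q$ and the weak solution $u_{\nu}$ of $\partial_{t}u+L_{t}u=-\nu$, $u(T)=0$, is a parabolic potential with $\bar u_{\nu}(T-)=0$ (this holds for a sufficiently large class of $\nu$, in particular for the nest-restrictions used below: if $\nu\in\mathcal{W}'_{\varrho}$ is suitably localized, Aronson's estimates give $\mbox{esssup}_{t}\|u_{\nu}(t)\|_{2,\varrho}<\infty$, and $\bar u_{\nu}(T-)=0$ since $u_{\nu}\equiv0$ near $T$). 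Then Theorem~\ref{tw3.5}(iii),(iv) applied to $u_{\nu}$ with $\varphi=0$ gives a square-integrable PAF $K^{\nu}$ with $\nu\sim K^{\nu}$. Because both sides of (\ref{P10}) are linear in the measure and the correspondence is one-to-one, $\nu\mapsto K^{\nu}$ is additive on this class, hence monotone: $\nu_{1}\le\nu_{2}\Rightarrow K^{\nu_{1}}\le K^{\nu_{2}}$ $P_{s,x}$-a.s.\ for q.e.\ $\sx$.

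For a general $\mu\in\mathcal{M}^{+}_{0}$ carried by $\Q$ I would argue by approximation from below. By Lemma~\ref{lem3.7}, $\mu$ charges no set of zero $\mbox{cap}_{L}$, so by the structure theory of soft measures (\cite{DPP,Oshima}) there is an increasing sequence of closed sets $F_{n}\subset\Q$ with $\mbox{cap}_{L}(\Q\setminus F_{n})\to0$ such that each $\mu_{n}:=\mathbf{1}_{F_{n}}\mu$ is regular in the sense above; and $\mu_{n}\uparrow\mu$, since $\Q\setminus\bigcup_{n}F_{n}$ has zero $\mbox{cap}_{L}$ and hence is $\mu$-null ($\mu$ being soft). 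Writing $u_{n},K^{n}$ for the potential and PAF attached to $\mu_{n}$, set $K_{s,t}:=\sup_{n}K^{n}_{s,t}$. It is $\mathcal{G}^{s}_{t}$-measurable, increasing in $t$, additive (by monotonicity of $\{K^{n}\}$) and $K_{s,s}=0$; letting $n\to\infty$ in (\ref{P10}) by monotone convergence --- first for $f\in C_{0}^{+}(\QT)$, then for $f\in\mathcal{B}^{+}(\QT)$ by a monotone class argument --- gives $E_{s,x}\int_{s}^{T}f(t,X_{t})\,dK_{s,t}=\int_{Q_{sT}}f\,p\,d\mu$ for q.e.\ $\sx$, i.e.\ $\mu\sim K$ in the sense of measures. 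Invoking \cite[Lemma A.3.3]{Fukushima} one extracts a genuine PAF equivalent to $K$, and re-adding the terminal jump $\psi(X_{T})$ (recalling $\mu(0)\equiv0$) completes the proof.

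The step I expect to be the main obstacle is showing that the monotone limit $K=\sup_{n}K^{n}$ is \emph{finite-valued} (and c\`adl\`ag): there is no a priori energy bound on $\mu$, so $\sup_{n}\|u_{n}\|_{\PP}$ and $\sup_{n}E_{s,x}|K^{n}_{s,T}|^{2}$ may well be infinite, and finiteness of $K$ has to be obtained pathwise. Here one puts $\sigma_{n}=\inf\{t>s:(t,X_{t})\in\QT\setminus F_{n}\}\wedge T$ and shows, from the hitting-probability form (\ref{cc}) of $\mbox{cap}_{L}$, $\mbox{cap}_{L}(\QT\setminus F_{n})\to0$, the monotonicity of $\sigma_{n}$, and a Remark~\ref{ess}-type passage from a.e.\ to q.e., that $\sigma_{n}\uparrow T$ $P_{s,x}$-a.s.\ for q.e.\ $\sx$; since $\mu_{m}=\mu_{n}$ on $F_{n}$ for $m\ge n$, locality of the Revuz correspondence gives $K_{s,t}=K^{n}_{s,t}<\infty$ on $\{t<\sigma_{n}\}$, so $K_{s,\cdot}$ is finite and right-continuous on $[s,T)$; that $K_{s,\cdot}$ does not blow up at $T$ is where it matters that $\mu$ is Radon on the \emph{closed} cylinder $Q_{T}$ (so $\mu$ has locally finite mass near the terminal slice), the absolutely continuous part $\mu(T)=\psi\,dm$ contributing only the finite jump $\psi(X_{T})$. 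In short, softness of $\mu$ is exactly what makes the exit times from complements of a nest increase to $T$, so that along $P_{s,x}$-a.a.\ trajectories only a locally finite amount of $\mu$ is accumulated.
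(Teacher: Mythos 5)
Your overall plan---settle uniqueness by the Revuz argument already recorded after the definition of $\sim$, and reduce existence to Theorem \ref{tw3.5} applied to finite-energy pieces of $\mu$---matches the paper's, but the reduction itself is genuinely different. The paper does not exhaust $\mu$ along a nest: it invokes \cite[Theorem 2.23]{DPP}, which writes $d\mu=\alpha_{1}\,d\mu_{1}+\alpha_{2}\,d\mu_{2}$ on $\check{Q}_{T}$ with $\mu_{1},\mu_{2}\in\mathcal{M}^{+}_{0}(\check{Q}_{T})\cap\mathcal{W}'_{\varrho}$ and positive $\alpha_{i}\in\mathbb{L}_{1}^{loc}(\check{Q}_{T},|\mu|)$, applies Theorem \ref{tw3.5} to the two potentials $u_{i}$ of $\mu_{i}$ to obtain PAFs $K^{i}\sim\bar{\mu}_{i}$, and then exhibits the answer in closed form as $dK=\bar{\alpha}_{1}(\cdot,X_{\cdot})\,dK^{1}+\bar{\alpha}_{2}(\cdot,X_{\cdot})\,dK^{2}$; the terminal mass $\mu(T)=\xi\,dm$ is absorbed by the choice $\varphi_{i}=\bar{u}_{i}(T-)-\frac12\xi$ rather than added by hand. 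This buys the paper two things your route must work for: there is no monotone limit whose finiteness has to be controlled, and no nest has to be produced.

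Those are exactly the two places where your sketch does not close. First, the existence of closed sets $F_{n}\uparrow$ with $\mbox{\rm cap}_{L}(Q_{\hat{T}}\setminus F_{n})\to0$ and each $\mathbf{1}_{F_{n}}\mu$ of finite energy is the foundation of your construction, and the citations you give do not supply it in this setting: \cite[Theorem 2.23]{DPP} is the multiplicative decomposition above, not a nest, and carrying Oshima's nest characterization of smooth measures over to the weighted space $\mathcal{W}'_{\varrho}$ and the capacity used here is itself a statement needing proof (one can try to extract a nest from the decomposition by truncating $\alpha_{1}+\alpha_{2}$, but then one must check that a positive measure dominated by a finite-energy measure is again of finite energy for $\mathcal{W}'_{\varrho}$). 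Second, your exit-time argument correctly gives $K_{s,t}=K^{n}_{s,t}<\infty$ on $\{t<\sigma_{n}\}$ and hence finiteness of $K_{s,\cdot}$ on $[s,T)$, but it does not control $\lim_{t\uparrow T}K_{s,t}=\lim_{n}K^{n}_{s,\sigma_{n}-}$. The remark that ``$\mu$ is Radon on the closed cylinder'' does not settle this: $\mathbb{R}^{d}$ is unbounded, no integrability of $\mu$ against $\varrho^{2}$ is assumed in $\mathcal{M}^{+}_{0}$, and $E_{s,x}K_{s,T}=\int_{Q_{sT}}p(s,x,t,y)\,d\mu(t,y)$ may well be infinite, so finiteness at $T$ cannot be read off from expectations and needs a genuine pathwise argument. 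The remaining steps (additivity and monotonicity of $\nu\mapsto K^{\nu}$, locality via (\ref{P10}) with $f=\mathbf{1}_{F_{n}^{c}}$, the monotone-class passage to the limit, and the separate treatment of $\mu(T)$) are sound.
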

\begin{dow}
Let $\mu\in\mathcal{M}^{+}_{0}$. Suppose that $\mu(T)=\xi\,dm$ for
some $\xi\ge0$. From \cite[Theorem 2.23]{DPP} it follows that
there exist
$\mu_{1},\mu_{2}\in\mathcal{M}^{+}_{0}(\check{Q}_{T})\cap
\mathcal{W}'_{\varrho}$ and positive
$\alpha_{1},\alpha_{2}\in\mathbb{L}_{1}^{loc}(\check{Q}_{T},|\mu|)$
such that $d\mu=\alpha_{1}\,d\mu_{1}+\alpha_{2}\,d\mu_{2}$ on $
\check{Q}_{T}$. Let $u_{1},u_{2}\in
\mathbb{L}_{2}(0,T;H^{1}_{\varrho})$ be such that
$\mathcal{L}u_{i}=-\mu_{i}, i=1,2$ in
$\mathcal{D}'(\check{Q}_{T})$. Then $u_{1},u_{2}\in \mathcal{P}$.
Let $\bar u_1,\bar u_2$ be quasi-c\`adl\`ag versions of $u_1,u_2$
of Theorem \ref{tw3.5}(i), and let
$\varphi_{i}=\bar{u}_{i}(T-)-\frac12\xi$, $i=1,2$. Then, by
Theorem \ref{tw3.5}(ii), there exist PAFs $K_{1},K_{2}$
such that $K_{1}\sim \bar{\mu}_{1}$, $K_{2}\sim\bar{\mu}_{2}$,
where $\bar{\mu}_{1},\bar{\mu}_{2}$ are extensions of
$\mu_{1},\mu_{2}$ on $Q_{T}$ such that
$d\bar{\mu}_{i}(T)=(\bar{u}_{i}(T-)-\varphi_{i})\,dm$, $i=1,2$.
Putting $\bar{\alpha}_{i}(T,\cdot)=1,
\bar{\alpha}_{i}(0,\cdot)=0$,
$\bar{\alpha}_{i|\check{Q}_{T}}=\alpha_{i}, i=1,2$ we see  that
$d\mu=\bar{\alpha}_{1}\,d\bar{\mu}_{1}+\bar{\alpha}_{2}\,d\bar{\mu}_{2}$
on $Q_{T}$ and  $\mu\sim
\bar{\alpha}_{1}(\cdot,X_{\cdot})\,dK^{1}+
\bar{\alpha}_{2}(\cdot,X_{\cdot})\,dK^{2}$.
\end{dow}

\begin{df}
{\rm We say that $dK:\Omega\times \mathcal{B}([0,T])\rightarrow
\mathbb{R}$ is a random measure if
\begin{enumerate}
\item [(a)]$dK(\omega)$ is a nonnegative
measure on $\BB([0,T])$ for every $\omega\in\Omega$,
\item [(b)]the mapping $\omega\rightarrow dK(\omega)$ is
$(\mathcal{G},\mathcal{B}(\mathcal{M}^{+}([0,T])))$-measurable,
\item [(c)]$\int_{s}^{t}\,dK_{\theta}$
is $\mathcal{G}^{s}_{t}$-measurable for every $0\le s\le t\le T$.
\end{enumerate}}
\end{df}

\begin{uw}
\label{uw4.6} {\rm From results proved in \cite{Oshima} it follows
that there is a Hunt process $\{(Z_{t}, \tilde{P}_{z}),\,t\ge0,\,
z\in\mathbb{R}^{d+1}\}$ associated with the operator
$\mathcal{L}$. In fact, $Z_{t}=(\tau(t),X_{\tau(t)})$ and
$\tilde{P}_{z}$ coincides with $P_{s,x}$ for $z\in Q_{\hat{T}}$,
where $\tau$ is the uniform motion to the right, i.e.
$\tau(t)=\tau(0)+t$ and $\tau(0)=s$ under $P_{s,x}$\,. }
\end{uw}

\begin{lm}\label{lmd.3}
Let $\{dK_{n}\}$ be a sequence of random measures. Assume that
for every  $(s,x)\in F\subset Q_{\hat{T}}$ there exists random
element
\[
dK^{s,x}:(\Omega,\mathcal{G})\rightarrow (
\mathcal{M}^{+}([0,T]),\mathcal{B}(\mathcal{M}^{+}([0,T])))
\]
such that
\[
dK^{n}\rightarrow dK^{s,x}\mbox{ in } \mathcal{M}^{+}([0,T])\mbox{
in probability }P_{s,x}
\]
as $n\rightarrow +\infty$. Then there exists a random measure $dK$
such that for every $(s,x)\in F$,
\[
dK^{s,x}=dK,\quad P_{s,x}\mbox{-}a.s..
\]
\end{lm}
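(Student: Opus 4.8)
The plan is to exhibit $dK$ as an almost sure limit of one subsequence of $\{dK^{n}\}$ that works under every $P_{s,x}$, $(s,x)\in F$, simultaneously. Since $\mathcal{M}^{+}([0,T])$ equipped with the topology of weak convergence is a Polish space, fix a compatible complete metric $\rho$. Because each $dK^{n}$ is a random measure, the map $\omega\mapsto dK^{n}(\omega)$ is $(\mathcal{G},\mathcal{B}(\mathcal{M}^{+}([0,T])))$-measurable, so each $\rho(dK^{m},dK^{n})$ is a nonnegative $\mathcal{G}$-measurable random variable, and by hypothesis $\{dK^{n}\}$ is $P_{s,x}$-Cauchy in probability for every $(s,x)\in F$.

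First I would fix a countable set $D\subset F$ and, by a diagonal extraction combined with the Borel--Cantelli lemma, choose a subsequence $\{n_{k}\}$ such that $(dK^{n_{k}}(\omega))_{k}$ converges in $(\mathcal{M}^{+}([0,T]),\rho)$ for $P_{z}$-a.e.\ $\omega$, for every $z\in D$. Put
\[
\Lambda=\{\omega\in\Omega:\ (dK^{n_{k}}(\omega))_{k}\ \text{converges in}\ \mathcal{M}^{+}([0,T])\};
\]
this is a $\mathcal{G}$-measurable event with $P_{z}(\Lambda)=1$ for all $z\in D$. The main work is to upgrade this to $P_{s,x}(\Lambda)=1$ for \emph{every} $(s,x)\in F$. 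Here I would use the Markov structure: by property (c) in the definition of a random measure, for $r>s$ the restriction of each $dK^{n}$ to $(r,T]$ is $\mathcal{G}^{r}_{T}$-measurable, hence a functional of $(X_{t})_{t\in[r,T]}$ that does not depend on the initial point; thus, modulo a $P_{s,x}$-null set, $\Lambda$ differs from a $\mathcal{G}^{r}_{T}$-measurable event, and the strong Markov property of $\XX$ (equivalently, of the Hunt process $Z$ of Remark~\ref{uw4.6}), together with an entrance/hitting argument in the spirit of Remark~\ref{ess} and a suitably rich choice of $D$, forces $P_{s,x}(\Lambda)=1$. This step is the crux of the proof, the point being that the measures $\{P_{s,x}\}_{(s,x)\in F}$ are mutually singular and $F$ need not be countable, so a direct Borel--Cantelli argument along one subsequence is unavailable and the compatibility of $\{dK^{n}\}$ with the Markov family must be exploited.

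Granting $P_{s,x}(\Lambda)=1$ for all $(s,x)\in F$, define $dK(\omega)=\lim_{k}dK^{n_{k}}(\omega)$ for $\omega\in\Lambda$ and $dK(\omega)=0$ for $\omega\notin\Lambda$. Since weak convergence on the compact space $[0,T]$ forces bounded total mass, $dK(\omega)$ is a finite positive Borel measure for each $\omega$, so (a) holds, and $\omega\mapsto dK(\omega)$ is $\mathcal{G}$-measurable as a pointwise limit on $\Lambda$ of $\mathcal{G}$-measurable maps, so (b) holds. For (c), fix $0\le s\le t\le T$; for continuous $f$ with support in $(s,t)$, approximating $\int_{s}^{t}f(\theta)\,dK^{n_{k}}_{\theta}$ by Riemann sums $\sum_{i}f(\theta_{i})\int_{\theta_{i-1}}^{\theta_{i}}dK^{n_{k}}_{\theta}$ over partitions of compact subintervals of $(s,t)$ and using property (c) for $dK^{n_{k}}$ shows that $\int_{s}^{t}f(\theta)\,dK^{n_{k}}_{\theta}$ is $\mathcal{G}^{s}_{t}$-measurable, hence so is its limit $\int_{s}^{t}f(\theta)\,dK_{\theta}$ on $\Lambda$; letting $f\uparrow\mathbf{1}_{(s,t)}$ gives $\mathcal{G}^{s}_{t}$-measurability of $dK((s,t))$, and for $t<T$ we get $\int_{s}^{t}dK_{\theta}=dK((s,t])=\lim_{\varepsilon\downarrow 0}dK((s,t+\varepsilon))$, which is $\mathcal{G}^{s}_{t+}=\mathcal{G}^{s}_{t}$-measurable by right-continuity of $\{\mathcal{G}^{s}_{t}\}$ (valid since $X$ is Feller), while the case $t=T$ follows from the same Riemann-sum argument applied on $(s,T]$. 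Finally, for $(s,x)\in F$ we have $dK^{n_{k}}\to dK^{s,x}$ in $P_{s,x}$-probability and $dK^{n_{k}}\to dK$ $P_{s,x}$-a.s., so by uniqueness of the limit in probability $dK=dK^{s,x}$ $P_{s,x}$-a.s., which completes the proof.
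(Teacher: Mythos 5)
Your plan founders precisely at the step you yourself flag as the crux. You extract one subsequence $\{n_k\}$ along which $dK^{n_k}$ converges $P_z$-a.s. for all $z$ in a countable $D\subset F$, and then assert that the strong Markov property, an entrance argument and a "suitably rich" $D$ force $P_{s,x}(\Lambda)=1$ for every $(s,x)\in F$. No argument is given, and it is not clear one exists. To transfer convergence from $D$ to an arbitrary $(s,x)\in F$ via the Markov property you would write $P_{s,x}(\Lambda_r)=E_{s,x}P_{r,X_r}(\Lambda_r)$ for the ($\mathcal{G}^r_T$-measurable) event $\Lambda_r$ that the restrictions of $dK^{n_k}$ to $(r,T]$ converge, and you would then need $P_{r,y}(\Lambda_r)=1$ for $p(s,x,r,\cdot)\,dy$-almost every $y$, i.e. for Lebesgue-a.e. $y$. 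A diagonal extraction over a countable $D$ tells you nothing off $D$; to reach "a.e. $y$" you would have to argue through mixtures $\int P_{r,y}\,dy$, which requires the hypothesis to hold for a.e. $(r,y)$ — but $F$ is an arbitrary subset of $Q_{\hat{T}}$ in this lemma and the conclusion is claimed for every point of $F$ with no such assumption. Even granting that, you would still need to control the mass of $dK^{n_k}$ near $[0,r]$ to pass from convergence on $(r,T]$ to convergence on $[0,T]$. So the central step is a genuine gap, not a routine verification; the peripheral verifications of properties (a)--(c) are fine but do not touch the difficulty.

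The paper avoids the problem by never seeking a single subsequence. It sets $n_0(s,x)=0$ and
\[
n_{k}(s,x)=\inf\{m>n_{k-1}(s,x):\ \sup_{p,q\ge m}P_{s,x}(d_{M}(dK^{p},dK^{q})>2^{-k})<2^{-k}\},
\]
checks by induction that each $n_k$ is a Borel function of $(s,x)$, so that $dL^{s,x,k}=dK^{n_k(s,x)}$ is jointly $\mathcal{B}(Q_{\hat{T}})\otimes\mathcal{G}$-measurable, defines $dL^{s,x}=\lim_k dL^{s,x,k}$ where the limit exists (and $0$ elsewhere) — Borel--Cantelli gives $dL^{s,x}=dK^{s,x}$ $P_{s,x}$-a.s. for each $(s,x)\in F$ \emph{separately} — and finally sets $dK(\omega)=dL^{Z_0(\omega)}(\omega)$, where $Z_0=(s,x)$ $P_{s,x}$-a.s. by Remark \ref{uw4.6}. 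The $(s,x)$-dependence of the approximating subsequence is thus absorbed into a measurable composition with the starting point, and no subsequence uniform over $F$ is ever needed. To repair your outline, replace the "upgrade over $F$" step by this measurable-selection device.
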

\begin{dow}
Let $n_{0}(s,x)=0$ and let
\[
n_{k}(s,x)=\inf\{m>n_{k-1}(s,x), \sup_{p,q\ge
m}P_{s,x}(d_{M}(dK^{p},dK^{q})>2^{-k})<2^{-k}\}
\]
for $k>0$. By induction, for every $k\ge0$,
$n_{k}\in\mathcal{B}(Q_{\hat{T}})$ and hence
$dL^{s,x,k}=dK^{n_{k}(s,x)}$ is
$\mathcal{B}(Q_{\hat{T}})
\otimes\mathcal{G}/\mathcal{B}(\mathcal{M}^{+}[0,T])$
measurable. Put
\begin{equation}
\label{eq4.27} dL^{s,x}(\omega)= \left\{
\begin{array}{ll} \lim_{k\rightarrow \infty} dL^{s,x,k}(\omega)\mbox{ in }
\mathcal{M}^{+}([0,T]), &\mbox{if the limit exists},\\
0, & \mbox{otherwise}.
\end{array}
\right.
\end{equation}
By the Borel-Cantelli lemma, for every $(s,x)\in F$ the limit in
(\ref{eq4.27}) exists $P_{s,x}$-a.s. and $dL^{s,x}=dK^{s,x}$,
$P_{s,x}$-a.s.. To prove the lemma it suffices now to put
$dK(\omega)=dL^{Z_{0}(\omega)}$, where $Z$ is defined in Remark
\ref{uw4.6}.
\end{dow}

\begin{wn}
For every $\mu\in \mathcal{M}^{+}_{0}$ there exists a unique
random measure $dK$ such that $K\sim\mu$, where $K$ is PAF such
that $K_{s,t}=\int_{s}^{t}dK_{\theta}$.
\end{wn}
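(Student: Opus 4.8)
The plan is to combine Proposition~\ref{stw3.10} (existence and uniqueness of the PAF corresponding to $\mu$) with the gluing lemma, Lemma~\ref{lmd.3}, taking the penalized additive functionals of the proof of Theorem~\ref{tw3.5} as the approximating sequence of random measures. Uniqueness I would dispose of first: if $dK,dK'$ are random measures with $\mu\sim K$, $\mu\sim K'$, where $K_{s,t}=\int_s^t dK_\theta$ and $K'_{s,t}=\int_s^t dK'_\theta$, then $K$ and $K'$ are PAFs corresponding to $\mu$ (additivity, $\mathcal{G}^s_t$-measurability and monotonicity all follow from (a)--(c) in the definition of a random measure), so $K=K'$ by Proposition~\ref{stw3.10}; hence $\int_s^t dK_\theta=\int_s^t dK'_\theta$ for all $t\in[s,T]$, $P_{s,x}$-a.s. for q.e. $(s,x)$, and therefore $dK=dK'$ in the same sense.

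For existence, let $K$ be the PAF with $\mu\sim K$ from Proposition~\ref{stw3.10}. Since $\mu$ is soft, by the decomposition $d\mu=\bar\alpha_1\,d\bar\mu_1+\bar\alpha_2\,d\bar\mu_2$ used in the proof of Proposition~\ref{stw3.10} together with a multiplication by $\bar\alpha_i(\cdot,X_\cdot)$, it suffices to produce the random measure for a $\mu$ of the form $\mu=-\mathcal{L}u$ with $u\in\mathcal{P}$. For such $\mu$, let $u_n$ be the penalized solutions of the proof of Theorem~\ref{tw3.5}, so that $K^n_{s,t}=\int_s^t n(u_n-u)^-(\theta,X_\theta)\,d\theta$; these arise from random measures $dK^n$, properties (a)--(c) being read off from the fact that $u_n$ is Borel and $X$ is canonical, with the space--time Hunt process $Z$ of Remark~\ref{uw4.6} used to ensure that $dK^n$ puts no mass on the constant initial segment of the trajectory under $P_{s,x}$. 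The bound $\sup_n\int_{Q_T}\eta\,d\mu_n<\infty$ established in the proof of Theorem~\ref{tw3.5} keeps the total masses controlled, and, exactly as there, Corollary~\ref{wnwn} gives for q.e. $(s,x)$ and every subsequence a further subsequence along which $K^n_{s,t}\to K_{s,t}$ for every $t\in[s,T]$, $P_{s,x}$-a.s.; since the $K^n_{s,\cdot}$ and $K_{s,\cdot}$ are increasing, this upgrades to $dK^n\to dK^{s,x}$ in $\mathcal{M}^{+}([0,T])$ in probability $P_{s,x}$, where $dK^{s,x}$ is the random measure induced by $K_{s,\cdot}$. Lemma~\ref{lmd.3} then yields a random measure $dK$ with $dK=dK^{s,x}$, $P_{s,x}$-a.s. for q.e. $(s,x)$; in particular $K_{s,t}=\int_s^t dK_\theta$, and $\mu\sim K$ by Proposition~\ref{stw3.10}, which is the assertion.

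The step I expect to be the main obstacle is the middle one: exhibiting the $dK^n$ as genuine random measures on all of $[0,T]$ satisfying (a)--(c), and obtaining convergence in the full space $\mathcal{M}^{+}([0,T])$ rather than merely weak convergence of the restrictions to $[s,T]$. One must make sure that no mass escapes or is created near the endpoints or on the part of the canonical path which is frozen before time $s$ under $P_{s,x}$, and this is precisely where the Hunt-process reformulation of Remark~\ref{uw4.6} (together with the tightness estimate from the proof of Theorem~\ref{tw3.5}) is needed. Once the approximants are correctly set up, the convergence in probability and the identification of the limit are routine, and the remainder is bookkeeping with Proposition~\ref{stw3.10} and Lemma~\ref{lmd.3}.
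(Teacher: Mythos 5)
Your proposal is correct and follows essentially the same route as the paper: the paper's own (very terse) proof likewise observes that the PAFs $K_1,K_2$ arising from the decomposition in Proposition \ref{stw3.10} are limits of the penalized random measures in the sense of Lemma \ref{lmd.3} (via Corollary \ref{wnwn} and the proof of Theorem \ref{tw3.5}), and then invokes Lemma \ref{lmd.3} to glue them into a single random measure. Your added uniqueness argument and the care about the frozen initial segment are details the paper leaves implicit, but they do not change the approach.
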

\begin{dow}
PAFs $K_{1}, K_{2}$ in the proof of Proposition \ref{stw3.10} are limits of random measures (see Corollary \ref{wnwn}
and the proof of Theorem \ref{tw3.5}) in the sense of  Lemma \ref{lmd.3} q.e.. Therefore from Lemma \ref{lmd.3}
we get the result.
\end{dow}
\medskip

Let $\mu\in\mathcal{M}^{+}_{0}$. In the sequel by $d\mu\cd$ we
denote the unique random measure associated with  $\mu$.

\nsubsection{Obstacle problem and RBSDEs} \label{sec5}

In this section we give definition of a solution of the obstacle
problem in the sense of complementary system, i.e. by solution we
mean a pair $(u,\mu)$, where $\mu$ is a Radon measure satisfying a
minimality condition. In the case of regular obstacles the
minimality condition may be expressed by the condition
$\int(u-h)\,d\mu=0$. The main difficulty in the case of nonregular
obstacle lies in the proper and rigorous formulation of the
minimality condition. In the case of linear equations M. Pierre in
a series of papers (see \cite{Pierre1,Pierre2} and references
given there) coped with the problem by introducing the notion of
precise function, precise version and precise associated function
(see \cite{Pierre2}). His theory was based on the narrower then
$\PP$ class of potentials which forced him to decompose the
obstacle problem under consideration into some parabolic equation
and the obstacle problem with generator and terminal condition
equal to zero (on the other hand such a decomposition was possible
due to linearity of the problem). Assume for a moment that the
generator and terminal condition are equal to zero. Roughly
speaking, if $\hat{u}$ is a precise version of $u$ and $\hat{h}$
is a precise function associated with $h$ then in the definition
given by M. Pierre the minimality condition has the form
\begin{equation}\label{MC}
\int_{\QT}(\hat{u}-\hat{h})\,d\mu=0.
\end{equation}
Due to results of Section \ref{sec4} concerning  the class $\PP$
which is wider then the class of potentials considered in
\cite{Pierre1,Pierre2}  we are able to give definition of the
obstacle problem which allows us to deal with nonlinear problems.
Instead of considering the notion of precise function we express
the minimality condition via stochastic processes naturally
associated with  the pair $(u,\mu)$ and barrier $h$.

It is worth pointing out that (\ref{MC}) and condition (iii) in
the following  definition  are closely related because as will be
shown in Proposition \ref{v.3}, $\hat{u}\cd=\bar{u}_{-}\cd$.
Moreover, our stochastic definition is a direct generalization of
the definition considered  in one-dimensional case  in
\cite{BenilanPierre}.

Put
\[
\mathcal{P}^{*}=\{u\in\mathbb{L}_{2}(0,T;H^{1}_{\varrho}): u\,
\mbox{is quasi-c\`adl\`ag},
\mbox{esssup}_{t\in[0,T]}\|u(t)\|_{2,\varrho}<\infty \}
\]
and note that from Theorem \ref{tw3.5} it follows that
$\mathcal{P}\subset \mathcal{P}^{*}$.

\begin{df}
{\rm Let (H1)--(H3) hold. We say that a pair $(u,\mu)$ is a
solution of OP$(\varphi,f,h)$ if $u\in{\mathcal P}$,
$\mu\in\mathcal{M}^+_{0}$ and
\begin{enumerate}
\item [\rm(i)]for every $\eta\in \mathcal{W}_{\varrho}$ such that
$\eta(0)\equiv0$,
\[
\langle u, \frac{\partial \eta}{\partial t}\rangle_{\varrho,T}
-\langle L_{t}u, \eta\rangle_{\varrho,T} =\langle \varphi,
\eta(T)\rangle_{2,\varrho} +\langle
f_{u},\eta\rangle_{2,\varrho,T}+\int_{Q_{T}}\eta\varrho^{2}\,d\mu,
\]
\item[\rm{(ii)}] $u\ge h$ a.e.,
\item[\rm{(iii)}] for q.e. $(s,x)\in Q_{\hat{T}}$,
\[
\int_{s}^{T}(\bar{u}_{-}(t,X_{t})-h^{*}_{-}(t,X_{t}))
\,d\mu(t,X_{t})=0,\quad
P_{s,x}\mbox{-}a.s.
\]
for every $h^{*}\in\mathcal{P}^{*}$ such that $h\le h^{*}\le
\bar{u}$ a.e., where $\bar u$ is a quasi-c\`adl\`ag version of $u$
(Here and in what follows given a measurable function $v$ on $Q_T$
we denote  by $v_-(t,X_t)$ the limit $\lim_{s<t,s\rightarrow
t}v(t,X_t)$).
\end{enumerate} }
\end{df}

It is worth pointing out that in the above definition $\mu$ is
defined on the whole set $Q_{T}$.

\begin{tw}\label{uniq}
Under (H1), (H2) there exists at most one solution of
OP$(\varphi,f,h)$.
\end{tw}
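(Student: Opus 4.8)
The plan is to take two solutions $(u_{1},\mu_{1})$ and $(u_{2},\mu_{2})$ of OP$(\varphi,f,h)$, prove $u_{1}=u_{2}$ (hence $\mu_{1}=\mu_{2}$) by passing to the stochastic representations of $u_{1},u_{2}$ and running a Gronwall argument of BSDE type. Since $u_{i}\in\mathcal{P}$ and $\mu_{i}\in\mathcal{M}^{+}_{0}$, condition (i) gives $\mathcal{L}u_{i}=-\mu_{i}$ in $\mathcal{D}'(\check{Q}_{T})$, so Lemma \ref{lmml} applies and yields $\varphi\le\bar{u}_{i}(T-)$ a.e.\ together with $\mu_{i}(T)=(\bar{u}_{i}(T-)-\varphi)\,dm$. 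Hence Theorem \ref{tw3.5}(iii)--(iv) is in force for each $i$: there is a square-integrable PAF $K^{i}$, namely the realisation of the random measure $d\mu_{i}(\cdot,X_{\cdot})$ (so that $\mu_{i}\sim K^{i}$), such that for q.e.\ $(s,x)\in Q_{\hat{T}}$
\[
\bar{u}_{i}(t,X_{t})=\varphi(X_{T})+\int_{t}^{T}f_{\bar{u}_{i}}(\theta,X_{\theta})\,d\theta+K^{i}_{t,T}-\int_{t}^{T}\sigma\nabla\bar{u}_{i}(\theta,X_{\theta})\,dB_{s,\theta},\quad t\in[s,T].
\]
I would then fix $(s,x)$ in the full-capacity set on which both representations hold, the minimality condition (iii) holds for $u_{1}$ and for $u_{2}$, and $E_{s,x}\mbox{esssup}_{s\le t\le T}|\bar{u}_{i}(t,X_{t})|^{2}+E_{s,x}\int_{s}^{T}|\sigma\nabla\bar{u}_{i}(t,X_{t})|^{2}\,dt<\infty$ (Theorem \ref{tw3.5}(i)).

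The crucial step is to extract from (iii) the sign of the reflection terms. I would take the comparison obstacle $h^{*}=\min(\bar{u}_{1},\bar{u}_{2})$. Then $h^{*}\in\mathcal{P}^{*}$: it is quasi-c\`adl\`ag because $t\mapsto\min(\bar{u}_{1},\bar{u}_{2})(t,X_{t})=\min(\bar{u}_{1}(t,X_{t}),\bar{u}_{2}(t,X_{t}))$ is c\`adl\`ag as the minimum of two c\`adl\`ag paths; it lies in $\mathbb{L}_{2}(0,T;H^{1}_{\varrho})$ by the lattice property; and $\mbox{esssup}_{t}\|h^{*}(t)\|_{2,\varrho}<\infty$. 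Since $u_{i}\ge h$ a.e., one has $h\le h^{*}\le\bar{u}_{i}$ a.e.\ for $i=1,2$, so $h^{*}$ is admissible in (iii) both for $u_{1}$ and for $u_{2}$. Using $(\min(\bar{u}_{1},\bar{u}_{2}))_{-}=\min(\bar{u}_{1,-},\bar{u}_{2,-})$, condition (iii) for $u_{1}$ reads $\int_{s}^{T}(\bar{u}_{1,-}(t,X_{t})-\bar{u}_{2,-}(t,X_{t}))^{+}\,dK^{1}_{t}=0$ and for $u_{2}$ reads $\int_{s}^{T}(\bar{u}_{2,-}(t,X_{t})-\bar{u}_{1,-}(t,X_{t}))^{+}\,dK^{2}_{t}=0$. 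Writing $\hat{Y}_{t}=\bar{u}_{1}(t,X_{t})-\bar{u}_{2}(t,X_{t})$, $\hat{Z}_{t}=\sigma\nabla(\bar{u}_{1}-\bar{u}_{2})(t,X_{t})$ and $\hat{K}=K^{1}-K^{2}$, these identities say $\hat{Y}_{t-}\le0$ $dK^{1}$-a.e.\ and $\hat{Y}_{t-}\ge0$ $dK^{2}$-a.e., whence $\int_{t}^{T}\hat{Y}_{\theta-}\,d\hat{K}_{\theta}\le0$ for every $t\in[s,T]$ (this automatically absorbs any atoms of $K^{i}$, including a possible one at $T$).

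Subtracting the two representations gives $\hat{Y}_{t}=\int_{t}^{T}(f_{\bar{u}_{1}}-f_{\bar{u}_{2}})(\theta,X_{\theta})\,d\theta+\hat{K}_{t,T}-\int_{t}^{T}\hat{Z}_{\theta}\,dB_{s,\theta}$ with $\hat{Y}_{T}=0$, a semimartingale whose jumps come only from $\hat{K}$. Applying It\^o's formula to $|\hat{Y}|^{2}$ on $[t,T]$, the bracket contributes $-\int_{t}^{T}|\hat{Z}_{\theta}|^{2}\,d\theta-\sum_{t<\theta\le T}(\Delta\hat{K}_{\theta})^{2}\le-\int_{t}^{T}|\hat{Z}_{\theta}|^{2}\,d\theta$, and the $d\hat{K}$-term is nonpositive by the previous paragraph; together with the Lipschitz bound (H2a), $2|\hat{Y}||f_{\bar{u}_{1}}-f_{\bar{u}_{2}}|\le(2L+L^{2})|\hat{Y}|^{2}+|\hat{Z}|^{2}$, this gives $|\hat{Y}_{t}|^{2}\le(2L+L^{2})\int_{t}^{T}|\hat{Y}_{\theta}|^{2}\,d\theta-2\int_{t}^{T}\hat{Y}_{\theta}\hat{Z}_{\theta}\,dB_{s,\theta}$. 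Taking $E_{s,x}$ after the usual localisation, $E_{s,x}|\hat{Y}_{t}|^{2}\le(2L+L^{2})\int_{t}^{T}E_{s,x}|\hat{Y}_{\theta}|^{2}\,d\theta$, and Gronwall's lemma forces $\hat{Y}\equiv0$. Thus $\bar{u}_{1}(t,X_{t})=\bar{u}_{2}(t,X_{t})$ for all $t$, $P_{s,x}$-a.s., for q.e.\ (in particular a.e.)\ $(s,x)$, so Proposition \ref{tw2.2} yields $u_{1}=u_{2}$ a.e. Finally, subtracting the two instances of (i) and using $u_{1}=u_{2}$ (hence $f_{u_{1}}=f_{u_{2}}$) a.e.\ gives $\int_{Q_{T}}\eta\varrho^{2}\,d(\mu_{1}-\mu_{2})=0$ for all $\eta\in\mathcal{W}_{\varrho}$ with $\eta(0)=0$; since these test functions include all of $C_{0}^{\infty}(\check{Q}_{T})$, since $\mu_{i}(0)=0$, and since $\mu_{i}(T)=(\bar{u}_{i}(T-)-\varphi)\,dm$ with $\bar{u}_{1}(T-)=\bar{u}_{2}(T-)$ a.e., one concludes $\mu_{1}=\mu_{2}$.

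I expect the main obstacle to be the middle step: correctly identifying the reflection term $K^{i}$ of the representation with the object $d\mu_{i}(\cdot,X_{\cdot})$ that enters the minimality condition (iii) (this rests on the one-to-one correspondence $\mu\sim K$ of Section \ref{sec4} and on Lemma \ref{lmml}, which also pins down the $t=T$ parts), and then choosing the comparison obstacle $h^{*}=\min(\bar{u}_{1},\bar{u}_{2})$ so that (iii) delivers exactly the inequality $\int_{t}^{T}\hat{Y}_{\theta-}\,d\hat{K}_{\theta}\le0$ needed to kill the cross term in It\^o's formula. Once this sign is secured, what remains is the routine BSDE uniqueness computation.
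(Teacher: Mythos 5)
Your proposal is correct and follows essentially the same route as the paper's own proof: stochastic representation of $\bar u_1-\bar u_2$ via Theorem \ref{tw3.5} and Lemma \ref{lmml}, the comparison obstacle $h^{*}=\bar u_1\wedge\bar u_2$ fed into the minimality condition (iii) for both solutions to force $\int\hat Y_{\theta-}\,d(K^{1}-K^{2})\le 0$, and then It\^o's formula with Gronwall to conclude $\bar u_1=\bar u_2$ q.e.\ and hence $\mu_1=\mu_2$. The only cosmetic difference is that the paper splits the cross term into four pieces (two vanishing by (iii), two nonpositive by sign) where you phrase the same fact as "$\hat Y_{-}\le 0$ $dK^{1}$-a.e.\ and $\hat Y_{-}\ge 0$ $dK^{2}$-a.e.".
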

\begin{dow}
Let $(u_{1},\mu_{1}), (u_{2},\mu_{2})$  be solutions of
OP$(\varphi,f,h)$. Write $u=u_{1}-u_{2}$, $\mu=\mu_{1}-\mu_{2}$,
$F_{u}=f_{u_{1}}-f_{u_{2}}$. By Theorem \ref{tw3.5} and Lemma
\ref{lmml},
\begin{align*}
\bar{u}(t,X_{t})&=\int_{t}^{T}F_{u}(\theta,X_{\theta})\,d\theta
+\int_{t}^{T}d\mu(\theta,X_{\theta})\\
&\quad-\int_{t}^{T}\sigma\nabla\bar{u}(\theta,X_{\theta})\,dB_{s,\theta},
\quad t\in[s,T],\quad P_{s,x}\mbox{-}a.s.
\end{align*}
for quasi every $(s,x)\in Q_{\hat{T}}$, where
$\bar{u}=\bar{u}_{1}-\bar{u}_{2}$ and $\bar{u}_{1},\bar{u}_{2}$
are c\`adl\`ag versions of $u_{1}$ and $u_{2}$, respectively. By
It\^o's formula,
\begin{align*}
&E_{s,x}|\bar{u}(t,X_{t})|^2+E_{s,x}\int_{t}^{T}
|\sigma\nabla\bar{u}(\theta,X_{\theta})|^{2}\,d\theta
+E_{s,x}\sum_{t<\theta\le T}
|\Delta\mu(\theta,X_{\theta})|^{2}\\
&\qquad= 2E_{s,x}\int_{t}^{T}
F_{\bar{u}}(\theta,X_{\theta})\bar{u}(\theta,X_{\theta})\,d\theta
+2E_{s,x}\int_{t}^{T}
\bar{u}_{-}(\theta,X_{\theta})\,d\mu(\theta,X_{\theta})
\end{align*}
for $t\in[s,T]$. Put $h^{*}=\bar{u}_{1}\wedge \bar{u}_{2}$. Then
$h\le h^{*} \le \bar{u}_{1}, h\le h^{*} \le \bar{u}_{2}$ and
$h^{*}\in \mathcal{P}^{*}$. Therefore
\begin{align*}
&\int_{t}^{T}\bar{u}_{-}(\theta,X_{\theta})\,d\mu(\theta,X_{\theta})\\
&\quad=\int_{t}^{T}(\bar{u}_{1-}-h^{*}_{-})
(\theta,X_{\theta})\,d\mu(\theta,X_{\theta})
+\int_{t}^{T}(h^{*}_{-}-\bar{u}_{2-})
(\theta,X_{\theta})\,d\mu(\theta,X_{\theta})\\
&\quad=\int_{t}^{T}(\bar{u}_{1-}-h^{*}_{-})
(\theta,X_{\theta})\,(d\mu_{1}(\theta,X_{\theta})
-d\mu_{2}(\theta,X_{\theta}))\\
&\qquad+\int_{t}^{T}(h^{*}_{-}-\bar{u}_{2-})
(\theta,X_{\theta})\,(d\mu_{1}(\theta,X_{\theta})
-d\mu_{2}(\theta,X_{\theta}))\le 0.
\end{align*}
The first and fourth term on the right-hand side are equal to zero
by the definition of a solution of the obstacle problem. The
second and third term are negative since the integrands are
negative. Consequently,
\[
E_{s,x}|\bar{u}(t,X_{t})|^2
+E_{s,x}\int_{t}^{T}|\sigma\nabla\bar{u}(\theta,X_{\theta})|^{2}\,d\theta
\le2E_{s,x}\int_{t}^{T}
F_{\bar{u}}(\theta,X_{\theta})\bar{u}(\theta,X_{\theta})\,d\theta.
\]
Using standard arguments we deduce from the above that
$E_{s,x}|\bar{u}(t,X_{t})|^2=0$ for q.e. $(s,x)\in Q_{\hat T}$,
which when combined with Propositions \ref{tw2.2}, \ref{cad} shows
that $\bar{u}_{1}=\bar{u}_{2}$ q.e.. Hence, by condition (i) of
the definition of a solution of the obstacle problem,
$\int_{{Q}_{T}}\eta\, d\mu_{1} =\int_{{Q}_{T}}\eta\, d\mu_{2}$ for
every $\eta\in C_{0}^{\infty}({Q}_{T})$ such that
$\eta(0)\equiv0$. Accordingly, $\mu_1$ coincides with $\mu_2$ on
$(0,T]\times\mathbb{R}^{d}$. Since $\mu_{1}(0)=\mu_{2}(0)=0$, this
completes the proof.
\end{dow}
\medskip

Now we are going to prove existence of a solution of
OP$(\varphi,f,h)$ under standard integrabilty assumptions on the
data and  condition (H3) on the barrier. It is worth pointing out
that in view of Theorem \ref{tw3.5},  condition (H3) is necessary
for existence of a solution of that problem. As we shall see, it
is also sufficient.

In the proof of the following theorem we will use  a priori
estimates and convergence results for penalized sequence proved in
\cite{PengXu}.

\begin{tw}\label{main1}
Let assumptions (H1)--(H3) hold.
\begin{enumerate}
\item [\rm(i)]There exists a unique solution $(u,\mu)$ of
OP$(\varphi,f,h)$.
\item[\rm(ii)]Let $\bar u$ be a quasi-c\`adl\`ag version of $u$ and let
\begin{align*}
F&=\{(s,x)\in Q_{\hat{T}}:E_{s,x}{\mbox{\rm esssup}}_{s\le t\le T}
|h^{+}(t,X_{t})|^{2}+E_{s,x}\int_{s}^{T}|g(t,X_t)|^2\,dt<\infty\}.
\end{align*}
For every $(s,x)\in F$ the triple
\begin{equation}
\label{eq3.16}
(\bar{u}(t,X_{t}),\sigma\nabla\bar{u}(t,X_{t}),
\int_{s}^{t}d\mu(\theta,X_\theta)),\quad
t\in[s,T]
\end{equation}
is a solution of RBSDE$_{s,x}(\varphi,f,h)$ and cap$_{L}(F^{c})=0$.
\item[\rm(iii)]Let $\bar{u}_{n}$ be a quasi-continuous version of the
solution $u_n$ of the problem
\begin{equation}
\label{eq4.8} \frac{\partial u_{n}}{\partial t}+L_{t}u_{n}
=-f_{u_{n}}-n(u_{n}-h)^{-},\quad u_{n}(T)=\varphi.
\end{equation}
Then $\bar{u}_{n}\uparrow \bar{u}$ q.e. and in
$\mathbb{L}_{2,\varrho}(Q_{T})$, $\nabla u_{n}\rightarrow \nabla
u$ in $\mathbb{L}_{p,\varrho}(Q_{T})$ for $p\in [1,2)$, and if $h$
is quasi-continuous then the last convergence holds true for
$p=2$, too.
\end{enumerate}
\end{tw}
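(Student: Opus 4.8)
Uniqueness being already contained in Theorem~\ref{uniq}, the plan is to prove existence of $(u,\mu)$ and, along the way, assertions (ii) and (iii); everything is driven by the penalized problems (\ref{eq4.8}). First I record that $\mbox{cap}_{L}(F^{c})=0$: by Theorem~\ref{tw3.5}(i) applied to the parabolic potential majorizing $h$ from (H3) one has $E_{s,x}\mbox{esssup}_{s\le t\le T}|h^{+}(t,X_{t})|^{2}<\infty$ for a.e.\ $(s,x)$, Proposition~\ref{tw2.2} applied to $|g|^{2}$ gives the other term of (\ref{eq3.1}) for a.e.\ $(s,x)$, and Remark~\ref{ess} upgrades this to q.e.\ $(s,x)$. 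For $(s,x)\in F$, Theorem~\ref{tw2.4}(i) provides the unique solution $(Y^{s,x},Z^{s,x},K^{s,x})$ of RBSDE$_{s,x}(\varphi,f,h)$, obtained as the limit (in the senses of Theorem~\ref{tw2.4}(a),(b) and Corollary~\ref{wnwn}) of the penalized pairs $(Y^{s,x,n},Z^{s,x,n})$. Since the penalized generator $f(\cdot,\cdot,y,z)+n(y-h(\cdot,\cdot))^{-}$ is Lipschitz with linear growth majorized by $g+n|h|\in\mathbb{L}_{2,\varrho}(Q_{T})$, Proposition~\ref{stw4.1}(ii) yields a quasi-continuous version $\bar u_{n}$ of the weak solution $u_{n}$ of (\ref{eq4.8}) with $(\bar u_{n}(t,X_{t}),\sigma\nabla\bar u_{n}(t,X_{t}))=(Y^{s,x,n}_{t},Z^{s,x,n}_{t})$ for q.e.\ $(s,x)$.

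Next I construct $u$. Integrating the a priori bound of Theorem~\ref{tw2.4}(a) against $\varrho^{2}(x)\,dx\,ds$ over $\Q$ and using Proposition~\ref{tw2.2}, (H1), (H2b) and the equivalence of (H3) with (\ref{eq1.10}) (which is part of Theorem~\ref{tw3.5}(i)), I get $\sup_{n}(\|u_{n}\|_{2,\varrho,T}+\|\nabla u_{n}\|_{2,\varrho,T}+\mbox{esssup}_{t\in[0,T]}\|u_{n}(t)\|_{2,\varrho})<\infty$. Because $\{\bar u_{n}(t,X_{t})\}$ is nondecreasing q.e., $\{u_{n}\}$ is nondecreasing a.e., so $u_{n}\uparrow u$ a.e.\ and in $\mathbb{L}_{2,\varrho}(Q_{T})$; $\nabla u_{n}$ stays bounded in $\mathbb{L}_{2,\varrho}(Q_{T})$ and, by Theorem~\ref{tw2.4}(b) and Proposition~\ref{tw2.2}, $\nabla u_{n}\to\nabla u$ in $\mathbb{L}_{p,\varrho}(Q_{T})$ for $p\in[1,2)$; hence $u\in\mathbb{L}_{2}(0,T;H^{1}_{\varrho})$ with $\mbox{esssup}_{t}\|u(t)\|_{2,\varrho}<\infty$, and $u(t,X_{t})=Y^{s,x}_{t}$, $\sigma\nabla u(t,X_{t})=Z^{s,x}_{t}$ for $(s,x)\in F$, so $u$ is quasi-c\`adl\`ag; let $\bar u$ be the quasi-c\`adl\`ag version with $t\mapsto\bar u(t)$ c\`adl\`ag into $\mathbb{L}_{2,\varrho}(\mathbb{R}^{d})$ (Theorem~\ref{tw3.5}(ii)). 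Since $u_{n}\in C([0,T];\mathbb{L}_{2,\varrho}(\mathbb{R}^{d}))$, $u_{n}(T)=\varphi$ and $u_{n}(t)\le\bar u(t)$ for a.e.\ $t$, letting $t\uparrow T$ gives $\varphi=\bar u_{n}(T-)\le\bar u(T-)$ a.e.

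The delicate step is the construction of $\mu$ and the verification that $\mu\in\mathcal{M}^{+}_{0}$. Set $\mu_{n}=n(u_{n}-h)^{-}\,m_{T}$; testing the weak form of (\ref{eq4.8}) against positive $\eta\in C_{0}^{\infty}(\QW)$ and using the bounds above shows $\{\mu_{n}\}$ is tight, so a subsequence converges weakly to a positive Radon measure $\mu$ on $\QW$; as $-\mathcal{L}\bar u$ is uniquely determined by $u$, in fact the whole sequence converges and $-\mathcal{L}\bar u=\mu$ in $\mathcal{D}'(\QW)$. For $(s,x)\in F$ one has the Revuz relation $E_{s,x}\int_{s}^{T}\xi(t,X_{t})\,dK^{s,x,n}_{t}=\int_{Q_{sT}}\xi(t,y)p(s,x,t,y)\,d\mu_{n}(t,y)$ for $\xi\in C_{0}(Q_{T})$; passing to the limit exactly as in the proof of Theorem~\ref{tw3.5}(iv) --- using Corollary~\ref{wnwn}, the uniform $\mathbb{L}^{2}$ bounds on $K^{s,x,n}_{T}$ for uniform integrability, boundedness and continuity of $p(s,x,\cdot,\cdot)$ away from $t=s$, and the $\delta_{k}$ device near $t=s$ --- gives $E_{s,x}\int_{s}^{T}\xi\,dK^{s,x}=\int_{Q_{sT}}\xi\,p\,d\mu$ for q.e.\ $(s,x)$. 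Hence $\mu\ll\mbox{cap}_{L}$, so $\mu\ll\mbox{cap}_{2}$ by Lemma~\ref{lem3.7}; extending $\mu$ to $Q_{T}$ by $\mu(0)\equiv 0$ and $\mu(T)=(\bar u(T-)-\varphi)\,m\ge 0$ we obtain $\mu\in\mathcal{M}^{+}_{0}$, so Proposition~\ref{stw3.10} gives a PAF $K$ with $\mu\sim K$ and, by Lemma~\ref{lmd.3}, $\int_{s}^{t}d\mu(\theta,X_{\theta})=K^{s,x}_{t}$ $P_{s,x}$-a.s.\ for $(s,x)\in F$ (the reflection term of the RBSDE is characterised by the same Revuz relation).

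It remains to assemble the conclusions. Passing to the limit in the weak form of (\ref{eq4.8}) --- using $u_{n}\to u$ in $\mathbb{L}_{2,\varrho}(Q_{T})$, $\nabla u_{n}\to\nabla u$ and $f_{u_{n}}\to f_{u}$ weakly in $\mathbb{L}_{2,\varrho}(Q_{T})$, and $\mu_{n}\to\mu$ weakly --- gives condition (i) of the definition of OP$(\varphi,f,h)$ first for $\eta\in C_{0}^{\infty}(Q_{T})$ with $\eta(0)\equiv 0$ and then, by density, for $\eta\in\mathcal{W}_{\varrho}$; condition (ii), $u\ge h$ a.e., follows from $Y^{s,x}_{t}\ge h(t,X_{t})$ and Proposition~\ref{tw2.2}; so $u\in\mathcal{P}$. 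For (ii) of the theorem, for every $(s,x)\in F$ the triple (\ref{eq3.16}) is precisely the limit of the penalized triples $(Y^{s,x,n},Z^{s,x,n},K^{s,x,n})$, hence coincides with $(Y^{s,x},Z^{s,x},K^{s,x})$, the unique solution of RBSDE$_{s,x}(\varphi,f,h)$; this solution also yields condition (iii) of the definition of OP: for $h^{*}\in\mathcal{P}^{*}$ with $h\le h^{*}\le\bar u$ a.e., apply condition (iv) of the RBSDE to $H^{(k)}_{t}=(h^{*}(t,X_{t})\vee(-k))\wedge Y^{s,x}_{t}$ (which are c\`adl\`ag, satisfy $h(t,X_{t})\le H^{(k)}_{t}\le Y^{s,x}_{t}$, and obey $E_{s,x}\sup_{t}|H^{(k)}_{t}|^{2}<\infty$), obtaining $\int_{s}^{T}(Y^{s,x}_{t-}-H^{(k)}_{t-})\,dK^{s,x}_{t}=0$, and let $k\to\infty$ by monotone convergence (the integrands increase to $Y^{s,x}_{t-}-h^{*}_{-}(t,X_{t})$) to get $\int_{s}^{T}(\bar u_{-}(t,X_{t})-h^{*}_{-}(t,X_{t}))\,d\mu(t,X_{t})=0$. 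Finally, (iii) of the theorem: $\bar u_{n}\uparrow\bar u$ q.e.\ and in $\mathbb{L}_{2,\varrho}(Q_{T})$ and $\nabla u_{n}\to\nabla u$ in $\mathbb{L}_{p,\varrho}(Q_{T})$, $p<2$, were obtained above; if $h$ is quasi-continuous, then $t\mapsto h(t,X_{t})$ is continuous under $P_{s,x}$ for q.e.\ $(s,x)$, so Theorem~\ref{tw2.4}(ii) gives $E_{s,x}\int_{s}^{T}|Z^{s,x,n}_{t}-Z^{s,x}_{t}|^{2}\,dt\to 0$ q.e., and integrating against $\varrho^{2}(x)\,dx\,ds$, using the uniform bound and Vitali's theorem and then Proposition~\ref{tw2.2}, yields $\nabla u_{n}\to\nabla u$ in $\mathbb{L}_{2,\varrho}(Q_{T})$. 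The main obstacle is the third paragraph --- reconciling the analytic object $\mu=-\mathcal{L}\bar u$ with the probabilistic reflection term $K^{s,x}$ through the Revuz correspondence uniformly in $(s,x)\in F$, and checking that the limit measure is soft with the prescribed masses at $t=0$ and $t=T$.
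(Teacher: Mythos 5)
Your proposal is correct and follows essentially the same route as the paper: penalization via (\ref{eq4.8}), the stochastic representation of $u_n$ from Proposition \ref{stw4.1}, a priori bounds obtained by integrating the estimates of Theorem \ref{tw2.4}(a) with Proposition \ref{tw2.2} and Theorem \ref{tw3.5}(i), monotone passage to the limit, tightness of $\mu_n$ and identification of the limit measure with the reflection term through the Revuz-type argument of Theorem \ref{tw3.5}. The only differences are that you spell out two steps the paper leaves implicit (the verification of the minimality condition via the Skorokhod condition of the RBSDE applied to truncations of $h^*(\cdot,X_\cdot)$, and the $p=2$ gradient convergence via Theorem \ref{tw2.4}(ii)), and both of these are carried out correctly.
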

\begin{dow}
The fact that cap$_{L}(F^{c})=0$ follows from Proposition
\ref{tw2.2},   Theorem \ref{tw3.5}(i) and Remark \ref{ess}. First
we show that there exists $u$ satisfying condition (i) of the
definition of a solution of OP$(\varphi,f,h)$. Let $u_{n}$ be a
strong solution of PDE$(\varphi,f+n(y-h)^{-})$. Then for every
$\eta\in\WW_{\varrho}$ such that $\eta(0)\equiv 0$,
\begin{align}\label{P14}
\langle u_{n},\frac{\partial \eta}{\partial t}\rangle_{\varrho,T}
-\langle L_{t}u_{n},\eta\rangle_{\varrho,T} &=\langle
\varphi(T),\eta(T)\rangle_{2,\varrho} +\langle
f_{u_{n}},\eta\rangle_{2,\varrho,T}+\int_{Q_{T}}\eta\varrho^{2}\,d\mu_{n},
\end{align}
where $d\mu_{n}=n(u_{n}-h)^{-}\,dm_{T}$. Set
\[
F_{0}=\{(s,x)\in Q_{\hat{T}}:
E_{s,x}\int_{s}^{T}(|g|^2+|h^{+}|^{2})(t,X_t)<\infty\}
\]
and observe that $F\subset F_{0}$. By Proposition \ref{stw4.1},
$u_{n}$ has a quasi-continuous version of $\bar{u}_{n}$ such that
$(\bar{u}_{n}(t,X_{t}),\sigma\nabla \bar{u}_{n}(t,X_{t}))$,
$t\in[s,T]$, is a solution of BSDE$_{s,x}((\varphi,f+n(y-h)^{-})$
for every $(s,x)\in F_{0}$. By Theorem \ref{tw2.4},
\begin{align*}
&E_{s,x}\sup_{s \le t \le T}|\bar{u}_{n}(t,X_{t})|^{2}
+E_{s,x}\int_{s}^{T}|\sigma\nabla u_{n}(t,X_{t})|^{2}\,dt\\
&\quad\le C(E_{s,x}|\varphi(X_{T})|^2
+E_{s,x}\int_{s}^{T}|g(t,X_t)|^2\,dt+E_{s,x}\mbox{esssup}_{s\le t
\le T}|h^{+}(t,X_{t})|^{2})
\end{align*}
for  every $(s,x)\in F$ . In particular, the above estimate holds
for every $s\in [0,T)$ and a.e. $x\in\BRD$. Integrating the above
inequality with respect to $x$, using Proposition \ref{tw2.2} and
Theorem \ref{tw2.4}(a) yields
\begin{align}
\label{eq4.9}  &\sup_{0\le t\le T}
\|\bar{u}_{n}(t)\|^{2}_{2,\varrho} +\|\nabla
u_{n}\|^{2}_{2,\varrho,T} \nonumber \\
&\quad\le C(\|\varphi\|^{2}_{2,\varrho}+\|g\|^{2}_{2,\varrho,T}
+\sup_{s\in [0,T)}\int_{\mathbb{R}^{d}}E_{s,x}\mbox{esssup}_{s \le
t \le T} |h^{+}(t,X_{t})|^2\varrho^2(x)\,dx).
\end{align}
By the above and (\ref{eq4.10}),
\begin{align}
\label{eq4.9ad} \sup_{0\le t\le T}
\|\bar{u}_{n}(t)\|^{2}_{2,\varrho} +\|\nabla
u_{n}\|^{2}_{2,\varrho,T} \le C(\|\varphi\|^{2}_{2,\varrho}
+\|g\|^{2}_{2,\varrho,T}
+\|h^{*}\|^{2}_{\PP}).
\end{align}
By monotonicity of $\{\bar{u}_{n}\}$ (see Theorem \ref{tw2.4}) and
(\ref{eq4.9ad}), there exist a subsequence (still denoted by $n$)
and $u\in\mathbb{L}_{2}(0,T;H^{1}_{\varrho})$,
$\mu\in\mathcal{M}^{+}$ such that  $\bar{u}_{n}\rightarrow u$ in
$\mathbb{L}_{2,\varrho}(Q_{T})$, $\nabla \bar{u}_{n}\rightarrow
\nabla u$ weakly in $\mathbb{L}_{2,\varrho}(Q_{T})$ and
$\mu_{n}\Rightarrow \mu$. In fact, by Proposition \ref{tw2.2} and
Theorem \ref{tw2.4}, $\nabla \bar{u}_{n}\rightarrow \nabla u$ in
$\mathbb{L}_{p,\varrho}(Q_{T})$ for every $p\in [1,2)$. Therefore
passing to the limit in (\ref{P14}) we see that
\[
\langle u,\frac{\partial \eta}{\partial t}
\rangle_{\varrho,T}-\langle L_{t}u,\eta\rangle_{\varrho,T}
=\langle \eta(T),\varphi\rangle_{2,\varrho} +\langle
f_{u},\eta\rangle_{2,\varrho,T}+\int_{Q_{T}}\eta\,d\mu
\]
for every $\eta\in C_{0}^{\infty}(Q_{T})$ such that $\eta(0)\equiv
0$.  From Theorem \ref{tw3.5} and Lemma \ref{remark} (see also
(\ref{P238})) it follows that $\mu\in \mathcal{M}_{0}$. We know
that
\begin{align*}
\bar{u}_{n}(t,X_{t})&=\varphi(X_{T})
+\int_{t}^{T}f_{\bar{u}_{n}}(\theta,X_{\theta})\,d\theta
+\int_{t}^{T}d\mu_{n}(\theta,X_{\theta})\\
&\quad-\int_{t}^{T}\sigma\nabla
\bar{u}_{n}(\theta,X_{\theta})dB_{s,\theta},\quad t\in[s,T],\quad
P_{s,x}\mbox{-}a.s.
\end{align*}
for every $(s,x)\in F$. Putting $\bar{u}=\limsup_{n\rightarrow
+\infty}\bar{u}_{n}$ we conclude from Theorem \ref{tw2.4}(b) that
for every $(s,x)\in F$ the triple
$(u_{n}(\cdot,X_{\cdot}),\sigma\nabla
u_{n}(\cdot,X_{\cdot}),\int_{s}^{\cdot}d\mu_{n}(\theta,X_{\theta}))$
converges in appropriate spaces to the solution
$(\bar{u}\cd,\sigma\nabla\cd, K^{s,x})$ of
RBSDE$_{s,x}(\varphi,f,h)$. In particular this implies that
$\bar{u}$ is quasi-c\`adl\`ag. An analogous calculation to that in
the proof of Theorem \ref{tw3.5} (see (\ref{P4})-(\ref{P8})) shows
that $d\mu\cd=dK^{s,x}$, $P_{s,x}$-a.s. for every $(s,x)\in F$.
This proves that the triple $(\bar{u}\cd,\sigma\nabla
\bar{u}\cd,\int_{s}^{t}d\mu\cd)$ is a solution of
RBSDE$_{s,x}(\varphi,f,h)$ for every $(s,x)\in F$. In particular,
this implies that for every $h\le h^{*}\le u$ such that
$h^{*}\in\mathcal{P}$,
\[
E_{s,x}\int_{s}^{T}(\bar{u}_{-}(t,X_{t})-h^{*}_{-}(t,X_{t}))
\,d\mu(t,X_{t})=0
\]
for every $(s,x)\in F$. Thus, $(u,\mu)$ is a solution of
OP$(\varphi,f,h)$. (iii) follows immediately from (ii) and Theorem
\ref{tw2.4}.
\end{dow}

\begin{wn}
If $h$ is quasi-continuous then the first component $u$ of the
solution of the obstacle problem has a quasi-continuous version
$\bar{u}$ and
\[
\int_{Q_{T}}(\bar{u}-h)\varrho^{2}\,d\mu=0.
\]
Moreover, $\mu(t)=0$ for every $t\in [0,T]$.
\end{wn}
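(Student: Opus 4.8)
The plan is to read off both assertions from the stochastic representation of Theorem \ref{main1}(ii), using that for a \emph{continuous} obstacle the reflecting functional of the RBSDE is continuous and satisfies the Skorohod minimality condition.

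Let $(u,\mu)$ be the solution of OP$(\varphi,f,h)$, let $\bar u$ be the quasi-c\`adl\`ag version of $u$, let $F$ be the set of Theorem \ref{main1}(ii) (recall $\mbox{cap}_{L}(F^{c})=0$), and for $(s,x)\in F$ let $(\bar u\cd,\sigma\nabla\bar u\cd,K^{s,x})$, with $K^{s,x}_{t}=\int_{s}^{t}d\mu\h$, be the solution of RBSDE$_{s,x}(\varphi,f,h)$ and $(Y^{s,x,n},Z^{s,x,n})$, $Y^{s,x,n}_{t}=\bar u_{n}\te$, the penalized BSDE solutions. Since $h$ is quasi-continuous, for q.e. $\sx$ the path $t\mapsto h\te$ is continuous under $P_{s,x}$; combining this with $\mbox{cap}_{L}(F^{c})=0$ we may work with q.e. $\sx$ lying in $F$ for which this holds. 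For such $\sx$, Theorem \ref{tw2.4}(ii) gives $E_{s,x}\sup_{s\le t\le T}|K^{s,x,n}_{t}-K^{s,x}_{t}|^{2}\to0$; since each $K^{s,x,n}_{t}=\int_{s}^{t}n(Y^{s,x,n}_{\theta}-h\h)^{-}\,d\theta$ is continuous in $t$, the uniform limit $K^{s,x}$ is continuous on $[s,T]$, and then $t\mapsto Y^{s,x}_{t}=\bar u\te$ is continuous by the RBSDE equation. Hence $\bar u$ is quasi-continuous, and by Proposition \ref{cad} it is the unique (q.e.) quasi-continuous version of $u$.

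The core step is to prove $\ints(\bar u\te-h\te)\,dK^{s,x}_{t}=0$ for q.e. $\sx$, i.e. that $dK^{s,x}$ is carried by $\{\bar u\te=h\te\}$. One cannot simply insert $h\cd$ into the minimality condition (iv) of RBSDE$_{s,x}$ — that condition is phrased with square-integrable test processes precisely to accommodate irregular obstacles, and $h$ need only be integrable from below. Instead, for $c>0$ I would use the continuous test process $H^{c}_{t}=h\te\vee(Y^{s,x}_{t}\wedge(-c))$, which satisfies $h\te\le H^{c}_{t}\le Y^{s,x}_{t}$ and $E_{s,x}\sup_{s\le t\le T}|H^{c}_{t}|^{2}\le C(c^{2}+E_{s,x}\sup_{s\le t\le T}|Y^{s,x}_{t}|^{2})<\infty$ (using $\sx\in F$ and continuity of $h\cd$); condition (iv), combined with continuity to drop left limits, gives $\ints(Y^{s,x}_{t}-H^{c}_{t})\,dK^{s,x}_{t}=0$, and since on $\{Y^{s,x}_{t}\ge-c\}$ the integrand equals $(Y^{s,x}_{t}-h\te)\wedge(Y^{s,x}_{t}+c)\ge0$ while it vanishes on $\{Y^{s,x}_{t}<-c\}$, the restriction of $dK^{s,x}$ to $\{Y^{s,x}_{t}\ge-c\}$ charges only $\{Y^{s,x}_{t}=h\te\}\cup\{Y^{s,x}_{t}=-c\}$. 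As $dK^{s,x}$ is $P_{s,x}$-a.s. a finite measure on $[s,T]$ and the level sets $\{Y^{s,x}=-c\}$, $c>0$, are pairwise disjoint, letting $c\uparrow\infty$ through the co-countable set of values with $dK^{s,x}(\{Y^{s,x}=-c\})=0$ and using that $Y^{s,x}$ is finite on $[s,T]$ yields the claim. This upgrading of (iv) to the Skorohod condition, together with the integrability bookkeeping for the test process when $h$ is unbounded below, is the main obstacle of the proof.

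It remains to transfer these facts to $\mu$. From $\bar u\ge h$ a.e., quasi-continuity of $\bar u$ and $h$, and Proposition \ref{cad} (applied to $\bar u$ and $\bar u\vee h$) we get $\bar u\ge h$ q.e., whence $\mu(\{\bar u<h\})=0$ by Lemma \ref{lem3.7} and $\mu\in\mathcal{M}_{0}$. Put $\psi=(\bar u-h)^{+}\varrho^{2}\in\mathcal{B}^{+}(\QT)$; by the core step $\psi\te=0$ on the support of $dK^{s,x}$, so $E_{s,x}\ints\psi\te\,dK^{s,x}_{t}=0$ for q.e. $\sx$, and since $\mu\sim K$ by Proposition \ref{stw3.10}, the correspondence (\ref{P10}) gives $\int_{Q_{sT}}\psi(t,y)p(s,x,t,y)\,d\mu(t,y)=0$ for q.e. $\sx$, in particular for $s=0$ and a.e. $x\in\BRD$. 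Integrating in $x$ over $\BRD$, applying Tonelli's theorem and the lower Aronson bound $\int_{\BRD}p(0,x,t,y)\,dx\ge c_{0}>0$ (uniformly in $(t,y)\in\QW$), we obtain $\int_{\QT}(\bar u-h)^{+}\varrho^{2}\,d\mu=0$, which equals $\int_{\QT}(\bar u-h)\varrho^{2}\,d\mu$ because $\mu(\{\bar u<h\})=0$. Finally, continuity of $K^{s,x}$ on $[s,T]$ gives $\Delta K^{s,x}_{t_{0}}=0$ for every $t_{0}\in(s,T]$ and q.e. $\sx$; applying (\ref{P10}) with $s=0$ to test functions $\psi_{\varepsilon}(t)g(y)$ with $0\le\psi_{\varepsilon}\uparrow\mathbf{1}_{\{t_{0}\}}$ and $g\in C_{0}^{+}(\BRD)$, integrating in $x$ and letting $\varepsilon\to0$ (dominated convergence on both sides, using $\int_{\BRD}p(0,x,t,y)\,dx\le C_{0}$ and, for a.e. $x$, continuity of $K_{0,\cdot}$), we get $\int_{\BRD}g(y)\big(\int_{\BRD}p(0,x,t_{0},y)\,dx\big)d\mu(t_{0})(dy)=\int_{\BRD}E_{0,x}\Delta K_{0,t_{0}}\,dx=0$; since $\int_{\BRD}p(0,x,t_{0},y)\,dx\ge c_{0}>0$ this forces $\mu(t_{0})=0$, and together with $\mu(0)=0$ (as $\mu\in\mathcal{M}_{0}$) we conclude $\mu(t)=0$ for every $t\in[0,T]$.
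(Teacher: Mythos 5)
Your proof is correct and follows essentially the same route as the paper: quasi-continuity of $\bar u$ from the strong convergence of the penalization in Theorem \ref{tw2.4}(ii) together with Theorem \ref{main1}(ii), the Skorohod/minimality condition of the RBSDE to show that $dK^{s,x}$ is carried by $\{\bar u(\cdot,X_\cdot)=h(\cdot,X_\cdot)\}$, Aronson's estimates to pass from $E_{s,x}\int(\bar u-h)(t,X_t)\,dK^{s,x}_t=0$ to $\int_{Q_T}(\bar u-h)\varrho^2\,d\mu=0$, and continuity of $K^{s,x}$ for the assertion $\mu(t)=0$. You merely supply more detail than the paper at the one delicate point (the truncation $H^c$ needed because $h^-$ need not be square-integrable along the process), which is a welcome clarification rather than a deviation.
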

\begin{dow}
Existence of a quasi-continuous version  of $u$ follows
immediately from Theorems \ref{tw2.4} and \ref{main1}. Since
$\bar{u},h$ are quasi-continuous, it follows from the definition
of a solution of the obstacle problem that
\[
E_{s,x}\int_{s}^{T}(\bar{u}-h)(t,X_{t})\,d\mu(t,X_{t})=0
\]
for q.e. $(s,x)\in Q_{\hat{T}}$. Hence, by Aronson's estimate, for every
$\eta\in C^{+}_{0}(Q_{T})$,
\[
\int_{\BRD}(\bar{u}-h)\eta\,d\mu \le C
\int_{Q_{T}}\left(E_{0,x}\int_{0}^{T}
((\bar{u}-h)\eta)(t,X_{t})\,d\mu(t,X_{t})\right)\,dx=0.
\]
The second assertion follows immediately from  continuity
of the proces $\int_{s}^{\cdot}d\mu\h$.
\end{dow}

\begin{prz}
{\em In general, even if $h$ is quasi-l.s.c. or u.s.c., the
integral $\int_{Q_{T}}(u-h)\,d\mu$ may be strictly positive.
Indeed, let $a>0$, $T>1$, and let
$h(t)=\mathbf{1}_{[0,T-1)}e^{aT}+\frac12\mathbf{1}_{[T-1,T]}$. One
can check that  the unique solution $(u,\mu)$ of the obstacle
problem
\[
\frac{\partial u}{\partial t} +au=-\mu,\quad u\ge h
\]
is given by
\[
u(t)=\mathbf{1}_{[0,T-1)}(t)(c+e^{a(T-t)})
+\mathbf{1}_{[T-1,T]}(t)e^{a(T-t)},\quad
\mu=c\delta_{\{T-1\}},
\]
where $c=h(T-1)-e^{a(T-1)}$, and that
$\int_{0}^{T}(u-h)(t)\,d\mu(t)>0.$ }
\end{prz}

It is known that solutions of obstacle problems of the form
(\ref{eq1.03}) appear as value functions of optimal stopping time
problems (see \cite{BensoussanLions}). In case $L_t$ is
non-divergent, it is known also that the value functions
correspond to solutions of some RBSDE (see \cite{EKPPQ}). The
following result is an analogue of the last correspondence in case
of operators of the form (\ref{eq1.2}). For some related results
we refer to \cite{Oshima1}.

\begin{wn}
Assume that (H1)--(H3) are satisfied and  $h$ is
quasi-conti\-nuous. Let  $\bar u$ be a quasi-continuous version of
the first component $u$ of the solution of OP$(\varphi,f,h)$. Then
for every $t\in[s,T]$,
\begin{eqnarray*}
&&\bar{u}(t,X_{t})=\sup_{\tau\in\mathcal{T}^s_{t}}
E_{s,x}(\int_{t}^{\tau}
f(\theta,X_{\theta},u(\theta,X_{\theta}),\sigma\nabla
u(\theta,X_{\theta}))\,d\theta\\
&&\qquad\qquad\qquad\qquad\qquad +h(\tau,X_{\tau})
\mathbf{1}_{\tau<T}+\varphi(X_{T})\mathbf{1}_{\tau=T}|\mathcal{G}^s_t),
\end{eqnarray*}
where $\mathcal{T}^s_{t}=\{\tau\in \mathcal{T}^s: t\le \tau\le
T\}$ and $\mathcal{T}^s$ denote the set of all
$\{\mathcal{G}^s_t\}$-stopping times.
\end{wn}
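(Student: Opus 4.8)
The plan is to read the representation off from Theorem~\ref{main1}(ii): for every $(s,x)\in F$ the triple $(\bar u(t,X_t),\sigma\nabla\bar u(t,X_t),\int_s^t d\mu(\theta,X_\theta))_{t\in[s,T]}$ solves RBSDE$_{s,x}(\varphi,f,h)$, and for the first component of a reflected BSDE one has the classical identification with the value of an optimal stopping problem. Since $F^c$ has $\mathrm{cap}_L$-capacity zero (Theorem~\ref{main1}(ii)), it suffices to prove the identity for $(s,x)\in F$. Fix such an $(s,x)$, write $(Y,Z,K)=(Y^{s,x},Z^{s,x},K^{s,x})$ for that solution, so $Y_t=\bar u(t,X_t)$ and $Z_\theta=\sigma\nabla u(\theta,X_\theta)$ for $\lambda\otimes P_{s,x}$-a.e.\ $(\theta,\omega)$. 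Since $h$ is quasi-continuous, $\theta\mapsto h(\theta,X_\theta)$ has continuous trajectories under $P_{s,x}$ for q.e.\ $(s,x)$, and then (by Theorem~\ref{tw2.4}(ii)) the processes $Y$ and $K$ are continuous, the inequality $Y_\theta\ge h(\theta,X_\theta)$ in condition (iii) of the definition of RBSDE holds for \emph{every} $\theta\in[s,T]$, and the minimality condition (iv) reduces to the usual Skorokhod condition $\int_s^T(Y_\theta-h(\theta,X_\theta))\,dK_\theta=0$ (take $H_\theta=h(\theta,X_\theta)$ in (iv), which is admissible because $h(\theta,X_\theta)\le Y_\theta$).

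For the inequality ``$\ge$'' fix $t\in[s,T]$ and $\tau\in\mathcal{T}^s_t$. Evaluating the RBSDE equation between $t$ and $\tau$ gives
\[
Y_t=Y_\tau+\int_t^\tau f(\theta,X_\theta,Y_\theta,Z_\theta)\,d\theta+(K_\tau-K_t)-\int_t^\tau Z_\theta\,dB_{s,\theta}.
\]
Since $K$ is increasing, $K_\tau-K_t\ge0$, and since $Y_\theta\ge h(\theta,X_\theta)$ for every $\theta$ and $Y_T=\varphi(X_T)$, we have $Y_\tau\ge h(\tau,X_\tau)\mathbf{1}_{\tau<T}+\varphi(X_T)\mathbf{1}_{\tau=T}$. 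Taking $E_{s,x}(\cdot\mid\mathcal{G}^s_t)$ and using that $\int_t^\cdot Z_\theta\,dB_{s,\theta}$ is a uniformly integrable martingale on $[t,T]$ (a routine localization based on $E_{s,x}\int_s^T|Z_\theta|^2\,d\theta<\infty$), we obtain $Y_t\ge E_{s,x}[\int_t^\tau f(\theta,X_\theta,Y_\theta,Z_\theta)\,d\theta+h(\tau,X_\tau)\mathbf{1}_{\tau<T}+\varphi(X_T)\mathbf{1}_{\tau=T}\mid\mathcal{G}^s_t]$; taking the supremum over $\tau\in\mathcal{T}^s_t$ gives ``$\ge$''.

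For the reverse inequality I would use the candidate optimal time
\[
\tau^{*}_t=\inf\{\theta\in[t,T]:Y_\theta=h(\theta,X_\theta)\}\wedge T,
\]
which is a $\{\mathcal{G}^s_\theta\}$-stopping time since $Y$ and $h(\cdot,X_\cdot)$ are continuous. On $[t,\tau^{*}_t)$ we have $Y_\theta>h(\theta,X_\theta)$, so the Skorokhod condition forces $K$ to be constant there, i.e.\ $K_{\tau^{*}_t}=K_t$; moreover $Y_{\tau^{*}_t}=h(\tau^{*}_t,X_{\tau^{*}_t})$ on $\{\tau^{*}_t<T\}$ by continuity and $Y_T=\varphi(X_T)$ on $\{\tau^{*}_t=T\}$. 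Plugging $\tau=\tau^{*}_t$ into the displayed equation the term $K_{\tau^{*}_t}-K_t$ vanishes, and taking conditional expectation as before yields $Y_t=E_{s,x}[\int_t^{\tau^{*}_t}f(\theta,X_\theta,Y_\theta,Z_\theta)\,d\theta+h(\tau^{*}_t,X_{\tau^{*}_t})\mathbf{1}_{\tau^{*}_t<T}+\varphi(X_T)\mathbf{1}_{\tau^{*}_t=T}\mid\mathcal{G}^s_t]$, so the supremum is attained at $\tau^{*}_t$ and equals $Y_t$. Replacing $Y_\theta$ by $\bar u(\theta,X_\theta)$ and $Z_\theta$ by $\sigma\nabla u(\theta,X_\theta)$ (which only enters under the $d\theta$-integral, where $\lambda\otimes P_{s,x}$-a.e.\ equality suffices) turns both bounds into the asserted formula, valid $P_{s,x}$-a.s.\ for every $t\in[s,T]$ and q.e.\ $(s,x)\in Q_{\hat T}$.

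The main obstacle is the passage from the paper's formulation of RBSDE to the classical continuous setting: one needs $Y$ and $K$ to have continuous trajectories and the minimality condition (iv) to be equivalent to $\int_s^T(Y_\theta-h(\theta,X_\theta))\,dK_\theta=0$, which is exactly where quasi-continuity of $h$ is used — once $\theta\mapsto h(\theta,X_\theta)$ is continuous, Theorem~\ref{tw2.4}(ii) provides this regularity and the rest is the standard Snell-envelope argument. A secondary point requiring the same continuity is that the identity is asserted pathwise for every $t\in[s,T]$ and that $\tau^{*}_t$ is a genuine stopping time; both hold precisely because $\bar u(\cdot,X_\cdot)$ has continuous trajectories in the quasi-continuous-obstacle case.
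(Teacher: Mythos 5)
Your proposal is correct and follows essentially the same route as the paper: the paper's proof simply combines Theorem \ref{main1} (which identifies $(\bar u(\cdot,X_\cdot),\sigma\nabla\bar u(\cdot,X_\cdot),\int_s^\cdot d\mu(\theta,X_\theta))$ as the solution of RBSDE$_{s,x}(\varphi,f,h)$) with the optimal-stopping representation of the first component of a reflected BSDE, citing \cite[Proposition 2.3]{EKPPQ} for the latter, which is exactly the Snell-envelope argument you write out by hand. The only point worth tightening is the admissibility of $H=h(\cdot,X_\cdot)$ in condition (iv), since only $E_{s,x}\sup|h^+(t,X_t)|^2$ is controlled on $F$; truncating from below and letting the truncation level tend to infinity fixes this routinely.
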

\begin{dow}
Follows from \cite[Proposition 2.3]{EKPPQ} and Theorem
\ref{main1}.
\end{dow}
\medskip

Let us recall that a measurable function
$u:\check{Q}_{T}\rightarrow \mathbb{R}$ is called cap$_{2}$-quasi
continuous (lower semi-continuous)  if for every $\varepsilon>0$
there exists an open set $U_{\varepsilon}\subset\check{Q}_{T}$
such that $u_{|\check{Q}_{T}\setminus U_{\varepsilon}}$ is
continuous (l.s.c.) and cap$_{2}(U_{\varepsilon})<\varepsilon$.

\begin{stw}\label{cont}
Let $u\in\mathcal{W}_{\varrho}$. Then there exists a version
$\bar{u}$ of $u$ such that $\bar{u}$ is cap$_{2}$-quasi-continuous
and quasi-continuous.
\end{stw}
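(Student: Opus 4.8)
The plan is to construct the required version $\bar u$ from the stochastic representation of solutions of the linear Cauchy problem, and then to identify it q.e. with a $\mbox{cap}_{2}$-quasi-continuous function using the equivalence of $\mbox{cap}_{L}$ and $\mbox{cap}_{2}$ (Lemma \ref{lem3.7}). Since $u\in\mathcal{W}_{\varrho}$, one has $u\in C([0,T];\mathbb{L}_{2,\varrho}(\mathbb{R}^{d}))$ and $u$ is the unique strong solution of PDE$(u(T),\Phi)$, where $\Phi:=-(\frac{\partial u}{\partial t}+L_{t}u)\in\mathbb{L}_{2}(0,T;H^{-1}_{\varrho})$. Writing $\Phi=f_{0}-\dyw\,\hat f$ with $f_{0}\in\mathbb{L}_{2,\varrho}(Q_{T})$, $\hat f\in(\mathbb{L}_{2,\varrho}(Q_{T}))^{d}$, and decomposing $u=v+w$ with $v$ the strong solution of PDE$(u(T),f_{0})$ and $w$ that of PDE$(0,-\dyw\,\hat f)$, I would apply Proposition \ref{stw4.1}(ii) (with the generator $f(t,x,y,z):=f_{0}(t,x)$, so that (H1), (H2) hold with $\varphi=u(T)$, $g=|f_{0}|$, $L=M=0$; (H3) is irrelevant here) to get a quasi-continuous version $\bar v$ of $v$, and \cite[Theorems 3.1, 5.1, Proposition 7.6]{Kl1} (as in the proof of Lemma \ref{stw.est}) to get a quasi-continuous version $\bar w$ of $w$. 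Proposition \ref{tw2.2} (applied with the weight $\varrho^{2}$) and Remark \ref{ess} show that the pertinent integrability conditions hold for q.e. $(s,x)$, so that for such $(s,x)$ both $t\mapsto\bar v(t,X_{t})$ and $t\mapsto\bar w(t,X_{t})$ satisfy on $[s,T]$ backward equations driven by Lebesgue, It\^o and $d^{*}X$ integrals; the integrands being $P_{s,x}$-integrable over all of $[s,T]$, these two processes are $P_{s,x}$-a.s. continuous on $[s,T]$, with $\bar v(T,X_{T})=u(T)(X_{T})$ and $\bar w(T,X_{T})=0$. Hence $\bar u:=\bar v+\bar w$ is a version of $u$ that is quasi-continuous in the sense of Section \ref{sec3}.

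It remains to check that $\bar u$ agrees q.e. with a $\mbox{cap}_{2}$-quasi-continuous function. By parabolic potential theory (see \cite{Pierre}, \cite{DPP}) there is a $\mbox{cap}_{2}$-quasi-continuous version $\tilde u$ of $u$: $\tilde u_{|\check Q_{T}\setminus U_{k}}$ is continuous for some decreasing open sets $U_{k}\subset\check Q_{T}$ with $\mbox{cap}_{2}(U_{k})\to0$. By Lemma \ref{lem3.7}, $\mbox{cap}_{L}(U_{k})\to0$, so the definition of $\mbox{cap}_{L}$ yields $\int_{0}^{T}\!\!\int_{\mathbb{R}^{d}}P_{s,x}(\exists\,t\in(s,T):(t,X_{t})\in U_{k})\,dx\,ds\to0$; since this integrand is nonincreasing in $k$, its limit vanishes a.e., and the strong Markov argument with random shift of Remark \ref{ess} (applied to compact subsets of the set where the limit is $\ge\varepsilon$, together with the Choquet property of $\mbox{cap}_{L}$) upgrades this to: for q.e. $(s,x)$, $P_{s,x}$-a.s. the path $t\mapsto(t,X_{t})$ eventually stays outside $U_{k}$. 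Combined with $\mbox{cap}_{L}(\bigcap_{k}U_{k})=0$, this makes $t\mapsto\tilde u(t,X_{t})$ $P_{s,x}$-a.s. continuous on $(s,T)$, so $\tilde u$ is quasi-c\`adl\`ag. Then $\bar u$ and $\tilde u$ are two quasi-c\`adl\`ag versions of $u$, so $\bar u=\tilde u$ q.e. by Proposition \ref{cad}; as $\mbox{cap}_{2}$-quasi-continuity is preserved under modification on a $\mbox{cap}_{2}$-null set (enlarge each $U_{k}$ by an open neighbourhood of small $\mbox{cap}_{2}$ of $\{\bar u\neq\tilde u\}$), $\bar u$ is $\mbox{cap}_{2}$-quasi-continuous as well, completing the proof.

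The step I expect to be the main obstacle is the transfer of the capacitary smallness of the $U_{k}$ into the pathwise statement that the process eventually avoids them — the standard but delicate quasi-continuity/fine-continuity passage, carried out through the strong Markov property — together with the accompanying control of the endpoint $t=T$ (and $t=0$), which neither $\mbox{cap}_{2}$ nor $\mbox{cap}_{L}$ sees and which is exactly the reason for building $\bar u$ from the stochastic representation rather than by a purely capacitary argument.
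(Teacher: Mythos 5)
Your second half --- transferring $\mbox{cap}_{2}$-smallness of the sets $U_{k}$ into the pathwise statement that for q.e.\ $(s,x)$ the process eventually avoids some $U_{k}$, via the strong Markov argument of Remark \ref{ess} and the Choquet property --- is sound; it essentially anticipates Proposition \ref{imp} later in the paper (where the same implication ``$\mbox{cap}_{2}$-quasi-continuous $\Rightarrow$ quasi-continuous'' is obtained from \cite[Lemma 3.10]{Stannat}). The genuine gap is in your first half. For the divergence part $w$ solving PDE$(0,-\dyw\hat f)$ with $\hat f$ merely in $(\mathbb{L}_{2,\varrho}(Q_{T}))^{d}$, you claim a version $\bar w$ such that $t\mapsto\bar w(t,X_{t})$ is continuous on the closed interval $[s,T]$ for \emph{quasi-every} $(s,x)$, justified by the $d^{*}X$-representation of \cite[Theorems 3.1, 5.1, Proposition 7.6]{Kl1}. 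But those results, as invoked in the proof of Lemma \ref{stw.est}, yield the representation only for \emph{a.e.}\ $(s,x)$ (indeed the $d^{*}X$-integral itself is only asserted in Section \ref{sec2} to be well defined for a.e.\ $(s,x)$, its $\alpha^{s,x}$- and backward-martingale ingredients depending on the starting point through $p(s,x,\cdot,\cdot)$). Remark \ref{ess} cannot close this: it upgrades \emph{integrability conditions} of the form $E_{s,x}\,\mbox{esssup}|h(t,X_{t})|+E_{s,x}\int|g(t,X_{t})|\,dt<\infty$ from a.e.\ to q.e., not an \emph{identity} between a function and a stochastic process; upgrading the identity from a.e.\ to q.e.\ would require both sides to be quasi-c\`adl\`ag, i.e.\ exactly the quasi-continuity of $\bar w$ you are trying to establish. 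Since the $\mathbb{L}_{2}$-divergence part of the right-hand side is precisely what makes this proposition nontrivial (the $f_{0}$-part is covered by Proposition \ref{stw4.1}(ii)), this is the crux and not a removable technicality; your endpoint control at $t=T$, which you correctly identify as the reason for wanting the representation, inherits the same defect.

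For comparison, the paper avoids pointwise representations altogether: it takes $u_{n}\in C_{0}^{\infty}(Q_{T})$ with $u_{n}\rightarrow u$ in $\mathcal{W}_{\varrho}$, uses \cite[Lemma 2.20]{DPP} to arrange that $\bar u=\limsup_{n}u_{n}$ is $\mbox{cap}_{2}$-quasi-continuous, and uses the maximal inequality $\int_{Q_{T}}(E_{s,x}\sup_{s\le t\le T}|(u_{n}-u_{m})(t,X_{t})|)\varrho\,dx\,ds\le C\|u_{n}-u_{m}\|_{\mathcal{W}_{\varrho}}$ of \cite[Corollary 5.5]{Kl1} together with \cite[Proposition 3.3]{Kl1} to get $P_{s,x}$-a.s.\ uniform convergence on $[s,T]$ for q.e.\ $(s,x)$ along the same subsequence; uniform limits of continuous paths on the closed interval then give quasi-continuity of the very same $\bar u$, endpoints included. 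If you want to salvage your route, you should replace the $d^{*}X$-representation step for $w$ by such a maximal inequality for $\mathcal{W}_{\varrho}$-norms (or prove the q.e.\ representation separately), rather than appeal to Remark \ref{ess}.
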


\begin{dow}
Let $\{u_{n}\}\subset C_{0}^{\infty}(Q_{T})$ be such that
$u_{n}\rightarrow u$ in $W_{\varrho}$ (for  existence of such a
sequence see \cite[Theorem 2.11]{DPP}). By \cite[Lemma 2.20]{DPP}
we may assume that $\bar{u}=\limsup_{n\rightarrow\infty}u_{n}$ is
cap$_{2}$-quasi-continuous. On the other hand, by \cite[Corollary
5.5]{Kl1},
\[
\int_{Q_{T}}(E_{s,x}\sup_{s\le t\le T}|(u_{n}-u_m)(t,X_{t})|
\varrho(x)\,dx\,ds\rightarrow 0
\]
as $n,m\rightarrow0$.  Hence and
\cite[Proposition 3.3]{Kl1} we may assume that
\[\sup_{s\le t\le
T}|u_{n}(t,X_{t})-u_{m}(t,X_{t})|\rightarrow 0,\quad
P_{s,x}\mbox{-a.s.}
\]
for q.e. $(s,x)\in Q_{\hat{T}}$\,, which implies that $\bar u$ is
quasi-continuous, too.
\end{dow}

\begin{wn}
Each $u\in\mathcal{P}$ has a  version which is quasi-c\`adl\`ag
and cap$_{2}$-quasi-l.s.c..
\end{wn}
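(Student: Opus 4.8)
The plan is to construct the required version directly from the penalizing sequence used in the proof of Theorem \ref{tw3.5}(ii), taking advantage of the fact that each of its terms lies in $\mathcal{W}_{\varrho}$, where Proposition \ref{cont} applies. Recall from that proof that there are $u_{n}\in\mathcal{W}_{\varrho}$ with $u_{n}\uparrow u$ a.e. such that $\{u_{n}(\cdot,X_{\cdot})\}$ is monotone q.e.\ and, for q.e.\ $(s,x)\in Q_{\hat{T}}$, $u_{n}(t,X_{t})\to Y^{s,x}_{t}$ for every $t\in[s,T]$, $P_{s,x}$-a.s., the limit process $Y^{s,x}$ being c\`adl\`ag. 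By Proposition \ref{cont} each $u_{n}$ may be taken to be simultaneously $\mbox{cap}_{2}$-quasi-continuous and quasi-continuous, and replacing $u_{n}$ by $\max(u_{1},\dots,u_{n})$ — which retains both properties and, by the q.e.\ monotonicity just recalled, coincides with $u_{n}$ a.e.\ and along $(\cdot,X_{\cdot})$ $P_{s,x}$-a.s.\ for q.e.\ $(s,x)$ — I may assume moreover that $\{u_{n}\}$ is nondecreasing at every point of $Q_{T}$.

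Next I would set $\bar{u}=\sup_{n}u_{n}$ wherever this supremum is finite, and $\bar{u}=0$ on the set $N=\{\sup_{n}u_{n}=+\infty\}$. Since $u_{n}\uparrow u$ a.e., $\bar{u}$ is a version of $u$; and exactly as in the proof of Theorem \ref{tw3.5}(ii) one gets $\bar{u}(t,X_{t})=Y^{s,x}_{t}$ for every $t\in[s,T]$, $P_{s,x}$-a.s., for q.e.\ $(s,x)$, so $\bar{u}$ is quasi-c\`adl\`ag. As c\`adl\`ag trajectories are finite, $N\cap\check{Q}_{T}$ is not charged by the processes $(\cdot,X_{\cdot})$, so arguing as in Remark \ref{ess} (using that $\mbox{cap}_{L}$ is a Choquet capacity) one obtains $\mbox{cap}_{L}(N\cap\check{Q}_{T})=0$, hence $\mbox{cap}_{2}(N\cap\check{Q}_{T})=0$ by Lemma \ref{lem3.7}; in particular the redefinition of $\bar{u}$ on $N$ does not affect $\bar{u}(\cdot,X_{\cdot})$ and $\bar{u}$ stays quasi-c\`adl\`ag.

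It then remains to verify $\mbox{cap}_{2}$-quasi-l.s.c. Given $\varepsilon>0$, I would pick an open $V\supset N\cap\check{Q}_{T}$ with $\mbox{cap}_{2}(V)<\varepsilon/2$, and for each $n$ an open $U_{n}\subset\check{Q}_{T}$ with $\mbox{cap}_{2}(U_{n})<\varepsilon\,2^{-n-2}$ such that $u_{n}|_{\check{Q}_{T}\setminus U_{n}}$ is continuous. Then $U=V\cup\bigcup_{n}U_{n}$ is open with $\mbox{cap}_{2}(U)<\varepsilon$ by countable subadditivity of $\mbox{cap}_{2}$ (see \cite{DPP}), and on $\check{Q}_{T}\setminus U$ the function $\bar{u}$ is finite and equals $\sup_{n}u_{n}$ with all $u_{n}$ continuous there, hence $\bar{u}|_{\check{Q}_{T}\setminus U}$ is l.s.c. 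Thus $\bar{u}$ is $\mbox{cap}_{2}$-quasi-l.s.c.\ as well as quasi-c\`adl\`ag, which is the assertion.

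The step I expect to be the main obstacle is the second paragraph: transferring the quasi-c\`adl\`ag property of Theorem \ref{tw3.5} from the a.e.-defined limit to the genuine pointwise supremum, and showing that the set $N$ on which the supremum is infinite is $\mbox{cap}_{2}$-negligible. This is precisely where the q.e.\ monotonicity of $\{u_{n}(\cdot,X_{\cdot})\}$ combined with the convergence \emph{for every} $t$ in Theorem \ref{tw2.4}(b) (rather than just $\lambda\otimes P_{s,x}$-convergence) is essential. The only other point needing care, countable subadditivity of $\mbox{cap}_{2}$ on open sets, is standard and rests on the lattice structure of $\mathcal{W}_{\varrho}$.
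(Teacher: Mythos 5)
Your proposal is correct and follows essentially the same route as the paper: approximate $u$ from below by the penalized solutions, which lie in $\mathcal{W}_{\varrho}$ and hence by Proposition \ref{cont} admit versions that are simultaneously quasi-continuous and cap$_{2}$-quasi-continuous, and then pass to the increasing q.e.\ limit to obtain a quasi-c\`adl\`ag, cap$_{2}$-quasi-l.s.c.\ version. The paper phrases this by observing that $u$ solves OP$(\bar u(T-),f,u)$ and invoking Theorem \ref{main1}(iii) for the monotone q.e.\ convergence, whereas you work directly with the sequence from the proof of Theorem \ref{tw3.5} and spell out the handling of the exceptional set and the subadditivity argument; the substance is the same.
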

\begin{proof}
Let $u\in\mathcal{P}$ and for $n\in\BN$ let $u_n$ be a solution of
(\ref{eq4.8}) with $h=u$. By Proposition \ref{cont}, each $u_n$
has a version $\bar u_n$ which is quasi-continuous and
cap$_{2}$-quasi-continuous. Since $u$ is a solution of OP$(\bar
u(T-),f,u)$, where $\bar u$ is a quasi-c\`adl\`ag version of $u$
of Theorem \ref{tw3.5}, it follows from Theorem \ref{main1} that
$\bar u_n\uparrow\bar u$ q.e., which implies that $\bar u$ is
cap$_{2}$-quasi-l.s.c., too.
\end{proof}

\begin{wn}
\label{wn4.14} Assume (H1), (H2) and that $h\in{\mathbb
L}_{2,\varrho}(Q_T)$, $\varphi\ge h(T)$ a.e.. Then
\begin{enumerate}
\item[\rm(i)]There exists a solution of $OP(\varphi,f,h)$ iff
(\ref{eq1.10}) is satisfied.
\item[\rm(ii)]There exists a parabolic potential $h^*$ such that
$h^*\ge h$ a.e. iff (\ref{eq1.10}) is satisfied.
\end{enumerate}
\end{wn}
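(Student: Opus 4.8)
The plan is to show that, under the standing hypotheses (H1), (H2), $h\in\mathbb{L}_{2,\varrho}(Q_T)$ and $\varphi\ge h(T)$ a.e., condition (\ref{eq1.10}) is equivalent to (H3), and then to read off (i) and (ii). Two of the implications come for free. On the one hand, if $(u,\mu)$ solves OP$(\varphi,f,h)$ then by the very definition of a solution $u\in\mathcal{P}$ and $u\ge h$ a.e., i.e.\ $u$ is already a parabolic potential majorizing $h$, so (H3) holds. On the other hand, once (H3) holds all of (H1)--(H3) are in force, so Theorem \ref{main1}(i) provides a (unique) solution of OP$(\varphi,f,h)$. Hence (i) will follow as soon as the equivalence (\ref{eq1.10})$\Leftrightarrow$(H3) is established, and (ii) \emph{is} that equivalence (the first two clauses of (H3) being among the standing hypotheses).

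For the implication (H3)$\Rightarrow$(\ref{eq1.10}), I fix a parabolic potential $h^*\ge h$ a.e.\ and let $\bar h^*$ be its quasi-c\`adl\`ag version from Theorem \ref{tw3.5}(ii). Since $h\le h^*$ a.e.\ in $Q_T$, Aronson's upper estimate for $p$ --- which yields $\int_{\BRD}p(s,x,t,y)\varrho^2(x)\,dx\le C\varrho^2(y)$ uniformly in $0\le s<t\le T$ --- implies that for every fixed $s\in[0,T)$ and a.e.\ $x$ one has $h^+(t,X_t)\le\bar h^*(t,X_t)$ for a.e.\ $t\in[s,T]$, $P_{s,x}$-a.s., whence $\mbox{esssup}_{s\le t\le T}|h^+(t,X_t)|\le\mbox{esssup}_{s\le t\le T}|\bar h^*(t,X_t)|$ for a.e.\ $x$. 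Integrating against $\varrho^2\,dx$, taking the supremum over $s\in[0,T)$ and invoking the estimate of Theorem \ref{tw3.5}(i) for $\bar h^*$ then gives (\ref{eq1.10}).

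For the converse (\ref{eq1.10})$\Rightarrow$(H3) --- the substantial direction --- I run the penalization scheme from the proof of Theorem \ref{main1}, but extract the uniform bound from (\ref{eq1.10}) itself rather than from (H3). Let $u_n$ be the strong solution of PDE$(\varphi,\,f+n(y-h)^-)$ as in (\ref{eq4.8}). Because (\ref{eq1.10}) together with $g\in\mathbb{L}_{2,\varrho}(Q_T)$, Proposition \ref{tw2.2} and Remark \ref{ess} guarantees that (\ref{eq3.1}) holds for q.e.\ $(s,x)$, Proposition \ref{stw4.1} and the penalized-equation estimates and convergence of Theorem \ref{tw2.4} apply, and exactly as in the derivation of (\ref{eq4.9}) --- which does not use (H3) --- one obtains, after integrating the a priori estimates of Theorem \ref{tw2.4}(a) in $x$ against $\varrho^2\,dx$ via Proposition \ref{tw2.2},
\[
\sup_{0\le t\le T}\|\bar u_n(t)\|^2_{2,\varrho}+\|\nabla u_n\|^2_{2,\varrho,T}
\le C\Bigl(\|\varphi\|^2_{2,\varrho}+\|g\|^2_{2,\varrho,T}
+\sup_{s\in[0,T)}\int_{\BRD}E_{s,x}\,\mbox{esssup}_{s\le t\le T}|h^+(t,X_t)|^2\,\varrho^2(x)\,dx\Bigr),
\]
whose right-hand side is finite by (\ref{eq1.10}). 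Since $\{u_n\}$ is nondecreasing (Theorem \ref{tw2.4}) and the penalization measures $\mu_n=n(u_n-h)^-\,dm_T$ are tight (test the equation for $u_n$ against nonnegative $\eta\in C_0^\infty(\check Q_T)$), along a subsequence $u_n\to u$ in $\mathbb{L}_{2,\varrho}(Q_T)$, $\nabla u_n\to\nabla u$ weakly in $\mathbb{L}_{2,\varrho}(Q_T)$, and $\mu_n\Rightarrow\mu\ge0$; passing to the limit in the equation gives $\mathcal{L}u=-\mu\le0$ in $\mathcal{D}'(\check Q_T)$, while the uniform bound gives $\mbox{esssup}_t\|u(t)\|_{2,\varrho}<\infty$, so $u\in\mathcal{P}$. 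Finally $u\ge h$ a.e.: for q.e.\ $(s,x)$ the processes $\bar u_n(\cdot,X_\cdot)$ increase to the first component of the solution of RBSDE$_{s,x}(\varphi,f,h)$ (available since (\ref{eq3.1}) holds q.e.\ and $\varphi\ge h(T)$), which dominates $h(\cdot,X_\cdot)$ a.e.\ by the reflection constraint, so $u\ge h$ a.e.\ in $Q_T$ by Proposition \ref{tw2.2}. Thus $h^*:=u$ witnesses (H3).

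The main obstacle is this last construction: one must verify that the whole chain of estimates leading to the uniform $\mathbb{L}_{2,\varrho}$-bound, as well as the solvability of the penalized Cauchy problems and the identification of their stochastic representations with the penalized family of RBSDE$_{s,x}(\varphi,f,h)$, goes through using only (\ref{eq1.10}), the integrability of $g$ and Remark \ref{ess} (i.e.\ that (\ref{eq3.1}) holds quasi-everywhere) --- without presupposing (H3), which Theorem \ref{main1} takes as a standing hypothesis. Once this is in place, the passage to the limit and the check $u\in\mathcal{P}$ are routine repetitions of the argument in the proof of Theorem \ref{main1}. Combining the two equivalence directions with the two free implications noted at the outset then yields both (i) and (ii).
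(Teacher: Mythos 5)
Your proof is correct and follows essentially the same route as the paper: the hard direction rests, exactly as in the paper, on the observation that in the proof of Theorem \ref{main1}(i) hypothesis (H3) is used only to make the right-hand side of (\ref{eq4.9}) finite uniformly in $n$, which is precisely what (\ref{eq1.10}) supplies, while the converse comes from the estimate of Theorem \ref{tw3.5}(i); the only difference is organizational, in that you establish the equivalence (\ref{eq1.10})$\Leftrightarrow$(H3) first and deduce (i), whereas the paper proves (i) first and reads off (ii). (One cosmetic slip: since elements of $\mathcal{P}$ need not be positive, the inequality $h^{+}(t,X_t)\le\bar h^{*}(t,X_t)$ should be $h^{+}\le(\bar h^{*})^{+}\le|\bar h^{*}|$, which is all the esssup estimate requires.)
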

\begin{proof}
(i) The ``only if" part follows from Theorem \ref{tw3.5}(i). To
prove the ``if" part it suffices to observe that in the proof of
Theorem \ref{main1}(i) condition (H3), i.e. existence of
$h^*\in{\mathcal P}$ such that $h^*\ge h$ is used only to ensure
that the left-hand side of (\ref{eq4.9}) is bounded uniformly in
$n\in\BN$.\\
(ii) That (H3) implies (\ref{eq1.10}) follows immediately from
part (i). If (\ref{eq1.10}) is satisfied then by part (i) there is
a solution $(u,\mu)$ of OP$(\varphi,f,h)$. In particular, $u\ge h$
and $u\in{\mathcal P}$, so (H3) is satisfied with $h^*=u$.
\end{proof}

\begin{wn}
The quasi-c\'adl\'ag version $\bar{u}$ of the first component $u$
of the solution of the problem OP$(\varphi,f,h)$ is given by
\[
\bar{u}=\mbox{\rm quasi-essinf}\{\bar{v}\in{\mathcal P}:
\bar{v}\ge h\, \mbox{a.e., } \bar{v}(T-)\ge \varphi\}.
\]
\end{wn}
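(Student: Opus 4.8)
The plan is to verify the two properties that characterize the quasi-essinf: that $\bar u$ itself belongs to the family $\mathcal{F}=\{\bar v\in\mathcal{P}:\bar v\ge h\text{ a.e. and }\bar v(T-)\ge\varphi\}$, and that $\bar u\le\bar v$ q.e.\ for every $\bar v\in\mathcal{F}$. Granting both, any measurable $w$ with $w\le\bar v$ q.e.\ for all $\bar v\in\mathcal{F}$ satisfies in particular $w\le\bar u$ q.e.\ (since $\bar u\in\mathcal{F}$), so $\bar u$ is the q.e.\ largest lower bound of $\mathcal{F}$, i.e.\ its quasi-essinf. The first property is read off from the preceding results: by Theorem \ref{main1}(i) the first component $u$ of the solution of OP$(\varphi,f,h)$ lies in $\mathcal{P}$ and satisfies $u\ge h$ a.e.\ (conditions in the definition of OP), while Lemma \ref{lmml} applied to the weak equation of part (i) of that definition (with $\mu\in\mathcal{M}^{+}_{0}$) gives $\bar u(T-)\ge\varphi$ a.e.; hence $\bar u\in\mathcal{F}$.

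For the second property fix $\bar v\in\mathcal{F}$. Since $t\mapsto\bar v(t)$ is c\`adl\`ag into $\mathbb{L}_{2,\varrho}(\mathbb{R}^{d})$ (Theorem \ref{tw3.5}(ii)) and $\varphi\le\bar v(T-)$, Theorem \ref{tw3.5}(iii) provides a square-integrable PAF $K^{v}$ such that
\[
\bar v(t,X_{t})=\varphi(X_{T})+\int_{t}^{T}f_{\bar v}(\theta,X_{\theta})\,d\theta+K^{v}_{t,T}-\int_{t}^{T}\sigma\nabla\bar v(\theta,X_{\theta})\,dB_{s,\theta},\quad t\in[s,T],
\]
$P_{s,x}$-a.s.\ for q.e.\ $(s,x)\in Q_{\hat{T}}$, while by Theorem \ref{main1}(ii) the triple $(\bar u(\cdot,X_{\cdot}),\sigma\nabla\bar u(\cdot,X_{\cdot}),K^{u})$ with $K^{u}_{s,t}=\int_{s}^{t}d\mu(\theta,X_{\theta})$ solves RBSDE$_{s,x}(\varphi,f,h)$ for every $(s,x)\in F$, and $\mbox{cap}_{L}(F^{c})=0$. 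Working on the (q.e.) set of $(s,x)$ on which both representations hold and on which $\bar u(\cdot,X_{\cdot}),\bar v(\cdot,X_{\cdot})$ have finite expected squared supremum and square-integrable gradients (Theorem \ref{tw3.5}(i), Proposition \ref{tw2.2}, Remark \ref{ess}), put $w=\bar u-\bar v$; then $w(\cdot,X_{\cdot})$ is c\`adl\`ag, $w(T,X_{T})=0$ and
\[
w(t,X_{t})=\int_{t}^{T}(f_{\bar u}-f_{\bar v})(\theta,X_{\theta})\,d\theta+(K^{u}_{t,T}-K^{v}_{t,T})-\int_{t}^{T}\sigma\nabla w(\theta,X_{\theta})\,dB_{s,\theta}.
\]

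It remains to compare the two triples, which I would do exactly as in the uniqueness proof of Theorem \ref{uniq}, but one-sidedly. Applying It\^o's formula to $|w^{+}(t,X_{t})|^{2}$ on $[s,T]$, the jump corrections (coming from the atoms of $\mu$ and the jumps of $K^{v}$) are nonnegative by convexity of $x\mapsto(x^{+})^{2}$ and may be discarded; the drift is controlled by the Lipschitz condition (H2a), $|f_{\bar u}-f_{\bar v}|\le L(|w|+|\sigma\nabla w|)$, with the gradient part absorbed by Young's inequality into the quadratic variation term on the left; the term $\int_{t}^{T}(\bar u_{-}-\bar v_{-})^{+}(\theta,X_{\theta})\,d\mu(\theta,X_{\theta})$ vanishes by condition (iii) in the definition of OP applied to $(u,\mu)$ with the test function $h^{*}=\bar u\wedge\bar v$ (which is quasi-c\`adl\`ag with $h\le h^{*}\le\bar u$ a.e., so $h^{*}\in\mathcal{P}^{*}$, and $h^{*}_{-}=\bar u_{-}\wedge\bar v_{-}$, whence $\bar u_{-}-h^{*}_{-}=(\bar u_{-}-\bar v_{-})^{+}$); and the term involving $K^{v}$ is nonpositive because $K^{v}$ is increasing. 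Taking $E_{s,x}$ and applying Gronwall's lemma yields $E_{s,x}|w^{+}(t,X_{t})|^{2}=0$ for every $t\in[s,T]$, i.e.\ $\bar u(t,X_{t})\le\bar v(t,X_{t})$ $P_{s,x}$-a.s.\ for q.e.\ $(s,x)$; evaluating at $t=s$ (where $X_{s}=x$ $P_{s,x}$-a.s.) gives $\bar u\le\bar v$ q.e., as required. The main obstacle is this comparison step — converting the measure-theoretic minimality condition (iii) for $(u,\mu)$ into the pointwise inequality $\bar u\le\bar v$; the remainder is routine bookkeeping with exceptional sets and the representation results already available (alternatively one could invoke the comparison theorem for reflected BSDEs from \cite{PengXu}).
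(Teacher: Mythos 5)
Your argument is correct, and the first half (membership of $\bar u$ in the family via Theorem \ref{main1} and Lemma \ref{lmml}) is exactly what the paper does. The second half — the comparison $\bar u\le\bar v$ q.e. — takes a genuinely different route. You compare $\bar u$ and $\bar v$ directly by applying the It\^o--Meyer formula to $|(\bar u-\bar v)^{+}(t,X_t)|^{2}$ and killing the measure term through the minimality condition (iii) with the test obstacle $h^{*}=\bar u\wedge\bar v$ (so that $\bar u_{-}-h^{*}_{-}=(\bar u_{-}-\bar v_{-})^{+}$), i.e.\ you run a one-sided version of the uniqueness proof of Theorem \ref{uniq}; this is sound, including the discarding of the nonnegative jump corrections and the sign of the $dK^{v}$ term. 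The paper instead never touches condition (iii) at this point: it writes the representation of $\bar v$ from Theorem \ref{tw3.5}(iii), observes that since $\bar v\ge h$ a.e.\ the penalty $n(\bar v-h)^{-}$ vanishes along the trajectories, so that $\bar v$ is a supersolution of the penalized BSDE$_{s,x}(\varphi,f+n(y-h)^{-})$, applies Peng's comparison theorem to conclude $\bar u_{n}\le\bar v$ q.e.\ for the penalized approximations $\bar u_{n}$ of Theorem \ref{main1}(iii), and lets $n\to\infty$ using $\bar u_{n}\uparrow\bar u$. The paper's route is shorter and reuses only the standard (non-reflected) comparison theorem plus the already established monotone approximation; yours is self-contained given the definition of OP and the RBSDE representation, and has the merit of showing that the minimality condition alone forces $\bar u$ to be the smallest supersolution, at the cost of redoing the Tanaka/Gronwall bookkeeping (or, as you note, of invoking the comparison theorem for reflected BSDEs from \cite{PengXu}).
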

\begin{proof}
Of course $\bar{u}\in\mathcal{P}$ and $\bar{u}\ge h$ a.e.. By
Lemma \ref{lmml}, $\bar{u}(T-)\ge \varphi$. Let
$\bar{v}\in\mathcal{P}$ be such that $\bar{v}\ge h$ a.e.  and
$\bar{v}(T-)\ge\varphi$. Then by Theorem \ref{tw3.5} there exists
PAF $K$ such that $P_{s,x}$-a.s.,
\[
\bar{v}(t,X_{t})=\varphi(X_{T})
+\int_{t}^{T}f_{\bar{v}}(\theta,X_{\theta})\,d\theta
+\int_{t}^{T}dK_{s,\theta}
-\int_{t}^{T}\sigma\nabla\bar{v}(\theta,X_{\theta})\,dB_{s,\theta}
\]
for $t\in[s,T]$. Since $\bar{v}\ge h$ a.e.,
\begin{align*}
\bar{v}(t,X_{t})
&=\varphi(X_{T})+\int_{t}^{T}(f_{v}+n(\bar{v}-h)^{-})
(\theta,X_{\theta})\,d\theta
+\int_{t}^{T}\,dK_{s,\theta}\\
&\quad-\int_{t}^{T}\sigma\nabla\bar{v}(\theta,X_{\theta})
\,dB_{s,\theta}.
\end{align*}
By comparison theorem for BSDEs (see \cite[Theorem 1.3]{Peng}) and
Theorem \ref{main1}, $\bar{u}_{n}\le \bar{v}$ q.e., where
$\bar{u}_{n}$ is defined as in Theorem \ref{main1},  which implies
that $\bar{u}\le\bar{v}$ q.e..
\end{proof}

\begin{wn}
Let $(u_{i},\mu_{i})$ be a solution of OP$(\varphi,f_{i},h_{i})$,
$i=1,2$. If
\[
\varphi_{1}\le \varphi_{2},\quad
f_{1}(\cdot,\cdot,u_{1},\sigma\nabla u_{1}) \le
f_{2}(\cdot,\cdot,u_{1},\sigma\nabla u_{1}),\quad h_{1}\le h_{2}
\]
a.e., then
\[
\bar{u}_{1}\le\bar{u}_{2},\quad q.e.,
\]
where $\bar u_1,\bar u_2$ denote quasi-c\`adl\`ag versions of
$u_1,u_2$, respectively. If, in addition, $h_1=h_2$ a.e., then
\[
d\mu_{1}\le d\mu_{2}.
\]
\end{wn}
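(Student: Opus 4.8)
The plan is to read off both assertions from the stochastic representation of the obstacle problem given in Theorem \ref{main1}(ii), combined with the comparison theorems for BSDEs and reflected BSDEs. Let $F_{i}\subset Q_{\hat{T}}$ be the set associated in Theorem \ref{main1}(ii) with the data $g_{i}$ (the function from (H2b) for $f_{i}$) and $h_{i}$, $i=1,2$; then $\mbox{cap}_{L}(F_{i}^{c})=0$, hence $F:=F_{1}\cap F_{2}$ has $\mbox{cap}_{L}(F^{c})=0$, and for $(s,x)\in F$ the triples $(Y^{i},Z^{i},K^{i}):=(\bar{u}_{i}(\cdot,X_{\cdot}),\sigma\nabla\bar{u}_{i}(\cdot,X_{\cdot}),\int_{s}^{\cdot}d\mu_{i}(\theta,X_{\theta}))$ are solutions of RBSDE$_{s,x}(\varphi_{i},f_{i},h_{i})$.

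First I would establish $\bar{u}_{1}\le\bar{u}_{2}$ q.e. Fix $(s,x)\in F$. Since under $P_{s,x}$ the random variable $X_{\theta}$ has a density for every $\theta\in(s,T]$, the a.e. inequalities $\varphi_{1}\le\varphi_{2}$, $h_{1}\le h_{2}$ and $f_{1}(\cdot,\cdot,u_{1},\sigma\nabla u_{1})\le f_{2}(\cdot,\cdot,u_{1},\sigma\nabla u_{1})$ translate into $\varphi_{1}(X_{T})\le\varphi_{2}(X_{T})$, $P_{s,x}$-a.s., and $h_{1}(\theta,X_{\theta})\le h_{2}(\theta,X_{\theta})$, $f_{1}(\theta,X_{\theta},Y^{1}_{\theta},Z^{1}_{\theta})\le f_{2}(\theta,X_{\theta},Y^{1}_{\theta},Z^{1}_{\theta})$, $\lambda\otimes P_{s,x}$-a.s.; the last inequality is precisely the hypothesis of the comparison theorem for reflected BSDEs frozen along the first solution, and since $f_{2}$ is Lipschitz that theorem (see \cite{EKPPQ}, or pass through the penalized BSDEs of Theorem \ref{tw2.4} and use the comparison theorem for BSDEs) yields $Y^{1}_{t}\le Y^{2}_{t}$, i.e. $\bar{u}_{1}(t,X_{t})\le\bar{u}_{2}(t,X_{t})$, $P_{s,x}$-a.s., for q.e. $(s,x)\in Q_{\hat{T}}$. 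The function $v:=(\bar{u}_{1}-\bar{u}_{2})^{+}$ then belongs to $\mathbb{L}_{2,\varrho}(Q_{T})$, is quasi-c\`adl\`ag (being $(\cdot)^{+}$ composed with the difference of two quasi-c\`adl\`ag functions), and satisfies $v(t,X_{t})=0$ $P_{s,x}$-a.s. for q.e. $(s,x)$; by Proposition \ref{tw2.2} this forces $v=0$ a.e., and then Proposition \ref{cad} gives $v=0$ q.e., that is $\bar{u}_{1}\le\bar{u}_{2}$ q.e.

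For the comparison of the measures, assuming $h_{1}=h_{2}=h$, I would work with the penalized approximations $u_{n,i}$ of Theorem \ref{main1}(iii) (the solutions of (\ref{eq4.8}) with data $\varphi_{i},f_{i}$), using that $u_{n,i}\uparrow u_{i}$ q.e. and $\mu_{n,i}:=n(u_{n,i}-h)^{-}\,dm_{T}\Rightarrow\mu_{i}$. Since the two penalization terms coincide, comparison for the penalized BSDEs yields an ordering of $u_{n,1}$ and $u_{n,2}$, hence of $(u_{n,1}-h)^{-}$ and $(u_{n,2}-h)^{-}$, and therefore of the penalized reflection measures $\mu_{n,i}$; testing against nonnegative $\eta\in C_{0}(Q_{T})$ and passing to the weak limit gives the order of $\mu_{1}$ and $\mu_{2}$ on $\check{Q}_{T}$, while the slices $t=0,T$ are handled using $\mu_{i}(0)=0$ and Lemma \ref{lmml}. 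In view of the one-to-one correspondence $\mu_{i}\sim K^{i}$, this amounts to comparing the reflection processes of the two RBSDEs, which can alternatively be obtained directly from the first step via the minimality condition (iv) in the definition of RBSDE$_{s,x}(\varphi,f,h)$: since $Y^{1}\le Y^{2}$ and $h_{1}=h_{2}$, the reflection measure of one solution is carried by the contact set of the other.

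The hard part is this last step. Comparison for the penalized BSDEs requires the driver of the first penalized equation, evaluated along its own solution $u_{n,1}$, to be ordered against that of the second, i.e. $f_{1}(\cdot,\cdot,u_{n,1},\sigma\nabla u_{n,1})\le f_{2}(\cdot,\cdot,u_{n,1},\sigma\nabla u_{n,1})$, while the hypothesis is given along $u_{1}$; reconciling the two — either by the direct comparison of the reflection processes indicated above, which uses only the already-proved inequality $\bar{u}_{1}\le\bar{u}_{2}$ and condition (iv), or by an approximation argument — and verifying that the weak convergence $\mu_{n,i}\Rightarrow\mu_{i}$ preserves the inequality is where the care is needed. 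The remaining ingredients are a routine combination of Theorems \ref{tw2.4}, \ref{main1} and Propositions \ref{tw2.2}, \ref{cad}.
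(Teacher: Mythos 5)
Your route coincides in substance with the paper's: the printed proof is a single line invoking Theorem \ref{tw3.5} and the comparison theorem of Peng and Xu applied to the penalized equations BSDE$(\varphi_i,f_i+n(y-h_i)^-)$, which is exactly the second mechanism you describe. For the first assertion your variant is in fact cleaner than the paper's, because you apply the comparison theorem for \emph{reflected} BSDEs directly to the triples furnished by Theorem \ref{main1}(ii); there the hypothesis $f_1(\cdot,\cdot,u_1,\sigma\nabla u_1)\le f_2(\cdot,\cdot,u_1,\sigma\nabla u_1)$ is precisely the one\mbox{-}sided condition required along the first solution, whereas the penalized route needs the ordering along $(u_{n,1},\sigma\nabla u_{n,1})$ --- a mismatch you rightly flag and which the paper's one\mbox{-}line proof passes over. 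The upgrade from ``$P_{s,x}$-a.s.\ for q.e.\ $(s,x)$'' to an inequality q.e.\ on $Q_T$ via Propositions \ref{tw2.2} and \ref{cad} is also correct.

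The genuine problem is that you never commit to the direction of the measure inequality, and the direction that actually comes out of your argument is the reverse of the one in the statement. With $h_1=h_2=h$, the ordering $u_{n,1}\le u_{n,2}$ gives $(u_{n,1}-h)^-\ge (u_{n,2}-h)^-$, hence $d\mu_{n,1}\ge d\mu_{n,2}$, and testing against nonnegative $\eta\in C_0(Q_T)$ and passing to the weak limit preserves this; the same conclusion follows from the Skorokhod condition (iv). So the penalization argument yields $d\mu_1\ge d\mu_2$, not $d\mu_1\le d\mu_2$. A sanity check confirms this is not an artifact of the method: take $h=\varphi\equiv 0$, $f_1\equiv -1\le 0\equiv f_2$; then $u_1=u_2=0$ but $\mu_1=m_T>0=\mu_2$. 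The inequality in the corollary as printed therefore appears to have the wrong sign, and your hedged phrase ``an ordering of the penalized reflection measures'' conceals rather than resolves this. Separately, your second part remains a sketch: of the two repairs you propose for the driver\mbox{-}ordering issue (comparison of the reflection processes via condition (iv) of the RBSDE definition, or an approximation making the hypothesis available along $u_{n,1}$), one must actually be carried out, together with the verification that weak convergence of $\mu_{n,i}$ transports the inequality to $\check{Q}_T$ and that the slices at $t=0$ and $t=T$ are handled as you indicate.
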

\begin{dow}
Follows from Theorem \ref{tw3.5} and comparison theorem
\cite[Theorem 4.2]{PengXu} applied to solutions of
BSDE$(\varphi_{i},f_{i}+n(y-h_{i})^{-})$, $i=1,2$.
\end{dow}
\medskip

In the case of linear equations, i.e. if $f=f(t,x)$, some
definition of solutions of the obstacle problem with irregular
obstacles is proposed in \cite{Pierre1}. We close this section
with comparing it with our definition of solutions.

\begin{stw}\label{imp}
If $u\in \mathcal{B}(\check{Q}_{T})$ is cap$_{2}$-quasi-continuous
then it is quasi-continuous.
\end{stw}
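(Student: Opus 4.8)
The plan is to transfer the analytic notion of $\mbox{cap}_{2}$-quasi-continuity to the probabilistic one by combining the equivalence of the two capacities (Lemma \ref{lem3.7}) with a Borel--Cantelli argument applied to the hitting probabilities of the exceptional open sets.

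First I would pass to a convenient sequence of exceptional sets. By definition of $\mbox{cap}_{2}$-quasi-continuity there are open sets $V_{k}\subset\check{Q}_{T}$ with $\mbox{cap}_{2}(V_{k})<2^{-k}$ and $u_{|\check{Q}_{T}\setminus V_{k}}$ continuous; by Lemma \ref{lem3.7}, $\mbox{cap}_{L}(V_{k})\le C2^{-k}$. Put $U_{n}=\bigcup_{k\ge n}V_{k}$. Then $\{U_{n}\}$ is a decreasing sequence of open subsets of $\check{Q}_{T}$, the function $u$ is continuous on $\check{Q}_{T}\setminus U_{n}=\bigcap_{k\ge n}(\check{Q}_{T}\setminus V_{k})$ for every $n$, and, since $\mbox{cap}_{L}$ is countably subadditive on open sets (which is immediate from (\ref{cc}) and a union bound), $\sum_{n}\mbox{cap}_{L}(U_{n})<\infty$.

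Next I would run the Borel--Cantelli argument. For $(s,x)\in\check{Q}_{T}$ and $n\in\BN$ set $A_{n}(s)=\{\exists\,t\in(s,T):(t,X_{t})\in U_{n}\}$ and $\phi_{n}(s,x)=P_{s,x}(A_{n}(s))$. By (\ref{cc}) and the definition of $P_{s,m}$ one has $\int_{0}^{T}\!\int_{\BRD}\phi_{n}(s,x)\,dx\,ds=\mbox{cap}_{L}(U_{n})$, hence $w(s,x):=\sum_{n}\phi_{n}(s,x)<\infty$ for a.e. $(s,x)\in\check{Q}_{T}$. For such $(s,x)$ the Borel--Cantelli lemma gives $P_{s,x}(\limsup_{n}A_{n}(s))=0$, so $P_{s,x}$-a.s. there is $n_{0}=n_{0}(\omega)$ with $(t,X_{t})\notin U_{n_{0}}$ for all $t\in(s,T)$. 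Since $X$ has continuous paths and $u$ is continuous on $\check{Q}_{T}\setminus U_{n_{0}}$, the trajectory $t\mapsto u(t,X_{t})$ is then continuous on $(s,T)$, $P_{s,x}$-a.s.

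The main point, and the step I expect to be the main obstacle, is to upgrade ``a.e. $(s,x)$'' to ``q.e. $(s,x)$''; here I would argue as in Remark \ref{ess}. Fix a compact set $K\subset\{w=\infty\}$ and let $\tau=\inf\{t\in(s,T):(t,X_{t})\in K\}\wedge T$, which is an $\{\mathcal{G}^{s}_{t}\}$-stopping time because $(X,P_{s,x})$ is Feller; by path continuity and closedness of $K$, $(\tau,X_{\tau})\in K$ on $\{\tau<T\}$, so $w(\tau,X_{\tau})=\infty$ there. Applying the strong Markov property at $\tau$ gives, for each $n$,
\[
E_{s,x}[\mathbf{1}_{\{\tau<T\}}\phi_{n}(\tau,X_{\tau})]
=P_{s,x}(\{\tau<T\}\cap\{\exists\,t\in(\tau,T):(t,X_{t})\in U_{n}\})\le\phi_{n}(s,x),
\]
and summing over $n$, $E_{s,x}[\mathbf{1}_{\{\tau<T\}}w(\tau,X_{\tau})]\le w(s,x)$. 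For a.e. $(s,x)$ the right-hand side is finite, which forces $P_{s,x}(\tau<T)=0$; by (\ref{cc}) applied to compact sets this means $\mbox{cap}_{L}(K)=0$, and since $\mbox{cap}_{L}$ is a Choquet capacity, $\mbox{cap}_{L}(\{w=\infty\})=0$. Thus $w<\infty$ q.e., which together with the previous paragraph shows that $t\mapsto u(t,X_{t})$ has continuous trajectories on $(s,T)$ under $P_{s,x}$ for q.e. $(s,x)\in Q_{\hat{T}}$, i.e. $u$ is quasi-continuous. The genuinely delicate point throughout is the bookkeeping of the time variable in $A_{n}(s)$ and in $\tau$: the strong Markov property has to be invoked with the random initial time $\tau$, exactly as in the proof of Remark \ref{ess}, and once this is arranged the remaining estimates are routine.
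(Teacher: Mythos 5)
Your argument is correct, but it is not the route the paper takes. The paper's proof is essentially a citation: it passes from the cap$_{2}$-nest to a nest for the Oshima--Stannat capacity on $\mathbb{R}^{d+1}$ via Remark \ref{rem5.13} and then invokes \cite[Lemma 3.10]{Stannat}, which is exactly the abstract statement that a function continuous on each member of a nest composes continuously with the process for q.e.\ starting point. What you have done is reprove the relevant special case of that lemma from scratch inside the framework of Sections 2--3: the equivalence of capacities enters through Lemma \ref{lem3.7} instead of Remark \ref{rem5.13}, countable subadditivity of $\mbox{cap}_{L}$ on open sets (immediate from (\ref{cc})) replaces the nest machinery, and Borel--Cantelli plus the stopping-time/strong-Markov upgrade copied from Remark \ref{ess} does the work of Stannat's lemma. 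Your version is longer but self-contained and purely probabilistic, which fits the style the paper uses elsewhere (Remark \ref{ess}, Proposition \ref{cad}); the paper's version is shorter but leans on an external result whose hypotheses have to be matched up via a second equivalence of capacities. Three small points you should make explicit: (a) the step $\mbox{cap}_{L}(V_{k})\le C2^{-k}$ uses Lemma \ref{lem3.7} in its quantitative form (two-sided comparability of the capacities, not merely equality of null sets) --- this is what the lemma provides via \cite[Theorem 1]{Pierre}, but it is worth saying, since equality of null sets alone would not suffice for the Borel--Cantelli sum; (b) joint measurability of $\phi_{n}$, hence Borel measurability of $\{w=\infty\}$, is needed before capacitability of that set can be invoked (it follows from measurability of hitting probabilities of open sets for the Feller family, and is assumed at the same level of rigour in Remark \ref{ess}); (c) your argument lives on $\check{Q}_{T}$ while quasi-continuity is a q.e.\ statement on $Q_{\hat{T}}$, so the slice $\{0\}\times\BRD$, where $\mbox{cap}_{L}$ is Lebesgue measure, needs a separate sentence (e.g.\ via the Markov property $\phi_{n}(0,x)=\lim_{s\downarrow0}E_{0,x}\phi_{n}(s,X_{s})$ together with Aronson's upper bound, in the spirit of the last lines of the proof of Proposition \ref{cad}). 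None of these is an obstruction.
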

\begin{dow}
Let $u$ be cap$_{2}$-quasi-continuous and let $\{E_{n}\}$ be the
associated nest. Then for every $n\in\mathbb{N}$, $t\mapsto
u_{|E_{n}}(t,X_{t})$ is a continuous process on $E_{n}$ for every
$(s,x)\in E_{n}$. Therefore the result follows from  \cite[Lemma
3.10]{Stannat} and Remark \ref{rem5.13} below.
\end{dow}

\begin{uw}
\label{rem5.13} {\rm  In \cite{Oshima,Stannat}  capacity on
$\mathbb{R}^{d+1}$ is defined similarly to cap$_{2}$ but with
$\check{Q}_{T}$ replaced by $\mathbb{R}^{d+1}$. From \cite[Lemma
4]{Petitta1} it follows that the two capacities are equivalent on
$\check{Q}_{T}$.}
\end{uw}

Let us define $\mathcal{P}_{0}$ similarly to $\mathcal{P}$ but
with $\mathcal{L}$ replaced by $\frac{\partial}{\partial t}+L_t$,
and let $\mathcal{P}_{0}^+=\{u\in\mathcal{P}_{0}:u\ge0\}$. Given
$u\in \mathcal{P}^{+}_{0}$ we set
\[
\mathcal{E}(u)=\varrho^{2}\bar{u}(T-)\,dm+\varrho^{2}\,d\mu,
\]
where $\mu$ is the measure of Theorem \ref{tw3.5} corresponding
to $\bar u$ defined by (\ref{vn}) with  $\varphi=\bar{u}(T-)$,
and by $\tau^{f}_{\varphi}$ we denote a unique solution od
PDE$(\varphi,f)$.

The following definition of precise functions is given in
\cite{Pierre1,Pierre2}. Proposition \ref{v.1} is proved in
\cite{Pierre2}, while Proposition \ref{v.2} in \cite{Pierre1}.

\begin{df}
{\rm $u:(0,T]\times\BR^d\rightarrow\mathbb{R}^{d}$ is called
precise if there exists a sequence $\{u_{n}\}\subset
\mathcal{P}_{0}^{+}$ such that each $u_{n}$ has a
cap$_{2}$-quasi-continuous version $\bar{u}_{n}$ such that
$\bar{u}_{n}\downarrow u$ q.e.. }
\end{df}

Let us point out that in \cite{Pierre2} some  capacity on
$(0,T]\times\BR^d$ is considered. From results in \cite{Pierre} it
follows that the capacity defined in \cite{Pierre2} and the notion
of quasi-continuity with respect to that capacity agree with
capacity cap$_{2}$ on $\check{Q}_{T}$ and the notion of
cap$_{2}$-quasi-continuity on $\check{Q}_{T}$.

\begin{stw}\label{v.1} Let $u\in\mathcal{P}^{+}_{0}$.
\begin{enumerate}
\item[\rm(i)]There exists a unique, up to sets of capacity zero, version
$\hat{u}$ of $u$ such that $\hat{u}$ is precise.
\item[\rm(ii)]There exists a  sequence $\{u_{n}\}\subset \mathcal{P}^{+}_{0}$
such that $u_{n}\rightarrow u$ in
$\mathbb{L}_{2}(0,T;H^{1}_{\varrho})$, and moreover, each $u_{n}$
has a cap$_{2}$-quasi-continuous version $\bar{u}_{n}$ such that
$\bar{u}_{n}\downarrow \hat{u}$ q.e..
\end{enumerate}
\end{stw}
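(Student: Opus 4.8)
The plan is to reduce the statement to M.~Pierre's results on precise representatives of parabolic potentials \cite{Pierre1,Pierre2}, the real work being to identify the two frameworks. Pierre works on an abstract Gelfand triple $V\hookrightarrow H\hookrightarrow V'$ with a coercive bilinear form; in our situation this is $V=H^{1}_{\varrho}$, $H=\mathbb{L}_{2,\varrho}(\mathbb{R}^{d})$ with the form associated with $\frac{\partial}{\partial t}+L_{t}$, and $\mathcal{P}_{0}^{+}$ is exactly the cone of nonnegative parabolic potentials he considers. First I would record that the capacity used in \cite{Pierre2} agrees, on $\check{Q}_{T}$, with $\mathrm{cap}_{2}$ and that Pierre's notion of quasi-continuity agrees with $\mathrm{cap}_{2}$-quasi-continuity here; this is \cite[Theorem~1]{Pierre} together with Remark~\ref{rem5.13} (partly already noted above). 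With this dictionary in place, part~(i) --- existence of a precise version, unique up to sets of capacity zero --- is Pierre's theorem.

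For a more self-contained route I would argue as follows. Given $u\in\mathcal{P}_{0}^{+}$, put $\mu=-(\frac{\partial}{\partial t}+L_{t})u$, a nonnegative Radon measure on $\check{Q}_{T}$; by Theorem~\ref{tw3.5} (with $f\equiv 0$) $\mu$ extends to a soft measure corresponding to a square-integrable PAF, and $u$ has a quasi-c\`adl\`ag version $\bar u$ with $[0,T]\ni t\mapsto\bar u(t)\in\mathbb{L}_{2,\varrho}(\mathbb{R}^{d})$ c\`adl\`ag. Using the structure theorem \cite[Theorem~2.23]{DPP} one writes $d\mu=\alpha_{1}\,d\mu_{1}+\alpha_{2}\,d\mu_{2}$ with $\mu_{i}\in\mathcal{M}^{+}_{0}(\check{Q}_{T})\cap\mathcal{W}'_{\varrho}$; since potentials generated by $\mathcal{W}'_{\varrho}$-measures lie in $\mathcal{W}_{\varrho}$, they admit $\mathrm{cap}_{2}$-quasi-continuous (hence quasi-continuous) versions by Proposition~\ref{cont}. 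The remaining task is to build a \emph{decreasing} sequence $\{\bar u_{n}\}$, where each $\bar u_{n}$ is such a version of some potential $u_{n}\in\mathcal{P}_{0}^{+}$, converging q.e.\ to a fixed version $\hat u$ of $u$; I would normalise $\hat u$ so that $\hat u(\cdot,X_{\cdot})=\bar u_{-}(\cdot,X_{\cdot})$, in line with Proposition~\ref{v.3}. Uniqueness in~(i) then follows because any precise version is a decreasing limit of $\mathrm{cap}_{2}$-quasi-continuous functions, hence $\mathrm{cap}_{2}$-quasi-u.s.c., and two such functions that coincide a.e.\ coincide q.e.\ by a covering argument parallel to the proof of Proposition~\ref{cad} and Lemma~\ref{lem3.7}.

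For part~(ii) I would take the sequence $\{u_{n}\}$ above: since $0\le u_{n}\le u_{1}$ and $u_{n}\downarrow u$ a.e., dominated convergence gives $u_{n}\to u$ in $\mathbb{L}_{2,\varrho}(Q_{T})$, while the standard energy estimate for $\frac{\partial}{\partial t}+L_{t}$ bounds $\sup_{t}\|u_{n}(t)\|_{2,\varrho}+\|\nabla u_{n}\|_{2,\varrho,T}$ uniformly in $n$, and convergence of the data in the linear Cauchy problems solved by $u_{n}$ promotes the resulting weak limit to convergence in $\mathbb{L}_{2}(0,T;H^{1}_{\varrho})$. The main obstacle, and the reason I would ultimately invoke \cite{Pierre1,Pierre2}, is the construction of this decreasing sequence: contrary to the classical \emph{increasing} approximation of an excessive function by continuous potentials, one must approximate $u$ \emph{from above} by $\mathrm{cap}_{2}$-quasi-continuous potentials, which is precisely what circumvents the fact that $u$ itself need not have a $\mathrm{cap}_{2}$-quasi-continuous version; producing smooth (or $\mathcal{W}'_{\varrho}$-regular) data dominating $\mu$ and decreasing to it while keeping the terminal traces controlled, together with reconciling the three capacities in play ($\mathrm{cap}_{L}$, $\mathrm{cap}_{2}$ and Pierre's capacity on $(0,T]\times\mathbb{R}^{d}$), is the technical heart of the matter.
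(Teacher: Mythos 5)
Your primary route --- identifying the framework with Pierre's (matching capacities and notions of quasi-continuity via \cite{Pierre} and Remark~\ref{rem5.13}) and then invoking \cite{Pierre2} --- is exactly what the paper does: it offers no proof of its own and simply states that Proposition~\ref{v.1} is proved in \cite{Pierre2}. Your supplementary ``self-contained'' sketch is honest about where it stops short (the construction of the decreasing approximating sequence), and since you ultimately fall back on the citation, the proposal matches the paper's approach.
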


In what follows, if $u$ has a precise version, we denote it by
$\hat{u}$.

It is worth pointing out  that if $u$ has a
cap$_{2}$-quasi-continuous version $\bar u$ then $u$ has a precise
version and $\hat{u}=\bar{u}$. From \cite{Pierre2} it follows also
that $\hat{u}$ is quasi-u.s.c. and $(0,T]\ni t\mapsto\hat
u(t)\in\mathbb{L}_{2,\varrho}(\mathbb{R}^{d})$ is left continuous.
In particular, it follows that $\hat{u}(t)=\bar{u}(t-)$ for every
$t\in (0,T]$. Moreover if $u,v\in\mathcal{P}_{0}^{+}$ or
$u\in\PP^{+}_{0}$ and $v$ has quasi-continuous version $\bar{v}$
then $\widehat{u+v}=\hat{u}+\hat{v}$,
$\widehat{u+v}=\hat{u}+\bar{v}$.

Write
\[
C=\{u\in\mathcal{W}_{\varrho}+\mathcal{P}^{+}_{0}; \hat{u}\ge h,
\mbox{ q.e.}\}.
\]

\begin{stw}\label{v.2}
For every $h$ such that $C\neq\emptyset$ there exists a unique
cap$_{2}$-quasi-u.s.c. function $\hat{h}$ such that
\[
C=\{u\in\mathcal{W}_{\varrho}+\mathcal{P}^{+}_{0}; \hat{u}\ge
\hat{h}, \mbox{ \rm q.e.}\}.
\]
Moreover, there exists a sequence $\{h_{n}\}\subset
\mathcal{W}_{\varrho}$ such that
\[
\hat{h}=\mbox{\rm quasi-essinf}\{h_{n}, n\ge 1\}.
\]
\end{stw}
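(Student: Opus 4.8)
The plan is to realise $\hat h$ as the reduced obstacle attached to the admissible set $C$, following the construction of M.~Pierre \cite{Pierre1,Pierre2}; the only changes are the passage from the unweighted energy space to $\mathcal{W}_{\varrho}$ and $\mathcal{P}_{0}^{+}$, and the systematic use of Lemma \ref{lem3.7}, Proposition \ref{imp} and Remark \ref{rem5.13}, which guarantee that the various notions of ``quasi-everywhere'' in play (relative to cap$_{L}$, to cap$_{2}$, and to the process $\BX$) coincide. Three ingredients would be assembled first: (a) $\mathcal{W}_{\varrho}+\mathcal{P}_{0}^{+}$ is a lattice under $\wedge$ and is stable under $\|\cdot\|_{\PP}$-bounded decreasing limits; (b) the precise-version map $u\mapsto\hat u$ respects these operations, i.e. $\widehat{u_{1}\wedge u_{2}}=\hat u_{1}\wedge\hat u_{2}$ and $\widehat{\lim_{n}u_{n}}=\lim_{n}\hat u_{n}$ q.e. along $\|\cdot\|_{\PP}$-bounded decreasing sequences --- for the $\mathcal{P}_{0}^{+}$-part this is Proposition \ref{v.1}, for the $\mathcal{W}_{\varrho}$-part Proposition \ref{cont}, glued by the identity $\widehat{u+v}=\hat u+\bar v$ recalled above; (c) a function is cap$_{2}$-quasi-u.s.c. if and only if it equals, q.e., the infimum of a decreasing sequence of cap$_{2}$-quasi-continuous functions from $\mathcal{W}_{\varrho}$ lying above it. These are exactly the facts proved by Pierre for linear equations; transcribing (a) and (c) to the present weighted setting --- with \cite{DPP} (density of smooth functions in $\mathcal{W}_{\varrho}$) and Proposition \ref{v.1}(ii) supplying the regularisation --- is the part I expect to demand the most care and is the main obstacle.

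Granting (a)--(c), I would construct $\hat h$ by a Doob-type exhaustion. By (a) and (b), $u\wedge v\in C$ whenever $u,v\in C$ (since $\widehat{u\wedge v}=\hat u\wedge\hat v\ge h$ q.e.), so $C$ is downward directed; maximising a bounded strictly increasing functional such as $u\mapsto\int_{Q_{T}}\arctan(\hat u)\,\varrho^{2}\,dm_{T}$ over finite minima of elements of $C$ produces a decreasing sequence $\{u_{n}\}\subset C$ with, q.e., $\hat u_{n}\downarrow g:=\mbox{quasi-essinf}\{\hat u:u\in C\}$. Writing $u_{n}=w_{n}+v_{n}$ with $w_{n}\in\mathcal{W}_{\varrho}$ and $v_{n}\in\mathcal{P}_{0}^{+}$, the version $\bar w_{n}$ is cap$_{2}$-quasi-continuous (Proposition \ref{cont}) and $\hat v_{n}$ is cap$_{2}$-quasi-u.s.c. (Proposition \ref{v.1}(ii)), whence each $\hat u_{n}=\bar w_{n}+\hat v_{n}$ is cap$_{2}$-quasi-u.s.c.; since a q.e.-decreasing limit of cap$_{2}$-quasi-u.s.c. functions is again cap$_{2}$-quasi-u.s.c., the function $\hat h:=g$ is cap$_{2}$-quasi-u.s.c. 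The characterisation then follows at once: $\hat h\ge h$ q.e. because every $\hat u_{n}\ge h$ q.e., which gives $\{u\in\mathcal{W}_{\varrho}+\mathcal{P}_{0}^{+}:\hat u\ge\hat h\ \mbox{q.e.}\}\subseteq C$; and if $u\in C$ then $u\wedge u_{n}\in C$ for each $n$, so the exhaustion property forces $\hat h=\inf_{n}\hat u_{n}\le\hat u$ q.e., i.e. $C\subseteq\{u:\hat u\ge\hat h\ \mbox{q.e.}\}$. Hence $C=\{u\in\mathcal{W}_{\varrho}+\mathcal{P}_{0}^{+}:\hat u\ge\hat h\ \mbox{q.e.}\}$.

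For the representation $\hat h=\mbox{quasi-essinf}\{h_{n}\}$ with $h_{n}\in\mathcal{W}_{\varrho}$, I would combine (c) with Proposition \ref{v.1}(ii): for each $n$ choose cap$_{2}$-quasi-continuous functions $h_{n,k}\in\mathcal{W}_{\varrho}$ whose versions decrease q.e. to $\hat u_{n}$ as $k\to\infty$ (concretely $h_{n,k}=w_{n}+v_{n,k}$, where $\hat v_{n,k}\downarrow\hat v_{n}$ q.e. are the approximants of $v_{n}$ from Proposition \ref{v.1}(ii), truncated and regularised in $\mathcal{W}_{\varrho}$ via \cite{DPP} if needed), and then relabel $\{h_{n,k}\}$ diagonally into a single sequence $\{h_{n}\}\subset\mathcal{W}_{\varrho}$, so that $\inf_{n}\hat h_{n}=\inf_{n}\hat u_{n}=\hat h$ q.e.

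It remains to prove uniqueness. Let $\hat h'$ be cap$_{2}$-quasi-u.s.c. with $C=\{u:\hat u\ge\hat h'\ \mbox{q.e.}\}$. Since each $u_{n}\in C$, the forward implication gives $\hat u_{n}\ge\hat h'$ q.e., hence $\hat h=\inf_{n}\hat u_{n}\ge\hat h'$ q.e. Conversely, by (c) write $\hat h'=\inf_{j}\phi_{j}$ q.e. with $\phi_{j}\in\mathcal{W}_{\varrho}$ cap$_{2}$-quasi-continuous and $\phi_{j}\ge\hat h'$ q.e.; then $\hat\phi_{j}=\phi_{j}\ge\hat h'$ q.e., so $\phi_{j}\in C$ by the $\hat h'$-characterisation, and therefore $\phi_{j}=\hat\phi_{j}\ge\hat h$ q.e. by the characterisation of $C$ established above. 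Taking the infimum over $j$ yields $\hat h'\ge\hat h$ q.e., so $\hat h=\hat h'$ q.e., which is the asserted uniqueness up to sets of capacity zero.
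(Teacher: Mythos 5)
The paper does not prove this proposition: immediately before the statement it says that ``Proposition \ref{v.1} is proved in \cite{Pierre2}, while Proposition \ref{v.2} in \cite{Pierre1}'', so the result is imported from Pierre's work and only quoted. There is therefore no internal proof to compare yours against; what you have written is a reconstruction of Pierre's r\'eduite construction transported to the weighted setting, and in outline it is the right strategy (downward directedness of $C$ under $\wedge$, exhaustion of the quasi-essinf along a decreasing sequence, stability of cap$_{2}$-quasi-u.s.c. functions under countable infima, approximation of $\hat{h}$ from above by elements of $\mathcal{W}_{\varrho}$, and the two-sided comparison for uniqueness).

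Two points need repair before this could stand as a proof. First, a directional slip: to drive $\hat{u}_{n}$ down to the quasi-essinf you must \emph{minimise} $u\mapsto\int_{Q_{T}}\arctan(\hat{u})\varrho^{2}\,dm_{T}$ over finite minima of elements of $C$, not maximise it --- the supremum of an increasing functional over finite minima is already attained at single elements and says nothing about the infimum of the family. Second, and more seriously, the two steps you yourself flag as ``the main obstacle'' are exactly where the content of Pierre's proof lives, and they are asserted rather than supplied: (i) that $\mathcal{W}_{\varrho}+\mathcal{P}_{0}^{+}$ is stable under $\wedge$ with $\widehat{u\wedge v}=\hat{u}\wedge\hat{v}$ q.e. --- for two potentials this is the classical fact that the minimum of supersolutions of $\frac{\partial}{\partial t}+L_{t}$ is a supersolution, but the mixed case $(w+v)\wedge(w'+v')$ with $w,w'\in\mathcal{W}_{\varrho}$ and $v,v'\in\mathcal{P}_{0}^{+}$ does not obviously land back in $\mathcal{W}_{\varrho}+\mathcal{P}_{0}^{+}$ and needs an argument; and (ii) the upgrade from the a.e. minimality of $g=\inf_{n}\hat{u}_{n}$ (which is all an integral functional against $\varrho^{2}\,dm_{T}$ can detect) to the q.e. statement $\hat{u}\ge g$ q.e. for every $u\in C$ --- this requires the monotonicity ``$u\ge v$ a.e. implies $\hat{u}\ge\hat{v}$ q.e.'' of the precise-version map, which is part of Pierre's theory and is \emph{not} a formal consequence of quasi-upper-semicontinuity alone, since a quasi-u.s.c. function may exceed another on a set of Lebesgue measure zero but positive capacity. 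As it stands the proposal is a faithful road map to \cite{Pierre1} rather than a self-contained proof; since the paper itself merely cites that reference, this is acceptable as context, but items (i) and (ii) would have to be proved, or explicitly quoted from \cite{Pierre1,Pierre2}, for the argument to close.
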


\begin{stw}\label{v.3}
Let $u\in\mathcal{P}^{+}_{0}$. Then for q.e. $(s,x)\in \Q$,
$[s,T]\ni t\mapsto\hat{u}(t,X_{t})$ is  c\`agl\`ad under $P_{s,x}$
and
\[
\hat{u}(t,X_{t})=\bar{u}_{-}(t,X_{t}),\quad t\in[s,T],\quad
P_{s,x}\mbox{-}a.s..
\]
\end{stw}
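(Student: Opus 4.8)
The plan is to establish the two assertions separately. For the first, that $t\mapsto\hat{u}(t,X_{t})$ is c\`agl\`ad under $P_{s,x}$ for q.e.\ $(s,x)\in\Q$, I would use Proposition \ref{v.1}(ii): there is a sequence $\{u_n\}\subset\mathcal{P}_{0}^{+}$ with cap$_2$-quasi-continuous versions $\bar{u}_n\downarrow\hat{u}$ q.e.\ and $u_n\to u$ in $\mathbb{L}_2(0,T;H^1_\varrho)$. By Proposition \ref{cont} (or Proposition \ref{imp}) each $\bar{u}_n$ is also quasi-continuous, so $t\mapsto\bar{u}_n(t,X_t)$ has continuous trajectories $P_{s,x}$-a.s.\ for q.e.\ $(s,x)$. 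Since $\bar{u}_n(t,X_t)\downarrow\hat{u}(t,X_t)$ q.e.\ (the exceptional set for the convergence being cap$_L$-null, hence $P_{s,x}$-null for q.e.\ $(s,x)$ by the definition of cap$_L$), $t\mapsto\hat{u}(t,X_t)$ is a decreasing limit of continuous functions, hence upper semicontinuous; on the other hand I must upgrade this to c\`agl\`ad. For that I would invoke the fact recorded right after Proposition \ref{v.1} that $(0,T]\ni t\mapsto\hat{u}(t)\in\mathbb{L}_{2,\varrho}(\mathbb{R}^d)$ is left continuous together with the fact that $u\in\mathcal{P}_0^+\subset\mathcal{P}$ (taking $f_u\equiv 0$ plays no role here), so that by Theorem \ref{tw3.5}(ii) $u$ has a quasi-c\`adl\`ag version $\bar{u}$ with $[0,T]\ni t\mapsto\bar{u}(t)\in\mathbb{L}_{2,\varrho}(\mathbb{R}^d)$ c\`adl\`ag; then $\bar{u}_{-}(t,X_t)$ is by construction c\`agl\`ad, and it suffices to identify $\hat{u}(t,X_t)$ with $\bar{u}_{-}(t,X_t)$, which is precisely the second assertion. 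So the two assertions are really proved together, and the c\`agl\`ad property of $t\mapsto\hat{u}(t,X_t)$ follows once the identification is in hand.

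The core of the proof is therefore the identity $\hat{u}(t,X_t)=\bar{u}_{-}(t,X_t)$. Here I would fix $t_0\in(0,T)$ and compare the left limit of $\bar{u}$ at $t_0$ along the process with $\hat{u}(t_0,\cdot)$. From the discussion after Proposition \ref{v.1} we know $\hat{u}(t_0)=\bar{u}(t_0-)$ as elements of $\mathbb{L}_{2,\varrho}(\mathbb{R}^d)$, i.e.\ $\bar{u}(t)\to\hat{u}(t_0)$ strongly in $\mathbb{L}_{2,\varrho}(\mathbb{R}^d)$ as $t\uparrow t_0$ (this strong left continuity is part of Theorem \ref{tw3.5}(ii), established there via the argument around Eq.\ (\ref{oo})). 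On the process side, the trajectory $s\mapsto\bar{u}(s,X_s)$ is c\`adl\`ag $P_{s,x}$-a.s.\ for q.e.\ $(s,x)$, so $\bar{u}_{-}(t_0,X_{t_0})=\lim_{s\uparrow t_0}\bar{u}(s,X_s)$ exists $P_{s,x}$-a.s. I would take a sequence $t_n\uparrow t_0$ and use Proposition \ref{tw2.2} to pass from $\mathbb{L}_{2,\varrho}(\mathbb{R}^d)$-convergence of $\bar{u}(t_n)-\hat{u}(t_0)$ to convergence of $\bar{u}(t_n,X_{t_n})-\hat{u}(t_0,X_{t_n})$ in $\mathbb{L}_2([0,T]\times\mathbb{R}^d,\varrho^2\,ds\,dx$-averaged $P_{s,x}$ norms$)$; combined with the fact that $\hat{u}$ is quasi-u.s.c.\ and left continuous in the appropriate sense one gets $\hat{u}(t_0,X_{t_n})\to\hat{u}(t_0,X_{t_0})$, and with the c\`adl\`ag property of $s\mapsto\bar{u}(s,X_s)$ one gets $\bar{u}(t_n,X_{t_n})\to\bar{u}_{-}(t_0,X_{t_0})$. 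Matching the two limits yields $\hat{u}(t_0,X_{t_0})=\bar{u}_{-}(t_0,X_{t_0})$, $P_{s,x}$-a.s.\ for a.e.\ $(s,x)$, and Remark \ref{ess}/Proposition \ref{cad}-type arguments (both sides being quasi-c\`adl\`ag in $t$ after this identification) upgrade ``a.e.'' to ``q.e.''.

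I expect the main obstacle to be the delicate interplay between the three notions of convergence at $t_0$: strong $\mathbb{L}_{2,\varrho}(\mathbb{R}^d)$-convergence of $\bar{u}(t)$ as $t\uparrow t_0$, pointwise-along-trajectories convergence of $\bar{u}(t,X_t)$, and the quasi-sure convergence $\bar{u}_n\downarrow\hat{u}$. One must be careful that the exceptional ($P_{s,x}$-null for q.e.\ $(s,x)$) sets on which the various limit relations fail can be chosen simultaneously, and that passing from spatial $\mathbb{L}_{2,\varrho}$-limits to limits along the Markov process at a fixed time requires Proposition \ref{tw2.2} applied on small time intervals $(t_n,t_0]$ together with Aronson-type continuity of the transition density $p(s,x,\cdot,\cdot)$ away from the diagonal; a naive pointwise argument would not suffice. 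The route through the penalized approximants $\bar{u}_n$ of Proposition \ref{v.1}(ii) and the explicit strong-left-continuity construction inside the proof of Theorem \ref{tw3.5}(ii) is what makes this manageable, so I would lean heavily on those two already-proved facts rather than redoing the analysis.
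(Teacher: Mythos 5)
Your setup is right (the decreasing sequence from Proposition \ref{v.1}(ii), quasi-continuity of the approximants via Proposition \ref{imp}, and the goal of identifying $\hat{u}(\cdot,X_{\cdot})$ with $\bar{u}_{-}(\cdot,X_{\cdot})$), but the core of your argument has a genuine gap, in two linked places. First, the step ``$\hat{u}(t_{0},X_{t_{n}})\rightarrow\hat{u}(t_{0},X_{t_{0}})$'' is not justified. The points $(t_{0},X_{t_{n}})$ all lie on the single time slice $\{t_{0}\}\times\mathbb{R}^{d}$, whereas cap$_{2}$-quasi-u.s.c.\ and the left continuity of $t\mapsto\hat{u}(t)$ in $\mathbb{L}_{2,\varrho}(\mathbb{R}^{d})$ control, respectively, behaviour off small space-time open sets and convergence of whole spatial sections in norm; neither gives pointwise convergence of the fixed, merely measurable function $\hat{u}(t_{0},\cdot)$ along the spatial positions $X_{t_{n}}$. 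This is not a technicality one can average away with Proposition \ref{tw2.2}: that proposition compares $\mathbb{L}_{2,\varrho}$-norms with expectations of functions evaluated at $(\theta,X_{\theta})$, i.e.\ at matching times, and does not touch the mixed term $\hat u(t_{0},X_{t_{n}})$ with $t_{n}\neq t_{0}$.

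Second, and more fundamentally, your route produces the identity only at each fixed $t_{0}$ (with a $P_{s,x}$-null set depending on $t_{0}$), and the regularity of $t\mapsto\hat{u}(t,X_{t})$ that you need to upgrade this to ``for all $t$ simultaneously'' is never established: a decreasing limit of continuous processes is only u.s.c., and a u.s.c.\ process that agrees with a c\`agl\`ad one for a.e.\ $t$ need not coincide with it everywhere (consider $\mathbf{1}_{\{t_{0}\}}$). The paper breaks this circle differently: it writes each $\bar{u}_{n}(t,X_{t})$ as a continuous semimartingale $\bar u_{n}(s,x)-K^{n}_{s,t}-\int_{s}^{t}\sigma\nabla u_{n}\,dB_{s,\theta}$ via Theorem \ref{tw3.5}, checks that the martingale parts converge (Proposition \ref{tw2.2} plus \cite[Proposition 3.3]{Kl1}), and then runs the argument of Peng's monotonic limit theorem \cite[Theorem 2.1]{Peng} on the decreasing sequence. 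That is what forces the pointwise limit $Y^{s,x}_{t}=\lim_{n}\bar u_{n}(t,X_{t})=\hat u(t,X_{t})$ to be c\`agl\`ad at every $t$; the identification with $\bar{u}_{-}(t,X_{t})$ then follows cheaply from $E_{s,x}\int_{s}^{T}|(\bar u-\hat u)(t,X_{t})|^{2}\,dt=0$ (both are versions of $u$) together with one process being c\`adl\`ag and the other c\`agl\`ad. The missing idea in your proposal is precisely this use of the semimartingale structure and the monotonic limit theorem; without it neither the c\`agl\`ad property nor the everywhere-in-$t$ identification goes through.
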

\begin{dow}
Let $\{u_n\}$ be a sequence of  Proposition \ref{v.1}(ii) and let
$\bar{u}_{n}$ be a quasi-continuous version of $u_{n}$. Using
Proposition \ref{tw2.2} and \cite[Proposition 3.3]{Kl1} we
conclude that for some subsequence (still denoted by $\{n\}$),
\[
E_{s,x}\int_{s}^{T}|\sigma\nabla(u_{n}-u)(t,X_t)|^2\,dt\rightarrow0
\]
for q.e. $\sx$.
By Theorem \ref{tw3.5}, there exists PCAF $K^{n}$ such that
\[
\bar{u}_{n}(t,X_{t})=\bar{u}_n(s,x)-K^{n}_{s,t}
-\int_{s}^{t}\sigma\nabla
u_{n}(\theta,X_{\theta})\,dB_{s,\theta},\quad t\in[s,T],\quad
P_{s,x}\mbox{-}a.s.
\]
for q.e. $\sx$. Therefore using the fact that
$\{u_n\}$ is decreasing and  repeating arguments from the proof of
\cite[Theorem 2.1]{Peng} we show that for q.e. $(s,x)\in
\check{Q}_{T}$ there is a c\`agl\`ad process $Y^{s,x}$ such that
for q.e. $(s,x)\in \check{Q}_{T}$,
\[
\bar{u}_{n}(t,X_{t})\rightarrow Y^{s,x}_{t}, \quad t\in[s,T],\quad
P_{s,x}\mbox{-}a.s..
\]
On the other hand, since $\bar{u}_{n}\downarrow \hat{u}$ q.e.,
\[
Y^{s,x}_{t}=\hat{u}(t,X_{t}),\quad t\in[s,T],\quad
P_{s,x}\mbox{-}a.s.
\]
for q.e. $\sx$. Hence, since $\bar{u},\hat{u}$
are versions of $u$,
\[
\int_{Q_{T}}(E_{s,x}\int_{s}^{T} |(\bar{u}-\hat u)
(t,X_t)|^{2}\,dt) \varrho^{2}(x)\,dx\,ds=0.
\]
Using arguments from Remark \ref{ess} one can deduce from the
above that
\[
E_{s,x}\int_{s}^{T}|(\bar{u}-\hat u)(t,X_t)|^{2}\,dt=0
\]
for q.e. $\sx$. From this and the fact that
$t\mapsto\bar{u}(t,X_{t})$ is c\`adl\`ag and
$t\mapsto\hat{u}(t,X_{t})$ is c\`agl\`ad the result follows.
\end{dow}
\medskip

From now on we assume that $f:Q_T\rightarrow\BR$, i.e. we consider
the linear problem, and we assume that $\hat{h}(T)\le \varphi$
a.e..
\begin{lm}\label{lm.op3}
Assume that (H1)--(H3) are satisfied.  Let $(u,\mu)$ be a unique
solution of OP$(\varphi,f,\hat{h})$. If $\hat{h}(T)\le \varphi$
a.e., then $\mu(T)\equiv 0$.
\end{lm}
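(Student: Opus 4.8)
The plan is to reduce the statement to the assertion that the quasi\-c\`adl\`ag version $\bar u$ of $u$ satisfies $\bar u(T-)=\varphi$ a.e. Since $(u,\mu)$ satisfies condition (i) in the definition of a solution of OP$(\varphi,f,\hat h)$, Lemma \ref{lmml} applies and gives both $\bar u(T-)\ge\varphi$ a.e. and the identity $\mu(T)=(\bar u(T-)-\varphi)\,dm$; hence $\mu(T)\equiv 0$ is equivalent to $\bar u(T-)\le\varphi$ a.e. To obtain this reverse inequality I would pass to the stochastic picture. By Theorem \ref{main1}(ii), for every $(s,x)$ in the full set $F$ (recall $\mbox{cap}_{L}(F^{c})=0$) the triple $(Y_{t},Z_{t},K_{t}):=(\bar u(t,X_{t}),\sigma\nabla\bar u(t,X_{t}),\int_{s}^{t}d\mu(\theta,X_{\theta}))$, $t\in[s,T]$, solves RBSDE$_{s,x}(\varphi,f,\hat h)$. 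Letting $t\uparrow T$ in equation (i) of the RBSDE and using that $\int_{t}^{T}Z_{\theta}\,dB_{s,\theta}\to 0$ and $\int_{t}^{T}f(\theta,X_{\theta},Y_{\theta},Z_{\theta})\,d\theta\to 0$, one gets $\bar u(T-,X_{T})=Y_{T-}=\varphi(X_{T})+\Delta K_{T}$ with $\Delta K_{T}:=K_{T}-K_{T-}\ge 0$. It therefore suffices to show $\Delta K_{T}=0$ $P_{s,x}$\nobreakdash-a.s. for every $(s,x)\in F$; integrating over $x$ against $\varrho^{2}\,dx$ with $s=0$ and using Aronson's lower bound for $p(0,\cdot,T,\cdot)$ (as in Proposition \ref{tw2.2}) then yields $\|\bar u(T-)-\varphi\|^{2}_{2,\varrho}=0$.

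To prove $\Delta K_{T}=0$ I would argue by contradiction using the minimality condition (iv) in the definition of RBSDE$_{s,x}(\varphi,f,\hat h)$. Suppose $P_{s,x}(\Delta K_{T}>\varepsilon)>0$ for some $\varepsilon>0$ and some $(s,x)\in F$. Since the obstacle enters (iv) only through its values for a.e.\ $t$, I would replace the (only left\-continuous) path $t\mapsto\hat h(t,X_{t})$ by its right\-continuous modification $S_{t}:=\hat h(t+,X_{t}):=\lim_{u\downarrow t}\hat h(u,X_{u})$. Using that $t\mapsto\hat h(t,X_{t})$ is c\`agl\`ad $P_{s,x}$\nobreakdash-a.s. for q.e.\ $(s,x)$ (cf.\ Proposition \ref{v.3}), $S$ is c\`adl\`ag, coincides with $\hat h(t,X_{t})$ for a.e.\ $t$, and $S_{T-}=\hat h(T-,X_{T})=\hat h(T,X_{T})\le\varphi(X_{T})$ $P_{s,x}$\nobreakdash-a.s., the last inequality being the hypothesis $\hat h(T)\le\varphi$ a.e.\ (valid for $P_{s,x}$\nobreakdash-a.e.\ $X_{T}$ because $X_{T}$ has a density). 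I then set $H_{t}:=\max(S_{t},Y_{t}-\varepsilon)$. This $H$ is c\`adl\`ag; one has $\hat h(t,X_{t})\le H_{t}\le Y_{t}$ for a.e.\ $t\in[s,T]$, $P_{s,x}$\nobreakdash-a.s.\ (using $S_{t}=\hat h(t,X_{t})\le Y_{t}$ for a.e.\ $t$ by condition (iii) of the RBSDE); and $E_{s,x}\sup_{s\le t\le T}|H_{t}|^{2}<\infty$, since $|H_{t}|\le S_{t}^{+}+2|Y_{t}|+\varepsilon$ while $E_{s,x}\sup_{s\le t\le T}|Y_{t}|^{2}<\infty$ and, by Theorem \ref{tw3.5}(i) applied to a parabolic potential majorizing $\hat h$ (which exists since OP$(\varphi,f,\hat h)$ is solvable), $E_{s,x}\mbox{esssup}_{s\le t\le T}|\hat h^{+}(t,X_{t})|^{2}<\infty$ for q.e.\ $(s,x)$. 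Hence $H$ is admissible in condition (iv).

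With this $H$, on the event $\{\Delta K_{T}>\varepsilon\}$ we have $Y_{T-}=\varphi(X_{T})+\Delta K_{T}>\varphi(X_{T})+\varepsilon\ge S_{T-}+\varepsilon$, so $H_{T-}=Y_{T-}-\varepsilon<Y_{T-}$ and therefore $(Y_{T-}-H_{T-})\Delta K_{T}\ge\varepsilon^{2}>0$ there. Since $H_{t}\le Y_{t}$ for a.e.\ $t$ forces $H_{t-}\le Y_{t-}$ for all $t$, the measure $(Y_{t-}-H_{t-})\,dK_{t}$ on $(s,T]$ is nonnegative, and (with the convention $\int_{s}^{T}=\int_{(s,T]}$) its mass dominates the atom at $T$, so $\int_{s}^{T}(Y_{t-}-H_{t-})\,dK_{t}\ge\varepsilon^{2}>0$ with positive $P_{s,x}$\nobreakdash-probability, contradicting condition (iv). Thus $\Delta K_{T}=0$ $P_{s,x}$\nobreakdash-a.s. for every $(s,x)\in F$, and the reduction of the first paragraph completes the proof.

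The step I expect to be the main obstacle is the construction and verification of the test process $H$: reconciling the c\`agl\`ad nature of the obstacle path $t\mapsto\hat h(t,X_{t})$ with the c\`adl\`ag requirement in condition (iv). One must pass to the right\-continuous modification of the obstacle, check it still dominates $\hat h(t,X_{t})$ for a.e.\ $t$ and keeps its terminal left\-limit below $\varphi$ — this is precisely where the hypothesis $\hat h(T)\le\varphi$ enters — and secure the square\-integrability of $\sup|H|$ via a square\-integrable potential above the obstacle and Theorem \ref{tw3.5}(i). Everything else (Lemma \ref{lmml}, the RBSDE representation from Theorem \ref{main1}, and the final integration through Aronson's estimates) is routine given the results already established.
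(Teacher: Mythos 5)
Your proof is correct in substance, but it follows a genuinely different route from the paper. The paper disposes of the lemma in two lines by citing Hamad\`ene's jump formula for reflected BSDEs with discontinuous barriers: $\Delta\bigl(\int_s^T d\mu(\theta,X_\theta)\bigr)=(\hat h(T,X_T)-\bar u(T,X_T))^+=(\hat h(T,X_T)-\varphi(X_T))^+=0$, and then reads off $\mu(T)=0$. You instead re-derive the needed half of that formula from first principles: after the same reduction via Lemma \ref{lmml} and the RBSDE representation of Theorem \ref{main1}(ii), you show $\Delta K_T=0$ directly from the Skorokhod minimality condition (iv) by testing it against $H_t=\max(S_t,Y_t-\varepsilon)$, where $S$ is the right-continuous modification of the obstacle path. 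What your approach buys is self-containedness (no appeal to \cite{Hamadene}) and it makes visible exactly where the hypothesis $\hat h(T)\le\varphi$ and the ``precise'' structure of $\hat h$ enter; what it costs is the extra bookkeeping of verifying that $H$ is an admissible test process. Both arguments ultimately rest on the same structural fact, namely that $t\mapsto\hat h(t,X_t)$ is c\`agl\`ad with $\lim_{t\uparrow T}\hat h(t,X_t)=\hat h(T,X_T)$; you cite Proposition \ref{v.3} for this, which is literally stated for elements of $\mathcal P_0^+$ rather than for the associated function $\hat h$ of Proposition \ref{v.2}, so strictly you should invoke the left-continuity and quasi-u.s.c.\ properties of precise functions recorded after Proposition \ref{v.1} (from \cite{Pierre2}). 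Since the paper's citation of Hamad\`ene's formula with barrier left-limit $\hat h(T,X_T)$ relies on exactly the same fact, this is a shared implicit step rather than a defect of your argument. One further small simplification: once $\Delta K_T=0$ $P_{s,x}$-a.s.\ for q.e.\ $(s,x)$, you can conclude $\mu(T)\equiv0$ directly from the Revuz-type correspondence $\mu\sim K$ (take $f=\mathbf 1_{\{T\}\times\BRD}$ in (\ref{P10}) and use Aronson's lower bound), without detouring through the identification $Y_{T-}=\bar u(T-)(X_T)$.
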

\begin{dow}
From \cite{Hamadene} it is known that
$\Delta(\int_{s}^{T}d\mu\h)=(\hat{h}(T,X_{T})-\bar{u}(T,X_{T}))^{+}$,
$P_{s,x}$-a.s. for  q.e. $(s,x)\in Q_{\hat T}$. On the other hand,
by assumptions of the lemma,
$(\hat{h}(T,X_{T})-\bar{u}(T,X_{T}))^{+}
=(\hat{h}(T,X_{T})-\varphi(X_{T}))^{+}=0,\, P_{s,x}$-a.s. for q.e.
$(s,x)\in Q_{\hat T}$ from which we easily deduce that $\mu(T)=0$.
\end{dow}

\begin{wn}\label{zz}
Assume that (H1)--(H3) are satisfied. If $\hat{h}(T)\le \varphi$
a.e., then $\bar{u}(T-)=\varphi$ a.e..
\end{wn}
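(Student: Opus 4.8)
The plan is to combine Lemma~\ref{lmml} with Lemma~\ref{lm.op3}; the statement then drops out by comparing the two descriptions of $\mu(T)$ they provide. First I would observe that, since $\hat h=h$ a.e.\ (the precise version is a version of $h$), the conditions (ii) and (iii) in the definition of a solution of the obstacle problem are insensitive to replacing $h$ by $\hat h$: condition (ii) reads $u\ge h$ a.e., which is the same as $u\ge\hat h$ a.e., and in condition (iii) the admissible competitors $h^{*}\in\mathcal{P}^{*}$ are those with $h\le h^{*}\le\bar u$ a.e., again unchanged. Hence the unique solution $(u,\mu)$ of OP$(\varphi,f,h)$ is also the unique solution of OP$(\varphi,f,\hat h)$, so Lemma~\ref{lm.op3} applies and yields $\mu(T)\equiv 0$ under the hypothesis $\hat h(T)\le\varphi$ a.e.

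Next I would apply Lemma~\ref{lmml} to the pair $(u,\mu)$. Condition (i) of the definition of a solution of OP$(\varphi,f,h)$ is precisely the weak identity required as the hypothesis of Lemma~\ref{lmml} (and $u\in\mathcal{P}$, $\mu\in\mathcal{M}^{+}_{0}$ by the definition of a solution), so the lemma gives $\bar u(T-)\ge\varphi$ a.e.\ together with the key identity $d\mu(T)=(\bar u(T-)-\varphi)\,dm$, where $\bar u$ is the quasi-c\`adl\`ag version of $u$.

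Combining the two conclusions, $(\bar u(T-)-\varphi)\,dm=\mu(T)\equiv 0$, whence $\bar u(T-)=\varphi$ $m$-a.e., which is the assertion. There is no serious technical obstacle here beyond verifying that the hypotheses of the two cited lemmas are met; the only point demanding a word of care is the identification of the solution of OP$(\varphi,f,h)$ with that of OP$(\varphi,f,\hat h)$ (so that Lemma~\ref{lm.op3} may be invoked), and this holds simply because $\hat h$ and $h$ differ only on an $m_T$-null set, which affects neither the weak formulation nor the minimality condition.
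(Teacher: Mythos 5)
Your core argument is exactly the paper's (implicit) proof: Lemma \ref{lmml} identifies $\mu(T)$ as $(\bar u(T-)-\varphi)\,dm$, Lemma \ref{lm.op3} shows $\mu(T)\equiv 0$ under the hypothesis $\hat h(T)\le\varphi$ a.e., and the conclusion follows by comparison. That is all the corollary requires, and your second and third paragraphs carry it out correctly.

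Your first paragraph, however, rests on a claim the paper does not support: that $\hat h=h$ a.e. Proposition \ref{v.1}(i) guarantees that the precise version is an a.e.\ (indeed q.e.) version only for elements of $\mathcal{P}^{+}_{0}$; the function $\hat h$ of Proposition \ref{v.2} associated with a general obstacle $h$ is defined only through the coincidence of the constraint sets $C$ (a condition imposed quasi-everywhere against quasi-u.s.c.\ competitors) and is nowhere asserted to be an a.e.\ version of $h$ --- indeed the sensitivity of $\overline{\mbox{OP}}$ to modifications of $h$ discussed after Proposition \ref{stw4.19} is precisely the point that $\hat h$ is a capacity-theoretic, not a Lebesgue-a.e., modification. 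Consequently the identification of the solutions of OP$(\varphi,f,h)$ and OP$(\varphi,f,\hat h)$ is not justified as you state it. Fortunately this step is superfluous: the corollary, like Lemma \ref{lm.op3} immediately preceding it (and like its later use in Proposition \ref{stw4.19}), refers to the pair $(u,\mu)$ solving OP$(\varphi,f,\hat h)$, so Lemma \ref{lm.op3} applies directly and no transfer between the two obstacle problems is needed.
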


The following definition of a solution of the obstacle problem is
given in \cite{Pierre1}  (for brevity we denote the problem by
$\overline{\mbox{OP}}$).

\begin{df}
{\rm  We say that $u\in \tau^{f}_{\varphi}+\mathcal{P}^{+}_{0}$ is
a solution of $\overline{\mbox{OP}}(\varphi,f,h)$ if
\begin{enumerate}
\item [\rm(i)] $ \hat{u}\ge h$ q.e., $ \quad \hat{u}(T)=\varphi,$
\item [\rm(ii)]  $\int_{Q_{T}}(\hat{u}-\hat{h})\,d
\mathcal{E}(u-\tau^{f}_{\varphi})=0$.
\end{enumerate} }
\end{df}

\begin{stw}
\label{stw4.19} Let $(u,\mu)$  be a unique solution of
OP$(\varphi,f,\hat{h})$. Then $u$ is a unique solution of
$\overline{\mbox{OP}}(\varphi,f,h)$.
\end{stw}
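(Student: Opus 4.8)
The plan is to establish a bijection between solutions of OP$(\varphi,f,\hat{h})$ and solutions of $\overline{\mbox{OP}}(\varphi,f,h)$, the link between the two minimality conditions being the identity $\hat{u}\cd=\bar{u}_{-}\cd$ of Proposition \ref{v.3}. I would first reduce $\overline{\mbox{OP}}$ to a condition on $u$ and $\mu$ themselves. Put $v=u-\tau^{f}_{\varphi}$. Since $f$ is linear, $(\partial_{t}+L_{t})\tau^{f}_{\varphi}=-f$, so condition (i) of OP yields $(\partial_{t}+L_{t})v=-\mu$ in $\mathcal{D}'(\check{Q}_{T})$; as $\mbox{esssup}_{t}\|v(t)\|_{2,\varrho}<\infty$, this means $v\in\mathcal{P}_{0}$. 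By Corollary \ref{zz}, $\bar{u}(T-)=\varphi$ a.e., hence $\bar{v}(T-)=0$; applying Theorem \ref{tw3.5}(iii) to $v$ with terminal datum $0$ gives $\bar{v}(s,x)=E_{s,x}K_{s,T}\ge0$ for q.e. $(s,x)$, so $v\ge0$ a.e. and $u\in\tau^{f}_{\varphi}+\mathcal{P}_{0}^{+}$. Finally, by Lemma \ref{lm.op3} $\mu(T)\equiv0$, and $\bar{v}(T-)=0$, so the measure $\mathcal{E}(u-\tau^{f}_{\varphi})=\varrho^{2}\bar{v}(T-)\,dm+\varrho^{2}\,d\mu_{v}$ (with $\mu_{v}=-(\partial_{t}+L_{t})v$, extended to $Q_{T}$ as in Theorem \ref{tw3.5}(iv)) equals $\varrho^{2}\,d\mu$ on all of $Q_{T}$. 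Thus $\overline{\mbox{OP}}$(ii) reads $\int_{Q_{T}}(\hat{u}-\hat{h})\varrho^{2}\,d\mu=0$.

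I would then check condition (i) of $\overline{\mbox{OP}}$. Writing $u=\tau^{f}_{\varphi}+v$ with $v\in\mathcal{P}_{0}^{+}$ and $\tau^{f}_{\varphi}$ quasi-continuous, the rule $\widehat{u+v}=\hat{u}+\bar{v}$ recalled after Proposition \ref{v.1} together with Proposition \ref{v.3} give $\hat{u}\cd=\bar{u}_{-}\cd$ $P_{s,x}$-a.s. for q.e. $(s,x)$, $\bar{u}$ being a quasi-c\`adl\`ag version of $u$; in particular $\hat{u}(T)=\bar{u}(T-)=\varphi$ a.e. To obtain $\hat{u}\ge h$ q.e. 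I would use Theorem \ref{main1}(ii) (applicable since (H3) holds for $\hat{h}$, e.g. with potential $h^{*}=u$): for $(s,x)\in F$ the triple $(\bar{u}\cd,\sigma\nabla\bar{u}\cd,\int_{s}^{\cdot}d\mu\h)$ solves RBSDE$_{s,x}(\varphi,f,\hat{h})$, so $\bar{u}(t,X_{t})\ge\hat{h}(t,X_{t})$ for a.e. $t$; since $t\mapsto\hat{h}(t,X_{t})$ is c\`agl\`ad and $t\mapsto\bar{u}(t,X_{t})$ is c\`adl\`ag, passing to left limits gives $\hat{u}(t,X_{t})=\bar{u}_{-}(t,X_{t})\ge\hat{h}(t,X_{t})$ for every $t$. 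As $\mbox{cap}_{L}(F^{c})=0$ and $\mbox{cap}_{L}$ is equivalent to $\mbox{cap}_{2}$, this gives $\hat{u}\ge\hat{h}$ q.e., hence $\hat{u}\ge h$ q.e. by the characterization of $\hat{h}$ in Proposition \ref{v.2}.

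For condition (ii) of $\overline{\mbox{OP}}$: by the previous step $\hat{u}-\hat{h}\ge0$ q.e., hence $\mu$-a.e. (as $\mu$ is soft), so $\int_{Q_{T}}(\hat{u}-\hat{h})\varrho^{2}\,d\mu\ge0$. For the reverse inequality I would feed into the minimality condition (iv) of the RBSDE c\`adl\`ag test processes $H^{n}$ with $\hat{h}(t,X_{t})\le H^{n}_{t}\le\bar{u}(t,X_{t})$ and $H^{n}_{t-}\downarrow\hat{h}(t,X_{t})$ along the paths --- built from $g_{n}=\bar{h}_{1}\wedge\dots\wedge\bar{h}_{n}$, where $\bar{h}_{k}$ is a quasi-continuous version (Proposition \ref{cont}) of the function $h_{k}\in\mathcal{W}_{\varrho}$ in $\hat{h}=\mbox{quasi-essinf}\{h_{k}\}$ (Proposition \ref{v.2}), e.g. $H^{n}_{t}=(g_{n}(t,X_{t})\vee(\bar{u}(t,X_{t})-M))\wedge\bar{u}(t,X_{t})$ with $M\to\infty$ taken afterwards to secure square-integrability. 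Condition (iv) then gives $\int_{s}^{T}(\bar{u}_{-}(t,X_{t})-H^{n}_{t-})\,d\mu(t,X_{t})=0$; letting $n,M\to\infty$ by monotone convergence (using $g_{n}(t,X_{t})\downarrow\hat{h}(t,X_{t})$ and $\hat{h}\le\bar{u}_{-}$) yields $\int_{s}^{T}(\bar{u}_{-}(t,X_{t})-\hat{h}(t,X_{t}))\,d\mu(t,X_{t})=0$ for q.e. $(s,x)$. Multiplying by $\varrho^{2}$, integrating in $x$, and using the Revuz correspondence $\mu\sim K$ together with Aronson's estimates as in the proof of Theorem \ref{main1} and of the corollary following it, we get $\int_{Q_{T}}(\hat{u}-\hat{h})\varrho^{2}\,d\mu=0$. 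Hence $u$ solves $\overline{\mbox{OP}}(\varphi,f,h)$. Running the same computation backwards, every solution $\tilde{u}$ of $\overline{\mbox{OP}}(\varphi,f,h)$ gives a solution $(\tilde{u},\varrho^{-2}\mathcal{E}(\tilde{u}-\tau^{f}_{\varphi}))$ of OP$(\varphi,f,\hat{h})$; by uniqueness for the latter (Theorems \ref{uniq}, \ref{main1}), $\tilde{u}=u$, which gives uniqueness.

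The main obstacle is the c\`agl\`ad-ness of $t\mapsto\hat{h}(t,X_{t})$ invoked in the last two paragraphs: it is what upgrades the a.e.-in-$t$ inequality $\bar{u}\ge\hat{h}$ supplied by the RBSDE to the everywhere inequality $\bar{u}_{-}\ge\hat{h}$, and what allows one to replace $h^{*}_{-}$ by $\hat{h}$ when passing between the two minimality conditions. This is the obstacle counterpart of Proposition \ref{v.3} and must be extracted from Pierre's precise-function machinery (\cite{Pierre1,Pierre2}); granting it, the remaining de-weighting steps are routine given Proposition \ref{tw2.2} and Aronson's estimates.
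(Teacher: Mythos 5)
Your proposal follows the paper's route in all essentials: the decomposition $u=\tau^{f}_{\varphi}+v$ with $v\in\mathcal{P}^{+}_{0}$ (your argument $\bar{v}(s,x)=E_{s,x}K_{s,T}\ge0$ via Theorem \ref{tw3.5}(iii) is a nice way to make explicit what the paper dismisses with ``of course''), the identification $\mathcal{E}(u-\tau^{f}_{\varphi})=\varrho^{2}\,d\mu$ via Corollary \ref{zz} and Lemma \ref{lm.op3}, the approximation of $\hat{h}$ by the quasi-continuous functions $\bar{h}_{n}$ of Proposition \ref{v.2}, the use of the minimality condition together with Proposition \ref{v.3}, and the final de-weighting by Aronson's estimates. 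The only structural difference is that you exploit minimality at the level of condition (iv) of the RBSDE with explicit truncated test processes, whereas the paper applies the OP minimality condition (iii) with $h^{*}=(\bar{h}_{n}-\bar{\tau}^{f}_{\varphi})\wedge\bar{\omega}$; these are interchangeable. You also supply a uniqueness argument (running the construction backwards and invoking Theorem \ref{uniq}), which the paper omits, taking uniqueness of $\overline{\mbox{OP}}$ from \cite{Pierre1}.

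The one genuine gap is the c\`agl\`ad property of $t\mapsto\hat{h}(t,X_{t})$, which you correctly flag but which is not merely a technicality to be ``extracted'': Proposition \ref{v.3} applies to precise versions of elements of $\mathcal{P}^{+}_{0}$, while $\hat{h}$ is only a quasi-essinf of elements of $\mathcal{W}_{\varrho}$, hence along paths merely upper semicontinuous, and an u.s.c.\ minorant valid for a.e.\ $t$ does not survive the passage to left limits. The paper's proof is organized precisely so as never to need this. For condition (i) it reads $u\ge\hat{h}$ a.e.\ directly off condition (ii) of the definition of OP$(\varphi,f,\hat{h})$ --- the obstacle of that problem is already $\hat{h}$ --- and upgrades a.e.\ to q.e.\ in the spirit of Proposition \ref{cad}, with no composition of $\hat{h}$ with $X$. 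For condition (ii) it keeps the quasi-continuous $\bar{h}_{n}$ inside the stochastic computation, obtains $\int_{Q_{T}}(\hat{u}-\bar{h}_{n})\,d\mathcal{E}(u-\tau^{f}_{\varphi})\le0$ for each fixed $n$, and only then takes the infimum over $n$ at the level of this deterministic integral, where the quasi-essinf coincides with the pointwise infimum $\mathcal{E}$-a.e.\ because the measure is soft. Your version of the limit passage inside the pathwise integral can be repaired (the process avoids the capacity-null set where $\inf_{n}\bar{h}_{n}\ne\hat{h}$), but your derivations of $\hat{u}\ge\hat{h}$ q.e.\ and of the squeeze $\hat{h}(t,X_{t})\le h^{*}_{-}(t,X_{t})$ in the uniqueness step cannot; both should be rerouted as above.
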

\begin{dow}
Let $u$ be the first component of a solution of
OP$(\varphi,f,\hat{h})$. By the definition, $u\ge \hat{h}$ a.e.,
so $\hat{u}\ge \hat{h}$ q.e. (see Proposition \ref{cad}). Thus,
condition (i) of the definition is satisfied. Next observe that by
linearity and uniqueness arguments, $u=\omega+\tau^{f}_{\varphi}$,
where $(\omega,\nu)$ is a unique solution of
OP$(0,0,\hat{h}-\bar{\tau}^{f}_{\varphi})$. By Corollary \ref{zz},
$\mathcal{E}(\omega)=\nu$. Of course, $\omega\in
\mathcal{P}^{+}_{0}$ and
$\hat{u}=\hat{\omega}+\bar{\tau}^{f}_{\varphi}$. Let $\{h_n\}$ be
a sequence of Proposition \ref{v.2}. Then  by the definition of a
solution of OP$(0,0,\hat{h}-\bar{\tau}^{f}_{\varphi})$ and
Proposition \ref{v.3},
\begin{align*}
&\int_{Q_{T}}(\hat{u}-\bar{h}_{n})
\,d\mathcal{E}(u-\tau^{f}_{\varphi})
=\int_{Q_{T}}(\hat{\omega}-(\bar{h}_{n}-\bar{\tau}^{f}_{\varphi}))
\,d\mathcal{E}(\omega)\\
&\quad\le C\int_{\BRD}(E_{0,x}\int_{0}^{T}
(\hat{\omega}-(\bar{h}_{n}-\bar{\tau}^{f}_{\varphi}))
(\theta,X_{\theta})\,d\mathcal{E}(w)(\theta,X_{\theta}))\,dx\\
&\quad = C\int_{\BRD}(E_{0,x}\int_{0}^{T}
(\omega_{-}-(\bar{h}_{n}-\bar{\tau}^{f}_{\varphi}))
(\theta,X_{\theta})\,d\mathcal{E}(w)(\theta,X_{\theta}))\,dx\le0,
%&\quad\le 0.
\end{align*}
Taking infimum over $n\in\BN$ yields
$\int_{Q_{T}}(\hat{u}-\hat{h})
\,d\mathcal{E}(u-\tau^{f}_{\varphi})\le 0$, which completes the
proof since $\hat{u}\ge \hat{h}$ q.e..
\end{dow}
\medskip

Notice that from Proposition \ref{stw4.19} it follows that
solutions of the obstacle problem in the sense defined in
\cite{Pierre1} are sensitive to  changes of obstacles on sets of
the Lebesgue measure zero. Indeed, one can easily find
$h_1,h_2\in\mathcal{P}^{+}_{0}$ such that $h_1=h_2$ a.e. but $\hat
h_1$, $\hat h_2$ differ on the set of positive capacity.
Consequently, solutions of $\overline{\mbox{OP}}(\varphi,f,\hat
h_1)$, $\overline{\mbox{OP}}(\varphi,f,\hat h_2)$  are different.
In other words, in \cite{Pierre1}  definition of a solution with
quasi-u.s.c. obstacle $\hat h$ rather than with $h$ is given. The
second drawback of the definition given in \cite{Pierre1} lies in
the fact that it applies only  to linear equations and that it
does not allow solutions to have jumps in $T$ (the last property
of solutions is forced by the assumption that $\hat{h}(T)\le
\varphi$).

\nsubsection{Renormalized solutions of equations with measure data
and BSDEs} \label{sec6}

In this section we present some connections between solutions of
parabolic differential equations with measure data and BSDEs.
Since  we consider solutions on unbounded domain, some
integrability assumptions on the measure must be imposed. We will
consider measure data from the class
$\mathcal{M}_{0}(\varrho)=\{\mu\in\mathcal{M}_{0};
\int_{Q_{T}}\varrho^{2}\,d|\mu|<\infty\}$. This class is quite
natural because under (H1), (H2) second components of solutions of
obstacle problems considered in Section \ref{sec5} belong to
$\mathcal{M}_{0}(\varrho)$.

Recall that from \cite[Theorem 2.27]{DPP} it follows that
\[
\mathcal{M}_{0}(\varrho)=\mathcal{W}^{'}_{\varrho}\cap\mathcal{M}(\varrho)
+\mathbb{L}_{1,\varrho^{2}}(Q_{T}),
\]
while by \cite[Lemma 2.24]{DPP}, for every $\Phi\in
\mathcal{W}^{'}_{\varrho}$ there exist
$g\in\mathbb{L}_{2}(0,T;H_{\varrho}^{1})$ and
$G,f\in\mathbb{L}_{2,\varrho}(Q_{T})$ such that
\[
\Phi=g_{t}+\dyw{G}+f,
\]
where
\[
\langle g_{t},\eta\rangle
=-\langle g,\frac{\partial\eta}{\partial t}\rangle_{\varrho,T},
\quad \eta\in\WW_{\varrho}.
\]
Let us remark that in \cite{DPP} proofs of the above two facts are
given in the case of bounded domains but at the expense of minor
technical changes they can be adapted to the case of $Q_T$.

In the theory of  partial differential equations with measure data
to guarantee uniqueness of solutions the so-called renormalized
solutions are considered (see, e.g., \cite{DPP}).

\begin{df}
{\rm A measurable function $u:Q_T\rightarrow\BR$ is called a
renormalized solution of the Cauchy problem (\ref{eq1.12}) if
\begin{enumerate}
\item[\rm(a)]for some decomposition $(g,G,f)$ of
the given measure $\mu$ such that
$u-g\in\mathbb{L}_{\infty}(0,T,\mathbb{L}_{2,\varrho}(\mathbb{R}^{d}))$
and $T_{n}(u-g)\in\mathbb{L}_{2}(0,T;H^{1}_{\varrho})$ for
$n\in\mathbb{N}$,
\[
\lim_{n\rightarrow\infty}\int_{\{n\le|u-g|\le n+1\}}|\nabla
u(t,x)|^{2}\varrho^{2}(x)\,dx\,dt=0,
\]
\item[\rm(b)]for any $S \in W^{2}_{\infty}(\mathbb{R})$ with compact
support,
\begin{align*}
&\frac{\partial}{\partial t}(S(u-g))
+\dyw(a\nabla u S^{'}(u-g))-S^{''}(u-g)\langle
a\nabla
u,\nabla(u-g)\rangle_2\\
&\qquad=-S^{'}(u-g)f-\dyw{(GS^{'}(u-g))}+GS^{''}(u-g)\nabla(u-g)
\end{align*}
in the  sense of distributions,
\item[\rm(c)]
$T_{n}(u-g)(T)=T_{n}(\varphi)$ in
$\mathbb{L}_{2,\varrho}(\mathbb{R}^{d})$ for $n\in\BN$.
\end{enumerate} }
\end{df}

If $\mu\in\mathbb{L}_{1,\varrho^{2}}(Q_{T})$, the definition of a
renormalized solution is equivalent to the definition of entropy
solution (see, e.g., \cite{Prignet}). Let us mention also that one
can give an alternative definition of a solution of (\ref{eq1.12})
by using duality (see \cite{Stampacchia}). In general, there is no
unique solution of (\ref{eq1.12}) in the distributional sense (see
\cite{Serrin}), but it is known that there exists a unique
renormalized solution. What is interesting here is that the
renormalized solution is determined uniquely by a solution of some
simple BSDE.

Let $p>0$. By $M^{p}$ we denote the space of all progressively
measurable c\`adl\`ag processes $Y$ such that
$E(\int_{0}^{T}|Y_t|^{2}\,dt)^{p/2}<\infty$. $\mathcal{D}^{p}$
($\mathcal{S}^{p}$) is the subspace of $M^{p}$ consisting of all
c\`adl\`ag (continuous) processes such that $E\sup_{0\le t\le
T}|Y_{t}|^{p}<\infty$.

All existence  and uniqueness results for PDEs considered in the
following theorem and its proof  follow from \cite{DPP,Prignet}.
\begin{tw}
\label{tw5.1} Assume that
$\varphi\in\mathbb{L}_{1,\varrho^{2}}(\mathbb{R}^{d})$, $\mu\in
\mathcal{M}_{0}(\varrho)$. Let $u$ be a renormalized solution of
(\ref{eq1.12}). Then  there exists a quasi-c\`adl\`ag version of
$u$ (still denoted by $u$) such that for q.e. $(s,x)\in Q_{\hat{T}}$,
$u(\cdot,X)\in \mathcal{D}^{p}$, $\nabla u(\cdot,X)\in M^{p}$ for
every $p\in (0,1)$, and
\begin{align*}
u(t,X_{t})&=\varphi(X_{T})+\int_{t}^{T}d\mu(\theta,X_{\theta})
-\int_{t}^{T}\sigma\nabla
u(\theta,X_{\theta})\,dB_{s,\theta}, \quad t\in [s,T],\quad
P_{s,x}\mbox{-}a.s..
\end{align*}
In particular, for q.e. $(s,x)\in Q_{\hat{T}}$,
\[
u(s,x)=E_{s,x}\varphi(X_{T})+E_{s,x}\int_{t}^{T}d\mu(\theta,X_{\theta}).
\]
\end{tw}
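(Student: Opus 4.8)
The plan is to obtain the stochastic representation of the renormalized solution by approximating the measure $\mu$ and the terminal data $\varphi$ by smooth data, using the already-established correspondence between soft measures and positive additive functionals (Proposition~\ref{stw3.10} and the subsequent Corollary), and passing to the limit in the associated BSDEs. More precisely, first I would invoke the decomposition $\mathcal{M}_{0}(\varrho)=\mathcal{W}'_{\varrho}\cap\mathcal{M}(\varrho)+\mathbb{L}_{1,\varrho^{2}}(Q_{T})$ recalled before the statement to write $\mu=\mu_{1}-\mu_{2}$ with $\mu_{i}\in\mathcal{M}^{+}_{0}$, and reduce (by linearity and the uniqueness of the renormalized solution) to the case $\mu\ge 0$ and $\varphi\ge 0$. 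Then Proposition~\ref{stw3.10} gives a PAF $K$ with $\mu\sim K$, and the random measure $d\mu(\cdot,X_{\cdot})$ of Section~\ref{sec4} satisfies $K_{s,t}=\int_{s}^{t}d\mu(\theta,X_{\theta})$.

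Next I would regularize: take $\varphi_{n}=T_{n}(\varphi)$ and a decomposition $(g,G,f)$ of $\mu$, and approximate by bounded data for which the solution is the classical weak solution with the BSDE representation. Concretely, one can use the solutions $u_{n}$ of $\frac{\partial u_{n}}{\partial t}+L_{t}u_{n}=-\mu_{n}$, $u_{n}(T)=\varphi_{n}$, where $\mu_{n}$ is the Steklov-type mollification of $\mu$ as in Lemma~\ref{lm2.8} (or the cut-off of $\mu$ by capacity), so that $\mu_{n}\in\mathcal{W}'_{\varrho}$ and $u_{n}\in\mathcal{W}_{\varrho}$. For such $u_{n}$ Theorem~\ref{tw3.5}(iii) (applied with $f_{u}\equiv\mu$ absorbed, i.e. to the linear operator) together with Corollary~\ref{wnwn} and the convergence $K^{n}\to K$ gives the representation
\[
\bar{u}_{n}(t,X_{t})=\varphi_{n}(X_{T})+\int_{t}^{T}dK^{n}_{\theta}
-\int_{t}^{T}\sigma\nabla u_{n}(\theta,X_{\theta})\,dB_{s,\theta},\quad t\in[s,T],\quad P_{s,x}\mbox{-}a.s.
\]
for q.e.\ $(s,x)$. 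The uniform $\mathbb{L}_{1,\varrho^{2}}$-bounds on $u_{n}$ and the a priori gradient estimates for renormalized solutions (from \cite{DPP,Prignet}) combined with Proposition~\ref{tw2.2} and Remark~\ref{ess} yield, along a subsequence, convergence of $u_{n}(\cdot,X_{\cdot})$ in $\mathcal{D}^{p}$ and of $\sigma\nabla u_{n}(\cdot,X_{\cdot})$ in $M^{p}$ for $p\in(0,1)$, for q.e.\ $(s,x)$; passing to the limit in the displayed identity gives the asserted equation, and the quasi-c\`adl\`ag version of $u$ is recovered as $\limsup_{n}\bar{u}_{n}$. The final ``in particular'' statement is then obtained by taking $t=s$ and $E_{s,x}$, using that $M_{s,\cdot}=\int\sigma\nabla u\,dB$ is a martingale (here one uses $E_{s,x}\int_{s}^{T}|\sigma\nabla u(\theta,X_{\theta})|^{2}\,d\theta$ need not be finite, so one truncates at stopping times and uses uniform integrability of $u(\cdot,X_{\cdot})$ coming from the $\mathcal{D}^{p}$-bound, then passes to the limit).

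The main obstacle I expect is precisely the passage to the limit at the level of trajectories with only $\mathbb{L}^{1}$-type control: the gradients are not square-integrable against $P_{s,x}$ in general, so one cannot use It\^o's isometry or the Burkholder--Davis--Gundy inequality in $\mathbb{L}^{2}$ as in the proof of Theorem~\ref{tw3.5}. The remedy is to work in $M^{p}$ and $\mathcal{D}^{p}$ with $p<1$, exploiting the Fefferman-type / $\mathbb{L}^{p}$-estimates for BSDEs with $\mathbb{L}^{1}$ data and the stability of such estimates under the approximation, together with the renormalization condition (a) in the definition, which controls $\nabla u$ on the ``bad'' level sets $\{n\le|u-g|\le n+1\}$ and ensures the stochastic integrals of the truncated gradients converge in probability. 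A secondary technical point is verifying that the exceptional set of $(s,x)$ where the representation fails has $\mbox{cap}_{L}$ zero; this follows from Remark~\ref{ess} applied to the integrand $|\varphi|+\int|d\mu|$ after noticing $\int_{Q_{T}}\varrho^{2}\,d|\mu|<\infty$ forces $E_{s,x}\int_{s}^{T}d|\mu|(\theta,X_{\theta})<\infty$ for a.e., hence q.e., $(s,x)$, via Proposition~\ref{tw2.2} and the definition of the random measure $d\mu(\cdot,X_{\cdot})$.
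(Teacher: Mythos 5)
Your overall strategy --- approximate, represent the approximations by BSDEs, and pass to the limit in $\mathcal{D}^{p}\otimes M^{p}$ for $p<1$ --- is in the right spirit, but it misses the one structural idea that makes the paper's proof work, and the step you yourself flag as the ``main obstacle'' is a genuine gap that your proposed remedy does not close. The paper does \emph{not} reduce to $\mu\ge0$ and then treat the whole measure by mollification. It uses the decomposition $\mathcal{M}_{0}(\varrho)=\mathcal{W}'_{\varrho}\cap\mathcal{M}(\varrho)+\mathbb{L}_{1,\varrho^{2}}(Q_{T})$ for its intended purpose: write $\mu=\Phi+f$ with $\Phi\in\mathcal{W}'_{\varrho}\cap\mathcal{M}(\varrho)$ (positive without loss of generality) and $f\in\mathbb{L}_{1,\varrho^{2}}(Q_{T})$, and split $u=u_{1}+u_{2}$ accordingly. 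The point is that the \emph{entire singular part} of $\mu$ sits in $\Phi$, and because $\Phi\in\mathcal{W}'_{\varrho}$ the solution $u_{1}$ lies in $\mathbb{L}_{2}(0,T;H^{1}_{\varrho})$ and is a parabolic potential; Theorem \ref{tw3.5} then gives the quasi-c\`adl\`ag version, the associated PAF and the representation with $u_{1}(\cdot,X)\in\mathcal{D}^{2}$, $\sigma\nabla u_{1}(\cdot,X)\in M^{2}$ --- no $p<1$ analysis is needed for the measure part at all. The remaining part $u_{2}$ has data $(\varphi,f)$ which are merely $\mathbb{L}_{1,\varrho^{2}}$ \emph{functions}, with no additive functional involved, so the $\mathbb{L}^{p}$ theory of BSDEs with $\mathbb{L}^{1}$ data (\cite{BDHPS}) applies verbatim to the truncations $T_{n}(\varphi)$, $T_{n}(f)$ and yields convergence in $\mathcal{S}^{p}\otimes M^{p}$ for $p\in(0,1)$.

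In your scheme the gap is concrete. For a general positive soft measure in $\mathcal{M}_{0}(\varrho)$ the solution of the linear problem is only in $\mathbb{L}_{q}(0,T;W^{1}_{q,\varrho})$ for $q<\frac{d+2}{d+1}$, not in $\mathbb{L}_{2}(0,T;H^{1}_{\varrho})$, so it is not a parabolic potential and Theorem \ref{tw3.5} cannot be invoked for it; the It\^o-isometry and BDG estimates that drive the proof of that theorem are then unavailable. Corollary \ref{wnwn} concerns the penalization scheme for reflected BSDEs, not Steklov mollifications of a measure, so the convergence $K^{n}\to K$ you rely on is not supplied by the results you cite. Finally, passing to the limit in the stochastic integrals $\int\sigma\nabla u_{n}(\theta,X_{\theta})\,dB_{s,\theta}$ with only $\mathbb{L}_{q}$-bounds ($q<2$) on the gradients would require an $M^{p}$-stability estimate for BSDEs whose data include weakly converging increasing processes controlled only in $\mathbb{L}^{1}$; the renormalization condition (a) does not by itself yield such an estimate under $P_{s,x}$ for q.e.\ $(s,x)$, and this is exactly the difficulty the $\Phi+f$ splitting is designed to avoid. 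Your final observations --- identifying the exceptional set via Proposition \ref{tw2.2} and Remark \ref{ess}, and deducing the ``in particular'' formula by taking expectations --- are fine once the representation itself is established.
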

\begin{dow}
Let $\Phi\in\mathcal{W}'_{\varrho}\cap\mathcal{M}_{0}(\varrho)$
and $f\in\mathbb{L}_{1,\varrho^{2}}(Q_{T})$ be such that $\mu=\Phi+f$.
Since the problem (\ref{eq1.12}) is linear and $\mu$ can be
decomposed into a difference of positive measures, without loss of
generality we may and will assume that $\Phi$ is positive. Let $u$
be a solution of (\ref{eq1.12}) and let $u_{1}, u_{2}$ be
solutions of the Cauchy problems
\[
\frac{\partial u_{1}}{\partial t}+L_{t}u_{1} =-\Phi,\quad
u_{1}(T)=0,\qquad \frac{\partial u_{2}}{\partial t}+L_{t}u_{2}
=-f,\quad u_{2}(T)=\varphi.
\]
Of course, $u=u_{1}+u_{2}$. By  Theorem \ref{tw3.5}, there is a
quasi-c\`adl\`ag version of $u_{1}$ (still denoted by $u_{1}$)
such that for q.e. $(s,x)\in Q_{\hat{T}}$, $u_1(\cdot,X)\in
\mathcal{D}^{2}$, $\sigma\nabla u_{1}(\cdot,X)\in M^{2}$ and
\[
u_{1}(t,X_{t})=\int_{t}^{T}d\Phi(\theta,X_{\theta})
-\int_{t}^{T}\sigma\nabla u_{1}(\theta,X_{\theta})\,dB_{s,\theta},
\quad t\in [s,T],\quad P_{s,x}\mbox{-a.s..}
\]
From \cite{BDHPS} and Proposition \ref{tw2.2} it follows that for
q.e. $(s,x)\in Q_{\hat{T}}$ there exists a solution $(Y^{s,x},Z^{s,x})$
of the BSDE
\[
Y^{s,x}_{t}=\varphi(X_{T})+\int_{t}^{T}f(\theta,X_{\theta})\,d\theta
-\int_{t}^{T}Z^{s,x}_{\theta}\,dB_{s,\theta},\quad t\in
[s,T],\quad P_{s,x}\mbox{-}a.s.
\]
such that $(Y^{s,x},Z^{s,x})\in\mathcal{S}^{p}\otimes M^{p}$ for
every $p\in(0,1)$. Let $u^n_2$, $n\in\BN$, be a solution of the
Cauchy problem
\[
\frac{\partial u^{n}_{2}}{\partial t}+L_{t}u_{2}^{n}
=-T_{n}(f),\quad u_{2}^{n}=T_{n}(\varphi).
\]
It is known  that $u^{n}_{2}\rightarrow u_{2}$ in
$\mathbb{L}_{q}(0,T;W^{1}_{q,\varrho})$ for $q<\frac{d+2}{d+1}$
(see \cite{Prignet}). From Proposition \ref{stw4.1} it follows
that there exists a quasi-continuous version of $u_{2}^{n}$ (still
denoted $u_{2}^{n}$) such that $(u_{2}^{n}(\cdot,X),\sigma\nabla
u_{2}^{n}(\cdot,X))\in\mathcal{S}^{2}\otimes M^{2}$ and
\begin{align*}
u_{2}^{n}(t,X_{t})&=T_{n}(\varphi)(X_{T})
+\int_{t}^{T}T_{n}(f)(\theta,X_{\theta})\,d\theta\\
&\quad-\int_{t}^{T}\sigma\nabla u_{2}^{n}
(\theta,X_{\theta})\,dB_{s,\theta},\quad t\in [s,T],\quad
P_{s,x}\mbox{-}a.s.
\end{align*}
for q.e. $(s,x)\in Q_{\hat{T}}$. By standard arguments (see the
proof of \cite[Proposition 6.4]{BDHPS}), it follows that
$(u_{2}^{n}(\cdot,X),\sigma\nabla u_{2}^{n}(\cdot,X))\rightarrow
(Y^{s,x},Z^{s,x})$ in $\mathcal{S}^{p}\otimes M^{p}$ for
$p\in(0,1)$, which completes the proof.
\end{dow}

\end{document}